\theoremstyle{plain}
\newtheorem{theorem}{Theorem}[section]
\crefname{theorem}{theorem}{theorems}
\Crefname{theorem}{Theorem}{Theorems}
\newaliascnt{lemma}{theorem}
\newtheorem{lemma}[lemma]{Lemma}
\crefname{lemma}{lemma}{lemmas}
\Crefname{lemma}{Lemma}{Lemmas}
\newaliascnt{proposition}{theorem}
\newtheorem{proposition}[proposition]{Proposition}
\crefname{proposition}{proposition}{propositions}
\Crefname{proposition}{Proposition}{Propositions}
\newtheorem{prop}[proposition]{Proposition}
\crefname{prop}{proposition}{propositions}
\Crefname{prop}{Proposition}{Propositions}
\newaliascnt{corollary}{theorem}
\newtheorem{corollary}[corollary]{Corollary}
\crefname{corollary}{corollary}{corollaries}
\Crefname{corollary}{Corollary}{Corollaries}
\newaliascnt{conjecture}{theorem}
\crefname{conjecture}{conjecture}{conjectures}
\Crefname{conjecture}{Conjecture}{Conjectures}
\newaliascnt{claim}{theorem}
\crefname{claim}{claim}{claims}
\Crefname{claim}{Claim}{Claims}
\crefname{clm}{claim}{claims}
\Crefname{clm}{Claim}{Claims}
\newaliascnt{observation}{theorem}
\crefname{observation}{observation}{observations}
\Crefname{observation}{Observation}{Observations}
\newaliascnt{calculation}{theorem}
\crefname{calculation}{calculation}{calculations}
\Crefname{calculation}{Calculation}{Calculations}
\newaliascnt{question}{theorem}
\newtheorem{question}[question]{Question}
\crefname{question}{question}{questions}
\Crefname{question}{Question}{Questions}
\newaliascnt{speculation}{theorem}
\crefname{speculation}{speculation}{speculations}
\Crefname{speculation}{Speculation}{Speculations}
\crefname{spec}{speculation}{speculations}
\Crefname{spec}{Speculation}{Speculations}
\newaliascnt{theo}{theorem}
\crefname{theo}{Theorem}{Theorems}
\Crefname{theo}{Theorem}{Theorems}
\theoremstyle{definition}
\newaliascnt{definition}{theorem}
\newtheorem{definition}[definition]{Definition}
\crefname{definition}{definition}{definitions}
\Crefname{definition}{Definition}{Definitions}
\newtheorem{defn}[definition]{Definition}
\crefname{defn}{definition}{definitions}
\Crefname{defn}{Definition}{Definitions}
\newaliascnt{notation}{theorem}
\crefname{notation}{notation}{notations}
\Crefname{notation}{Notation}{Notations}
\newaliascnt{example}{theorem}
\newtheorem{example}[example]{Example}
\crefname{example}{example}{examples}
\Crefname{example}{Example}{Examples}
\crefname{exmp}{example}{examples}
\Crefname{exmp}{Example}{Examples}
\newaliascnt{exercise}{theorem}
\crefname{exercise}{exercise}{exercises}
\Crefname{exercise}{Exercise}{Exercises}
\crefname{exer}{exercise}{exercises}
\Crefname{exer}{Exercise}{Exercises}
\crefname{sketch}{sketch}{sketches}
\Crefname{sketch}{Sketch}{Sketches}
\theoremstyle{remark}
\newaliascnt{remark}{theorem}
\newtheorem{remark}[remark]{Remark}
\crefname{remark}{remark}{remarks}
\Crefname{remark}{Remark}{Remarks}
\theoremstyle{plain}
\newtheorem{mainthm}{Theorem}
\crefname{mainthm}{Theorem}{Theorems}
\Crefname{mainthm}{Theorem}{Theorems}
\crefname{section}{section}{sections}
\Crefname{section}{Section}{Sections}
\crefname{appendix}{Appendix}{Appendices}
\Crefname{appendix}{Appendix}{Appendices}
\crefname{subsection}{subsection}{subsections}
\Crefname{subsection}{Subsection}{Subsections}
\crefname{subsubsection}{subsection}{subsections}
\Crefname{subsubsection}{Subsection}{Subsections}
\crefname{equation}{equation}{equations}
\Crefname{equation}{Equation}{Equations}
\newcommand{\EE}{\mathbb E}
\newcommand{\PP}{\mathbb P}
\newcommand{\RR}{\mathbb R}
\newcommand{\CC}{\mathbb C}
\newcommand{\ZZ}{\mathbb Z}
\newcommand{\NN}{\mathbb N}
\newcommand{\dist}{\operatorname{dist}}
\newcommand{\Hom}{\operatorname{Hom}}
\newcommand{\eps}{\varepsilon}
\newcommand{\beq}{\begin{equation}}
\newcommand{\eeq}{\end{equation}}
\newcommand{\supp}{\operatorname{supp}}
\newcommand{\BHF}{\mathrm{BHF}}
\newcommand{\HF}{\mathrm{HF}}
\renewcommand{\Im}{\operatorname{Im}}
\renewcommand{\Re}{\operatorname{Re}}
\newcommand{\SAS}{\operatorname{SAS}}
\newcommand{\LHF}{\operatorname{LHF}}
\begin{document}

\title[Lipschitz harmonic functions on groups]{The Lipschitz Liouville Property, Affine Rigidity, and Coarse Harmonic Coordinates on Groups of Polynomial Growth}
\author{Mayukh Mukherjee}

\address{MM: Department of Mathematics, Indian Institute of Technology Bombay, Powai, Mumbai. 400 076. INDIA.}
\email{mathmukherjee@gmail.com}

\author{Soumyadeb Samanta}
\address{SS: Department of Mathematics, Indian Institute of Technology Bombay, Powai, Mumbai. 400 076. INDIA.}
\email{soumyadeb@iitb.ac.in}

\author{Soumyadip Thandar}
\address{ST: Theoretical Statistics and Mathematics unit, Indian Statistical Institute, Kolkata 700108,
INDIA.}
\email{soumyadip.thandar@tifr.res.in}


\begin{abstract}
We develop a quantitative theory of Lipschitz harmonic functions (LHF) on finitely generated groups, with emphasis on the Lipschitz Liouville property, affine rigidity, and quasi-isometric invariance for groups of polynomial growth. On finitely generated nilpotent groups we prove an affine rigidity theorem: for any adapted, smooth, Abelian-centered probability measure $\mu$, every Lipschitz $\mu$-harmonic function is affine, $f(x)=c+\varphi([x])$. For any finite generating set $S$ this yields a canonical isometric identification
$$
  \mathrm{LHF}(G,\mu)/\mathbb{C} \cong \mathrm{Hom}(G_{\mathrm{ab}},\mathbb{C}),\qquad
  \|\nabla_S f\|_\infty=\max_{s\in S}|\varphi([s])|,
$$
independent of the choice of centered measure. In addition, we prove an identification of $\HF_1$ with $\LHF$ on polynomial growth groups for adapted, smooth, Abelian-centered measures. Next, for any finite-index subgroup $H\le G$ and adapted smooth $\mu$ we prove a quantitative induction-restriction principle: restriction along $H$ and an explicit averaging operator give a linear isomorphism $\mathrm{LHF}(G,\mu)\cong\mathrm{LHF}(H,\mu_H)$, where $\mu_H$ is the hitting measure, with two-sided control of the Lipschitz seminorms. For groups of polynomial growth equipped with $\SAS$ measures we then show that $\mathrm{LHF}$ is a quasi-isometry invariant as a seminormed affine space, via choice-dependent Shalom--Sauer transport on virtual first cohomology. Separately, for quasi-isometries with bounded Abelian defect, we construct coarse harmonic coordinates that straighten them up to bounded error. Finally, within the Lyons-Sullivan/Ballmann-Polymerakis discretization framework, we prove a quantitative discrete-to-continuous extension theorem: Lipschitz harmonic data on an orbit extend to globally Lipschitz $L$-harmonic functions on the ambient manifold, with gradient bounds controlled by the background geometry.
\end{abstract}

\maketitle

\section{Introduction}

\emph{Harmonic functions on groups and manifolds} lie at the intersection of analysis, geometry, and probability. A central theme is to understand how the large-scale geometry and the random walk determine the space of harmonic functions with controlled growth. A landmark outcome of the work of Colding-Minicozzi \cite{CM97} and Kleiner \cite{Kl10} is that, on spaces of polynomial growth, the space of harmonic functions of a given growth degree is \emph{finite dimensional}.  Finiteness alone, however, does not reveal the \emph{linear structure} hidden at large scales, nor how this structure behaves under coarse operations such as passing to finite index, changing the step of a nilpotent model, or taking a quasi-isometry.

\noindent\textbf{From bounded to linear scale.}
At the \emph{bounded} scale, the Poisson/Martin boundary is often trivial on groups of polynomial growth: for many natural measures one has the Liouville property (every bounded harmonic function is constant), and the classical boundary theory becomes degenerate. At \emph{linear} scale the situation is far richer: non-constant harmonic functions of linear growth exist (for instance characters), and their behavior is controlled by both geometry and cohomology. Kleiner's work already exploits Lipschitz harmonic functions to build a finite-dimensional representation and derive virtual nilpotency.

Our main message is that, on finitely generated nilpotent groups, there is a very simple and robust boundary theory at linear scale, with explicit transport laws under finite-index inclusion and under quasi-isometry, and that this boundary is completely controlled by Lipschitz harmonic functions. We make this precise by introducing the \emph{linear harmonic boundary}
$$
  \partial_{\mathrm{lin}} G := \mathcal P\big(\Hom(G_{\mathrm{ab}},\RR)\big),
$$
and showing that it is realized by Lipschitz harmonic functions, is independent of the centered law of the random walk, transports along finite-index inclusions via the hitting measure (\Cref{bij:LHF}), and is a quasi-isometry invariant of the polynomial-growth group as a seminormed affine space via virtual first cohomology after choosing Shalom--Sauer transport data (\Cref{thm:E}). Equivalently, for each group individually we identify the space of Lipschitz harmonic functions up to constants with the dual of the Abelianization, in a way that is canonical at the level of seminormed affine spaces.

\medskip

\noindent\textbf{A guiding question.}
On groups of polynomial growth, can one build a boundary at linear scale which
\begin{itemize}
  \item[(i)] is realized by Lipschitz harmonic functions,
  \item[(ii)] is independent of the (centered) measure $\mu$,
  \item[(iii)] transports along finite-index inclusions and is invariant under quasi-isometry, with bounded-error harmonic-coordinate straightening for the subclass of quasi-isometries with bounded Abelian defect, and
  \item[(iv)] is compatible with Lyons-Sullivan-type discretization on Riemannian covers?
\end{itemize}

This paper answers this question positively in the setting of finitely generated nilpotent groups. At \emph{linear scale} the picture turns out to be \emph{affine and canonical}: on nilpotent groups with adapted smooth measures (and no symmetry assumption), every Lipschitz harmonic function is an \emph{affine character} (i.e., a constant plus a homomorphism factoring through the Abelianization) and its Lipschitz seminorm is the \emph{norm} of that character on a fixed generating set. With this identification in hand we prove \emph{stability across finite index}, \emph{quasi-isometry invariance at the level of seminormed affine spaces}, \emph{coarse harmonic coordinates} that \emph{straighten} quasi-isometries with \emph{bounded Abelian defect} up to sublinear error (with quantitative bounds depending only on the nilpotent structure), and a \emph{discrete$\to$continuous extension} principle with \emph{global gradient bounds}. As a consequence we obtain a sharp \emph{Lipschitz Liouville theorem}: any Lipschitz harmonic function with sublinear growth on a polynomial growth group must be constant.

\paragraph{Why \emph{Lipschitz}?}
The Lipschitz class isolates the \emph{linear scale} (degree $1$) in a way that is robust under quasi-isometry and compatible with gradient estimates. Working at the \emph{seminorm} level (rather than at the level of mere dimension) yields explicit transport across finite index, quasi-isometry invariance as \emph{seminormed affine spaces}, and for quasi-isometries with bounded Abelian defect we define canonical coordinates that linearize them up to bounded error: features unavailable at the level of general polynomial-growth harmonic functions.

\paragraph{\textbf{Guiding examples.}}
On $G=\ZZ^d$ with a centered finite-first-moment measure, $\LHF(G,\mu)$ consists \emph{exactly} of affine functions $x\mapsto c+v\cdot x$, and $\|\nabla_S f\|_\infty=\|v\|_\infty$ (relative to a generating set $S$). On the discrete Heisenberg group $H_{3}(\ZZ)$, $\LHF(H_{3}(\ZZ),\mu)$ again consists of constants plus characters factoring through $H_{3}(\ZZ)_{\mathrm{ab}}\cong\ZZ^2$; the central direction is invisible at linear scale. Theorems below show that these are not special cases but instances of a general phenomenon on nilpotent groups.

\medskip
\noindent\textbf{What is new.}
\begin{itemize}
\item \emph{Canonical, norm-level identification.} On nilpotent groups with Abelian-centered measures we prove
$$
\LHF(G,\mu) \cong \Hom(G_{\mathrm{ab}},\CC) \oplus \CC,
\quad
\|\nabla_S f\|_\infty=\max_{s\in S}|\bar\varphi([s])|
$$
for $f(x)=c+\bar\varphi([x])$. This sharpens the Alexopoulos classification (see \cite{Al02}) by pinpointing the \emph{Lipschitz} subclass and identifying its seminorm \emph{exactly}.
\item \emph{Classification of linear growth harmonic functions under smooth measures.} For adapted smooth Abelian-centered measures on nilpotent groups, with no symmetry or finite-support hypothesis, we prove $\HF_1=\LHF=P^1$ (\Cref{thm:unconditional-HF1-closure}). The proof requires heat kernel estimates that are uniform over a family of recentered finitely supported approximants whose supports grow with a truncation parameter; this is a harder problem than the single-measure setting of Dungey \cite{Dungey2008}. We develop a self-contained family-uniform Dungey estimate theory in \Cref{sec:uniform-dungey-appendix}. This is a hard analytic result which we feel can have an independent interest on its own.
\item \emph{Finite-index stability with estimates.} We construct inverse maps (restriction and first-return induction) between $\LHF(G,\mu)$ and $\LHF(H,\mu_H)$ with two-sided quantitative Lipschitz control. This gives a robust transfer principle across finite index, compatible with cohomology and hitting measures.
\item \emph{Quasi-isometric invariance at the seminormed-affine level.} For quasi-isometric polynomial-growth groups with $\SAS$ measures we build linear isomorphisms between their $\LHF$ spaces, after choosing Shalom-Sauer transport data, and prove seminorm comparability, strengthening dimension-level invariance.
\item \emph{Coarse harmonic coordinates and algebraic straightening.} We isolate a natural \emph{bounded Abelian defect} condition on quasi-isometries of nilpotent groups and show that it is equivalent to coarse affinity on the Abelianization. Under this condition we prove an algebraic linearization theorem which yields a canonical linear map
$$ 
T_\Psi:\Hom(N_{\mathrm{ab}},\RR)\longrightarrow\Hom(M_{\mathrm{ab}},\RR)
$$
and coarse harmonic coordinates in which such quasi-isometries are close to affine maps. We also exhibit a geometric class of examples where bounded Abelian defect can be checked directly.
\item \emph{Discrete$\to$continuous extension with global gradient bounds.} Within the Lyons-Sullivan/Ballmann-Polymerakis discretization, a Lipschitz harmonic function on an orbit extends to a globally Lipschitz $L$-harmonic function on the ambient manifold with an explicit gradient bound controlled by the background geometry.
\end{itemize}

\paragraph{\emph{Standing conventions.}}
Throughout, $G$ is finitely generated with a fixed finite symmetric generating set $S$, $\mu$ is a probability measure on $G$, and $X_{t+1}=X_t\xi_{t+1}$ denotes the \emph{right} random walk. We often assume $\mu$ is \emph{adapted} and \emph{smooth} (see \Cref{def:smooth_measure}). We write $G_{\mathrm{ab}}:=G/[G,G]$ and call $\mu$ \emph{Abelian-centered} if $\sum_g\mu(g)[g]=0$ in $G_{\mathrm{ab}}\otimes\RR$.

\medskip
\noindent\textbf{Main results.}
We now state the theorems in their precise form.

\smallskip
\noindent\textbf{1. Affine rigidity and the linear boundary.}
\begin{mainthm}[Affine rigidity and the $\HF_1$ classification]\label{thm:A}
Let $G$ be a finitely generated nilpotent group and let $\mu$ be \emph{adapted and smooth} on $G$ (no symmetry or finite support assumed).
Assume the \emph{Abelian drift is centered}, i.e.\ $\mathbf m_{\mathrm{ab}}(\mu):=\sum_g \mu(g)[g]_{G_{\mathrm{ab}}} = 0$.
Then
$$
\HF_1(G,\mu)=\LHF(G,\mu)=P^1(G).
$$
In particular, every $\mu$-harmonic function of at most linear growth is \emph{affine}: there exist $c\in\CC$ and a homomorphism
$\varphi\in\mathrm{Hom}(G_{\mathrm{ab}},\CC)$ such that
$$
f(x)=c+\varphi([x])\qquad(x\in G).
$$
Equivalently, $\mathrm{LHF}(G,\mu) \cong \mathrm{Hom}(G_{\mathrm{ab}},\CC) \oplus \CC$.
\end{mainthm}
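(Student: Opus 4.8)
The plan is to reduce \Cref{thm:A} to the \emph{bounded} (classical) Liouville property by exploiting the left-translation action on the space of harmonic functions, and then to verify separately that the affine characters so produced are themselves Lipschitz harmonic, which is the one place the centering hypothesis is used. First I would record the structural observation that left translation preserves $\mu$-harmonicity for the right random walk: if $f$ is $\mu$-harmonic and $h\in G$, then $f_h(x):=f(hx)$ satisfies $\sum_g\mu(g)f_h(xg)=\sum_g\mu(g)f(hxg)=f(hx)=f_h(x)$ by harmonicity of $f$ at $hx$. Hence, for each $h$, the discrete derivative $D_hf:=f_h-f$ is again $\mu$-harmonic, and if $f\in\mathrm{LHF}(G,\mu)$ then left-invariance of the word metric gives $|D_hf(x)|=|f(hx)-f(x)|\le\|\nabla_Sf\|_\infty\,|h|_S$ for all $x$; so each $D_hf$ is a \emph{bounded} $\mu$-harmonic function (the $\CC$-valued case reducing at once to its real and imaginary parts, since harmonicity is $\CC$-linear).

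Next I would invoke the Choquet--Deny / Dynkin--Malyutov theorem --- an adapted probability measure on a finitely generated nilpotent group has the Liouville property --- to conclude that each $D_hf$ is constant, say $D_hf\equiv\chi(h)\in\CC$, so that $f(hx)=f(x)+\chi(h)$ for all $x$. The identity
$$
f(h_1h_2x)-f(x)=\big[f(h_1(h_2x))-f(h_2x)\big]+\big[f(h_2x)-f(x)\big]
$$
shows $\chi(h_1h_2)=\chi(h_1)+\chi(h_2)$, so $\chi\colon G\to(\CC,+)$ is a homomorphism; as the target is abelian, $\chi$ annihilates $[G,G]$ and factors as $\chi=\varphi\circ\pi$ with $\pi\colon G\twoheadrightarrow G_{\mathrm{ab}}$ and $\varphi\in\Hom(G_{\mathrm{ab}},\CC)$. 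Evaluating $f(hx)=f(x)+\chi(h)$ at $x=e$ yields $f(h)=f(e)+\varphi([h])$, i.e.\ $f=c+\varphi\circ\pi$ with $c=f(e)$: every Lipschitz $\mu$-harmonic function is an affine character.

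For the reverse inclusion --- and hence the isomorphism $\mathrm{LHF}(G,\mu)\cong\Hom(G_{\mathrm{ab}},\CC)\oplus\CC$ --- I would take arbitrary $c\in\CC$ and $\varphi\in\Hom(G_{\mathrm{ab}},\CC)$, put $u(x):=c+\varphi([x])$, and check directly that $u$ is Lipschitz (indeed $\|\nabla_Su\|_\infty=\max_{s\in S}|\varphi([s])|$, since $u(xs)-u(x)=\varphi([s])$) and that
$$
\sum_g\mu(g)\,u(xg)\;=\;u(x)+\varphi\!\Big(\textstyle\sum_g\mu(g)[g]\Big)\;=\;u(x)+\varphi\big(\mathbf m_{\mathrm{ab}}(\mu)\big)\;=\;u(x),
$$
where the interchange of $\varphi$ with the (possibly infinite) sum is licensed by the absolute convergence of $\sum_g\mu(g)[g]$ presupposed in the centering hypothesis, $\varphi$ being extended $\RR$-linearly to $G_{\mathrm{ab}}\otimes\RR$. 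Combined with the previous paragraph this identifies $\mathrm{LHF}(G,\mu)$ with $\{\,c+\varphi\circ\pi : c\in\CC,\ \varphi\in\Hom(G_{\mathrm{ab}},\CC)\,\}$, and the linear assignment $(c,\varphi)\mapsto c+\varphi\circ\pi$ is injective (recover $c=u(e)$, then $\varphi$ from the values $u(s)-c$ on the generators $[s]$) and, by the first half, surjective --- giving the asserted isomorphism.

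The only genuinely non-elementary ingredient is the bounded Liouville property used in the second step, and that is precisely where the nilpotency hypothesis enters the rigidity direction; everything else is formal. I expect the remaining care to be purely bookkeeping: the integrability implicit in the centering condition, the reduction of the complex-valued statement to the real case, and (if one wants the sharp seminorm formula as well) the elementary computation $u(xs)-u(x)=\varphi([s])$ already noted above.
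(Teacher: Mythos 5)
Your overall architecture is sound, and your converse direction (checking that $u(x)=c+\varphi([x])$ is Lipschitz with $\|\nabla_S u\|_\infty=\max_{s\in S}|\varphi([s])|$ and is $\mu$-harmonic precisely when $\varphi(\mathbf m_{\mathrm{ab}}(\mu))=0$) is exactly what the paper does in the first half of the proof of \Cref{thm:HF_1-is-linear-poly}. The problem is in the rigidity direction, at the step where you claim that each left difference $D_hf(x)=f(hx)-f(x)$ is \emph{bounded}. The Lipschitz seminorm of the paper is built from \emph{right} differences $\partial^s f(x)=f(xs)-f(x)$, i.e.\ $f$ is Lipschitz for the left-invariant word metric $d(x,y)=|x^{-1}y|_S$. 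Writing $hx=x\cdot(x^{-1}hx)$, the best this gives is
$$
|f(hx)-f(x)|\;\le\;\|\nabla_S f\|_\infty\,|x^{-1}hx|_S,
$$
and the conjugate length $|x^{-1}hx|_S$ is \emph{not} uniformly bounded in $x$ for non-central $h$ in a non-abelian nilpotent group: in the discrete Heisenberg group, $|b^{-n}ab^{n}|_S\asymp\sqrt{n}\to\infty$. So $D_hf$ is indeed harmonic (that part of your computation is correct), but it is only of sublinear growth, not bounded, and the Choquet--Deny/Dynkin--Malyutov theorem cannot be applied to it. The dual difficulty is that the right differences $f(xu)-f(x)$, which \emph{are} bounded, are not $\mu$-harmonic for the right walk (the increment $u$ sits in the wrong place: $\sum_g\mu(g)f(xgu)\neq f(xu)$ in general). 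Your argument is the classical Choquet--Deny proof and is complete only when $G$ is abelian.

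This mismatch is precisely why the paper does not argue via the bounded Liouville property: it instead invokes Alexopoulos' classification (harmonic of polynomial growth degree $\le 1$ on a polynomial-growth group $\Rightarrow$ group polynomial of degree $\le 1$), and then \Cref{lem:deg1_affine} converts degree-$1$ polynomials into affine characters. To salvage your route you would need either (i) a sublinear Liouville theorem to handle the sublinearly growing functions $D_hf$ --- but beware that the paper's sublinear Liouville statement (\Cref{cor:sublinear-constant}) is itself deduced from \Cref{thm:A}, so you must import an independent proof to avoid circularity --- or (ii) an induction on the nilpotency class, starting with central $h$ (for which $x^{-1}hx=h$ and your bound is valid), killing the central directions and descending to $G/Z(G)$; both options require substantially more work than what is written.
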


\begin{mainthm}[Cohomological rigidity and the linear boundary]\label{thm:D}
Under the hypotheses of Theorem \ref{thm:A}, the gradient map
$$
\Theta: \LHF(G,\mu)/\CC \longrightarrow \Hom(G_{\mathrm{ab}},\CC),\qquad
\Theta([f])(s):= \partial^{s}f(e)=f(s)-f(e),
$$
is a well-defined linear \emph{isometry} (for the Lipschitz seminorm on the LHS and the norm induced by $S$ on the RHS), with inverse $\varphi\mapsto [x\mapsto \varphi([x])]$.
\end{mainthm}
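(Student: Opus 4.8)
The plan is to deduce the statement directly from \Cref{thm:A}, which already identifies $\LHF(G,\mu)$ with the space of affine characters. Concretely, \Cref{thm:A} tells us that every $f\in\LHF(G,\mu)$ has the form $f(x)=c+\varphi([x])$ for a \emph{unique} $c\in\CC$ and a unique $\varphi\in\Hom(G_{\mathrm{ab}},\CC)$ (uniqueness of $\varphi$ because $S$ generates $G$, hence $[S]$ generates $G_{\mathrm{ab}}$, and uniqueness of $c$ by evaluating at $e$). The first task is then to check that $\Theta$ is \emph{well defined as a map into} $\Hom(G_{\mathrm{ab}},\CC)$: for such an $f$ one has $\partial^s f(e)=f(s)-f(e)=\varphi([s])$ for every $s\in S$, so the function $s\mapsto\partial^s f(e)$ on $S$ is the restriction to $[S]$ of the homomorphism $\varphi$, and $\varphi$ is the only homomorphism with that restriction; we set $\Theta([f]):=\varphi$, which depends only on the class $[f]\in\LHF(G,\mu)/\CC$ since first differences annihilate constants. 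Linearity of $f\mapsto\varphi$ is immediate from the defining formula and descends to the quotient.

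Next I would exhibit the claimed inverse $\varphi\mapsto[x\mapsto\varphi([x])]$ and verify it is two-sided. Given $\varphi\in\Hom(G_{\mathrm{ab}},\CC)$, extend it $\RR$-linearly to $G_{\mathrm{ab}}\otimes\RR$ (it kills torsion since $\CC$ is torsion-free, and $G_{\mathrm{ab}}/\mathrm{tors}$ embeds in $G_{\mathrm{ab}}\otimes\RR\cong\RR^r$) and put $f_\varphi(x):=\varphi([x])$. The computation $\sum_\xi\mu(\xi)f_\varphi(x\xi)=\varphi([x])+\varphi\!\big(\sum_\xi\mu(\xi)[\xi]\big)=\varphi([x])+\varphi(\mathbf m_{\mathrm{ab}}(\mu))=\varphi([x])$ shows $f_\varphi$ is $\mu$-harmonic, the middle term vanishing \emph{exactly} because $\mathbf m_{\mathrm{ab}}(\mu)=0$ (all sums converging by the moment bounds packaged into the smoothness hypothesis). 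Writing $g\in G$ as a word $s_1\cdots s_n$ of minimal length $n=|g|_S$ gives $[g]=\sum_i[s_i]$, hence $|\varphi([g])|\le n\max_{s\in S}|\varphi([s])|$, so $f_\varphi\in\LHF(G,\mu)$. Since $\Theta([f_\varphi])(s)=\varphi([s])$ for $s\in S$, the assignment $\varphi\mapsto[f_\varphi]$ is a right inverse to $\Theta$; it is also a left inverse, since $\Theta([f])=0$ forces the homomorphism $\varphi$ of \Cref{thm:A} to vanish on every $[s]$, hence on all of $G_{\mathrm{ab}}$, so $f$ is constant and $[f]=0$. Thus $\Theta$ is a linear isomorphism with exactly the stated inverse.

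Finally I would verify that $\Theta$ is an \emph{isometry}, the left side carrying the Lipschitz seminorm $\|\nabla_S\cdot\|_\infty$ and the right side the $S$-norm $\|\varphi\|_S:=\max_{s\in S}|\varphi([s])|$. The key observation is that affine characters have increments independent of the base point: for $f=c+\varphi([\cdot])$ and any $x\in G$, $s\in S$ one has $f(xs)-f(x)=\varphi([s])$. Hence the edge-gradient of $f$ over the Cayley graph $(G,S)$ takes \emph{precisely} the values $\{\varphi([s]):s\in S\}$, giving $\|\nabla_S f\|_\infty=\max_{s\in S}|\varphi([s])|=\|\Theta([f])\|_S$. (If one instead normalizes the Lipschitz seminorm through the word metric $d_S$, the same base-point-independence together with the triangle-inequality bound $|\varphi([g])|\le|g|_S\max_{s\in S}|\varphi([s])|$ used above yields the matching two-sided estimate, since on a connected graph the metric Lipschitz constant equals the edge-increment supremum.)

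I expect no serious obstacle here: once \Cref{thm:A} is available the statement is essentially bookkeeping. The only genuine computation is the harmonicity of $x\mapsto\varphi([x])$, which is exactly where the centering hypothesis $\mathbf m_{\mathrm{ab}}(\mu)=0$ is used, and the isometry is then forced because the increments of an affine character along each generator are constant.
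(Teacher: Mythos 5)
Your proposal is correct and follows essentially the same route as the paper's proof (Theorem~\ref{thm:coho-rigidity}): invoke the affine classification of Theorem~\ref{thm:A}, observe that the increments $\partial^{s}f(x)=\varphi([s])$ are independent of the base point, and read off both the inverse and the seminorm identity $\|\nabla_S f\|_\infty=\max_{s\in S}|\varphi([s])|$. The only difference is that you re-verify the harmonicity of $x\mapsto\varphi([x])$ and the Lipschitz bound explicitly, whereas the paper delegates these to Theorem~\ref{thm:HF_1-is-linear-poly} and Corollary~\ref{cor:norm_ident_vnilp}.
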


\smallskip
\noindent\textbf{2. Finite-index stability.}
\begin{mainthm}[Finite-index induction-restriction]\label{thm:B}
Let $H\le G$ be a finite-index subgroup and let $\mu$ be \emph{adapted and smooth} on $G$.
Let $\mu_{H}$ be the hitting law on $H$ (see \Cref{def:Schreier}). Then \emph{restriction} induces a linear isomorphism
$$
\mathrm{Res}_H^G: \LHF(G,\mu) \cong \LHF(H,\mu_{H}),
$$
with inverse \emph{harmonic induction}
$$
\mathrm{Ind}_H^{G}(\tilde f)(x) := \EE_x\big[\tilde f(X_\tau)\big],\qquad 
\tau:=\inf\{t\ge0:\ X_t\in H\}.
$$
Moreover, there exist explicit constants $C_\ast, C_{H,G}$ (depending on the generating sets and the measure) such that
$$
\|\nabla_{S_{G}}(\mathrm{Ind}_{H}^{G}\tilde f)\|_\infty \le 
C_\ast \|\nabla_{S_{H}}\tilde f\|_\infty,
\qquad
\|\nabla_{S_{H}}(\mathrm{Res}_H^G f)\|_\infty \le C_{H,G} \|\nabla_{S_{G}} f\|_\infty.
$$
In particular, $\LHF$ is stable under passage to finite-index subgroups and supergroups as a seminormed affine space.
\end{mainthm}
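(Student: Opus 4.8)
The plan is to prove that $\mathrm{Res}_H^G$ and $\mathrm{Ind}_H^G$ are mutually inverse bounded linear maps; linearity is clear, and $\mathrm{Res}_H^G\circ\mathrm{Ind}_H^G=\mathrm{id}$ is immediate since $\tau=0$ whenever $X_0\in H$. The analytic input used throughout is that, because $[G:H]<\infty$ and $\mu$ is adapted, the projected chain on the finite Schreier graph $H\backslash G$ is irreducible; hence from \emph{every} coset the hitting time $\tau$ of the coset $H$, and likewise the first return time $\tau^+$ from $H$, has geometric tails, uniformly. Combining this with smoothness of $\mu$ (an exponential moment $\sum_g\mu(g)e^{\lambda|g|_{S_G}}<\infty$) and a Hölder estimate yields a uniform bound $\EE_x[\mathrm{disp}_{S_G}(X;\tau)]\le C$, where $\mathrm{disp}_{S_G}(X;\tau):=\sum_{t\le\tau}|\xi_t|_{S_G}$ is the accumulated step length up to $\tau$; via Milnor--\v{S}varc ($|\cdot|_{S_H}\asymp|\cdot|_{S_G}$ on $H$) this also records that $\mu_H$ is adapted and smooth on $H$, so that the right-hand side makes sense (cf.\ \Cref{def:Schreier}). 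Note that no nilpotency or growth hypothesis is needed here.

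For restriction: if $f\in\LHF(G,\mu)$ then $(f(X_t))_t$ is a martingale, and the displacement bound together with the Lipschitz hypothesis makes $(f(X_{t\wedge\tau}))_t$ uniformly integrable; optional stopping gives $f(x)=\EE_x[f(X_\tau)]$ for every $x$, which is exactly $\mathrm{Ind}_H^G\circ\mathrm{Res}_H^G=\mathrm{id}$. Stopping instead at the first return $\tau^+$ from a point $h\in H$ yields $f(h)=\EE_h[f(X_{\tau^+})]=\sum_{h'}\mu_H(h')f(hh')$, i.e.\ $f|_H$ is $\mu_H$-harmonic. Finally, for $h\in H$ and $s\in S_H$ one has $|f(hs)-f(h)|\le\|\nabla_{S_G}f\|_\infty\,|s|_{S_G}$, whence $\|\nabla_{S_H}(\mathrm{Res}_H^G f)\|_\infty\le C_{H,G}\|\nabla_{S_G}f\|_\infty$ with the explicit constant $C_{H,G}=\max_{s\in S_H}|s|_{S_G}$.

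For harmonic induction, set $F:=\mathrm{Ind}_H^G\tilde f$. Conditioning on the first increment shows $F$ is $\mu$-harmonic: for $x\notin H$ one has $\tau\ge1$ and $F(x)=\sum_g\mu(g)F(xg)$ by the Markov property, while for $x\in H$, $F(x)=\tilde f(x)=\sum_{h'}\mu_H(h')\tilde f(xh')=\EE_x[\tilde f(X_{\tau^+})]=\sum_g\mu(g)F(xg)$ using $\mu_H$-harmonicity of $\tilde f$. The core point is the gradient bound. Fix $x\in G$, $s\in S_G$, put $y=xs$, and build a coupling $(X_t,Y_t)$ with $X_0=x$, $Y_0=y$ as follows: first run a Markovian coupling of the projected walks that brings $HX_t=HY_t$ within a time of geometric tails; from then on feed $X$ and $Y$ identical increments, so $X_t^{-1}Y_t$ is frozen and the two walks enter $H$ at the same instant, at points differing by that fixed element of $H$; if, before the projections agree, one of the two walks has already entered $H$, freeze it at its entrance point and run the other alone until it enters. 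This is a legitimate coupling (each marginal is the $\mu$-walk from the respective start), and in every branch the entrance points satisfy
\[
d_{S_H}(X_\tau,Y_{\tau'})\ \asymp\ |X_\tau^{-1}Y_{\tau'}|_{S_G}\ \le\ 1+\mathrm{disp}_{S_G}(X;\tau)+\mathrm{disp}_{S_G}(Y;\tau'),
\]
so $\EE[d_{S_H}(X_\tau,Y_{\tau'})]\le C_\ast$ uniformly, by the geometric-tail/exponential-moment estimate. Since $\tilde f$ is Lipschitz on $(H,S_H)$,
\[
|F(x)-F(xs)|=\bigl|\EE[\tilde f(X_\tau)-\tilde f(Y_{\tau'})]\bigr|\le\|\nabla_{S_H}\tilde f\|_\infty\,\EE[d_{S_H}(X_\tau,Y_{\tau'})]\le C_\ast\|\nabla_{S_H}\tilde f\|_\infty,
\]
and taking the supremum over $x,s$ gives $\|\nabla_{S_G}F\|_\infty\le C_\ast\|\nabla_{S_H}\tilde f\|_\infty$; in particular $F\in\LHF(G,\mu)$. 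Together with $\mathrm{Res}_H^G\circ\mathrm{Ind}_H^G=\mathrm{id}$ this shows $\mathrm{Res}_H^G$ and $\mathrm{Ind}_H^G$ are inverse bijections with two-sided seminorm control, identifying $\LHF(G,\mu)$ and $\LHF(H,\mu_H)$ as seminormed affine spaces.

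The main obstacle is precisely this uniform gradient bound for $\mathrm{Ind}_H^G$: producing a coupling of the two first-hitting distributions on $H$ supported on pairs at bounded expected $S_H$-distance. The delicate points are (a) verifying that the ``synchronize-then-absorb, or absorb-and-freeze'' recipe does not disturb either marginal, and (b) the integrability bookkeeping, where the geometric tails of the finite Schreier chain must be combined with the exponential moment of $\mu$ (with Milnor--\v{S}varc converting $S_G$-lengths of elements of $H$ into $S_H$-lengths), which is also where the \emph{explicit} value of $C_\ast$ (in terms of the contraction rate of the quotient coupling and the moment of $\mu$) is read off. Everything else --- harmonicity of $F$, $\mu_H$-harmonicity of $f|_H$, and the two optional-stopping identities --- is routine once the uniform geometric-tail estimate on $\tau,\tau^+$ is in hand.
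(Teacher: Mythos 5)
Your proposal is correct, and its quantitative core coincides with the paper's: the uniform bound $\sup_x\EE_x[\tau]<\infty$ coming from irreducibility of the projected chain on the finite quotient (you work directly on the Schreier coset space $H\backslash G$; the paper passes to a finite-index normal subgroup $K\le H$ and uses the walk on the finite group $G/K$ -- both are fine), followed by Wald's identity to control $\EE_x\big[\sum_{t\le\tau}|\xi_t|_{S_G}\big]\le m_1\sup_x\EE_x[\tau]$, the comparison $|\cdot|_{S_H}\le A|\cdot|_{S_G}$ on $H$, and a triangle inequality. Where you genuinely differ is in the soft part: the paper simply cites Meyerovitch--Yadin for the facts that $f|_H$ is $\mu_H$-harmonic, that $\mathrm{Ind}_H^G\tilde f$ is $\mu$-harmonic, and that the two maps are mutually inverse, whereas you derive all three from optional stopping of the uniformly integrable stopped martingale $(f(X_{t\wedge\tau}))_t$ and a first-step decomposition. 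That makes your write-up self-contained, at the cost of the (correctly identified) integrability bookkeeping; the domination by $\|\nabla_{S_G}f\|_\infty\cdot\mathrm{disp}_{S_G}(X;\tau)$, which has finite expectation, does give the needed uniform integrability.

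One point deserves a caution, though it does not invalidate the proof. You present the "synchronize-then-absorb" coupling as the main obstacle, but the inequality you actually use,
$$
|X_\tau^{-1}Y_{\tau'}|_{S_G}\ \le\ |X_\tau^{-1}x|_{S_G}+|x^{-1}(xs)|_{S_G}+|(xs)^{-1}Y_{\tau'}|_{S_G}\ \le\ 1+\mathrm{disp}_{S_G}(X;\tau)+\mathrm{disp}_{S_G}(Y;\tau'),
$$
is the triangle inequality through the two starting points and holds under \emph{any} coupling, including the independent one; the elaborate coupling buys you nothing and can be deleted (this is essentially what the paper does, routing the triangle inequality through coset representatives instead, with constant $C_\ast=A((4D+1)+2m_1T)$; your direct route even gives the slightly cleaner $A(1+2m_1T)$). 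Moreover, if you \emph{had} relied on the coupling to bound the entrance distance, there would be a genuine error: with identical increments fed to both walks the frozen quantity is $X_tY_t^{-1}$ (a left-translation discrepancy), not $X_t^{-1}Y_t$, so at the common entrance time $Y_\tau=h_0^{-1}X_\tau$ and the relevant left-invariant distance is $|X_\tau^{-1}h_0^{-1}X_\tau|_{S_H}$, a conjugate of $h_0^{-1}$ whose word length is not controlled by $|h_0|$. Since your displayed estimate never uses this, the proof stands, but the narrative around the coupling should be removed or corrected. A last minor remark: Wald's identity already gives $\EE_x[\mathrm{disp}_{S_G}(X;\tau)]\le m_1\sup_x\EE_x[\tau]$ from the finite first moment alone; the exponential moment and the H\"older step are only needed if you additionally want smoothness of $\mu_H$, which the theorem does not require.
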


\smallskip
\noindent\textbf{3. Quasi-isometry invariance and harmonic coordinates.}
Combining the quantitative stability and the exact norm identification yields invariance under coarse geometric equivalence.

\begin{mainthm}[Quasi-isometric invariance of $\LHF$ as a seminormed space]\label{thm:E}
Let $G$ and $H$ be finitely generated groups of polynomial growth, and suppose
$\Phi:G\to H$ is a quasi-isometry. Let $\mu_G,\mu_H$ be $\SAS$ measures on $G,H$ respectively. Then, after choosing a Shalom-Sauer transport datum for $\Phi$, there is a  linear isomorphism
$$
\mathcal T: \LHF(G,\mu_G) \longrightarrow \LHF(H,\mu_H)
$$
that respects Lipschitz seminorms up to a multiplicative constant $C\ge 1$:
$$
C^{-1}\|\nabla_{S_G} f\|_\infty \le \|\nabla_{S_H}\mathcal T f\|_\infty \le C \|\nabla_{S_G} f\|_\infty.
$$
Here the asserted transport is a choice-dependent Shalom-Sauer transport on first cohomology, combined with the canonical affine-character identification of $\LHF$.
\end{mainthm}

The map $\mathcal T$ is constructed by canonically identifying $\LHF(G,\mu_G)/\CC$ and $\LHF(H,\mu_H)/\CC$ with first cohomology, using \Cref{thm:B} on finite-index torsion-free nilpotent subgroups and \Cref{thm:A} on those subgroups, and then applying a chosen Shalom-Sauer quasi-isometry transport datum. 

\begin{mainthm}[Coarse straightening in harmonic coordinates]\label{thm:coarse-straightening}
Let $G,H$ be finitely generated groups of polynomial growth, and let
$\Phi:G\to H$ be a quasi-isometry. Let $N\le G$ and $M\le H$ be finite-index
torsion-free nilpotent subgroups, and let $\Psi:N\to M$ be a quasi-isometry at
bounded distance from $\Phi|_N$, normalized so that $\Psi(e_N)=e_M$. Let
$F_G$ and $F_H$ be coarse harmonic coordinates constructed as in
\Cref{lem:FI-harmonic-coords}, using a basis of $\Hom(N_{\mathrm{ab}},\RR)$
and its image under the map $T_\Psi$ from \Cref{thm:alg-linearization-bdd-defect}.

Assume that $\Psi$ has bounded Abelian defect, i.e.\ $\Delta_{\mathrm{ab}}(\Psi)<\infty$
in the sense of \Cref{def:abelian-defect}. Then
$$
\sup_{x\in G} \big\|F_H(\Phi(x)) - F_G(x)\big\| < \infty.
$$
\end{mainthm}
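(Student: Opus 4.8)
The plan is to reduce the statement to a tracking of Abelianizations, where the bounded Abelian defect hypothesis does all the work. First I would unwind the definitions of the coarse harmonic coordinates $F_G$ and $F_H$: by \Cref{lem:FI-harmonic-coords}, $F_G$ is built from a basis $\varphi_1,\dots,\varphi_k$ of $\Hom(N_{\mathrm{ab}},\RR)$, with each coordinate of $F_G(x)$ given (up to an $O(1)$ error from the finite-index induction/restriction of \Cref{thm:B} and the affine identification of \Cref{thm:A}) by $\varphi_i([x]_{N_{\mathrm{ab}}})$ on the nilpotent core $N$, extended boundedly to all of $G$. Likewise $F_H$ has coordinates $\psi_j([\,\cdot\,]_{M_{\mathrm{ab}}})$ where $\psi_j = T_\Psi(\varphi_j)$ is the image under the linear map from \Cref{thm:alg-linearization-bdd-defect}. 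So $F_H(\Phi(x)) - F_G(x)$ has $i$-th coordinate, up to a uniformly bounded term, equal to $(T_\Psi\varphi_i)\big([\Phi(x)]_{M_{\mathrm{ab}}}\big) - \varphi_i\big([x]_{N_{\mathrm{ab}}}\big)$, and the task becomes to bound this quantity uniformly in $x$.

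Next I would handle the discrepancy between $\Phi$ and $\Psi$ and the passage in and out of the cores. Since $N$ has finite index in $G$ and $M$ has finite index in $H$, every $x\in G$ lies within bounded distance of $N$, and by the coarse Lipschitz property of $F_G$ the value $F_G(x)$ differs by $O(1)$ from $F_G(n)$ for a nearest $n\in N$; similarly $\Phi(x)$ is within bounded distance of some $m\in M$, and $\Psi(n)$ is within bounded distance of $\Phi(n)$ which is within bounded distance of $\Phi(x)$, hence of $m$. Using that $F_H$ is coarsely Lipschitz, all of these replacements cost only additive constants. Thus it suffices to prove $\sup_{n\in N}\big|(T_\Psi\varphi_i)([\Psi(n)]_{M_{\mathrm{ab}}}) - \varphi_i([n]_{N_{\mathrm{ab}}})\big| < \infty$ for each $i$, i.e.\ to bound the Abelianized defect of $\Psi$ against the linear map $T_\Psi$.

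This last bound is exactly the content of the bounded Abelian defect hypothesis together with the construction of $T_\Psi$. By \Cref{def:abelian-defect}, $\Delta_{\mathrm{ab}}(\Psi)<\infty$ means precisely that the induced coarse map $[\Psi]_{\mathrm{ab}}: N_{\mathrm{ab}}\to M_{\mathrm{ab}}$ (defined up to finite ambiguity via the projections) is at bounded distance from a genuine group homomorphism, and \Cref{thm:alg-linearization-bdd-defect} identifies $T_\Psi$ (or rather its transpose on characters) with that homomorphism tensored with $\RR$. Concretely: there is a constant $C_0$ with $\big\| [\Psi(n)]_{M_{\mathrm{ab}}} - A([n]_{N_{\mathrm{ab}}}) \big\| \le C_0$ for all $n\in N$, where $A$ is the linear map dual to $T_\Psi$; applying the character $T_\Psi\varphi_i = \varphi_i\circ A$ (evaluated at the lattice points, then compared with its linear extension) gives $\big| (T_\Psi\varphi_i)([\Psi(n)]) - \varphi_i([n]) \big| = \big|\varphi_i\big([\Psi(n)] - A[n]\big)\big| \le \|\varphi_i\|\,C_0$, which is uniform in $n$. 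Summing over the finitely many coordinates $i=1,\dots,k$ and collecting the bounded errors from the previous paragraph yields the claimed finite supremum.

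The main obstacle I anticipate is not any single estimate but the bookkeeping of \emph{which} bounded error comes from where: the finite-index induction/restriction constants of \Cref{thm:B}, the $O(1)$ gap between a Lipschitz harmonic function and its affine model on the core from \Cref{thm:A}, the bounded distance between $\Phi|_N$ and $\Psi$, the finite fibres of $G/N$ and $H/M$, and the finite ambiguity in defining $[\Psi]_{\mathrm{ab}}$ all need to be shown independent of $x$ and composed correctly. In particular one must be careful that the coordinates $F_G$ are only defined up to additive constants, so the statement is really about the \emph{linear parts} matching; choosing compatible normalizations (e.g.\ $F_G(e_G)=0=F_H(e_H)$, consistent with $\Psi(e_N)=e_M$) at the outset removes this ambiguity and is the cleanest way to make the argument rigorous.
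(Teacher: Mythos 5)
Your proposal is correct and follows essentially the same route as the paper: reduce to the nilpotent cores via the coset decomposition and the bounded distance between $\Phi|_N$ and $\Psi$ (paying only additive constants, since the coordinates are Lipschitz homomorphisms), then invoke the bounded-Abelian-defect linearization (\Cref{thm:alg-linearization-bdd-defect}(3)), whose choice of $\psi_i=T_\Psi(\varphi_i)$ makes the linear parts match exactly so that the remaining discrepancy is controlled by $\Delta_{\mathrm{ab}}(\Psi)$. The only quibble is notational: in your final estimate the functional applied to the bounded vector $[\Psi(n)]-L_{\mathrm{ab}}[n]\in M_{\mathrm{ab}}\otimes\RR$ should be $T_\Psi\varphi_i$ rather than $\varphi_i$ (equivalently, insert $L_{\mathrm{ab}}^{-1}$), which changes the constant but not the conclusion; also note that in the paper's construction $F_G(x)=F_N(n)$ holds exactly, so several of the $O(1)$ errors you budget for are actually zero.
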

A classical way to analyse quasi-isometries between nilpotent groups is to pass to
the asymptotic cones (Carnot groups) and apply Pansu's differentiability theory for
Lipschitz maps. While in the present paper we choose to give a discrete proof of
\Cref{thm:coarse-straightening} using coarse harmonic coordinates and bounded Abelian defect, we include
Appendix A as a short self-contained reference on Pansu calculus. This makes it easy
to relate our bounded Abelian-defect hypothesis to the asymptotic-cone viewpoint,
and in applications it often provides an efficient route to verify the hypothesis by
working directly on the induced maps between Carnot groups (see for instance \cite{BreuillardLeDonne2013} for more information regarding asymptotic cones of nilpotent groups and associated results).

In \Cref{sec:QI-LHF} we show that bounded Abelian defect is genuinely stronger than bare quasi-isometry (already for $\ZZ^2$), but we also give a simple sufficient criterion: quasi-isometries that are coarsely affine on the Abelianization automatically have bounded Abelian defect. This applies, in particular, to quasi-isometries between torus-fibered nilpotent Lie groups whose base maps are coarsely affine, yielding large families of geometric examples where our harmonic straightening theorem applies.

\smallskip
\noindent\textbf{4. Quantitative discrete$\to$continuous extension.}
\begin{mainthm}[Discrete-to-continuous Lipschitz extension]\label{thm:G}
Let $M$ be a complete Riemannian manifold, and let $\Gamma$ act cocompactly by isometries on $M$. Let $L$ be a smooth $\Gamma$-invariant uniformly elliptic diffusion operator on $M$. Let $X=\Gamma\cdot x_0$ be an orbit, and let $\nu$ be the probability measure on $X$ arising from the BP discretization of $L$. Then any Lipschitz $\nu$-harmonic function $f:X\to\RR$ admits an $L$-harmonic extension $F:M\to\RR$ that is \emph{globally Lipschitz}, with a Lipschitz constant bounded by a universal multiple of the Lipschitz constant of $f$ depending only on the geometry of $(M,\Gamma)$ and the coefficients of $L$.
\end{mainthm}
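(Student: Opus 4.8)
The plan is to push $f$ through the Lyons--Sullivan/Ballmann--Polymerakis (LS/BP) extension operator and then upgrade the classical $L^\infty$-estimates for that operator to an $L^\infty$ \emph{gradient} estimate. Three ingredients will do this: (a) a uniform exponential tail bound, equivalently a uniform first-moment bound, for the LS discretization measures $\mu_y$, available because the cocompact action forces bounded geometry; (b) the uniform interior gradient estimate for $L$-harmonic functions on a manifold of bounded geometry; and (c) the $\Gamma$-equivariance of the construction, which lets me transport any local estimate back to a fixed compact neighbourhood of $x_0$ and so avoid the implication ``linear growth $\Rightarrow$ bounded gradient'', which is false for $L$-harmonic functions in general.

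\emph{Setting up the extension.} I would fix a $\Gamma$-invariant family of LS data --- balls $B_\gamma=B(\gamma x_0,r_0)\subset B'_\gamma=B(\gamma x_0,R_0)$, with $r_0$ chosen at least the covering radius of the net $X\subset M$, together with the Bernoulli absorption mechanism for the $L$-diffusion --- producing the $\Gamma$-equivariant family of probability measures $\mu_y$ on $X$ (laws of the absorption point of the $L$-diffusion started at $y$), with $\nu$ the associated hitting law on $\Gamma\cong X$; recall that in the BP setting $L1=0$, so constants are $L$-harmonic and each $\mu_y$ is a probability measure. Then $F:=\mathrm{ext}(f)$ is defined by $F(y):=\int_X f\,d\mu_y$. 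The point beyond the bounded theory is convergence, so I would first establish a bound
\[
\mu_y\bigl(\{z\in X:\ d(z,y)\ge t\}\bigr)\ \le\ C_1 e^{-c_1 t}\qquad(y\in M,\ t\ge 0),
\]
with $C_1,c_1>0$ depending only on the geometry of $(M,\Gamma)$ and the ellipticity of $L$; in particular $m_1:=\sup_{y\in M}\int_X d(z,y)\,d\mu_y(z)<\infty$. This uses the LS mechanism: the absorption probability at each visited ball is bounded below by a uniform $\varepsilon_0>0$ (Harnack on $B'_\gamma$, bounded geometry), the coin flips at distinct balls are independent by construction, and a diffusion path cannot travel more than a bounded distance without entering an active ball (every point lies in some $B_\gamma$, and only boundedly many balls are temporarily inactive near a given point), so reaching distance $t$ requires at least $\gtrsim t$ independent $\mathrm{Bernoulli}(\ge\varepsilon_0)$ trials. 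Granting this, since $f$ is Lipschitz it has at most linear growth, so the integral defining $F$ converges absolutely and locally uniformly; then Fubini together with the mean-value property $\mu_y=\int\mu_{y'}\,d\pi^\varepsilon_y(y')$ of the LS measures ($\pi^\varepsilon_y$ the $L$-exit law of a small sphere avoiding the absorption region) shows that $F$ obeys the $L$-mean-value property at every point, hence is $L$-harmonic on $M$ by elliptic regularity; with the standard normalisation one has $F|_X=f$, so $F$ is an extension in the asserted sense.

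\emph{Localizing and bounding the gradient.} Write $\lambda:=\operatorname{Lip}(f)$ for the Lipschitz constant of $f$ with respect to the metric on $X$ induced from $M$ (comparable to the word metric by \v{S}varc--Milnor, so the statement is unaffected). Fix $y\in M$ and pick $\gamma\in\Gamma$ with $w:=\gamma^{-1}y\in\overline{B}(x_0,D_0)$, where $D_0$ is the covering radius. Since $\gamma$ is an isometry and $\mathrm{ext}$ is $\Gamma$-equivariant, linear, and fixes constants, $F\circ\gamma=\mathrm{ext}(g)+f(\gamma x_0)$ where $g(z):=f(\gamma z)-f(\gamma x_0)$ is $\nu$-harmonic with $g(x_0)=0$ and $\operatorname{Lip}(g)=\lambda$; hence $|\nabla F(y)|=|\nabla\,\mathrm{ext}(g)(w)|$. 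Since $|g(z)|\le\lambda\,d(z,x_0)$, the tail bound gives, for every $w'\in B(x_0,D_0+\rho_1)$,
\[
\bigl|\mathrm{ext}(g)(w')\bigr|\ =\ \Bigl|\int_X g\,d\mu_{w'}\Bigr|\ \le\ \lambda\int_X d(z,x_0)\,d\mu_{w'}(z)\ \le\ \lambda\bigl(d(w',x_0)+m_1\bigr)\ \le\ \lambda\,(D_0+\rho_1+m_1),
\]
where $\rho_1>0$ is a fixed scale determined by the geometry. As $\mathrm{ext}(g)$ is $L$-harmonic on all of $M$, the uniform interior gradient estimate for $L$ (Schauder interior bounds, uniform by the $\Gamma$-periodicity of $L$ and bounded geometry) then yields
\[
\bigl|\nabla\,\mathrm{ext}(g)(w)\bigr|\ \le\ \sup_{B(x_0,D_0)}\bigl|\nabla\,\mathrm{ext}(g)\bigr|\ \le\ \frac{C_3}{\rho_1}\operatorname*{osc}_{B(x_0,D_0+\rho_1)}\mathrm{ext}(g)\ \le\ \frac{2C_3}{\rho_1}\,\lambda\,(D_0+\rho_1+m_1)\ =:\ C\lambda .
\]
Hence $|\nabla F(y)|\le C\lambda$; since $y\in M$ was arbitrary, $\|\nabla F\|_{L^\infty(M)}\le C\operatorname{Lip}(f)$ with $C=C(M,\Gamma,L)$, which is the stated conclusion.

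I expect the main obstacle to be Step 1 --- the uniform exponential tail (equivalently, first-moment) bound for the LS measures, together with the verification that the extension formula and its $L$-harmonicity persist for \emph{linearly growing} data rather than bounded data. This is precisely where the constant depending only on the geometry of $(M,\Gamma)$ and the coefficients of $L$ is produced, and the only place where the fine structure of the LS mechanism (uniform absorption probability, independence of the coin flips) enters; the remainder is the equivariance bookkeeping and standard uniform elliptic estimates on a cocompact manifold.
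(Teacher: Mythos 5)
Your proposal is correct and rests on the same two pillars as the paper's proof (\Cref{prop:LipExtension}): uniform moment bounds for the LS/BP measures $\mu_y$ and the uniform interior gradient estimate for $L$ on a cocompact cover. The final step, however, is genuinely different. The paper first establishes a \emph{global} linear-growth bound $|F(y)|\le K_0\,\mathrm{Lip}_X(f)\,(1+d_M(y,x_0))$ by citing the $a$-boundedness transfer of Ballmann--Polymerakis (their Lemma 3.5, with $a(r)=1+r$), and then applies the gradient estimate at each $p$ on a ball of radius $r=1+d_M(p,x_0)$, so that the $1/r$ factor cancels the linear growth and yields $|\nabla F(p)|\le 3C_\nabla K_0\,\mathrm{Lip}_X(f)$ uniformly. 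You instead exploit $\Gamma$-equivariance of the extension operator to translate an arbitrary $y$ into a fixed compact neighbourhood of $x_0$, renormalize $f$ so that the translated data vanishes at $x_0$, bound the renormalized extension by $\lambda(D_0+\rho_1+m_1)$ on a fixed ball, and apply the gradient estimate at a \emph{fixed} scale $\rho_1$. Both routes are valid; yours replaces the global growth lemma by the uniform first moment $m_1$ plus equivariance bookkeeping, while the paper's is more directly citable from the BP lemmas. One side remark of yours is off: the implication ``global linear growth from a fixed basepoint $\Rightarrow$ bounded gradient'' is \emph{not} false in this setting --- it is exactly what the paper proves, via the scaling $r=1+d_M(p,x_0)$ in the interior estimate --- and both arguments require the same coefficient regularity for that estimate, so your localization does not weaken the hypotheses. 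Your Step 1 (the exponential tail for $\mu_y$ via uniform absorption probabilities and independent coin flips) is a reasonable sketch of what the paper outsources to \cite[Theorem 2.21, Lemma 2.13]{BallmannPolymerakis2022}, and would need to be fleshed out or replaced by those citations in a final write-up.
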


\subsection{Boundaries at multiple scales}

The linear harmonic boundary $\partial_{\mathrm{lin}}G$ sits naturally between the (often trivial) bounded-scale boundary and the higher-degree polynomial structure. Very roughly:

\begin{itemize}
  \item At \emph{bounded} scale, the Martin boundary of a symmetric random walk on a group of polynomial growth is frequently reduced to a point; the bounded harmonic functions carry no large-scale information.
  \item At \emph{linear} scale, our results show that the entire structure is captured by the Abelianization $G_{\mathrm{ab}}$ and its dual norm, and that this boundary is extremely well behaved under finite index and quasi-isometry.
  \item At higher polynomial scales $t>1$, the spaces $\HF_t(G,\mu)$ are finite dimensional and described in terms of noncommutative polynomials on finite-index nilpotent subgroups, but there is no canonical ``polynomial boundary'' of degree $t$ with the same functoriality features as $\partial_{\mathrm{lin}}G$.
\end{itemize}

This suggests the following natural problem:

\begin{question}
For groups of polynomial growth, is there a meaningful boundary theory at higher polynomial degrees (say quadratic or cubic scale) which is realized by harmonic functions, is independent of the measure up to centering and moment conditions, and enjoys functoriality under finite index and quasi-isometry comparable to $\partial_{\mathrm{lin}}G$?
\end{question}

Our work provides a  positive answer at degree $1$ for finitely generated nilpotent groups; it also indicates several obstructions at higher degrees, where non-Abelian features of the nilpotent core and the lack of a canonical norm make the situation substantially more rigid. We will not pursue this here, but we view the linear harmonic boundary as a first step towards a more systematic multi-scale boundary theory.

\subsection{Almost sublinear harmonic functions and Liouville phenomena}

Finally, the affine rigidity at linear scale yields a sharp Liouville-type statement for almost sublinear Lipschitz harmonic functions. In \Cref{cor:sublinear-constant} we prove:

\begin{quote}
\emph{If $G$ has polynomial growth, $\mu$ is adapted and smooth, and $f\in\LHF(G,\mu)$ satisfies $|f(x)|=o(|x|)$ along the word metric, then $f$ is constant.}
\end{quote}

This is optimal in the Lipschitz category: any nonconstant $f\in\LHF(G,\mu)$ must grow exactly linearly along some direction in the Abelianization. In particular, there are no nonconstant Lipschitz harmonic functions with strictly sublinear growth.

This statement is closely related to recent work of Sinclair \cite{Sinclair15}, who showed that on groups of polynomial growth, $\mu$-harmonic functions that are large-scale Lipschitz and almost sublinear in a suitable sense must be constant. Our structure theorem gives a complementary and somewhat stronger picture in the strict Lipschitz category: we obtain a full classification of Lipschitz harmonic functions and, as a corollary, an immediate sublinear Liouville theorem.

Via the Banach-valued extension \Cref{thm:Banach-valued} and the LS/BP discretization in \Cref{sec:LHF-discretization}, the same phenomenon holds for Banach-valued Lipschitz harmonic functions on Cayley graphs and for globally Lipschitz $L$-harmonic maps on Riemannian covers: if such a map has sublinear growth along the orbits, it must be constant.

\medskip
\noindent\textbf{Organization of the paper.}
\begin{itemize}
\item \Cref{sec:preliminaries} reviews prerequisites on harmonic functions, polynomial growth, smooth measures, and discretization.
\item \Cref{sec:LHF-affine} establishes the link between degree-$1$ polynomials, $\LHF$, and affine characters and proves \Cref{thm:A}.
\item \Cref{sec:linear-boundary} reformulates rigidity cohomologically and proves \Cref{thm:D}.
\item \Cref{sec:finite-index} proves finite-index stability with quantitative Lipschitz bounds (\Cref{thm:B}).
\item \Cref{sec:QI-LHF} proves the seminormed-affine quasi-isometry invariance of $\mathrm{LHF}$ via virtual first cohomology (\Cref{thm:E}) and, for quasi-isometries with bounded Abelian defect, the bounded-error harmonic-coordinate straightening theorem (\Cref{thm:coarse-straightening}).
\item \Cref{sec:LHF-discretization} proves the discrete-to-continuous extension with global gradient bounds (\Cref{thm:G}).
\end{itemize}

\medskip
\noindent\textbf{Notation.}
We write $G_{\mathrm{ab}}$ for the Abelianization of $G$, $[x]$ for the class of $x\in G$, $S$ for a fixed finite symmetric generating set, and $\|\nabla_S\cdot\|_\infty$ for the associated Lipschitz seminorm. The space $\mathrm{LHF}(G,\mu)$ consists of all Lipschitz $\mu$-harmonic functions. The spaces $\mathrm{HF}_t(G,\mu)$ of polynomially growing harmonic functions are defined in \eqref{def:poly_growing_harmonic}.

\section{Preliminaries}\label{sec:preliminaries}

\subsection{Harmonic functions and SAS measures}\label{subsec:harmonic_and_SAS}
Throughout, $G$ is a finitely generated group with a fixed finite symmetric generating set $S$ (used only to define the word metric $|\cdot|$), and $\mu$ is a probability measure (not necessarily finitely supported) on $G$. We refer to \cite{Yadin2024} for background and further references.

\begin{defn}[Symmetric measure]\label{def:symmetric}
The probability measure $\mu$ is \emph{symmetric} if $\mu(g)=\mu(g^{-1})$ for all $g\in G$.
\end{defn}

\begin{defn}[Adapted/non-degenerate measure]\label{defn:non_degenerate_measure}
The probability measure $\mu$ is \emph{adapted} (or non-degenerate) if the semigroup generated by $\supp(\mu)$ equals $G$.
\end{defn}

\begin{defn}[Smooth measure]\label{def:smooth_measure}
A measure $\mu$ on $G$ is \emph{smooth} if there exists $\zeta>0$ such that
\begin{equation}\label{eq:exp_moment}
\Psi(\zeta):=\sum_{x\in G}\mu(x) e^{\zeta|x|}<\infty.
\end{equation}
We say $\mu$ has \emph{superexponential moments} if \eqref{eq:exp_moment} holds for all $\zeta>0$.
\end{defn}

\begin{defn}[SAS measure]\label{def:SAS}
We will use the acronym \emph{SAS} for measures that are \emph{symmetric, adapted, and smooth}.
\end{defn}

In particular, smoothness implies that a $\mu$-distributed increment has an exponential tail and finite first moment in the word metric. We work with the (right) $\mu$-random walk
$$
X_{t+1}=X_t \xi_{t+1}\qquad(t\ge0),
$$
where $(\xi_t)_{t\ge1}$ are i.i.d.\ with law $\mu$.

\begin{defn}[Harmonic function]\label{def:harmonic}
Let $G$ be a group and $\mu$ a probability measure on $G$. A function $f:G\to\CC$ is \emph{$\mu$-harmonic at $k\in G$} if
\begin{equation}\label{eq:def_harmonic}
f(k)=\sum_{g\in G}\mu(g) f(kg),
\end{equation}
and the series converges absolutely. If \eqref{eq:def_harmonic} holds for all $k$, we say $f$ is $\mu$-harmonic on $G$. We denote by $\BHF(G,\mu)$ the vector space of bounded $\mu$-harmonic functions on $G$.
\end{defn}

\begin{remark}\label{rem:harmonic_basic}
(a) $G$ acts on functions by left translation $(g.f)(k):=f(g^{-1}k)$; this preserves $\mu$-harmonicity.
\newline
(b) If $f$ is $\mu$-harmonic, then $f$ is $\mu^{*n}$-harmonic for every $n\ge1$ (easy induction).
\newline
(c) With our right-walk convention \eqref{eq:def_harmonic}, the (equivalent) left-walk harmonicity would read $f(k)=\sum_g\mu(g)f(gk)$. For symmetric $\mu$ the two notions coincide by inversion symmetry.
\end{remark}

\subsection{Polynomials and coordinate polynomials}\label{subsec:polys}
We recall the derivative and coordinate definitions of polynomials (see \cite{Leib02}).

\begin{defn}[Polynomial via discrete derivatives]\label{def:poly_using_derivative}
For $u\in G$ define the left and right differences by
$$
\partial_u f(x):=f(ux)-f(x),\qquad \partial^{u} f(x):=f(xu)-f(x).
$$
Let $H\subset G$. A function $f:G\to\CC$ is a \emph{polynomial of degree $\le k$ with respect to $H$} if
$$
\partial_{u_1}\cdots\partial_{u_{k+1}}f\equiv 0\quad\text{for all }u_1,\dots,u_{k+1}\in H.
$$
If $H=G$ we say $f$ is a (global) polynomial of degree $\le k$ and write $f\in P^k(G)$. By convention $P^k(G)=\{0\}$ for $k<0$.
\end{defn}

\begin{remark}\label{rmk:polynomial_defn_derivative}
(1) Using left or right differences yields the same class of polynomials \cite[Corollary 2.13]{Leib02}.
\newline
(2) If $S_1,S_2$ are finite generating sets, then $f$ is a polynomial of degree $\le k$ (with respect to $S_1$) iff the same holds with respect to $S_2$ \cite{Leib02}. Thus, $f\in P^k(G)$ iff $\partial_{u_1}\cdots\partial_{u_{k+1}}f\equiv 0$ for all $u_i$ in any fixed finite generating set $S$.
\end{remark}

For the coordinate description we restrict to the virtually nilpotent setting (which is the only place we use it later). Fix a finite-index torsion-free nilpotent subgroup $N\le G$. Let $N_i:=[N,N_{i-1}]$ (with $N_1=N$) and $\hat N_i:=\{x\in N:\exists n\ge1,\ x^n\in N_i\}$ its isolator. Then each $\hat N_i/\hat N_{i+1}$ is torsion-free Abelian (see \cite[Lemma 4.5]{MPTY17}). Choose elements
$$
e_{n_{i-1}+1},\dots,e_{n_i}\in N
$$
whose images form a basis of $\hat N_i/\hat N_{i+1}$; the ordered list $(e_j)_{j=1}^m$ (with $m=n_c$) is a \emph{Mal'cev coordinate system} for $N$. Then every $x\in N$ has a unique $m$-tuple of integers $(x_1,\dots,x_m)$ such that for all $k$,
\begin{equation}\label{eq:polycord}
x\hat N_{k+1}=e_1^{x_1}\cdots e_{n_k}^{x_{n_k}}\hat N_{k+1}.
\end{equation}
We call $(x_j)$ the \emph{coordinates} of $x$ with respect to $(e_j)$.

\begin{defn}[Coordinate polynomial]\label{def:coordinate_polynomial}
Let $N$ be as above with coordinate system $(e_j)_{j=1}^m$. A \emph{coordinate monomial} is a function of the form
$$
q(x)=\lambda x_1^{a_1}\cdots x_r^{a_r},
$$
with $\lambda\in\CC$, $1\le r\le m$, each $a_i\in\NN\cup\{0\}$, and $x_1,\dots,x_r$ as in \eqref{eq:polycord}. If $\sigma(i):=\sup\{k:\ e_i\in \hat N_k\}$, define the degree of $q$ by $\deg(q)=\sum_{i=1}^r \sigma(i)a_i$, and the degree of a sum as the maximum of the degrees of its monomials.
\end{defn}

\begin{prop}[Leibman]\label{prop:Leibman}
For finitely generated nilpotent groups, the derivative and coordinate definitions agree: $f\in P^k(N)$ iff $f$ is a coordinate polynomial of degree $\le k$ (for any fixed coordinate system). See \cite{Leib02}.
\end{prop}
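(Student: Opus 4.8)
The plan is to establish the two inclusions between $P^{k}(N)$ and the space of coordinate polynomials of weighted degree $\le k$ separately. Throughout I would work with \emph{right} differences $\partial^{u}$ (legitimate by \Cref{rmk:polynomial_defn_derivative}), abbreviate $T^{u}f(x):=f(xu)$, and use freely two facts from \cite{Leib02}: that $P^{k}(N)$ is invariant under every $T^{u}$, and that $f\in P^{k}(N)$ iff $\partial^{u}f\in P^{k-1}(N)$ for all $u\in N$. The only external structural input is the Baker--Campbell--Hausdorff description of the group law in a Mal'cev system $(e_{j})_{j=1}^{m}$: one has $(xu)_{j}=x_{j}+u_{j}+P_{j}(x_{1},\dots,x_{j-1};u_{1},\dots,u_{j-1})$, where every monomial of $P_{j}$ carries at least one $x$-factor and at least one $u$-factor and has weighted degree $\le\sigma(j)$; hence, for fixed $u$, the function $x\mapsto\partial^{u}x_{j}(x)$ is a coordinate polynomial in $x_{1},\dots,x_{j-1}$ of weighted degree $\le\sigma(j)-1$.

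For the inclusion from coordinate polynomials to $P^{k}(N)$, I would first record the discrete Leibniz rule $\partial^{u}(fg)=(\partial^{u}f)(T^{u}g)+f\,(\partial^{u}g)$ and deduce, by induction on $a+b$, that $P^{a}(N)\cdot P^{b}(N)\subseteq P^{a+b}(N)$. Then, by induction on $j$, each coordinate function $x_{j}$ lies in $P^{\sigma(j)}(N)$: by the structural input $\partial^{u}x_{j}$ is a coordinate polynomial in the earlier coordinates of weighted degree $\le\sigma(j)-1$, hence (inductive hypothesis plus the product rule) lies in $P^{\sigma(j)-1}(N)$ for every $u$. Combining these, a coordinate monomial $\lambda\prod_{i}x_{i}^{a_{i}}$ with $\sum_{i}\sigma(i)a_{i}\le k$ lies in $P^{k}(N)$, hence so does every coordinate polynomial of weighted degree $\le k$.

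The substantive inclusion needs a weighted refinement of ``one difference drops the degree by one'': \emph{if $z\in\hat N_{i}$ then $\partial^{z}$ maps $P^{k}(N)$ into $P^{k-i}(N)$}. For $z$ in the lower central term $N_{i}$ this follows by induction on $i$ from the commutator identity $\partial^{a}\partial^{b}-\partial^{b}\partial^{a}=T^{ba}\partial^{[a,b]}$ (a consequence of $ab=ba[a,b]$), together with $\partial^{z_{1}z_{2}}=\partial^{z_{1}}T^{z_{2}}+\partial^{z_{2}}$ and $\partial^{z^{-1}}=-T^{z^{-1}}\partial^{z}$ to pass from commutator generators to all of $N_{i}$; for a general $z\in\hat N_{i}$ one picks $p\ge1$ with $z^{p}\in N_{i}$, expands $\partial^{z^{p}}=\sum_{l\ge1}\binom{p}{l}(\partial^{z})^{l}$, solves for $\partial^{z}f$, and bootstraps the a priori bound $\partial^{z}f\in P^{k-1}(N)$ up to $P^{k-i}(N)$. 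With this lemma I would induct on the Hirsch length $m$ of $N$, the case $m=0$ being trivial. For $m\ge1$ set $z:=e_{m}$ and $c:=\sigma(m)$; then $z$ is central (it lies in $\hat N_{c}\subseteq Z(N)$) and $Z:=\langle z\rangle$ is isolated, so $\bar N:=N/Z$ is torsion-free nilpotent of Hirsch length $m-1$ with a compatible Mal'cev system. Given $f\in P^{k}(N)$, the weighted-depth lemma gives $(\partial^{z})^{d+1}f=0$ with $d:=\lfloor k/c\rfloor$, so $n\mapsto f(xz^{n})$ is a polynomial of degree $\le d$ and Newton's forward-difference formula yields $f(xz^{n})=\sum_{j=0}^{d}\binom{n}{j}(\partial^{z})^{j}f(x)$. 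The top coefficient $(\partial^{z})^{d}f$ is $z$-invariant, hence descends to $\bar N$, where it is a coordinate polynomial of weighted degree $\le k-dc$ by induction; subtracting $\binom{x_{m}}{d}$ times its pullback — a genuine element of $P^{k}(N)$ by the first inclusion, since $x_{m}\in P^{c}(N)$ — strictly lowers the $z$-degree of $f$. Iterating (with $d-1,d-2,\dots$) peels off the $x_{m}$-dependence entirely and exhibits $f$ as a coordinate polynomial on $N$; checking degrees, $dc+(k-dc)=k$, shows its weighted degree is $\le k$.

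The main obstacle is the weighted-depth lemma: the naive estimate only gives that $\partial^{z}$ drops the polynomial degree by one, which overshoots the target weighted degree by a factor of the nilpotency class, so the whole argument hinges on upgrading this via the commutator identity and the $z\mapsto z^{p}$ bootstrap. The rest is bookkeeping I would isolate in short lemmas: that $N/\langle e_{m}\rangle$ remains torsion-free with matching coordinate weights, and that the leading Newton coefficients match so the subtraction in the inductive step genuinely decreases the $z$-degree.
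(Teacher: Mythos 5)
The paper does not prove this proposition at all: it is stated as recalled background and outsourced entirely to the citation \cite{Leib02}, so there is no in-paper argument to compare against. Your sketch is a correct self-contained reconstruction, and it follows essentially the same strategy as Leibman's original proof: the easy inclusion via the discrete Leibniz rule and the Mal'cev description of the group law, and the hard inclusion via the weighted degree-drop lemma for $\partial^{z}$ with $z\in\hat N_{i}$ (proved through the commutator identity $\partial^{a}\partial^{b}-\partial^{b}\partial^{a}=T^{ba}\partial^{[a,b]}$ and the $z\mapsto z^{p}$ bootstrap), followed by induction on the Hirsch length using the central last Mal'cev generator, Newton's forward-difference formula, and subtraction of $\binom{x_m}{j}$ multiples of pulled-back polynomials from the quotient. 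The individual steps check out — in particular the bootstrap from the a priori bound $\partial^{z}f\in P^{k-1}$ to $P^{k-i}$ does converge (each pass at least doubles the established drop), the top Newton coefficient is genuinely $z$-invariant so it descends, and $(\partial^{z})^{d}\bigl[\binom{x_m}{d}h\bigr]=h$ for $z$-invariant $h$, so the subtraction does kill the leading term. The two bookkeeping points you flag (that $N/\langle e_m\rangle$ is torsion-free with compatible weights, and that the isolator of the last lower-central term is central so $e_m\in Z(N)$) are indeed the standard facts needed and hold for the reasons you indicate. In short: correct, and it fills in a proof the paper deliberately omits.
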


\subsection{Lipschitz harmonic functions and polynomial growth}\label{subsec:Lip_and_growth}
For a function $f:G\to\CC$ and symmetric generating set $S$, define the (right) gradient by $(\nabla f(x))_s:=\partial^{s}f(x)=f(xs)-f(x)$ and the seminorm
$$
\|\nabla_S f\|_\infty:=\sup_{s\in S}\sup_{x\in G}|\partial^{s}f(x)|.
$$
We call $f$ \emph{Lipschitz} if $\|\nabla_S f\|_\infty<\infty$ (independent of the choice of $S$ by \Cref{lem:gradcomp}). Denote by $\LHF(G,\mu)$ the space of $\mu$-harmonic, Lipschitz functions on $G$.

\begin{lemma}[Generator comparability]\label{lem:gradcomp}
If $S_1,S_2$ are finite symmetric generating sets, then there is $C\ge1$ (depending only on $S_1,S_2$) such that
$$
\|\nabla_{S_1} f\|_\infty \le C \|\nabla_{S_2} f\|_\infty\qquad\text{for all } f:G\to\CC.
$$
\emph{Proof sketch.} Every $s\in S_1$ is a word of $S_2$-length at most $C$; write $s=t_1\cdots t_\ell$ with $\ell\le C$. Then
$
|\partial^{s}f(x)|
\le \sum_{j=1}^\ell |\partial^{t_j}f(x t_1\cdots t_{j-1})|
\le \ell\|\nabla_{S_2}f\|_\infty.
$
\end{lemma}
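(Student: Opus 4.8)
The plan is to reduce the comparison of the two gradient seminorms to a single telescoping estimate along words. First I would use that $S_2$ generates $G$ to write each $s\in S_1$ as a product $s=t_1t_2\cdots t_{\ell(s)}$ with every $t_j\in S_2$, where $\ell(s)$ is the word length of $s$ with respect to $S_2$. Since $S_1$ is finite, $C:=\max\{1,\ \max_{s\in S_1}\ell(s)\}$ is a finite constant $\ge 1$ depending only on $S_1$ and $S_2$, and this is the constant I would produce. (If $s=e$ then $\partial^{s}f\equiv 0$, so that case contributes nothing and one may assume $\ell(s)\ge 1$.)

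Next, for an arbitrary $f:G\to\CC$, a point $x\in G$, and $s\in S_1$ expressed as above, I would telescope the right difference along the chosen word:
$$
\partial^{s}f(x)=f(xs)-f(x)=\sum_{j=1}^{\ell(s)}\Big(f(x t_1\cdots t_j)-f(x t_1\cdots t_{j-1})\Big)=\sum_{j=1}^{\ell(s)}\partial^{t_j}f\big(x t_1\cdots t_{j-1}\big),
$$
with the empty product read as $e$. Because each $t_j$ lies in $S_2$ and $\|\nabla_{S_2}f\|_\infty$ bounds $|\partial^{t}f(y)|$ uniformly over all $t\in S_2$ and all $y\in G$, the triangle inequality yields $|\partial^{s}f(x)|\le \ell(s)\,\|\nabla_{S_2}f\|_\infty\le C\,\|\nabla_{S_2}f\|_\infty$. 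Taking the supremum over $x\in G$ and over $s\in S_1$ then gives $\|\nabla_{S_1}f\|_\infty\le C\,\|\nabla_{S_2}f\|_\infty$, which is the desired inequality; exchanging the roles of $S_1$ and $S_2$ produces the reverse bound, so the two seminorms are bi-Lipschitz equivalent and the class of Lipschitz functions — hence $\LHF(G,\mu)$ and its seminormed structure up to a constant — is independent of the generating set.

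There is essentially no genuine obstacle here: the argument is a one-line telescoping together with the defining bound for $\|\nabla_{S_2}f\|_\infty$. The only points worth a word of care are that $C$ must be extracted from $S_1,S_2$ alone, which is immediate since it is a maximum over the finitely many fixed word lengths $\ell(s)$, $s\in S_1$; the trivial normalization $C\ge 1$; and the observation that the estimate is uniform in $f$ because the word decompositions of the generators do not depend on $f$. Symmetry of $S_1$ and $S_2$ plays no role in the estimate itself beyond ensuring the word metrics are honest metrics (and that $\nabla$ is defined symmetrically); in fact only the generating property of $S_2$ is used.
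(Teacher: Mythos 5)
Your argument is correct and is exactly the paper's proof: express each $s\in S_1$ as an $S_2$-word of bounded length, telescope the right difference, and bound each term by $\|\nabla_{S_2}f\|_\infty$. The extra remarks on extracting $C$, the trivial case $s=e$, and the irrelevance of symmetry are fine but do not change the argument.
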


\noindent
For $t\ge0$ and finite symmetric $S$, define the $t$-growth seminorm
\begin{equation}\label{def:poly_growing_harmonic}
\|f\|_{S,t}:=\limsup_{r\to\infty} r^{-t}\max_{|x|\le r}|f(x)|.
\end{equation}
Different $S$ give equivalent seminorms; we write simply $\|f\|_t$. Set
$$
\HF_t(G,\mu):=\big\{f:G\to\CC:\ f\ \text{$\mu$-harmonic and}\ \|f\|_t<\infty\big\}.
$$

\begin{lemma}\label{lem:G_invariance_growth}
(a) $\HF_t(G,\mu)$ is a $G$-invariant vector space under the left action in \Cref{rem:harmonic_basic}(a).
\smallskip

(b) If $f\in \HF_t(G,\mu)$ and $x\in G$, then $\|x.f\|_t=\|f\|_t$.
\end{lemma}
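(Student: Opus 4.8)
The plan is to derive everything from two ingredients: left translation preserves $\mu$-harmonicity, which is already recorded in \Cref{rem:harmonic_basic}(a), and the growth seminorm $\|\cdot\|_t$ is insensitive to left translation, which is precisely part (b). So I would prove (b) first and then read off the invariance assertion in (a). For the vector-space claim in (a), note that harmonicity is a linear condition — the identity \eqref{eq:def_harmonic} is linear in $f$, and absolute convergence of the defining series is stable under finite linear combinations — while $\|\cdot\|_t$ is a seminorm on functions $G\to\CC$: it is absolutely homogeneous, and it is subadditive because $\max_{|x|\le r}|f+g|\le\max_{|x|\le r}|f|+\max_{|x|\le r}|g|$ and $\limsup$ is subadditive. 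Hence $\{f:\|f\|_t<\infty\}$ is a linear subspace, and its intersection with the space of $\mu$-harmonic functions, namely $\HF_t(G,\mu)$, is a vector space.

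For (b), write $M(r):=\max_{|y|\le r}|f(y)|$, a nondecreasing function of $r$, so that $\|f\|_t=\limsup_{r\to\infty}r^{-t}M(r)$ (using a fixed $S$; independence of the choice of $S$ was already noted after \eqref{def:poly_growing_harmonic}). Unwinding the left action, $(x.f)(k)=f(x^{-1}k)$, so after the substitution $k=xy$ one has $\max_{|k|\le r}|(x.f)(k)|=\max\{|f(y)|:|xy|\le r\}$. The triangle inequality for the word metric, $\big||y|-|x|\big|\le|xy|\le|y|+|x|$, sandwiches this index set between the balls of radius $r-|x|$ and $r+|x|$ about the identity, giving
$$
M(r-|x|)\ \le\ \max_{|k|\le r}|(x.f)(k)|\ \le\ M(r+|x|)\qquad(r>|x|).
$$
Multiplying through by $r^{-t}$, rewriting $r^{-t}M(r\pm|x|)=\big(r/(r\pm|x|)\big)^{t}\,(r\pm|x|)^{-t}M(r\pm|x|)$, and letting $r\to\infty$ — so that $\big(r/(r\pm|x|)\big)^{t}\to1$ for fixed $x$ and $t\ge0$, while the reparametrized $\limsup$ of $(r\pm|x|)^{-t}M(r\pm|x|)$ equals $\|f\|_t$ — forces $\|x.f\|_t=\|f\|_t$. (For $t=0$ this is simply $\sup|x.f|=\sup|f|$, which is immediate.)

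Finally, the invariance claim in (a) follows at once: if $f\in\HF_t(G,\mu)$ and $x\in G$, then $x.f$ is $\mu$-harmonic by \Cref{rem:harmonic_basic}(a) and $\|x.f\|_t=\|f\|_t<\infty$ by (b), so $x.f\in\HF_t(G,\mu)$. There is no genuine obstacle here; the only point needing a line of care is that the seminorm is a $\limsup$ of $r^{-t}M(r)$ rather than of $M(r)$ itself, so one must check that the $O(1)$ perturbation $r\mapsto r\pm|x|$ of the radius is absorbed in the limit — which is exactly the elementary estimate $\big(r/(r\pm|x|)\big)^{t}\to1$ used above.
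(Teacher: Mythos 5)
Your proof is correct, and since the paper states this lemma without proof, your argument is exactly the standard one the authors intend: linearity of harmonicity plus the seminorm property for (a), and the ball-sandwiching $M(r-|x|)\le\max_{|k|\le r}|(x.f)(k)|\le M(r+|x|)$ together with the observation that the $O(1)$ radius shift is absorbed by the factor $\bigl(r/(r\pm|x|)\bigr)^{t}\to1$ for (b). The one step worth keeping explicit, which you do, is that $\limsup_{r\to\infty}a_r b_r=(\lim_r a_r)(\limsup_r b_r)$ when $a_r\to1$ and $b_r\ge0$, which justifies passing the reparametrized $\limsup$ back to $\|f\|_t$.
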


\begin{remark}\label{remk:zerosetHF_knorm}
If $f\in \HF_t(G,\mu)$ with $\|f\|_t=0$, then $\|f\|_{t'}=0$ for every $t'>t$. Moreover, if $t_1\le t_2$ then $\HF_{t_1}(G,\mu)\subseteq \HF_{t_2}(G,\mu)$.
\end{remark}

Finally, we record a key structural theorem for virtually nilpotent groups.

\begin{theorem}[{\cite[Theorem 1.6]{MPTY17}}]\label{th:polynilp}
Let $G$ be finitely generated with finite-index nilpotent subgroup $N$, and let $\mu$ be SAS. Then for every $k\in\NN_0$,
$$
\dim \HF_k(G,\mu)=\dim P^k(N)-\dim P^{k-2}(N),
$$
with the convention $P^m(N)=\{0\}$ for $m<0$.
\end{theorem}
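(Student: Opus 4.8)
The plan is to reduce to a torsion-free nilpotent group by restricting along a finite-index subgroup, to prove there that every polynomially growing harmonic function is a genuine (Leibman) polynomial, and then to identify $\HF_k$ with the kernel of a discrete Laplacian on $P^k(N)$ whose image is exactly $P^{k-2}(N)$.

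\emph{Step 1: reduction to a torsion-free nilpotent group.} Passing to a further finite-index subgroup — which changes neither $\dim P^k(N)$ (the dimension of the truncated Mal'cev polynomial algebra, by \Cref{prop:Leibman}) nor $\dim\HF_k$ (by the finite-index induction--restriction below) — we may take $N$ torsion-free. Put $\tau:=\inf\{t\ge0:X_t\in N\}$ and let $\mu_N$ be the law of $X_\tau$ started at $e$; then $\tau<\infty$ a.s., since the $\mu$-walk projects to an irreducible, hence positive recurrent, chain on the finite Schreier graph $N\backslash G$. The measure $\mu_N$ is adapted and smooth — the latter because $\tau$ has a geometric tail while $\mu$ has an exponential one, so $|X_\tau|\le\sum_{t<\tau}|\xi_{t+1}|$ has an exponential tail by a Wald-type estimate — and it is Abelian-centered, since $\EE_e\bigl[[X_\tau]_{G_{\mathrm{ab}}}\bigr]=0$ by optional stopping applied to the martingale $\sum_{t\le n}[\xi_t]_{G_{\mathrm{ab}}}$ (symmetry of $\mu$ makes $\mu$ centered). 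Restriction $f\mapsto f|_N$ maps $\HF_k(G,\mu)$ into $\HF_k(N,\mu_N)$ — optional stopping gives $\mu_N$-harmonicity, and since $N\hookrightarrow G$ is a quasi-isometry the growth degree is preserved — with inverse the harmonic induction $\tilde f\mapsto\EE_{\cdot}[\tilde f(X_\tau)]$, by the same restriction/first-return-induction argument as in \Cref{thm:B}. Since $P^k(N)$ is intrinsic to $N$, it suffices to prove the identity for $G=N$ torsion-free nilpotent and $\mu$ adapted, smooth, and centered; we rename accordingly.

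\emph{Step 2: polynomial growth implies polynomial.} By Kleiner's theorem (via the Colding-Minicozzi gradient estimate), $\HF_k(N,\mu)$ is finite-dimensional; it is an $N$-module under left translation (\Cref{rem:harmonic_basic}). This module is \emph{unipotent}: otherwise its semisimplification contains a nontrivial character $\chi$ of $N$, so by Kolchin's theorem there is a common eigenvector $0\neq v$ with $v(ux)=\chi(u)v(x)$ for all $u,x$; iteration gives $|v(u^{-n}x_0)|=|\chi(u)|^n|v(x_0)|$, which can grow at most polynomially in $n$ since $v$ does, forcing $|\chi|\equiv1$ on $N$; but then $|v|$ is constant, so $v$ is bounded, hence constant by the Liouville property for adapted measures on nilpotent groups, and $\chi\equiv1$ — a contradiction. (Alternatively, one invokes Alexopoulos' classification of polynomially growing harmonic functions on the Mal'cev completion together with his discrete-to-continuous comparison.) Unipotence and simultaneous triangularization (Kolchin) give, for every $f\in\HF_k(N,\mu)$ and all $u_1,\dots,u_m\in N$ with $m:=\dim\HF_k(N,\mu)$, the identity $\partial_{u_1}\cdots\partial_{u_m}f\equiv0$; thus $f\in P^{m-1}(N)$, so $f$ is a Leibman polynomial. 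Since the Leibman degree and the growth degree agree for polynomials on a nilpotent group (via Mal'cev coordinates), such an $f$, of growth degree $\le k$, has Leibman degree $\le k$, i.e.\ $\HF_k(N,\mu)\subseteq P^k(N)$; conversely a polynomial of degree $\le k$ has growth degree $\le k$. Hence $\HF_k(N,\mu)=\{\,p\in P^k(N):p\text{ is }\mu\text{-harmonic}\,\}$.

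\emph{Step 3: the dimension count.} Put $\Delta_\mu f(x):=\sum_g\mu(g)f(xg)-f(x)$; by \Cref{def:smooth_measure} this is well defined and degree-nonincreasing on $P^\bullet(N):=\bigcup_jP^j(N)$, and by Step 2, $\HF_k(N,\mu)=\Ker\bigl(\Delta_\mu|_{P^k(N)}\bigr)$, so $\dim\HF_k(N,\mu)=\dim P^k(N)-\operatorname{rank}\bigl(\Delta_\mu|_{P^k(N)}\bigr)$. It remains to show $\Delta_\mu$ maps $P^k(N)$ onto $P^{k-2}(N)$. Expanding $f(xg)$ about $x$ in Mal'cev coordinates: the constant term cancels, the first-order term is the drift and vanishes by centering, and smoothness makes the remaining moment series converge, so $\Delta_\mu f\in P^{k-2}(N)$, with principal part a homogeneous second-order left-invariant sub-Laplacian $\mathcal L$ on the associated Carnot group. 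Such an $\mathcal L$ maps homogeneous degree-$d$ polynomials onto degree-$(d-2)$ polynomials (Fischer's lemma when $N$ is abelian; hypoellipticity of sub-Laplacians on Carnot groups in general), and passing from the associated graded back to the filtered level upgrades this to surjectivity of $\Delta_\mu:P^k(N)\to P^{k-2}(N)$. Therefore $\operatorname{rank}\bigl(\Delta_\mu|_{P^k(N)}\bigr)=\dim P^{k-2}(N)$, which is the asserted identity; the cases $k=0,1$, where $P^{k-2}(N)=\{0\}$, are the Liouville property and the affine-character description of $\HF_1$ (\Cref{thm:A}).

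\emph{The main obstacle.} The heart of the proof is Step 2 — that polynomially growing harmonic functions on a nilpotent group are honest polynomials. Kleiner's finite-dimensionality is used only as a black box; the substance is the unipotence of the $N$-module $\HF_k(N,\mu)$, which forces one to combine the Liouville property with a spectral analysis of finite-dimensional representations of finitely generated nilpotent groups (or, alternatively, to import Alexopoulos' continuous theory through the Mal'cev completion). The surjectivity of $\Delta_\mu$ onto $P^{k-2}(N)$ in Step 3 is a secondary, more computational point — dischargeable by sub-Laplacian surjectivity on Carnot groups or by a direct degree-by-degree construction of harmonic coordinate polynomials — and the reduction of Step 1 is routine once the adaptedness, smoothness, and centering of the hitting measure have been verified.
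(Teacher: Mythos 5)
First, a point of reference: the paper does not prove this statement at all --- it is imported verbatim as \cite[Thm.~1.6]{MPTY17} and used as a black box --- so there is no internal proof to compare against. What you have written is an outline of the known strategy (essentially that of Meyerovitch--Perl--Tointon--Yadin, with Alexopoulos in the background): reduce to a torsion-free nilpotent $N$ via the hitting measure, show that polynomially growing harmonic functions are Leibman polynomials by proving the translation representation on the finite-dimensional space $\HF_k$ is unipotent, and then compute $\dim\Ker(\Delta_\mu|_{P^k(N)})$ by rank--nullity. Steps 1 and 2 are the right ideas and are essentially correct, modulo standard repairs: for general $k$ the induction map needs the moment bound $\sup_x\EE_x[d(x,X_\tau)^k]<\infty$ (which your geometric-tail remark does supply), and the Lie--Kolchin step needs care because the image of $N$ in $GL(\HF_k)$ need not be connected, so one first triangularizes a finite-index subgroup and then upgrades; the Liouville input for adapted measures on nilpotent groups is classical.

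The genuine gap is in Step 3, at the surjectivity of $\Delta_\mu:P^k(N)\to P^{k-2}(N)$, which is the technical heart of the dimension formula and is precisely what you discharge by citation to facts that do not deliver it. Hypoellipticity of sub-Laplacians is a regularity statement about distributional solutions and says nothing about surjectivity of a linear map between the finite-dimensional spaces of homogeneous polynomials; Fischer's lemma is genuinely an argument, but it uses the Euclidean inner product pairing $\partial^\alpha$ with $x^\alpha$ and does not transfer to a stratified group without a replacement for that duality. Moreover the homogeneous degree $-2$ part of $\Delta_\mu$ on the associated Carnot group is not just $\tfrac12\sum a_{ij}X_iX_j$ on the first layer: symmetry of $\mu$ does not kill the drift in the second-layer Mal'cev coordinates (the second-layer coordinate of $g^{-1}$ is $-g_2$ plus a quadratic expression in $g_1$), so the principal part is a sub-Laplacian \emph{plus} a first-order second-layer term, and any surjectivity statement must be proved for that operator. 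This surjectivity is exactly where \cite{MPTY17} (and, in the continuous setting, \cite{Al02}) have to work --- e.g.\ by constructing, degree by degree, a polynomial $p$ with $\Delta_\mu p=q$ for each $q\in P^{k-2}(N)$, or by a duality argument adapted to the group structure --- and your proposal does not contain such an argument. (The reduction from the graded to the filtered level, by contrast, is fine: downward induction on degree works once graded surjectivity is in hand.) The cases $k=0,1$ that you dispose of separately are indeed immediate, but they are exactly the cases where $P^{k-2}=\{0\}$ and no surjectivity is needed.
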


\section{Lipschitz harmonic functions and affine structure}\label{sec:LHF-affine}

We relate degree $1$ polynomials, Lipschitz harmonic functions, and affine characters. Throughout this section we work with the right-walk convention (cf.\ \Cref{def:harmonic}): $(X_{t+1}=X_t\xi_{t+1})$ 

\medskip
\noindent\textbf{Abelian drift and centering.}
Let $\pi_{\mathrm{ab}}:G\to G_{\mathrm{ab}}:=G/[G,G]$ be the canonical projection onto the Abelianisation, and for notational convenience write $[g]$ for the class $\pi_{\mathrm{ab}}(g)$ in $G_{\mathrm{ab}}$. Fix a finite symmetric generating set $S$ of $G$. Set $V:=G_{\mathrm{ab}}\otimes_{\ZZ}\RR$, which is a finite-dimensional real vector space. Define the \emph{Abelian drift} of $\mu$ by
\begin{equation}\label{eq:ab_drift}
\mathbf m_{\mathrm{ab}}(\mu) := \sum_{g\in G}\mu(g)[g] \in V.
\end{equation}
The series in \eqref{eq:ab_drift} converges absolutely in $V$ provided $\mu$ has finite first moment.

Indeed, equip $V$ with any norm $\|\cdot\|$ and let $|\cdot|$ denote the word length on $G$ with respect to $S$. Writing $g=s_1\cdots s_{|g|}$ with $s_i\in S$ and using additivity in $V$ gives
$$
\|[g]\| = \Big\|\sum_{i=1}^{|g|}[s_i]\Big\| \le \sum_{i=1}^{|g|}\|[s_i]\| \le\ C|g|\qquad\text{with }C:=\max_{s\in S}\|[s]\|.
$$
Since $\sum_g \mu(g) |g|<\infty$ by the finite first-moment assumption, we obtain $\sum_g \mu(g) \|[g]\|<\infty$, and hence absolutely convergent in $V$. We say that $\mu$ is \emph{Abelian-centered} if $\mathbf m_{\mathrm{ab}}(\mu)=0$, or equivalently
\begin{equation}\label{eq:centering_equiv}
\sum_{g\in G}\mu(g) \phi\big([g]\big) = 0\qquad\text{for all }\phi\in\Hom(G_{\mathrm{ab}},\RR).
\end{equation}
If $\mu$ is symmetric (i.e., $\mu(g)=\mu(g^{-1})$) and has a finite first moment, then $\mu$ is Abelian-centered. Indeed, since $\phi([g^{-1}])=-\phi([g])$ for any homomorphism $\phi$, the symmetry of $\mu$ leads to pairwise cancellation in the absolutely convergent series \eqref{eq:centering_equiv}.

\begin{lemma}[Degree $1$ $\Rightarrow$ affine character]\label{lem:deg1_affine}
If $f\in P^1(G)$, then for each $u\in G$ the right difference $\partial^{u}f$ is constant on $G$. Consequently,
$$
\phi:G\to\CC,\qquad \phi(u):=\partial^{u}f(e),
$$
is a homomorphism, hence factors through $G_{\mathrm{ab}}$. In particular there exist $c\in\CC$ and a homomorphism $\bar\phi:G_{\mathrm{ab}}\to\CC$ such that
$$
f(x)=c+\bar\phi([x])\qquad(\forall x\in G).
$$
\end{lemma}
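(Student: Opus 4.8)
The plan is to distill the degree-$1$ condition into a single functional equation, from which every assertion follows by direct substitution. By \Cref{def:poly_using_derivative}, $f\in P^1(G)$ means $\partial_{u_1}\partial_{u_2}f\equiv 0$ for all $u_1,u_2\in G$. Expanding the iterated left difference,
$$
\partial_{u_1}\partial_{u_2}f(x)=f(u_2u_1x)-f(u_1x)-f(u_2x)+f(x),
$$
and evaluating at the identity $e$ yields $f(u_2u_1)-f(u_1)-f(u_2)+f(e)=0$; relabelling, this says that $f$ is additive up to the constant $f(e)$:
$$
f(ab)=f(a)+f(b)-f(e)\qquad(a,b\in G).
$$

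From this identity the first assertion is immediate. For any $u,x\in G$,
$$
\partial^{u}f(x)=f(xu)-f(x)=\bigl(f(x)+f(u)-f(e)\bigr)-f(x)=f(u)-f(e),
$$
which does not depend on $x$, so $\partial^{u}f$ is the constant function with value $f(u)-f(e)$.

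Next I would set $\phi(u):=\partial^{u}f(e)=f(u)-f(e)$ and verify homomorphy directly from the functional equation:
$$
\phi(ab)=f(ab)-f(e)=\bigl(f(a)-f(e)\bigr)+\bigl(f(b)-f(e)\bigr)=\phi(a)+\phi(b).
$$
Thus $\phi\colon G\to(\CC,+)$ is a group homomorphism into an abelian group, so it annihilates $[G,G]$ and factors uniquely as $\phi=\bar\phi\circ\pi_{\mathrm{ab}}$ for some homomorphism $\bar\phi\colon G_{\mathrm{ab}}\to\CC$. Writing $c:=f(e)$ and using $f(x)=f(e)+\phi(x)$ gives $f(x)=c+\bar\phi([x])$ for all $x\in G$, as required.

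The argument is entirely elementary and there is essentially no obstacle; the only thing to watch is the order of multiplication (the right difference $\partial^{u}$ multiplies on the right, matching the right-walk convention), which is why it is convenient to evaluate the second-order difference at $e$ rather than at a general point — this is precisely the normalization that produces the clean additive identity. If one wished, one could instead invoke \Cref{rmk:polynomial_defn_derivative}(1) to pass freely between left and right differences, but it is not needed: the left-difference definition of $P^1(G)$, specialized at $e$, already delivers the identity $f(ab)=f(a)+f(b)-f(e)$ used above.
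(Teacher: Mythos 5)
Your proof is correct and follows essentially the same elementary route as the paper's: both extract from the vanishing of second differences that $f$ is additive up to the constant $f(e)$, deduce that $\partial^u f$ is constant, verify the homomorphism property, and factor through $G_{\mathrm{ab}}$. The only cosmetic difference is that you package the computation as the single functional equation $f(ab)=f(a)+f(b)-f(e)$ obtained from the left-difference definition at $e$, whereas the paper works with right differences and a telescoping identity; both are valid and equivalent.
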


\begin{proof}
Since $f\in P^1(G)$, every second right-difference vanishes:
$$
\partial^{v}\partial^{u} f \equiv 0 \qquad(\forall u,v\in G).
$$
Thus, for each fixed $u$, the function $\partial^{u}f$ has degree $\le 0$ and is therefore constant; write
$$
\phi(u) := \partial^{u}f(e)\quad\text{($=\partial^{u}f(x)$ for all $x\in G$)}.
$$
Next, for any $u,v\in G$ we have
\begin{align*}
\phi(uv)&=\partial^{uv}f(e) = f(uv)-f(e) \\
&= \underbrace{f(uv)-f(u)}_{=\partial^{v}f(u)}\ +\ \underbrace{f(u)-f(e)}_{=\partial^{u}f(e)}.
\end{align*}
Since $\partial^v f$ is constant, $\partial^v f(u) = \partial^v f(e) = \phi(v)$. By definition, $\partial^u f(e) = \phi(u)$. Thus,
$$
\phi(uv) = \phi(v)+\phi(u).
$$
Hence, $\phi:G\to(\CC,+)$ is a homomorphism. As $(\CC,+)$ is Abelian, $\phi$ factors through $G_{\mathrm{ab}}$ by the universal property of Abelianisation, yielding $\bar\phi:G_{\mathrm{ab}}\to\CC$.

Finally, $f(x)-f(e)=\partial^{x}f(e)=\phi(x)$ for all $x\in G$. Hence, $f(x)=c+\bar\phi([x])$ with $c:=f(e)$.
\end{proof}

\begin{proposition}[$\LHF\subseteq P^1$ for adapted measures]\label{prop:choquet-deny-induction}
Let $G$ be a finitely generated nilpotent group and let $\mu$ be an adapted probability measure on $G$. Then every Lipschitz $\mu$-harmonic function on $G$ is a polynomial of degree at most $1$:
$$
\LHF(G,\mu)\subseteq P^1(G).
$$
\end{proposition}

\begin{proof}
Let $f\in\LHF(G,\mu)$, and let $s$ denote the nilpotency class of $G$. We argue by induction on $s$.

\smallskip
\noindent\emph{Base case $s=1$ (Abelian).}
For each $c\in G$ the left difference $D_c f(x):=f(cx)-f(x)$ is $\mu$-harmonic.
Since $G$ is Abelian, $d(cx,x)=|c|$, so
$|D_c f(x)|\le\|\nabla_S f\|_\infty|c|$,
and $D_c f$ is bounded.
By \cite[Theorem 1]{FHTV19}, $D_c f$ is constant; write $\alpha(c):=D_c f(e)$.
The identity $f(cx)=f(x)+\alpha(c)$ for all $c,x$ gives
$\alpha(cd)=\alpha(c)+\alpha(d)$,
so $\alpha$ is a homomorphism and $f\in P^1(G)$.

\smallskip
\noindent\emph{Inductive step ($s\ge 2$).}
Let $G_s$ be the last nontrivial term of the lower central series.
Then $G_s\subseteq Z(G)\cap [G,G]$.
For each $z\in G_s$, the left difference $D_z f(x):=f(zx)-f(x)$ is $\mu$-harmonic.
Since $z\in Z(G)$, $d(zx,x)=|x^{-1}z^{-1}x|=|z^{-1}|=|z|$, so
$$
|D_z f(x)|\le\|\nabla_S f\|_\infty|z|,
$$
which means $D_z f$ is bounded.
By the Choquet-Deny property \cite{FHTV19}, $D_z f\equiv\alpha(z)$ for some constant $\alpha(z)\in\CC$.

Now for every $x\in G$ and $n\in\NN$,
$$
f(z^n x)=f(x)+n\alpha(z).
$$
Since $f$ has linear growth ($|f(w)|\le C(1+|w|)$) and $z\in G_s$ with $s\ge 2$,
the standard distortion estimate gives $|z^n|=O(n^{1/s})$.

We claim that
$$
|z^n|=O(n^{1/s}).
$$
Set
$$
\mathcal C_s(S):=\{[u_1,\dots,u_s]:u_i\in S\},
$$
where we use the notation $[u_1, u_2,\dots, u_k]=[[\dots[[u_1, u_2],u_3]\dots, u_k]$. We first show that
$$
G_s=\langle \mathcal C_s(S)\rangle.
$$
Since $G_s\subset Z(G)$ the commutator map is multiplicative in each variable; for example,
$$
[ab,v_2,\dots,v_s]=[a,v_2,\dots,v_s][b,v_2,\dots,v_s],
$$
and similarly in every slot. Writing each $g_i$ as a word in the symmetric generating set $S$ and expanding one slot at a time shows that every $[g_1,\dots,g_s]$ is a product of elements of $\mathcal C_s(S)$. This proves the claim. Moreover, iterating this gives
$$
[u_1^{m_1},\dots,u_s^{m_s}]
=
[u_1,\dots,u_s]^{m_1\cdots m_s}
\qquad (m_1,\dots,m_s\in\NN).
$$
Also, repeated use of $|[a,b]|\le 2|a|+2|b|$ yields a constant $C_s>0$ depending only on $s$ such that for every $g_1,g_2,\dots, g_s \in G$,
$$
|[g_1,\dots,g_s]|\le C_s\sum_{j=1}^s |g_j|.
$$
Hence for $c=[u_1,\dots,u_s]$ (where $u_1,u_2,\dots u_s \in S$) and every $m\in\NN$,
$$
|c^{m^s}|
=
|[u_1^m,\dots,u_s^m]|
\le C_s\sum_{j=1}^s |u_j^m|
\le C_s sm.
$$

Now choose such commutators $c_1,\dots,c_r$ generating $G_s$, and write
$$
z=c_1^{a_1}\cdots c_r^{a_r}.
$$
After replacing some $c_i$ by $c_i^{-1}$ if necessary, we may assume $a_i\ge 0$ for all $i$.
Since $G_s$ is central, we have
$$
z^{m^s}=c_1^{a_1m^s}\cdots c_r^{a_rm^s}.
$$
For each $i$, let $c_i=[u_{i1},u_{i2},\dots,u_{is}]$. Then,
$$
|c_i^{a_i m^s}|
=
|[u_{i1}^{a_i m},u_{i2}^m,\dots,u_{is}^m]|
\le C_i m
$$
for a constant $C_i$ depending only on $C_s$ and $a_i$.
Therefore
$$
|z^{m^s}|
\le \sum_{i=1}^r |c_i^{a_i m^s}|
\le C_z m
$$
for some constant $C_z$ depending only on $z$ and $s$.

Finally, write
$$
n=\sum_{k=0}^K \varepsilon_k 2^{ks},
\qquad 0\le \varepsilon_k<2^s,
\qquad 2^{Ks}\le n<2^{(K+1)s}.
$$
Using subadditivity of word length,
$$
|z^n|
=
\Big|\prod_{k=0}^K z^{\varepsilon_k 2^{ks}}\Big|
\le \sum_{k=0}^K |z^{\varepsilon_k 2^{ks}}|
\le \sum_{k=0}^K \varepsilon_k|z^{2^{ks}}|
\le C_z 2^s \sum_{k=0}^K 2^k
\le C_z2^{s+1}2^K
\le D n^{1/s}.
$$ 
where $D=C_z2^{s+1}$. Thus $|z^n|=O(n^{1/s})$.

Hence
$$
|n\alpha(z)|\le C\big(1+|z^n x|+|x|\big)
\le C_x+C' n^{1/s}.
$$
Dividing by $n$ and letting $n\to\infty$ forces $\alpha(z)=0$.
Therefore $f(zx)=f(x)$ for all $z\in G_s$, so $f$ descends to a function $\bar f$ on $G/G_s$ such that $f=\bar f \circ \pi$, where $\pi:G\to G/G_s$ is the canonical projection map.

The quotient $G/G_s$ is nilpotent of class at most $s-1$,
the push-forward $\pi_*\mu$ is adapted on $G/G_s$
and $\bar f$ is Lipschitz $(\pi_*\mu)$-harmonic
(since $\|\nabla_{\pi(S)}\bar f\|_\infty\le\|\nabla_S f\|_\infty$).
By the induction hypothesis, $\bar f\in P^1(G/G_s)$ which also means $f\in P^1(G)$.
\end{proof}

\begin{remark}[Connection to Pansu differentiability]\label{rem:pansu-analogy}
The inclusion $\LHF(G,\mu)\subseteq P^1(G)$ can be viewed as the discrete analogue of the fact that Lipschitz functions between Carnot groups with zero Pansu differential are constant (see \Cref{lem:zero-Pansu-constant} in \Cref{sec:Pansu-calculus}).
The induction on the nilpotency step mirrors the stratification of the Carnot group: the sublinear distortion in the deeper central-series layers ($|z^n|=O(n^{1/s})$ for $z\in G_s$, $s\ge 2$) corresponds to the vanishing of the differential in higher strata, forcing the function to factor entirely through the Abelianization (the first stratum).
\end{remark}

\begin{theorem}[Nilpotent affine rigidity for $\LHF$; no symmetry]\label{thm:HF_1-is-linear-poly}
Let $G$ be a finitely generated nilpotent group, and let $\mu$ be an adapted probability measure on $G$ (no symmetry assumed) such that the Abelian drift
$$
\mathbf m_{\mathrm{ab}}(\mu)=\sum_{g\in G}\mu(g)[g]
$$
converges absolutely in $G_{\mathrm{ab}}\otimes_\ZZ\RR$
(for example, it suffices that $\mu$ have finite first moment). Then
$$
\LHF(G,\mu)=\Big\{x\mapsto c+\bar\phi([x]) : c\in\CC,
\bar\phi\in\Hom(G_{\mathrm{ab}},\CC), \bar\phi\big(\mathbf m_{\mathrm{ab}}(\mu)\big)=0\Big\}.
$$
In particular, if $\mu$ is Abelian-centered then $\LHF(G,\mu)=P^1(G)$, and every Lipschitz $\mu$-harmonic function is affine.
\end{theorem}

\begin{proof}
Let $f(x)=c+\bar\phi([x])$ with $\bar\phi\big(\mathbf m_{\mathrm{ab}}(\mu)\big)=0$. For any $k\in G$,
$$
\sum_{g}\mu(g) f(kg)
= \sum_{g}\mu(g) \big(c+\bar\phi([k])+\bar\phi([g])\big)
= f(k) + \bar\phi\big(\mathbf m_{\mathrm{ab}}(\mu)\big)
= f(k).
$$
Thus $f$ is $\mu$-harmonic. Moreover, $\partial^{s}f(x)=\bar\phi([s])$ for all $x\in G$, so $f$ is Lipschitz with
$\|\nabla_S f\|_\infty=\max_{s\in S}|\bar\phi([s])|$.

\medskip
Conversely, let $f\in \LHF(G,\mu)$. By \Cref{prop:choquet-deny-induction} and  \Cref{lem:deg1_affine}, $f(x)=c+\bar\phi([x])$ for some $\bar\phi\in\Hom(G_{\mathrm{ab}},\CC)$.
Since $f$ is $\mu$-harmonic, we have
$$
0=\sum_g\mu(g)\bigl[f(kg)-f(k)\bigr]
=\sum_g\mu(g)\bar\phi([g])
=\bar\phi\big(\mathbf m_{\mathrm{ab}}(\mu)\big),
$$
so $\bar\phi$ annihilates the Abelian drift.
\end{proof}

\begin{corollary}[Norm identification on nilpotent groups]\label{cor:norm_ident_vnilp}
Let $G$ be finitely generated and nilpotent, $\mu$ adapted and smooth, and $S$ a finite symmetric generating set. Then the map $(\bar\phi,c)\mapsto f(x):=c+\bar\phi([x])$ is an isomorphism
$$
\big\{(\bar\phi,c)\in\Hom(G_{\mathrm{ab}},\CC)\times\CC:
  \bar\phi\big(\mathbf m_{\mathrm{ab}}(\mu)\big)=0\big\}\to\LHF(G,\mu),
$$
with $\|\nabla_S f\|_\infty=\max_{s\in S}|\bar\phi([s])|$.
In particular, if $\mu$ is Abelian-centered then $\LHF(G,\mu)\cong\Hom(G_{\mathrm{ab}},\CC)\oplus\CC$.
\end{corollary}

\begin{example}[The discrete Heisenberg group]\label{ex:heisenberg}
Consider $G = H_3(\ZZ) = \langle X, Y, Z \mid [X,Y]=Z,\ [X,Z]=[Y,Z]=e \rangle$.
The Abelianization $G_{\mathrm{ab}} \cong \ZZ^2$ is generated by the images $[X]$ and $[Y]$.

Define a non-symmetric probability measure $\mu$ on $G$ by
$$
\mu(X) = \tfrac{1}{2},\quad \mu(X^{-2}) = \tfrac{1}{4},\quad \mu(Y) = \tfrac{1}{8},\quad \mu(Y^{-1}) = \tfrac{1}{8}.
$$
The support generates $G$, so $\mu$ is adapted. Its Abelian drift is
$$
\mathbf{m}_{\mathrm{ab}}(\mu)
= \big(\tfrac{1}{2}\cdot 1 + \tfrac{1}{4}\cdot (-2)\big)[X]
  + \big(\tfrac{1}{8}\cdot 1 + \tfrac{1}{8}\cdot (-1)\big)[Y]
= 0,
$$
so $\mu$ is Abelian-centered despite being non-symmetric.

By \Cref{cor:norm_ident_vnilp}, every $f\in\LHF(G,\mu)$ has the form $f(g)=c+\bar\phi([g])$. In coordinates $g=X^xY^yZ^z$, the class $[g]\in G_{\mathrm{ab}}$ depends only on $x$ and $y$. The two independent non-constant Lipschitz harmonic functions are the coordinate projections
$$
f_1(X^xY^yZ^z)=x,\qquad f_2(X^xY^yZ^z)=y.
$$
The reason no $z$-coordinate appears is the preceding classification is that: every Lipschitz harmonic function is an affine character on $G_{\mathrm{ab}}$, and $Z=[X,Y]$ maps to $0$ in $G_{\mathrm{ab}}$. Hence every such function has the form
$$
f(X^xY^yZ^z)=c+ax+by.
$$
For $S=\{X^{\pm1},Y^{\pm1}\}$, both have $\|\nabla_S f_i\|_\infty=1$.
\end{example}

\begin{proposition}[Regularity bridge: $\HF_1\subseteq\LHF$ for finitely supported centered measures]\label{prop:HF1-regularity-bridge}
Let $G$ be a finitely generated group of polynomial growth of degree $D$, and let $\mu$ be an adapted, finitely supported, Abelian-centered probability measure on $G$.
Then
$$
\HF_1(G,\mu)\subseteq\LHF(G,\mu).
$$
\end{proposition}

\begin{proof}
Let $f\in\HF_1(G,\mu)$, so $|f(x)|\le A(1+|x|)$ for all $x\in G$, for some $A>0$.
Define $K:=\tfrac12\delta_e+\tfrac12\mu$.
Then $f$ is $K$-harmonic
and $K$ is adapted, finitely supported and Abelian-centered.

The set $U:=\textup{supp}(K)$ is finite and semigroup-generates $G$.
By \cite[Theorem 1.12]{Dungey2008}, for every $h\in U$,
\begin{equation}\label{eq:Dungey-gradient}
|(\partial_h K^{(n)})(g)|\le Cn^{-(D+1)/2}e^{-b|g|^2/n}
\qquad(n\in \NN, g\in G),
\end{equation}
where $\partial_h K^{(n)}(g):=K^{(n)}(h^{-1}g)-K^{(n)}(g)$ and $C,b>0$ are constants depending on $K$ and $h$.
Since $U$ is finite, we may take one common pair of constants $C,b$ for all $h\in U$.
Here $K^{(n)}:=K*\cdots*K$ denotes the $n$-fold convolution.

For every integer $m\ge0$ there is a constant $C_m>0$ such that
\begin{equation}\label{eq:gaussian-volume-sum}
\sum_{g\in G}(1+|g|)^m e^{-b|g|^2/n}
\le C_m n^{(D+m)/2}\qquad(n\ge1).
\end{equation}
Indeed, let $R_n:=\lceil\sqrt n\rceil$ and decompose $G$ into annuli
$$
A_j:=\{g\in G:jR_n\le |g|<(j+1)R_n\},\qquad j\ge0.
$$
The polynomial growth bound $|B(r)|\le C_0(1+r)^D$ gives
$|A_j|\le C(1+(j+1)R_n)^D\le C' n^{D/2}(j+1)^D$.
For $g\in A_j$ we also have
$(1+|g|)^m\le C'' n^{m/2}(j+1)^m$ and
$e^{-b|g|^2/n}\le e^{-b j^2}$.
Summing over $j$ gives \eqref{eq:gaussian-volume-sum}, since
$\sum_{j\ge0}(j+1)^{D+m}e^{-b j^2}<\infty$.
Consequently, by \eqref{eq:Dungey-gradient} and \eqref{eq:gaussian-volume-sum} with $m=0,1$,
\begin{equation}\label{eq:Dungey-ell1}
\begin{aligned}
\sum_{g\in G}|(\partial_h K^{(n)})(g)|
&\le C n^{-(D+1)/2}\sum_{g\in G}e^{-b|g|^2/n}
\le C_2 n^{-1/2},\\
\sum_{g\in G}|g||(\partial_h K^{(n)})(g)|
&\le C n^{-(D+1)/2}\sum_{g\in G}(1+|g|)e^{-b|g|^2/n}
\le C_3,
\end{aligned}
\end{equation}
for constants $C_2,C_3>0$ independent of $h\in U$.

Now $f$ is $K^{(n)}$-harmonic for every $n\in \NN$, so
$$
f(xh)-f(x)=\sum_{g\in G}(\partial_h K^{(n)})(g)f(xg).
$$
Using $|f(xg)|\le A(1+|x|+|g|)$, we get
$$
|f(xh)-f(x)|\le A(1+|x|)\sum_g |(\partial_h K^{(n)})(g)|+ A\sum_g |g||(\partial_h K^{(n)})(g)|.
$$
By \eqref{eq:Dungey-ell1}, the last term is uniformly bounded in $h\in U$, and choosing $n>(1+|x|)^2$ makes the right-hand side uniformly bounded in $x$ and $h\in U$.
Thus $\sup_x|f(xh)-f(x)|<\infty$ for every $h\in U$.  If $S$ is any finite symmetric generating set, each $s\in S$ has a fixed word expression $s=u_1\cdots u_{\ell_s}$ with letters in $U$; telescoping along this word gives a uniform bound for $|f(xs)-f(x)|$.
Hence $f\in\LHF(G,\mu)$.
\end{proof}

\begin{corollary}[$\HF_1$ classification for finitely supported centered measures]\label{cor:HF1-finsupp}
Let $G$ be finitely generated and virtually-nilpotent, and let $\mu$ be adapted, finitely supported and Abelian-centered.
Then
$$
\HF_1(G,\mu)=\LHF(G,\mu).
$$
In particular, if $G$ is nilpotent then
$$
\HF_1(G,\mu)=\LHF(G,\mu)=P^1(G).
$$
\end{corollary}

\begin{proof}
If $G$ is virtually nilpotent, the inclusion $\HF_1(G,\mu)\subseteq\LHF(G,\mu)$ is \Cref{prop:HF1-regularity-bridge} and the inclusion $\LHF(G,\mu)\subseteq\HF_1(G,\mu)$ holds for any probability measure $\mu$. If $G$ is nilpotent then
$\LHF(G,\mu)=P^1(G)$ by \Cref{thm:HF_1-is-linear-poly}.

\end{proof}

\begin{theorem}[Dimension of $\LHF$ and the Anomaly of False Centering]\label{thm:false_centering}
Let $G$ be a finitely generated virtually nilpotent group, and let $\mu$ be an adapted and smooth probability measure on $G$. Let $R$ denote the rank of $G$, i.e.\ the common value
$$
R := \dim_\RR\big(N_{\mathrm{ab}}\otimes_\ZZ\RR\big)
$$
for any finite-index nilpotent subgroup $N\le G$.
Let $N$ be any such subgroup and let $\mu_N$ be the induced hitting measure on $N$. Define
$$
\delta(\mu_N) :=
\begin{cases}
0 &\text{if } \mathbf{m}_{\mathrm{ab}}(\mu_N)=0,\\
1 &\text{if } \mathbf{m}_{\mathrm{ab}}(\mu_N)\neq 0.
\end{cases}
$$
Then
$$
\dim_\CC \LHF(G,\mu)\ =\ R + 1 - \delta(\mu_N).
$$
In particular, a globally Abelian-centered measure on $G$ may still satisfy $\delta(\mu_N)=1$.
\end{theorem}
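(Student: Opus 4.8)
The plan is to reduce the computation to a finite-index nilpotent subgroup $N$ and glue together the finite-index transfer principle \Cref{thm:B} with the affine classification \Cref{thm:HF_1-is-linear-poly}. Fix a finite-index nilpotent $N\le G$ as in the statement and let $\mu_N$ be its hitting law. The skeleton is: (i) $\HF_1(G,\mu)=\LHF(G,\mu)$; (ii) $\mathrm{Res}_N^G$ is a $\CC$-linear isomorphism $\LHF(G,\mu)\cong\LHF(N,\mu_N)$, by \Cref{thm:B}; (iii) since $N$ is finitely generated nilpotent and $\mu_N$ is adapted and smooth, \Cref{thm:HF_1-is-linear-poly} identifies $\LHF(N,\mu_N)=\HF_1(N,\mu_N)$ with $\{x\mapsto c+\bar\phi([x]):c\in\CC,\ \bar\phi\in\Hom(N_{\mathrm{ab}},\CC),\ \bar\phi(\mathbf m_{\mathrm{ab}}(\mu_N))=0\}$; and (iv) a one-line dimension count on that space yields $R+1-\delta(\mu_N)$. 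Chaining (i)--(iv) gives $\dim_\CC\HF_1(G,\mu)=\dim_\CC\LHF(N,\mu_N)=R+1-\delta(\mu_N)$. (This refines \Cref{th:polynilp}, which assumes $\mu$ symmetric and hence forces $\delta=0$, to the merely adapted-and-smooth setting.)

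For (i), the inclusion $\LHF(G,\mu)\subseteq\HF_1(G,\mu)$ is immediate because a Lipschitz function on a polynomial-growth group satisfies $|f(x)-f(e)|\le\|\nabla_S f\|_\infty|x|$ and hence has growth degree $\le1$. For the converse, let $f\in\HF_1(G,\mu)$. Smoothness of $\mu$ (with $[G:N]<\infty$) forces the first-return time $\tau=\inf\{t\ge0:X_t\in N\}$ to have an exponential tail and the increments to have an exponential moment; these are exactly the estimates underlying \Cref{thm:B}. Optional stopping applied to the linearly growing $f$ -- dominating $|f(X_{\tau\wedge n})|\le|f(e)|+\|\nabla_Sf\|_\infty\sum_{i\le\tau}|\xi_i|$, which is integrable -- gives $f(x)=\EE_x[f(X_\tau)]$ for all $x$. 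In particular $f|_N$ is $\mu_N$-harmonic, it has growth degree $\le1$ in the word metric of $N$ (bi-Lipschitz equivalent to the restriction of that of $G$ by Milnor--\v Svarc), so $f|_N\in\HF_1(N,\mu_N)=\LHF(N,\mu_N)$ by \Cref{thm:HF_1-is-linear-poly}; and the same identity $f=\EE_\cdot[f(X_\tau)]$ says $f=\mathrm{Ind}_N^G(f|_N)$, which lies in $\LHF(G,\mu)$ since $\mathrm{Ind}_N^G$ is the inverse isomorphism in \Cref{thm:B}. This proves (i), and along the way (ii).

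For (iii) one must check $\mu_N$ is adapted and smooth: adaptedness of the hitting law on a finite-index subgroup is standard (its support generates $N$), and smoothness follows from combining the exponential tail of $\tau$ with the exponential moment of the $\xi_i$ via a Wald-type bound $\EE[e^{\zeta'|X_\tau|}]\le\EE\big[\prod_{i\le\tau}e^{\zeta'|\xi_i|}\big]<\infty$ for small $\zeta'>0$. Then \Cref{thm:HF_1-is-linear-poly} applies verbatim to $(N,\mu_N)$. For (iv): homomorphisms $N_{\mathrm{ab}}\to\CC$ annihilate torsion and $N_{\mathrm{ab}}/\mathrm{tors}\cong\ZZ^R$, so $\Hom(N_{\mathrm{ab}},\CC)\cong\CC^R$ and the pair $(\bar\phi,c)$ ranges over an $(R+1)$-dimensional space subject only to the single linear equation $\bar\phi(\mathbf m_{\mathrm{ab}}(\mu_N))=0$. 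If $\mathbf m_{\mathrm{ab}}(\mu_N)=0$ the equation is vacuous and the dimension is $R+1$; if $\mathbf m_{\mathrm{ab}}(\mu_N)\neq0$ then, since $\Hom(N_{\mathrm{ab}},\RR)$ separates points of $N_{\mathrm{ab}}\otimes_\ZZ\RR$, the functional $\bar\phi\mapsto\bar\phi(\mathbf m_{\mathrm{ab}}(\mu_N))$ is nonzero on $\CC^R$ and cuts the dimension to $R$. Hence $\dim_\CC\HF_1(G,\mu)=R+1-\delta(\mu_N)$.

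Two closing points. First, the left-hand side and $R$ are manifestly independent of the choice of $N$, so the identity forces $\delta(\mu_N)$ to be independent of $N$; this is the structural content behind the ``anomaly of false centering'': $\mathbf m_{\mathrm{ab}}(\mu)=0$ in $G_{\mathrm{ab}}\otimes\RR$ does not imply $\mathbf m_{\mathrm{ab}}(\mu_N)=0$ (the induced walk on $N$ can pick up a genuine Abelian drift), yet whether it does is a well-defined dichotomy intrinsic to $(G,\mu)$, governing whether the linear-growth space has ``full'' dimension $R+1$ or the deficient dimension $R$. Second, the only real work beyond bookkeeping is the passage $\mu\rightsquigarrow\mu_N$: one needs the exponential-tail bound on $\tau$ and the exponential moment of $X_\tau$ simultaneously, to justify the optional-stopping identity in (i) and to certify smoothness of $\mu_N$ in (iii). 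Both are already available from the proof of \Cref{thm:B}, so I expect no genuinely new analytic obstacle here -- the subtlety is conceptual, in recognizing that finite-index reduction can destroy apparent centering.
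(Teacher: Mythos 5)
Your proof is correct and follows essentially the same route as the paper's: reduce to the finite-index nilpotent subgroup $N$ via restriction, apply the affine classification with the drift constraint $\bar\phi(\mathbf m_{\mathrm{ab}}(\mu_N))=0$, and count dimensions. You additionally supply justifications (the identification $\HF_1(G,\mu)=\LHF(G,\mu)$ via optional stopping, and the adaptedness/smoothness of $\mu_N$) that the paper's own proof leaves implicit when it asserts the restriction isomorphism.
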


\begin{proof}
By the induction-restriction theorem for $\LHF$ (\Cref{bij:LHF}),
$$
\LHF(G,\mu)\cong\LHF(N,\mu_N).
$$
By \Cref{thm:HF_1-is-linear-poly},
$$
\LHF(N,\mu_N)=
\big\{x\mapsto c+\bar\phi([x]_N):\bar\phi\big(\mathbf m_{\mathrm{ab}}(\mu_N)\big)=0\big\}.
$$
Now $\Hom(N_{\mathrm{ab}},\CC)$ has dimension $R$ over $\CC$.
Write $v_N:=\mathbf{m}_{\mathrm{ab}}(\mu_N)\in N_{\mathrm{ab}}\otimes_\ZZ\RR$. 
If $v_N=0$, $\LHF(N,\mu_N)=\CC\oplus\Hom(N_{\mathrm{ab}},\CC)$ so $\dim_\CC\LHF(G,\mu)=R+1$.
If $v_N\ne 0$, the single linear constraint $\bar\phi(v_N)=0$ lowers the dimension by one, giving $\dim_\CC\LHF(G,\mu)=R$.
This proves $\dim_\CC\LHF(G,\mu)=R+1-\delta(\mu_N)$.
\end{proof}

\begin{corollary}\label{cor:symm_cure}
Let $G$ be a virtually nilpotent group. If $\mu$ is a \emph{SAS} probability measure on $G$, then 
$$
 \dim_\CC \LHF(G,\mu)=R+1.
$$
\end{corollary}

\begin{proof}
Note that $\mu_N$ is $\SAS$ whenever $\mu$ is. For any $\phi\in\Hom(N_{\mathrm{ab}},\RR)$,
$$
\sum_{h\in N} \mu_N(h)\phi([h]_N)
= \frac{1}{2}\sum_{h\in N}\mu_N(h)\big(\phi([h]_N)+\phi([h^{-1}]_N)\big)
= 0,
$$
since $\phi([h^{-1}]_N)=-\phi([h]_N)$. Thus $\mathbf{m}_{\mathrm{ab}}(\mu_N)=0$, so $\delta(\mu_N)=0$ and the formula in \Cref{thm:false_centering} gives $\dim_\CC\LHF(G,\mu)=R+1$.
\end{proof}

\begin{remark}\label{rem:MPTY_relation}
For general groups of polynomial growth and symmetric, adapted and smooth measures, Meyerovitch-Perl-Tointon-Yadin \cite{MPTY17} describe $\HF_k(G,\mu)$ in terms of harmonic polynomials on a finite-index nilpotent subgroup; in particular, for $k=1$ they recover the maximal dimension $R+1$ in the centered case.

\Cref{thm:false_centering} refines this picture for the virtually nilpotent class and arbitrary adapted smooth measures by identifying the dimension drop at degree $1$ with the \emph{induced drift} $\mathbf{m}_{\mathrm{ab}}(\mu_N)$ on the finite-index nilpotent subgroup.

For the unconditional $\LHF$ theorem (\Cref{thm:HF_1-is-linear-poly}), neither symmetry nor finite support of $\mu$ is needed: the proof uses only the Choquet--Deny property of nilpotent groups \cite{FHTV19} and the sublinear growth of powers in deeper central-series layers. The full $\HF_1=\LHF=P^1$ identification is established unconditionally for adapted smooth Abelian-centered measures in \Cref{thm:unconditional-HF1-closure}, via a dynamic truncation and path-coupling argument that transfers Dungey's Gaussian bounds \cite{Dungey2008} from finitely supported measures to the smooth setting, made family-uniform by the weighted $L^2$ machinery of \Cref{sec:uniform-dungey-appendix}.
\end{remark}

\begin{lemma}[Interior centering from adapted measures]\label{lem:finite-recentering-set}
Let $G$ be a finitely generated group, $\mu$ an adapted and Abelian-centered probability measure on $G$, and $V:=G_{\mathrm{ab}}\otimes_\ZZ\RR$.
Assume $\dim V\ge 1$.
Let $\Sigma:=\{[g]:g\in\supp(\mu)\}\subset V$ and $\Sigma^\times:=\Sigma\setminus\{0\}$.
Then $0\in\operatorname{int}(\operatorname{conv}(\Sigma^\times))$.
Consequently, there exists a finite subset $T_0\subset\supp(\mu)\setminus\{e\}$ such that
$0\in\operatorname{int}(\operatorname{conv}\{[t]:t\in T_0\})$.
\end{lemma}

\begin{proof}
Write $\Lambda:=G_{\mathrm{ab}}$.
Because $\supp(\mu)$ semigroup-generates $G$, the set $\Sigma$ semigroup-generates $\Lambda$.
Removing $0$ does not change the generated semigroup, so $\Sigma^\times$ also semigroup-generates $\Lambda$.

If $0\notin\operatorname{int}(\operatorname{conv}(\Sigma^\times))$, the Hahn-Banach separation theorem gives a nonzero real linear functional $\ell: V\to\RR$ with $\ell(\sigma)\ge 0$ for all $\sigma\in\Sigma^\times$.
Hence $\ell\ge 0$ on every finite semigroup sum of elements of $\Sigma^\times$, and therefore on all of $\Lambda$.
Since also $-\lambda\in\Lambda$ whenever $\lambda\in\Lambda$, this forces $\ell(\lambda)=0$ for every $\lambda\in\Lambda$, hence $\ell=0$ on $V$, a contradiction.
Thus $0\in\operatorname{int}(\operatorname{conv}(\Sigma^\times))$.

To extract a finite subset, let $\mathbb S(V)$ be the unit sphere in $V$. Since $V$ is linearly isomorphic to $\RR^d$ for some $d\in\NN$, we borrow the inner product $\langle \cdot,\cdot\rangle$ of $\RR^d$ on $V$.
For each $\sigma\in\Sigma^\times$, define the open set $U_\sigma:=\{u\in\mathbb S(V):\langle u,\sigma\rangle>0\}$.
Since $0\in\operatorname{int}(\operatorname{conv}(\Sigma^\times))$, $(U_\sigma)_{\sigma\in\Sigma^\times}$ cover $\mathbb S(V)$.
By compactness, choose $\sigma_1,\dots,\sigma_m\in\Sigma^\times$ whose hemispheres still cover $\mathbb S(V)$.
Then $0\in\operatorname{int}(\operatorname{conv}\{\sigma_1,\dots,\sigma_m\})$.
Choose $t_i\in\supp(\mu)\setminus\{e\}$ with $[t_i]=\sigma_i$, and set $T_0:=\{t_1,\dots,t_m\}$.
\end{proof}

\begin{lemma}[Affine correction coefficients]\label{lem:small-affine-correction}
Let $T\subset V$ be finite with $0\in\operatorname{int}(\operatorname{conv}(T))$. Fix a norm $\| \cdot\|$ on $V$.
Then there exists $C_0>0$ such that for every $v\in V$ there exist real coefficients $(c_t(v))_{t\in T}$ satisfying
$$
\sum_{t\in T}c_t(v)t=v,\qquad
\sum_{t\in T}c_t(v)=0,\qquad
\max_{t\in T}|c_t(v)|\le C_0\|v\|.
$$
\end{lemma}

\begin{proof}
Let $E:=\{a=(a_t)_{t\in T}\in\RR^T:\sum_{t\in T}a_t=0\}$ and define
$L:E\to V$ by $L(a):=\sum_{t\in T}a_tt$.
We show $L$ is surjective.
Fix $v\in V\setminus\{0\}$.
Since $0\in\operatorname{int}(\operatorname{conv}(T))$, there exists $\eps>0$ such that $\pm\eps v\in\operatorname{conv}(T)$.
Hence there exist coefficients $a_t^+, a_t^-\ge 0$ with $\sum_t a_t^+=1=\sum_t a_t^-$ and $\eps v=\sum_{t\in T}a_t^+t$, $-\eps v=\sum_{t\in T}a_t^-t$.
Set $c_t:=(a_t^+-a_t^-)/(2\eps)$.
Then $c=(c_t)\in E$ and $L(c)=v$.

Choose a linear right inverse $R:V\to E$ of $L$ and set $c_t(v):=(Rv)_t$.
Since $R$ is linear on the finite-dimensional space $V$, there exists $C_0>0$ with
$\max_{t\in T}|c_t(v)|=\|Rv\|_{\ell^\infty(T)}\le C_0\|v\|$ for all $v\in V$.
\end{proof}

\begin{lemma}[Uniform diagonal Dungey estimates for the recentered family]\label{lem:uniform-dungey-family}
Let $G$ be a finitely generated nilpotent group of polynomial growth with homogeneous dimension $D$,
and let $W\subseteq G$ be a fixed finite symmetric generating set containing $e$.
Let $(K_R)_{R\ge R_0}$ be a family of centered, finitely supported probability measures on $G$,
and assume there exist constants $\eta>0$, $\beta>0$, and $M_\beta>0$ such that
$$
K_R(e)\ge\eta,\qquad K_R(w)\ge \eta\quad (w\in W,\ R\ge R_0),
$$
and
$$
\sup_{R\ge R_0}\sum_{g\in G} e^{\beta |g|}K_R(g)\le M_\beta.
$$
Fix $\kappa>0$ and set $R_q:=\max\{R_0,\kappa\log(q+1)\}$ for $q\in\NN$.
Then there exists a constant $C>0$, depending only on $(G,W,\eta,\beta,M_\beta,\kappa)$, such that
for every $h\in W$ and every $q\in\NN$,
\begin{equation}\label{eq:uniform-ell1-dungey}
\|\partial_h K_{R_q}^{(q)}\|_1\le \frac{C}{\sqrt{q}},
\qquad
\sum_{g\in G}|g||\partial_h K_{R_q}^{(q)}(g)|\le C,
\end{equation}
where $\partial_h\nu(y):=\nu(h^{-1}y)-\nu(y)$.
\end{lemma}

\begin{proof}
This is \Cref{prop:diagonal-generator-bounds-app}, proved in \Cref{sec:uniform-dungey-appendix}
by a self-contained uniform weighted-$L^2$ transfer argument.
\end{proof}

\begin{theorem}[$\HF_1$ classification for smooth centered measures on nilpotent groups]\label{thm:unconditional-HF1-closure}
Let $G$ be a finitely generated nilpotent group, and let $\mu$ be an adapted, smooth, and Abelian-centered probability measure on $G$ (no symmetry or finite support assumed). Then
$$
\HF_1(G,\mu)=\LHF(G,\mu)=P^1(G).
$$
In particular, every $\mu$-harmonic function of at most linear growth is a globally Lipschitz affine character.
\end{theorem}

\begin{proof}
By \Cref{thm:HF_1-is-linear-poly}, $P^1(G)=\LHF(G,\mu)\subseteq\HF_1(G,\mu)$.
It remains to prove $\HF_1(G,\mu)\subseteq\LHF(G,\mu)$. For $z\in G$, define 
$$
D_z f(x):=f(zx)-f(x) \qquad(x\in G)
$$

Suppose we have already proved that $D_zf$ is bounded for each $z\in G_s$. We now argue by induction on the nilpotency class $s$ of $G$.
The base case $s=1$ is when $G$ is Abelian, and $D_c f(x):=f(cx)-f(x)$ is bounded and $\mu$-harmonic for every $c\in G$. Then the Choquet-Deny property makes each $D_c f$ constant, which implies $f\in\LHF(G,\mu)$.

Assume now that $s\ge2$ and that the theorem is true for all nilpotent groups of nilpotency class $<s$. Then, $G_s\subseteq Z(G)\cap [G,G]$. Since $D_z f$ is bounded and $\mu$-harmonic for every $z\in G_s$, \cite[Theorem 1]{FHTV19} gives $D_z f\equiv c_z$ for some $c_z\in\CC$.
As in the proof of \Cref{prop:choquet-deny-induction}, $|z^n|=O(n^{1/s})$ as $n\to\infty$ for every $z\in G_s$.
If $c_z\ne0$, then $|f(z^n x)-f(x)|=n|c_z|$, contradicting the linear-growth bound
$|f(w)|\le A+B|w|$ together with $|z^n x|\le |x|+O(n^{1/s})$.
Thus $c_z=0$ for every $z\in G_s$, and $f$ descends to a function
$\tilde f:G/G_s\to\CC$.

Set $\bar G:=G/G_s$, let $\pi:G\to\bar G$ be the quotient map, and put $\bar\mu:=\pi_*\mu$.
The measure $\bar\mu$ is adapted because the image of a generating set for $G$ under $\pi$ generates $\bar G$.
It is smooth because, for quotient word metrics, $|\pi(g)|_{\bar G}\le |g|_G$, so any exponential moment of $\mu$ pushes forward to an exponential moment of $\bar\mu$.
It is Abelian-centered because $G_s\subseteq [G,G]$, hence
$$
\bar G_{\mathrm{ab}}=(G/G_s)_{\mathrm{ab}}\cong G/[G,G]=G_{\mathrm{ab}},
$$
and the Abelian drift of $\bar\mu$ is the image of the Abelian drift of $\mu$, which is zero.
Moreover $\tilde f$ is $\bar\mu$-harmonic, since $f=\tilde f\circ\pi$ and $f$ is $\mu$-harmonic.
Finally, $\tilde f$ has at most linear growth on $\bar G$: every $\bar x\in\bar G$ has a lift $x\in G$ with $\pi(x)=\bar x$ and $|x|_G\le |\bar x|_{\bar G}$, and hence
$|\tilde f(\bar x)|=|f(x)|\le A+B|\bar x|_{\bar G}$.
Therefore $\tilde f\in\HF_1(\bar G,\bar\mu)$.
By the induction hypothesis, $\tilde f\in P^1(\bar G)$, and consequently $f\in P^1(G)$.

It therefore suffices to establish the boundedness of $D_z f$ for each $z\in G_s$.

\smallskip\noindent
Define the lazy measure $L:=\tfrac12\delta_e+\tfrac12\mu$.
Since $f$ is $\mu$-harmonic, it is also $L$-harmonic, and hence $L^{(m)}$-harmonic for every $m\in\NN$.
Fix $m_0\in\NN$ (whose value is to be determined later), and set $K:=L^{(m_0)}$.
Since $f$ is $K$-harmonic, $z$ is central, and $|f(w)|\le A+B|w|$, the same computation as before gives, for every $q\in\NN$,
\begin{equation}\label{eq:Dz-kernel-rep}
D_z f(x)=\sum_{u\in G}f(xu)\Delta_q^z(K)(u),
\end{equation}
where $\Delta_q^z(K)(y):=K^{(q)}(yz^{-1})-K^{(q)}(y)$, and hence
\begin{equation}\label{eq:Dz-bound}
|D_z f(x)|
\le(A+B|x|)\|\Delta_q^z(K)\|_1+B\sum_u|u||\Delta_q^z(K)(u)|.
\end{equation}
Thus $D_z f$ is bounded provided:
\begin{align}
\|\Delta_q^z(K)\|_1&\to0\qquad(q\to\infty),\label{eq:tv-goal}\\
\sup_{q\in\NN}\sum_u|u||\Delta_q^z(K)(u)|<\infty,\label{eq:mom-goal}
\end{align}
choosing $q=q(x)$ so that $(A+B|x|)\|\Delta_q^z(K)\|_1\le1$.
We establish \eqref{eq:tv-goal}-\eqref{eq:mom-goal} in four steps.

\smallskip\noindent 
Since $\mu$ is adapted and Abelian-centered,
\Cref{lem:finite-recentering-set} gives a finite subset
$T_0\subset\supp(\mu)\setminus\{e\}$ with
$0\in\operatorname{int}(\operatorname{conv}\{[t]:t\in T_0\})$.
Next choose a finite subset $T_1\subset\supp(\mu)\setminus\{e\}$ that semigroup-generates $G$:
for each element $w$ of a fixed finite symmetric generating set $W$ of $G$, choose a word
$w=t_{w,1}\cdots t_{w,r_w}$ with $t_{w,j}\in\supp(\mu)$, and take $T_1$ to be the set of non-identity letters appearing.
Set $T:=T_0\cup T_1\subset\supp(\mu)\setminus\{e\}$.
Then $T$ semigroup-generates $G$, and $0\in\operatorname{int}(\operatorname{conv}\{[t]:t\in T\})$.

For $R>0$, let $B_R=\{x\in G\mid |x|\le R \}$ and $\mu_R:=\mu\mathbf{1}_{B_R}$. Since $\mu$ is smooth, there exists $\zeta >0$ such that $C_1:=\sum_{x\in G}\mu(x)e^{\zeta|x|}<\infty$. Now,
$$
\delta_R=\sum_{|x|>R}\mu(x)
\le e^{-\zeta R}\sum_{x\in G}e^{\zeta |x|}\mu(x)
= C_1 e^{-\zeta R}.
$$
Moreover, 
$$
\sum_{|x|>R}|x|\mu(x)
\le e^{-\frac{\zeta R}{2}}\sum_{|x|>R}|x|e^{\frac{\zeta |x|}{2}}\mu(x)
\le \frac{2}{\zeta}e^{-\frac{\zeta R}{2}}\sum_{|x|>R }e^{\zeta|x|}\mu(x)
\le \frac{2}{\zeta}C_1 e^{-\frac{\zeta R}{2}}.
$$
Hence the Abelian drift tail satisfies $\|m_R\|\le\sum_{|x|>R}|x|\mu(x)\le C_2 e^{-\frac{\zeta R}{2}}$, where $m_R:=\mathbf{m}_{\mathrm{ab}}(\mu_R)=-\sum_{|x|>R}\mu(x)[x]$ and $C_2>0$ depends on $C_1$, $\zeta$ and the generating set of $G$.
By \Cref{lem:small-affine-correction}, there is $C_0>0$ such that for every $R>0$ there exist coefficients $(c_t(R))_{t\in T}$ satisfying
$$
\sum_{t\in T}c_t(R)[t]=-m_R,\qquad
\sum_{t\in T}c_t(R)=0,\qquad
\max_{t\in T}|c_t(R)|\le C_0\|m_R\|\le C_0C_2 e^{-\frac{\zeta R}{2}}.
$$
Choose $R_0$ so large that $T\subset B_{R_0}$ and $C_0C_2 e^{-\zeta'R_0}<\frac{1}{2}\min_{t\in T}\mu(t)$.
For $R\ge R_0$, define
$$
\widetilde\mu_R(x)
:=\mu_R(x)+\sum_{t\in T}c_t(R)\delta_t(x)+\delta_R\delta_e(x).
$$
Then $\widetilde\mu_R$ has total mass $1$ and $\mathbf{m}_{\mathrm{ab}}(\widetilde\mu_R)=0$.
Since $T\subset B_{R_0}\subset B_R$, we have $\mu_R(t)=\mu(t)$ for each $t\in T$, and therefore
$\widetilde\mu_R(t)=\mu(t)+c_t\ge\frac12\mu(t)>0$.
Since $e\notin T$, also $\widetilde\mu_R(e)=\mu_R(e)+\delta_R\ge 0$ and $\widetilde\mu_R(x)=0$ for all $x\notin B_R$.
Hence $\widetilde\mu_R$ is an adapted, finitely supported, Abelian-centered probability measure.
For the total variation distance, note that $\mu-\widetilde\mu_R$ is supported on
$\{|x|>R\}\cup T\cup\{e\}$, so
$$
\|\mu-\widetilde\mu_R\|_1
\le \sum_{|x|>R}\mu(x)+\sum_{t\in T}|c_t|+\delta_R
\le \delta_R+|T|C_0C_2 e^{-\frac{\zeta R}{2}}+\delta_R
\le C_3e^{-\frac{\zeta R}{2}}.
$$
for some $C_3>0$ depending on $C_0, C_1, C_2$ and $T$. Set $\rho_R:=\frac12\|\mu-\widetilde\mu_R\|_1\le \frac12 C_3e^{-\frac{\zeta R}{2}}$.

\smallskip\noindent
Define $L_R:=\frac12\delta_e+\frac12\widetilde\mu_R$ and $K_R:=L_R^{(m_0)}$.
Because $T$ semigroup-generates $G$, we can now choose $m_0$ such that each $w\in W$ is a word of length at most $m_0$ comprising of elements of $T$.
Since $L_R(e)\ge\tfrac12$ and $L_R(t)\ge\mu(t)/4$ for every $t\in T$ and every $R\ge R_0$, there exists $\eps>0$ such that
$K_R(w)\ge\eps$ for all $w\in W$ and all $R\ge R_0$.

Fix $\beta\in(0,\zeta/2)$.
Using $e\notin T$ and the bound on the coefficients $c_t$,
\begin{align*}
\sum_{g\in G}e^{\beta|g|}\widetilde\mu_R(g)
&\le\sum_{|g|\le R}e^{\beta|g|}\mu(g)
  +\delta_R
  +\sum_{t\in T}|c_t|e^{\beta|t|}\\
&\le\sum_{g\in G}e^{\beta|g|}\mu(g)
  +C_1 e^{-\frac{\zeta R}{2}}
  +C_0C_2 e^{-\frac{\zeta R}{2}}\sum_{t\in T}e^{\beta|t|}
\le M_{\beta,T}
\end{align*}
for a constant $M_{\beta,T}>0$ independent of $R$.
Hence
$$
\sum_{g\in G}e^{\beta|g|}K_R(g)
\le\Big(\sum_{g\in G}e^{\beta|g|}L_R(g)\Big)^{m_0}\le\big(\tfrac12+\tfrac12 M_{\beta,T}\big)^{m_0}
=:M<\infty,
$$
uniformly in $R\ge R_0$.

Fix a parameter $\kappa>0$ to be chosen later, and define 
$$
R_q:=\max\{R_0,\kappa\log(q+1)\}\qquad (q\in \NN).
$$
\Cref{lem:uniform-dungey-family} therefore gives, for each $h\in W$ and all $q\in\NN$,
\begin{equation}\label{eq:Dungey-gen-bound}
\|\partial_h K_{R_q}^{(q)}\|_1\le\frac{C_4}{\sqrt{q}},
\qquad
\sum_{y\in G}|y||\partial_h K_{R_q}^{(q)}(y)|\le C_4,
\end{equation}
with $C_4$ depending only on $(G,W,\eps,\beta,M_\beta,\kappa)$.

To pass from generator differences to $\Delta_q^z(K_{R_q})$ with $z\in G_s$, write $z=s_1\cdots s_\ell$ with $s_i\in W$ and $\ell=|z|_W$.
Since $z\in Z(G)$, $\Delta_q^z(K_{R_q})(y)=K_{R_q}^{(q)}(z^{-1}y)-K_{R_q}^{(q)}(y)=\partial_z K_{R_q}^{(q)}(y)$.
Telescoping gives
$$
\partial_{z}K_{R_q}^{(q)}
=\sum_{j=1}^{\ell}L_{s_1}\cdots L_{s_{j-1}}\big(\partial_{s_j}K_{R_q}^{(q)}\big),
$$
where $L_g\nu(y):=\nu(g^{-1}y)$.
Each left-translation preserves the $\ell^1$-norm, and 
$\sum_y|y||L_g\nu(y)|=\sum_y|gy||\nu(y)|\le\sum_y|y||\nu(y)|+|g|\|\nu\|_1$.
Hence
\begin{equation}\label{eq:uniform-Dungey}
\|\Delta_q^z(K_{R_q})\|_1
\le\frac{\ell C_4}{\sqrt{q}},
\qquad
\sum_{y}|y||\Delta_q^z(K_{R_q})(y)|
\le\ell C_4+\frac{\ell^2 C_4}{\sqrt{q}},
\end{equation}
for every $q\in\NN$.

\smallskip\noindent
Define $\nu_q:=\widetilde\mu_{R_q}$.
The total variation distance between $L$ and $L_{R_q}$ satisfies $\frac{1}{2}\|L-L_{R_q}\|_1=\frac{1}{2}\rho_{R_q}\le\frac{1}{4} C_3(q+1)^{-\frac{\zeta\kappa}{2}}$.
Hence 
$$
\frac{1}{2}\|K-K_{R_q}\|_1\le \frac{m_0}{2}\| L-L_{R_q}\|\le\frac{m_0}{4}C_3(q+1)^{-\frac{\zeta\kappa}{2}}.
$$

For each step $i$, draw $(\xi_i,\eta_i)$ with marginals $\xi_i\sim K$, $\eta_i\sim K_{R_q}$, and $\PP(\xi_i\ne\eta_i)=\tfrac12\|K-K_{R_q}\|_1$.
Let $X_q=\xi_1\cdots\xi_q$ and $Y_q=\eta_1\cdots\eta_q$.
Set
$$
p_q(y):=K^{(q)}(y)=\PP(X_q=y),
\qquad
\widetilde p_q(y):=K_{R_q}^{(q)}(y)=\PP(Y_q=y).
$$
By the standard coupling inequality,
$$
\tfrac12\sum_y\big|\PP(X_q=y)-\PP(Y_q=y)\big|
\le\PP(X_q\ne Y_q)
\le q\cdot\tfrac{m_0}{2}C_3(q+1)^{-\frac{\zeta\kappa}{2}}
\le\tfrac{m_0 C_3}{2}q^{1-\frac{\zeta\kappa}{2}}.
$$
For a function $\nu:G\to\CC$ and $g\in G$, define $(\tau_g\nu)(y):=\nu(yg^{-1})$. Observe that
$$
\Delta_q^z(K)=\left[\tau_z \big(K^{(q)}-K^{(q)}_{R_q}\big)\right]+\Delta_q^z(K_{R_q})+\left[K^{(q)}_{R_q}-K^{(q)}\right].
$$
Hence, by the triangle inequality and the fact that $\tau_z$ preserves the $\ell^1$-norm,
\begin{align*}
\|\Delta_q^z(K)\|_1
&\le \|\tau_z(p_q-\widetilde p_q)\|_1
   +\|\Delta_q^z(K_{R_q})\|_1
   +\|\widetilde p_q-p_q\|_1 \\
&= \|\Delta_q^z(K_{R_q})\|_1
   +2\|p_q-\widetilde p_q\|_1 \\
&= \|\Delta_q^z(K_{R_q})\|_1
   +2\sum_y\big|\PP(X_q=y)-\PP(Y_q=y)\big|\\
&\le \|\Delta_q^z(K_{R_q})\|_1 + 2m_0C_3q^{1-\frac{\zeta\kappa}{2}}
\le \frac{\ell C_4}{\sqrt q}+2m_0C_3q^{1-\frac{\zeta\kappa}{2}}.
\end{align*}
Choosing $\kappa>2/\zeta$ makes the right-hand side tend to $0$ as $n\to\infty$, establishing \eqref{eq:tv-goal}.

\smallskip\noindent
Now,
\begin{align*}
\sum_y|y||\Delta_q^z(K)(y)|
&\le \sum_y |y|\big|\Delta_q^z(K_{R_q})(y)\big|
 + \sum_y |y|\big|\tau_z(K^{(q)}-K^{(q)}_{R_q})(y)\big|
 + \sum_y |y|\big|K^{(q)}(y)-K^{(q)}_{R_q}(y)\big|.
\end{align*}
By change of variables and using $|yz|\le |y|+|z|$, we get
$$
\sum_y |y|\big|\tau_z(K^{(q)}-K^{(q)}_{R_q})(y)\big|
\le \sum_y |y|\big|K^{(q)}(y)-K^{(q)}_{R_q}(y)\big|
   +|z|\big\|K^{(q)}-K^{(q)}_{R_q}\big\|_1.
$$
Hence,
\begin{align*}
\sum_y|y||\Delta_q^z(K)(y)|
&\le \sum_y|y|\big|\Delta_q^z(K_{R_q})(y)\big|
  +2\sum_y|y|\big|K^{(q)}(y)-K^{(q)}_{R_q}(y)\big|
  +|z|\big\|K^{(q)}-K^{(q)}_{R_q}\big\|_1.
\end{align*}
Now for each $y$,
$$
\big|K^{(q)}(y)-K^{(q)}_{R_q}(y)\big|
\le \PP(X_q=y,\ X_q\ne Y_q)+\PP(Y_q=y,\ X_q\ne Y_q),
$$
so
$$
\sum_y |y|\big|K^{(q)}(y)-K^{(q)}_{R_q}(y)\big|
\le \EE[|X_q|\mathbf1_{\{X_q\ne Y_q\}}]
 + \EE[|Y_q|\mathbf1_{\{X_q\ne Y_q\}}].
$$
Also $\big\|K^{(q)}-K^{(q)}_{R_q}\big\|_1\le 2\PP(X_q\ne Y_q)$. Therefore
\begin{align*}
\sum_y|y||\Delta_q^z(K)(y)|
&\le \sum_y|y||\Delta_q^z(K_{R_q})(y)|
  +2\big(\EE[|X_q|\mathbf1_{\{X_q\ne Y_q\}}]+\EE[|Y_q|\mathbf1_{\{X_q\ne Y_q\}}]\big)\\
&\qquad +2|z|\PP(X_q\ne Y_q).
\end{align*}

By Cauchy--Schwarz, $\EE[|X_q|\mathbf1_{\{X_q\ne Y_q\}}]\le\EE[|X_q|^2]^{1/2}\PP(X_q\ne Y_q)^{1/2}$.
Subadditivity gives $|X_q|\le\sum_{i=1}^q|\xi_i|$, so $\EE[|X_q|^2]\le q^2\EE[|\xi_1|^2]=q^2 M_2$, where $M_2=\sum_{x\in G}|x|^2K(x)<\infty$ (since $K$ has exponential tails).
Hence, $\EE[|X_q|\mathbf1_{\{X_q\ne Y_q\}}]\le\sqrt{M_2m_0 C_2/2}q^{3/2-\zeta\kappa/4}$.
The analogous bound holds for $Y_q$: since $|Y_q|\le\sum_{i=1}^q|\eta_i|$ with
$\eta_i\sim K_{R_q}$, and $\EE[|\eta_1|^2]=\sum_g|g|^2 K_{R_q}(g)\le 2\beta^{-2}M=:M_2'$ uniformly in $q$,
the same Cauchy--Schwarz estimate yields
$\EE[|Y_q|\mathbf1_{\{X_q\ne Y_q\}}]\le\sqrt{M_2' m_0 C_3/2}q^{3/2-\zeta\kappa/4}$.
Choosing $\kappa\ge 6/\zeta$ gives
$$
\sup_{q\in\NN}\sum_y|y||\Delta_q^z(K)(y)|<\infty,
$$
establishing \eqref{eq:mom-goal}.

\end{proof}

\begin{corollary}[$\HF_1$ classification for smooth centered measures on groups of polynomial growth]\label{cor:HF_1-is-LHF}
Let $G$ be a finitely generated group of polynomial growth, and let $\mu$ be an adapted, smooth, and Abelian-centered probability measure on $G$ (no symmetry or finite support assumed). Then
$$
\HF_1(G,\mu)\cong\LHF(G,\mu).
$$
\end{corollary}

\begin{proof}
Let $H$ be a finite index nilpotent subgroup of $G$, and let $\mu_H$ be the induced hitting measure on $H$. Then, $\mu_H$ is Abelian-centered. Indeed, let $\varphi\in\Hom(H,\CC)$ and $G=\bigsqcup_{i=1}^n Hg_i$. Take any $x\in G$. Then, for every $i=1,2,\dots,n$ there exists $h_i \in H$ and $\sigma_x(i) \in \{ 1,2,\dots,n\}$ such that $g_ix=h_ig_{\sigma_x(i)}$ ($\sigma_x$ is actually a permutation on $\{ 1,2,\dots,n\}$ for each $x\in G$). Define $\psi:G\to\CC$ by
$$
\psi(x)=\frac{1}{n}\sum_{i=1}^n \varphi(h_i).
$$
It can be checked that $\psi$ is a well-defined group homomorphism that extends $\varphi$. Now, 
$$
\sum_{h\in H}\varphi(h)\mu_H(h)=\EE_e[\psi(X_{\tau^+})]
$$ 
where $\tau^+$ is the first return time to $H$ (\Cref{def:Schreier}). Note that $\tau^+<\infty$ since $H$ has finite index in $G$. Since $(\psi(X_t))_t$ is a uniformly integrable martingale, Optional Stopping Theorem gives
$$
\EE_e[\psi(X_{\tau^+})]=\EE_e[\psi(X_0)]=0.
$$
Since this holds for all $\varphi\in\Hom(H,\CC)$, $\mu_H$ is Abelian-centered. Also, $\mu_H$ is adapted and smooth as $\mu$ is adapted and smooth. Therefore, by \cite[Proposition 3.4]{MY16}, \Cref{thm:unconditional-HF1-closure} and \Cref{bij:LHF}, we have
$$
\HF_1(G,\mu)\cong\HF_1(H,\mu_H)=\LHF(H,\mu_H)\cong \LHF(G,\mu).
$$
\end{proof}

\begin{remark}
If in addition to the hypothesis of \Cref{cor:HF_1-is-LHF} we have $\dim \HF_1(G,\mu)<\infty$, then $\HF_1(G,\mu)=\LHF(G,\mu)$.
\end{remark}

\smallskip
\emph{Immediate payoffs.}
\begin{itemize}
\item \textbf{Sublinear-growth Liouville (Lipschitz).} On groups of polynomial growth, any Lipschitz $\mu$-harmonic function with $o(|x|)$ growth is constant (\Cref{cor:sublinear-constant} below).
\item \textbf{Measure stability on the polynomial-growth side.} On nilpotent groups, for any two adapted and smooth measures with the same Abelian drift, $\LHF(G,\mu)$ and $\LHF(G,\nu)$ coincide canonically. On virtually nilpotent groups, if both measures are $\SAS$, the identification is independent of the measure (\Cref{cor:measure-stability}).
\item \textbf{Bridge to geometry.} The discrete$\to$continuous extension in \Cref{prop:LipExtension} propagates the \emph{Lipschitz} affine structure of $\LHF$ on an orbit to a globally Lipschitz $L$-harmonic function on the ambient manifold with quantitative gradient bounds.
\end{itemize}

\subsection{Virtual first cohomology and Shalom-Sauer transport}

For virtually nilpotent groups the ordinary group $H^1(G;\RR)$ is too small for
our purposes: finite extensions can act nontrivially on the first cohomology of
a nilpotent finite-index subgroup. The correct canonical object is the following
virtual first cohomology.

\begin{lemma}[Finite-index subgroups and commutators]
\label{lem:finite-index-commutators}
Let $G$ be a finitely generated nilpotent group and let $H\leq G$ be a
finite-index subgroup. Then
\begin{equation}\label{eq:finite-commutator-kernel}
(H\cap [G,G])/[H,H]
\end{equation}
is finite.
\end{lemma}
\begin{proof}
Since $[H,H]\leq H$ and $[H,H]\leq [G,G]$ (because $H\leq G$), we have
$[H,H]\leq H\cap[G,G]\leq [G,G]$, and hence
$[H\cap[G,G]:[H,H]]\leq [[G,G]:[H,H]]$.
It therefore suffices to show that $[[G,G]:[H,H]]<\infty$.

If $G$ is abelian (nilpotency class $c=1$), then $[G,G]=\{e\}$ and the
conclusion is immediate. Assume henceforth that $c\geq2$.

We prove the auxiliary claim that $[\gamma_k(G):\gamma_k(H)]<\infty$ for every
$k\geq2$, by descending induction on $k$. Here
$\gamma_{k+1}(G)=[G,\gamma_k(G)]$, and $\gamma_{c+1}(G)=\{e\}$.

Choose generators $x_1,\dots,x_r$ for $G$. Since $H$ has finite index, there is
an integer $m\geq1$ such that $x_i^m\in H$ for every $i$.

For each $k\geq2$, the abelian group $\gamma_k(G)/\gamma_{k+1}(G)$ is generated
by the images of the left-normed simple commutators
$[x_{i_1},\dots,x_{i_k}]$ of weight $k$. Standard commutator calculus in the
associated graded group gives
\begin{equation}\label{eq:power-commutator-graded}
[x_{i_1}^m,\dots,x_{i_k}^m]
\equiv
[x_{i_1},\dots,x_{i_k}]^{m^k}
\pmod{\gamma_{k+1}(G)}.
\end{equation}
The left-hand side lies in $\gamma_k(H)$. Hence the image of $\gamma_k(H)$ in
$\gamma_k(G)/\gamma_{k+1}(G)$ contains the $m^k$-multiples of a finite
set of generators, and therefore has finite index.

\smallskip\noindent\emph{Base case ($k=c$).}
Since $\gamma_{c+1}(G)=\{e\}$, so
$\gamma_c(G)/\gamma_{c+1}(G)=\gamma_c(G)$. The argument above shows directly
that $\gamma_c(H)$ contains $m^c$-th powers of every generator of the finitely
generated abelian group $\gamma_c(G)$, so
$[\gamma_c(G):\gamma_c(H)]<\infty$.

\smallskip\noindent\emph{Induction step ($k+1\to k$, for $2\leq k<c$).}
Assume $[\gamma_{k+1}(G):\gamma_{k+1}(H)]<\infty$. The argument above gives
$$
[\gamma_k(G):\gamma_k(H)\gamma_{k+1}(G)]=[\gamma_k(G)/\gamma_{k+1}(G):\gamma_k(H)\gamma_{k+1}(G)/\gamma_{k+1}(G)]<\infty.
$$
For the remaining factor, the second isomorphism theorem (applicable because
$\gamma_{k+1}(G)\trianglelefteq\gamma_k(G)$) gives
$$
\gamma_k(H)\gamma_{k+1}(G)\big/\gamma_k(H)\cong
\gamma_{k+1}(G)\big/\bigl(\gamma_k(H)\cap\gamma_{k+1}(G)\bigr).
$$
Since $\gamma_{k+1}(H)\leq\gamma_k(H)\cap\gamma_{k+1}(G)$, the right-hand
side has order at most $[\gamma_{k+1}(G):\gamma_{k+1}(H)]<\infty$ by the
induction hypothesis. Combining the two finite indices:
$$
[\gamma_k(G):\gamma_k(H)]
=[\gamma_k(G):\gamma_k(H)\gamma_{k+1}(G)]
[\gamma_k(H)\gamma_{k+1}(G):\gamma_k(H)]<\infty.
$$

Taking $k=2$ gives $[[G,G]:[H,H]]<\infty$, which completes the proof.
\end{proof}

\begin{lemma}[Degree-one restriction across finite index]\label{lem:finite-index-H1-iso}
Let $N$ be a finitely generated nilpotent group and let $N'\le N$ be a
finite-index subgroup. Then restriction induces an isomorphism
$$
\operatorname{res}_{N'}^N:H^1(N;\mathbb F)\longrightarrow H^1(N';\mathbb F),
\qquad \mathbb F\in\{\RR,\CC\}.
$$
Equivalently, every homomorphism $N'\to\mathbb F$ extends uniquely to a
homomorphism $N\to\mathbb F$.
\end{lemma}

\begin{proof}
We first prove that the restriction
$$
\Hom(A,\mathbb F)\longrightarrow \Hom(A',\mathbb F)
$$
is an isomorphism for a finite-index subgroup $A'\le A$ of a finitely generated abelian group $A$. Note that there exists a positive integer $k$ such that $kx\in H$ for all $x\in G$. Injectivity is immediate: if $\phi$ vanishes on
$A'$ and $b\in A$, choose $m\in \NN$ with $mb\in A'$; then
$m\phi(b)=\phi(mb)=0$, hence $\phi(b)=0$. For surjectivity, given
$\psi\in\Hom(A',\mathbb F)$ set
$$
\widetilde\psi(b):=\frac{1}{k}\psi(kb) \qquad (b\in A).
$$
The resulting map $\widetilde\psi:A\to\mathbb F$
is a homomorphism extending $\psi$.

Write $B=N_{\rm ab}=N/[N,N]$, and let $B'=N'/(N'\cap [N,N])$.
Let $C=(N'\cap [N,N])/[N',N']$, which is a finite subgroup of $N'_{\mathrm{ab}}$ by \Cref{lem:finite-index-commutators}. Then, $B'\cong N'_{\mathrm{ab}}/C$ via the isomorphism $\lambda_1:B'\to N'_{\mathrm{ab}}/C$ given by
$$
\lambda_1(x N'\cap [N,N]):=x[N'N']  \pmod C \qquad (x\in N')
$$
Also, $B'\cong D$, where $D=\{x[N,N]\mid x\in N' \}$ is a finite-index subgroup of $B$ and the isomorphism $\lambda_2:D\to B'$ is given by
$$
\lambda_2(x[N,N]):=x(N'\cap [N,N]) \qquad (x\in N').
$$
For $f\in \Hom(N',\mathbb F)$, let $\widetilde f$ be the unique lift of $f$ to $N'_{\mathrm{ab}}$, and $F:N'_{\mathrm{ab}}/C\to \mathbb F$ be the unique lift of $\widetilde f$ to $N'_{\mathrm{ab}}/C$. Define $g:N\to\mathbb F$ by 
$$
g(x)=\tilde g(x[N,N]) \qquad (x\in N)
$$ 
where $\widetilde g$ is the extension of $F\circ\lambda_1\circ\lambda_2$ to $B$. Then, $g|_{N'}=f$. 
The fact that this extension is unique can be proved exactly how we proved the injectiveness of the restriction map from $\Hom(A,\mathbb F)$ to $\Hom(A',\mathbb F)$ at the beginning of this proof. 
\end{proof}

\begin{definition}[Virtual first cohomology]\label{def:virtual-H1}
Let $G$ be a finitely generated group of polynomial growth. Let
$\mathcal N(G)$ be the collection of finite-index torsion-free nilpotent
subgroups of $G$. For $\mathbb F\in\{\RR,\CC\}$ set
$$
\mathrm{VH}^1(G;\mathbb F)
:=\varinjlim_{N\in\mathcal N(G)} H^1(N;\mathbb F),
$$
where the transition maps are restrictions to further finite-index subgroups.
Equivalently, an element is represented by a pair $(N,\alpha)$ with
$N\in\mathcal N(G)$ and $\alpha\in H^1(N;\mathbb F)$, and
$(N,\alpha)\sim (N',\alpha')$ if the restrictions of $\alpha$ and $\alpha'$ to
some finite-index subgroup of $N\cap N'$ agree. By \Cref{lem:finite-index-H1-iso}, the transition maps are isomorphisms; in
particular this is a finite-dimensional vector space.
\end{definition}

\begin{definition}[Shalom-Sauer Transport datum]\label{def:SS-transport-datum}
Let $G$ and $H$ be finitely generated groups of polynomial growth and let
$\Phi:G\to H$ be a quasi-isometry.  A \emph{Shalom-Sauer transport datum} $\mathfrak d$ for $\Phi$ consists of:
\begin{enumerate}
\item finite-index torsion-free nilpotent subgroups
$N\in\mathcal N(G)$ and $M\in\mathcal N(H)$;
\item a quasi-isometry $\Psi:N\to M$ at uniformly bounded distance from
$p_M\circ\Phi|_N$ for some coarse nearest-point projection $p_M:H\to M$, i.e. $\sup_{x\in N}d(\Psi(x), p_M(\Phi(x))<\infty$;
\item A linear isomorphism
$$
   S^1_{\Psi,\mathfrak d}:H^1(N;\RR)\longrightarrow H^1(M;\RR).
$$
corresponding to $\Psi$.
\end{enumerate}
Such data exist for any quasi-isometry $\Phi$ by \Cref{lem:QI-finite-index}, \cite[Theorem 1.2 and Section 4.1]{Shalom2004}, \cite[Theorem 1.5 and Section 5.2]{Sauer2006}. If $G$ and $H$ are themselves nilpotent, we take $\Psi=\Phi$ and $p_M$ as the identity map.
\end{definition}

\begin{theorem}[Shalom-Sauer transport on virtual $H^1$]\label{thm:SS-virtual-H1}
Let $G$ and $H$ be finitely generated groups of polynomial growth, and let
$\Phi:G\to H$ be a quasi-isometry.  For every Shalom--Sauer transport datum
$\mathfrak d$ for $\Phi$ there is a real-linear isomorphism
$$
\Phi^{\mathrm{vH}}_{\mathfrak d}:
\mathrm{VH}^1(G;\RR)\longrightarrow \mathrm{VH}^1(H;\RR).
$$
\end{theorem}

\begin{proof}
Fix $N\in\mathcal N(G)$.  There is a canonical map
$$
\rho_N^G:\mathrm{VH}^1(G;\RR)\longrightarrow H^1(N;\RR).
$$
Indeed, let an equivalence class be represented by $(L,\alpha)$ with
$L\in\mathcal N(G)$.  Put $K=L\cap N$.  Then $K$ is finite-index in both $L$ and
$N$, and by \Cref{lem:finite-index-H1-iso} the restriction map
$H^1(N;\RR)\to H^1(K;\RR)$ is an isomorphism.  There is therefore a unique
class $\alpha_N\in H^1(N;\RR)$ whose restriction to $K$ agrees with
$\alpha|_K$, and we set $\rho_N^G([L,\alpha])=\alpha_N$.  The definition is
independent of the representative $(L,\alpha)$ by passing to a further common
finite-index subgroup.  Its inverse is the map $\beta\mapsto [N,\beta]$, so
$\rho_N^G$ is a linear isomorphism.  The same construction gives
$\rho_M^H:\mathrm{VH}^1(H;\RR)\to H^1(M;\RR)$ for every
$M\in\mathcal N(H)$.

Now let $\mathfrak d$ be a Shalom-Sauer transport datum with chosen subgroups
$N,M$ and chosen nilpotent quasi-isometry $\Psi:N\to M$.  Define
$$
\Phi^{\mathrm{vH}}_{\mathfrak d}
   :=(\rho_M^H)^{-1}\circ S^1_{\Psi,\mathfrak d}\circ \rho_N^G.
$$
This is a linear isomorphism because all three factors are linear
isomorphisms.
\end{proof}

\begin{theorem}[Canonical virtual affine isomorphism]\label{lem:virtual-affine-boundary}
Let $G$ be a finitely generated group of polynomial growth and let $\mu$ be an
$\SAS$ measure on $G$. There is a canonical isomorphism
$$
\mathcal A_{G,\mu}:\LHF(G,\mu)\longrightarrow \CC\oplus
\mathrm{VH}^1(G;\CC).
$$
More explicitly, choose $N\in\mathcal N(G)$ and let $\mu_N$ be the hitting law
on $N$. If
$$
\mathrm{Res}_N^G f(n)=c+\varphi_N(n)\qquad(n\in N)
$$
is the decomposition given by \Cref{thm:HF_1-is-linear-poly} for
$(N,\mu_N)$, then
$$
\mathcal A_{G,\mu}(f)=(c,[N,\varphi_N]).
$$
This definition is independent of the choice of $N$.
\end{theorem}

\begin{proof}
Since $\mu$ is $\SAS$ the hitting law $\mu_N$ is $\SAS$. Hence \Cref{thm:HF_1-is-linear-poly} applies on $N$. If $L\le N$ is another finite-index torsion-free nilpotent
subgroup, then the restriction of $\mathrm{Res}_N^G f$ to $L$ is $\mathrm{Res}_L^G f$, and $\varphi_N$ restricts to the complex-valued homomorphism on $L$ obtained from the decomposition of $\mathrm{Res}_L^G f$ . Passing to a common finite-index subgroup
therefore shows independence of $N$.

Conversely, if $(c,[N,\varphi_N])\in \CC\oplus\mathrm{VH}^1(G;\CC)$, then
$c+\varphi_N$ is $\mu_N$-harmonic, and
\Cref{thm:B} induces it uniquely to a function in $\LHF(G,\mu)$. If the same equivalence class is represented on another subgroup, the two induced functions on $G$ have
the same restriction to a common finite-index subgroup, hence agree by the
injectivity of the restriction map in \Cref{thm:B}. This gives the inverse map
and proves the claim.
\end{proof}

\section{Linear boundary, Lipschitz 1-cocycles, and cohomological rigidity}\label{sec:linear-boundary}

We repackage \Cref{thm:A} as a cohomological rigidity statement for
Lipschitz $1$-cocycles and exhibit a canonical boundary at linear scale that
controls $\LHF$ on nilpotent groups. The construction is compatible with
finite index (\Cref{bij:LHF}), yielding a functorial linear boundary.

\medskip

\noindent\textbf{Standing conventions.}
Throughout this section:
\begin{itemize}
  \item For a finitely generated group $G$ with finite symmetric generating set $S$, we write $|g|_S$ for the word length and use the induced Lipschitz seminorms defined below. Different choices of finite symmetric generating sets yield equivalent norms.
  \item We write $[x]_G$ (resp.\ $[x]_H$) for the image of $x$ in $G_{\mathrm{ab}}:=G/[G,G]$ (resp.\ $H_{\mathrm{ab}}$). When the ambient group is clear from context, we may abbreviate $[x]_G$ to $[x]$.
  \item Our standing hypothesis ``$\mu$ is smooth'' includes the \emph{finite first moment} assumption $\sum_{g\in G}\mu(g)|g|_S<\infty$. In particular, since all norms on $V:=G_{\mathrm{ab}}\otimes_\ZZ\RR$ are equivalent and $\|[g]_G\|_{S}\le |g|_S$, we also have $\sum_{g\in G}\mu(g)\|[g]_G\|<\infty$ for any fixed norm on $V$.
\end{itemize}

\subsection{Lipschitz 1-cocycles and the gradient class}\label{subsec:Lip-cocycles}

Fix a finite symmetric generating set $S$ of $G$. Let $\mathrm{Lip}(G)$ be the space
of real/complex-valued Lipschitz functions on $G$, endowed with the seminorm
$$
\|f\|_{\mathrm{Lip},S}\ :=\ \|\nabla_S f\|_\infty\ =\ \max_{s\in S}\ \sup_{x\in G} |\partial^{s}f(x)|,
\qquad
\partial^{s}f(x)=f(x s)-f(x).
$$
The left action $(g\cdot f)(x):=f(g^{-1}x)$ is isometric on $\mathrm{Lip}(G)/\CC$ since
$\partial^{s}(g\cdot f)(x)=\partial^{s}f(g^{-1}x)$.
For $f\in \mathrm{Lip}(G)$ define the $1$-cochain
$$
b_f:G\to \mathrm{Lip}(G)/\CC,\qquad
b_f(g):=[g\cdot f-f].
$$
Then $b_f$ is a $1$-cocycle: $b_f(gh)=g\cdot b_f(h)+b_f(g)$.
Let $Z^1(G,\mathrm{Lip}(G)/\CC)$ and $B^1(G,\mathrm{Lip}(G)/\CC)$ denote cocycles and coboundaries.

We define the \emph{gradient cocycle map}
$$
\mathsf{Gr}: \mathrm{Lip}(G)/\CC \longrightarrow Z^1(G,\mathrm{Lip}(G)/\CC),\qquad [f]\mapsto b_f,
$$
It is easy to see that $\mathsf{Gr}$ is a linear map.

\begin{definition}[Linear boundary and induced norms]\label{def:linear-boundary}
The \emph{linear boundary} of $G$ is the projectivized Abelian dual
$$
\partial_{\mathrm{lin}}G := \mathcal P\big(\Hom(G_{\mathrm{ab}},\RR)\big).
$$
Let $V=G_{\mathrm{ab}}\otimes_\ZZ\RR$. The generating set $S$ induces the following norm on the dual $V^\ast=\Hom(G_{\mathrm{ab}},\RR)$ (and similarly on $\Hom(G_{\mathrm{ab}},\CC)$):
$$
\|\phi\|_{S}:=\max_{s\in S}|\phi([s]_G)|.
$$
\end{definition}

\begin{theorem}[Banach-valued affine rigidity]\label{thm:Banach-valued}
Let $G$ be a finitely generated nilpotent group, and let $\mu$ be adapted and smooth with centered Abelian drift $\mathbf m_{\mathrm{ab}}(\mu)=0$. Let $E$ be a complex Banach space. Then every Lipschitz $\mu$-harmonic map $f:G\to E$ has the affine form
$$
f(x)\ =\ c\ +\ \Lambda([x]_G)\qquad(x\in G),
$$
for some $c\in E$ and a (bounded) \emph{real}-linear operator $\Lambda\in\mathcal L(G_{\mathrm{ab}}\otimes_\ZZ\RR,\ E)$. If $\mu$ is not centered, the same holds with the constraint $\Lambda(\mathbf m_{\mathrm{ab}}(\mu))=0$.
\end{theorem}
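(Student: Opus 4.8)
The plan is to reduce the Banach-valued statement to the scalar affine rigidity of \Cref{thm:HF_1-is-linear-poly} by testing $f$ against continuous linear functionals, and then to reassemble the scalar data into a single bounded operator using Hahn--Banach separation and the structure theory of finitely generated abelian groups. The substantive analytic content (Alexopoulos' classification) is thereby entirely absorbed into \Cref{thm:HF_1-is-linear-poly}, and what remains is soft functional analysis.

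First I would check that $f$ is regular enough to be tested. Since $\|f(xg)-f(x)\|_E\le\|\nabla_S f\|_\infty\,|g|_S$ and $\sum_g\mu(g)|g|_S<\infty$ by smoothness, the series $\sum_g\mu(g)\bigl(f(xg)-f(x)\bigr)$ converges absolutely in $E$; combined with $\sum_g\mu(g)=1$ this makes sense of the defining relation $f(k)=\sum_g\mu(g)f(kg)$ and shows the right-hand series converges in $E$. For any $\ell\in E^\ast$, continuity then gives $\ell(f(k))=\sum_g\mu(g)\,\ell(f(kg))$, so $\ell\circ f$ is $\mu$-harmonic; moreover $|\partial^{s}(\ell\circ f)(x)|=|\ell(f(xs)-f(x))|\le\|\ell\|\,\|\nabla_S f\|_\infty$, so $\ell\circ f\in\LHF(G,\mu)$. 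By \Cref{thm:HF_1-is-linear-poly} there are $c_\ell\in\CC$ and $\bar\phi_\ell\in\Hom(G_{\mathrm{ab}},\CC)$ with $\bar\phi_\ell(\mathbf m_{\mathrm{ab}}(\mu))=0$ and $\ell(f(x))=c_\ell+\bar\phi_\ell([x]_G)$; evaluating at $x=e$ forces $c_\ell=\ell(f(e))$.

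Next set $g:=f-f(e)$, so that $\ell(g(x))=\bar\phi_\ell([x]_G)$ for every $\ell$ and $x$. Additivity of each $\bar\phi_\ell$ gives $\ell(g(xy))=\bar\phi_\ell([x]_G)+\bar\phi_\ell([y]_G)=\ell(g(x)+g(y))$ for all $\ell$, so by Hahn--Banach separation $g:G\to(E,+)$ is a group homomorphism. Since $(E,+)$ is abelian it factors as $g=\bar g\circ\pi_{\mathrm{ab}}$ with $\bar g\in\Hom(G_{\mathrm{ab}},E)$; any torsion element $t$ (say $nt=0$) satisfies $n\bar g(t)=\bar g(nt)=0$ in the vector space $E$, hence $\bar g(t)=0$, and $\bar g$ descends to $G_{\mathrm{ab}}/\mathrm{tors}\cong\ZZ^{r}$. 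A homomorphism $\ZZ^{r}\to E$ extends uniquely to a real-linear map, giving $\Lambda\in\mathcal L(G_{\mathrm{ab}}\otimes_\ZZ\RR,\ E)$ with $g(x)=\Lambda([x]_G)$; boundedness of $\Lambda$ is automatic since its domain is finite-dimensional, and in fact $\|\Lambda([s]_G)\|=\|f(s)-f(e)\|\le\|\nabla_S f\|_\infty$, which (as $S$ generates $G$) yields $\|\Lambda\|\le C_S\|\nabla_S f\|_\infty$ for a constant $C_S$ depending only on $S$ and the chosen norm on $V$. Thus $f(x)=c+\Lambda([x]_G)$ with $c=f(e)$.

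Finally, for the centering constraint: by uniqueness of the real-linear extension, $\ell\circ\Lambda=\bar\phi_\ell$ on all of $G_{\mathrm{ab}}\otimes_\ZZ\RR$ for each $\ell$, so $\ell(\Lambda(\mathbf m_{\mathrm{ab}}(\mu)))=\bar\phi_\ell(\mathbf m_{\mathrm{ab}}(\mu))=0$ for every $\ell\in E^\ast$, whence $\Lambda(\mathbf m_{\mathrm{ab}}(\mu))=0$; this is vacuous precisely when $\mathbf m_{\mathrm{ab}}(\mu)=0$, recovering the centered case. I do not anticipate a genuine obstacle: the only delicate points are the interchange of $\ell$ with the averaging operator, which is exactly what the absolute convergence in $E$ (from the Lipschitz bound and finite first moment) provides, and the assembly of the pointwise scalar descriptions into one bounded operator, which is handled by Hahn--Banach together with finite-dimensionality of $G_{\mathrm{ab}}\otimes_\ZZ\RR$.
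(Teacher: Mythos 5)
Your proposal is correct and follows essentially the same route as the paper's proof: scalarize via functionals $\ell\in E^\ast$, invoke the scalar affine rigidity of \Cref{thm:HF_1-is-linear-poly}, use the fact that $E^\ast$ separates points to upgrade the scalar affine identities to the statement that $f-f(e)$ is a group homomorphism into $(E,+)$, and then factor through $G_{\mathrm{ab}}$ and extend linearly to $G_{\mathrm{ab}}\otimes_\ZZ\RR$. The only cosmetic difference is that the paper passes through the vanishing of second differences $\partial^v\partial^u f\equiv 0$ before extracting the homomorphism, whereas you verify additivity of $f-f(e)$ directly; your explicit treatment of torsion and of the centering constraint via $\ell\circ\Lambda=\bar\phi_\ell$ matches the paper's intent.
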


\begin{proof}[Sketch]
Since $f$ is Lipschitz, it has linear growth, whence for each fixed $k\in G$ the series $\sum_{g\in G}\mu(g)f(kg)$ converges absolutely in $E$ by the finite first moment of $\mu$. Consequently, for every $\phi\in E^\ast$, the scalar map $\phi\circ f$ is Lipschitz and $\mu$-harmonic.

By \Cref{thm:HF_1-is-linear-poly} and \Cref{lem:deg1_affine} in the scalar case, $\phi\circ f$ is affine. Hence, for all $u,v\in G$,
$$
\phi\big(\partial^v\partial^u f(x)\big)=\partial^v\partial^u(\phi \circ f)(x) = 0\qquad(\forall x\in G).
$$
Since $E^\ast$ separates points of $E$, this implies $\partial^v\partial^u f\equiv 0$ as an $E$-valued function. Therefore, for all $u,v \in G$ we have
\begin{align*}
\partial^v\partial^u f(e) &= 0 \\
\implies f(uv)-f(u)-f(v)+f(e) &= 0 \\
\implies f(u)-f(e) + f(v)-f(e) &= f(uv) - f(e). 
\end{align*}
Hence,
$$
\Phi(u) := \partial^u f(e) \qquad(u \in G)
$$
defines a group homomorphism $\Phi:G\to (E,+)$. As $(E,+)$ is Abelian, $\Phi$ factors through $G_{\mathrm{ab}}$. Writing $\Lambda$ for the induced map on $G_{\mathrm{ab}}$ and extending linearly to $V:=G_{\mathrm{ab}}\otimes_\ZZ\RR$ yields $f(x)=c+\Lambda([x]_G)$ with $c=f(e)$.

The Lipschitz identity follows from
$
\partial^s f(x)=f(xs)-f(x)=\Lambda([xs]_G)-\Lambda([x]_G)=\Lambda([s]_G)
$
for all $x\in G$, whence
$
\|\nabla_S f\|_\infty=\max_{s\in S}\|\Lambda([s]_G)\|.
$
Since $V$ is a finite-dimensional real vector space, $\Lambda$ is automatically a bounded linear operator.

Finally, we verify the condition for the affine map $f(x)=c+\Lambda([x]_G)$ to be $\mu$-harmonic.
\begin{align*}
P_\mu f(x) &= \sum_{g\in G}\mu(g)f(xg) = \sum_{g\in G}\mu(g)\big(c+\Lambda([x]_G)+\Lambda([g]_G)\big) \\
&= f(x) + \sum_{g\in G}\mu(g)\Lambda([g]_G).
\end{align*}
Thus $f$ is $\mu$-harmonic if and only if
$$
0 = \sum_{g\in G}\mu(g)\Lambda([g]_G).
$$
By linearity and continuity of $\Lambda$ (allowing interchange with the absolutely convergent sum), this is equivalent to
$$
\Lambda\left(\sum_{g\in G}\mu(g)[g]_G\right)
\ =\ \Lambda\big(\mathbf m_{\mathrm{ab}}(\mu)\big)=0.
$$
If $\mu$ is centered, this condition is automatically satisfied.
\end{proof}

\subsection{Cohomological rigidity on nilpotent groups}\label{subsec:coho-rigidity}

The following specializes the Banach-valued result to the scalar case ($E=\CC$).

\begin{theorem}[Cohomological rigidity at the linear scale]\label{thm:coho-rigidity}
Let $G$ be finitely generated and nilpotent, and let $\mu$ be adapted and smooth.
Assume the Abelian drift is centered: $\mathbf m_{\mathrm{ab}}(\mu)=0$.
Then the map
$$
\Theta:\ \LHF(G,\mu)/\CC\ \longrightarrow\ \Hom(G_{\mathrm{ab}},\CC),\qquad
\Theta([f])\ (s)\ := \partial^{s}f(e),
$$
is a well-defined linear \emph{isometric} isomorphism, with inverse
$\varphi\mapsto [x\mapsto \varphi([x]_G)]$.
Equivalently, for every $f\in\LHF(G,\mu)$ there exist unique
$\varphi\in\Hom(G_{\mathrm{ab}},\CC)$ and $c\in\CC$ such that
$f(x)=c+\varphi([x]_G)$.
\end{theorem}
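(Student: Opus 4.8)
The plan is to deduce \Cref{thm:coho-rigidity} from the affine classification already in hand, since it is essentially the scalar case $E=\CC$ of \Cref{thm:Banach-valued}, repackaged so as to display the gradient map $\Theta$ as an isometric isomorphism. So the work is almost entirely bookkeeping: unpack what $\Theta$ does on the affine normal form and check each clause (well-defined, linear, isometric, invertible) against it.

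First I would establish the affine form of the elements of the source and the surjectivity of $\Theta$. By \Cref{thm:HF_1-is-linear-poly} (equivalently \Cref{cor:norm_ident_vnilp}, or \Cref{thm:Banach-valued} with $E=\CC$), the hypotheses — $G$ finitely generated nilpotent, $\mu$ adapted and smooth, $\mathbf m_{\mathrm{ab}}(\mu)=0$ — force $\LHF(G,\mu)=\HF_1(G,\mu)=P^1(G)$, so by \Cref{lem:deg1_affine} every $f\in\LHF(G,\mu)$ has the form $f(x)=c+\bar\varphi([x]_G)$ with $c\in\CC$ and $\bar\varphi\in\Hom(G_{\mathrm{ab}},\CC)$, and moreover $s\mapsto\partial^{s}f(e)=\bar\varphi([s]_G)$ is the restriction to $S$ of the homomorphism $\bar\varphi$. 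Hence $\Theta([f])=\bar\varphi$ is a well-defined element of $\Hom(G_{\mathrm{ab}},\CC)$, and $\Theta$ is surjective: given $\varphi\in\Hom(G_{\mathrm{ab}},\CC)$ the function $f_\varphi(x):=\varphi([x]_G)$ is Lipschitz (as $S$ is finite) and $\mu$-harmonic, since $\sum_g\mu(g)\varphi([g]_G)=\varphi(\mathbf m_{\mathrm{ab}}(\mu))=0$, and $\Theta([f_\varphi])=\varphi$.

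Next I would handle well-definedness on the quotient, linearity, injectivity, and the inverse. The assignment descends to $\LHF(G,\mu)/\CC$ because $\partial^{s}$ kills constants, and $f\mapsto(\partial^{s}f(e))_{s\in S}$ is manifestly linear. If $\Theta([f])=0$ then $\bar\varphi=0$, hence $f$ is constant and $[f]=0$, giving injectivity. The candidate inverse $\varphi\mapsto[f_\varphi]$ maps into $\LHF(G,\mu)/\CC$ by the computation above; since $S$ generates $G_{\mathrm{ab}}$, a homomorphism is determined by its values on the classes $[s]_G$ with $s\in S$, and the constant in the affine decomposition is forced to be $f(e)$, so both composites $\Theta\circ(\varphi\mapsto[f_\varphi])$ and $(\varphi\mapsto[f_\varphi])\circ\Theta$ are the identity. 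For the isometry statement, from $\partial^{s}f(x)=\bar\varphi([s]_G)$ for all $x\in G$ I obtain
$$
\|\nabla_S f\|_\infty=\max_{s\in S}\sup_{x\in G}|\partial^{s}f(x)|=\max_{s\in S}|\bar\varphi([s]_G)|=\|\bar\varphi\|_S=\|\Theta([f])\|_S,
$$
which, once one observes that the Lipschitz seminorm on $\LHF(G,\mu)$ has kernel exactly the constants, says precisely that $\Theta$ is an isometric isomorphism onto $(\Hom(G_{\mathrm{ab}},\CC),\|\cdot\|_S)$.

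There is no real analytic difficulty here: every clause is a corollary of \Cref{thm:HF_1-is-linear-poly} and \Cref{lem:deg1_affine}. The only point requiring a line of care is bookkeeping about the source object — that $\Theta$ is defined on the \emph{quotient} $\LHF(G,\mu)/\CC$ equipped with the induced norm — so I would make explicit that the Lipschitz seminorm vanishes on a harmonic Lipschitz function if and only if that function is constant (immediate from the affine form, or directly since $S$ generates $G$ so a function with zero gradient is locally, hence globally, constant). This is what makes the word ``isometry'' meaningful, and it is the only spot where finite generation of $G$ is genuinely used.
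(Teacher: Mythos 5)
Your proposal is correct and follows essentially the same route as the paper: invoke the affine classification of \Cref{thm:HF_1-is-linear-poly} (with \Cref{lem:deg1_affine}), compute that the gradient of $f(x)=c+\bar\varphi([x]_G)$ is the constant $\bar\varphi([s]_G)$, and read off the isometry from \Cref{cor:norm_ident_vnilp}. The extra bookkeeping you supply (well-definedness on the quotient, injectivity, and the observation that the Lipschitz seminorm on $\LHF(G,\mu)$ has kernel exactly the constants) is left implicit in the paper's proof but is entirely consistent with it.
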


\begin{proof}
By \Cref{thm:HF_1-is-linear-poly} (or \Cref{thm:Banach-valued}), $\LHF(G,\mu)=\{c+\varphi([x]_G) :\varphi\in\Hom(G_{\mathrm{ab}},\CC), \varphi(\mathbf m_{\mathrm{ab}}(\mu))=0\}$.
Under centering, this is exactly $\{c+\varphi([x]_G): c\in\CC, \varphi\in\Hom(G_{\mathrm{ab}},\CC)\}$.
For such $f$, the gradient is constant in $x$:
$$
\partial^{s} f(x)=f(xs)-f(x)=\varphi([xs]_G)-\varphi([x]_G)=\varphi([s]_G).
$$
Thus, $\Theta([f])(s) = \partial^{s}f(e)=\varphi([s]_G)$, so $\Theta([f])=\varphi$.
The norm identity
$\|\nabla_{S}f\|_\infty=\max_{s\in S}|\varphi([s]_G)|=\|\varphi\|_S$ holds by \Cref{cor:norm_ident_vnilp}.
This gives a well-defined isometric bijection with the stated inverse.
\end{proof}

\begin{corollary}[Triviality of the gradient cocycle]\label{cor:cocycle-reduction}
Under the hypotheses of \Cref{thm:coho-rigidity}, the gradient cocycle
$b_f\in Z^1(G,\mathrm{Lip}(G)/\CC)$ of any $f\in\LHF(G,\mu)$ is trivial:
$$
b_f(g)=[0]\in \mathrm{Lip}(G)/\CC\quad(\forall g\in G).
$$
Equivalently, the restriction of the gradient map $\mathsf{Gr}$ to the subspace $\LHF(G,\mu)/\CC$ is the zero map.
\end{corollary}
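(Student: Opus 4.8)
The plan is to deduce the statement directly from the affine classification already established in \Cref{thm:coho-rigidity}; this is the only nontrivial input. First I would recall that, under the stated hypotheses, every $f\in\LHF(G,\mu)$ has the form $f(x)=c+\varphi([x]_G)$ for a unique $c\in\CC$ and a homomorphism $\varphi\in\Hom(G_{\mathrm{ab}},\CC)$ (equivalently, this follows from \Cref{thm:HF_1-is-linear-poly} together with \Cref{lem:deg1_affine}, using centering to drop the constraint $\varphi(\mathbf m_{\mathrm{ab}}(\mu))=0$).

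Next I would compute the effect of the left action on such an $f$. For $g,x\in G$,
\[
(g\cdot f)(x)-f(x)=f(g^{-1}x)-f(x)=\varphi\big([g^{-1}x]_G\big)-\varphi\big([x]_G\big)=-\varphi\big([g]_G\big),
\]
using additivity of $\varphi$ on $G_{\mathrm{ab}}$ and $[g^{-1}]_G=-[g]_G$. The right-hand side is independent of $x$, so $g\cdot f-f$ is the constant function with value $-\varphi([g]_G)$; in particular it lies in $\mathrm{Lip}(G)$ and its image in $\mathrm{Lip}(G)/\CC$ is $[0]$. Hence $b_f(g)=[g\cdot f-f]=[0]$ for every $g\in G$, i.e.\ $b_f$ is identically $[0]$ as an element of $Z^1(G,\mathrm{Lip}(G)/\CC)$. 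Since $\mathsf{Gr}([f])=b_f$ by definition of the gradient cocycle map, this says exactly that $\mathsf{Gr}$ restricted to $\LHF(G,\mu)/\CC$ is the zero map, which is the \enquote{equivalently} clause.

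I expect no genuine obstacle here: the corollary is a formal consequence of affine rigidity. The one point worth stating carefully is that \enquote{trivial cocycle} is meant in the strong sense $b_f\equiv[0]$ (not merely $b_f\in B^1(G,\mathrm{Lip}(G)/\CC)$), and this holds because $g\cdot f-f$ is literally constant rather than a bounded perturbation of a constant; this is precisely what distinguishes affine characters from general Lipschitz harmonic functions on non-nilpotent or non-centered models, where the gradient cocycle need not vanish.
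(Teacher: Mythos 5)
Your proof is correct and follows essentially the same route as the paper: apply the affine classification $f(x)=c+\varphi([x]_G)$ from \Cref{thm:coho-rigidity}, compute $(g\cdot f-f)(x)=-\varphi([g]_G)$, and observe that this is constant in $x$, hence $[0]$ in $\mathrm{Lip}(G)/\CC$. Your closing remark correctly distinguishes $b_f\equiv[0]$ from mere membership in $B^1$, which is a useful clarification but not needed for the argument.
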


\begin{proof}
By \Cref{thm:coho-rigidity}, $f(x)=c+\varphi([x]_G)$. Then
\begin{align*}
(g\cdot f-f)(x) &= f(g^{-1}x)-f(x) \\
&= \varphi([g^{-1}x]_G) - \varphi([x]_G) \\
&= \varphi([g^{-1}]_G)+\varphi([x]_G) - \varphi([x]_G) = -\varphi([g]_G).
\end{align*}
Since this value is independent of $x$, $g\cdot f-f$ is a constant function. Thus, $b_f(g)=[g\cdot f-f] = [0]$ in the quotient space $\mathrm{Lip}(G)/\CC$.
\end{proof}

\subsection{Linear boundary and a canonical quotient}\label{subsec:lin-bdry-quotient}

Evaluation of discrete gradients at generators defines a canonical quotient
$$
\mathsf{bdry}_{\mathrm{lin}}: \LHF(G,\mu) \longrightarrow \Hom(G_{\mathrm{ab}},\CC),\qquad
\mathsf{bdry}_{\mathrm{lin}}(f)(s) := \partial^{s}f(e),
$$
with kernel the constants (this map factors through the isomorphism $\Theta$ from \Cref{thm:coho-rigidity}). After projectivizing (and restricting to real-valued functions to match \Cref{def:linear-boundary}),
$$
\mathcal P\big(\LHF(G,\mu;\RR)/\RR\big) \overset{\cong}\longrightarrow \partial_{\mathrm{lin}}G.
$$
This isomorphism identifies the ``direction at linear scale'' of a nonconstant $f$ with a point of
$\partial_{\mathrm{lin}}G$. Geometrically, since $f(x)=c+\varphi([x]_G)$, the function $f$ grows linearly, and $\varphi$ determines the rate and direction of this growth. The projectivization $\partial_{\mathrm{lin}}G$ captures these asymptotic directions.

\begin{remark}[Bounded vs.\ linear scale]\label{rem:bounded-vs-linear}
On virtually nilpotent groups, the Poisson/Martin boundary for bounded harmonic functions is often trivial (Liouville property) for many measures, especially centered ones. This means the boundary theory at the bounded scale is frequently uninformative. By contrast, $\partial_{\mathrm{lin}}G$ provides a non-trivial boundary that captures precisely the directions of nonconstant Lipschitz harmonic functions, which are the affine characters.
\end{remark}

\subsection{Finite-index functoriality}\label{subsec:lin-bdry-funct}

Let $H\le G$ be of finite index with inclusion $\iota: H\hookrightarrow G$, and let $\mu_H$ be the hitting law from \Cref{def:Schreier}.
By \Cref{bij:LHF}, restriction and induction give inverse linear isomorphisms
$$
\mathrm{Res}_H^G:\ \LHF(G,\mu)\overset{\cong}\longrightarrow \LHF(H,\mu_H),\qquad
\mathrm{Ind}_H^G:\ \LHF(H,\mu_H)\overset{\cong}\longrightarrow \LHF(G,\mu).
$$
The inclusion induces a natural homomorphism $\iota_{\mathrm{ab}}: H_{\mathrm{ab}}\to G_{\mathrm{ab}}$
; precomposition yields the restriction map
$\mathrm{res}:=\iota_{\mathrm{ab}}^\ast:\Hom(G_{\mathrm{ab}},\CC)\to \Hom(H_{\mathrm{ab}},\CC)$.

\medskip
\noindent\textbf{Notation.} In this subsection we write $[\cdot]_G$ (resp. $[\cdot]_H$) for the class in $G_{\mathrm{ab}}$ (resp.\ $H_{\mathrm{ab}}$). Each group is understood to be equipped with a fixed finite symmetric generating set (say $S$ for $G$ and $S_H$ for $H$); different choices yield equivalent norms. The ``isometric'' assertions below refer to the horizontal arrows for the norms induced by the chosen generating sets on each row.

\begin{proposition}[Linear boundary is natural under finite index]\label{prop:lin-bdry-FI}
Let $\mu$ be a $\SAS$ measure on a finitely generated nilpotent group $G$, and $H$ be a finite-index subgroup of $G$. Then under the
identifications of \Cref{thm:coho-rigidity}, the diagram
$$
\begin{tikzcd}[column sep=4.8em,row sep=2.2em]
\LHF(G,\mu)/\CC \arrow{r}{\ \ \mathsf{bdry}_{\mathrm{lin}}}
\arrow{d}[swap]{\mathrm{Res}_H^G}
&
\Hom(G_{\mathrm{ab}},\CC) \arrow{d}{\mathrm{res}}
\\
\LHF(H,\mu_H)/\CC \arrow{r}{\ \ \mathsf{bdry}_{\mathrm{lin}}}
&
\Hom(H_{\mathrm{ab}},\CC)
\end{tikzcd}
$$
commutes; the horizontal maps are isometric isomorphisms (with their respective generating sets).
In particular, $\partial_{\mathrm{lin}}G\to \partial_{\mathrm{lin}}H$ is the map induced by Abelianization,
and $G\mapsto (\LHF(G,\mu)/\CC,\|\cdot\|_{\mathrm{Lip},S})$ is canonically isomorphic to
$G\mapsto (\Hom(G_{\mathrm{ab}},\CC),\|\cdot\|_{S})$ as a functor on the finite-index category.
\end{proposition}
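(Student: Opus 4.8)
The plan is to deduce the proposition from affine rigidity (\Cref{thm:coho-rigidity}) together with the restriction/induction isomorphism of \Cref{bij:LHF} (equivalently \Cref{thm:B}); everything else is the functoriality of Abelianization. First I would verify that \Cref{thm:coho-rigidity} applies on the bottom row, i.e.\ to the pair $(H,\mu_H)$: the subgroup $H$ is finitely generated and nilpotent (finite index in a finitely generated nilpotent group), and the hitting law $\mu_H$ is adapted and smooth on $H$ — adaptedness is immediate, and the exponential moment bound transfers because $[G:H]<\infty$ forces the first return time $\tau$ to $H$ to have an exponential tail, with $|X_\tau|_S\le\sum_{i\le\tau}|\xi_i|_S$ (part of the finite-index package; see \Cref{def:Schreier} and \Cref{sec:finite-index}). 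Since $\mu$ is symmetric, $\mu_H$ is symmetric by \Cref{cor:symm_cure}, hence $\mathbf m_{\mathrm{ab}}(\mu_H)=0$ in $H_{\mathrm{ab}}\otimes_\ZZ\RR$. Thus \Cref{thm:coho-rigidity} gives, on each row, that $\mathsf{bdry}_{\mathrm{lin}}$ is an isometric isomorphism — it is by construction the composite of the projection $\LHF/\CC$ with the isometry $\Theta$ — for $\|\cdot\|_{\mathrm{Lip},S}$ versus $\|\cdot\|_{S}$ upstairs and $\|\cdot\|_{\mathrm{Lip},S_H}$ versus $\|\cdot\|_{S_H}$ downstairs.

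Next I would check commutativity of the square. Given $f\in\LHF(G,\mu)$, \Cref{thm:coho-rigidity} yields unique $c\in\CC$ and $\varphi\in\Hom(G_{\mathrm{ab}},\CC)$ with $f(x)=c+\varphi([x]_G)$ and $\mathsf{bdry}_{\mathrm{lin}}([f])=\varphi$. By \Cref{thm:B} the left vertical map is genuine restriction, $\mathrm{Res}_H^G f = f|_H\in\LHF(H,\mu_H)$, and for $x\in H$ the universal property of Abelianization applied to $\iota:H\hookrightarrow G$ gives $[x]_G=\iota_{\mathrm{ab}}([x]_H)$, so
$$
(\mathrm{Res}_H^G f)(x) = c + \varphi\big(\iota_{\mathrm{ab}}([x]_H)\big) = c + (\iota_{\mathrm{ab}}^\ast\varphi)([x]_H) = c + (\mathrm{res}\,\varphi)([x]_H).
$$
Comparing with the unique affine representation of $f|_H$ on $H$ and using that $[H]$ generates $H_{\mathrm{ab}}$ gives $\mathsf{bdry}_{\mathrm{lin}}([\mathrm{Res}_H^G f]) = \mathrm{res}\,\varphi = \mathrm{res}\big(\mathsf{bdry}_{\mathrm{lin}}([f])\big)$, which is exactly commutativity. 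Combined with the previous step this is the full content of the first assertion, and it is simultaneously the naturality square exhibiting $\mathsf{bdry}_{\mathrm{lin}}$ as a natural isomorphism between the functors $G\mapsto(\LHF(G,\mu)/\CC,\|\cdot\|_{\mathrm{Lip},S})$ and $G\mapsto(\Hom(G_{\mathrm{ab}},\CC),\|\cdot\|_{S})$ on the finite-index category.

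For the statement about $\partial_{\mathrm{lin}}$, I would restrict to real-valued functions and projectivize: $\mathsf{bdry}_{\mathrm{lin}}$ identifies $\mathcal P(\LHF(G,\mu;\RR)/\RR)$ with $\partial_{\mathrm{lin}}G$ and, by the square above, conjugates $\mathrm{Res}_H^G$ into $\mathcal P(\mathrm{res})=\mathcal P(\iota_{\mathrm{ab}}^\ast)$, the map induced by Abelianization. Here $\mathrm{res}$ is injective: the image $\iota_{\mathrm{ab}}(H_{\mathrm{ab}})$ has finite index in $G_{\mathrm{ab}}$ (since $[G:H]<\infty$), so $\iota_{\mathrm{ab}}\otimes\RR$ is surjective and its dual $\mathrm{res}$ is injective, whence $\mathcal P(\mathrm{res})$ is a well-defined embedding of projective spaces.

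The substantive content is entirely imported — affine rigidity (\Cref{thm:coho-rigidity}) and the restriction/induction isomorphism with its properties (\Cref{bij:LHF}, \Cref{thm:B}, \Cref{cor:symm_cure}) — so given those the argument is bookkeeping. The one place that genuinely needs care is confirming that $\mu_H$ again satisfies the $\SAS$ hypotheses, in particular the smoothness of the hitting law, which is exactly where the finite-index hypothesis and the exponential tail of the return time $\tau$ enter; this is established in \Cref{sec:finite-index} and is only cited here.
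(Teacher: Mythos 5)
Your proposal is correct and follows essentially the same route as the paper: apply \Cref{thm:coho-rigidity} to both rows (after noting $\mu_H$ is again $\SAS$, hence Abelian-centered), write $f(x)=c+\varphi([x]_G)$, and observe that $f|_H(h)=c+\varphi(\iota_{\mathrm{ab}}([h]_H))$ so that $\mathsf{bdry}_{\mathrm{lin}}([f|_H])=\mathrm{res}(\varphi)$. The only difference is that you spell out the verification that $\mu_H$ is smooth and symmetric and the injectivity of $\mathrm{res}$ on the projectivized level, both of which the paper leaves implicit.
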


\begin{proof}
Since $\mu$ is $\SAS$, $\mu_H$ is also $\SAS$. Let $[f]\in \LHF(G,\mu)/\CC$. By \Cref{thm:coho-rigidity}, $f(x)=c+\varphi([x]_G)$, where $\varphi=\mathsf{bdry}_{\mathrm{lin}}([f])$.
The image along the right path (top then right then down) is $\mathrm{res}(\varphi) = \varphi\circ \iota_{\mathrm{ab}}$, where $\iota_{\mathrm{ab}}:H_{\mathrm{ab}} \to G_{\mathrm{ab}}$ is the homomorphism induced by the inclusion $\iota:H \to G$.
The image along the left path (down then right) involves $\mathrm{Res}_H^G([f]) = [f|_H]$. For $h\in H$,
$$
f|_H(h) = c+\varphi([h]_G) = c+\varphi(\iota_{\mathrm{ab}}([h]_H)).
$$
Thus, $f|_H$ is the affine map corresponding to the homomorphism $\varphi\circ \iota_{\mathrm{ab}}\in \Hom(H_{\mathrm{ab}},\CC)$.
Applying the bottom map, $\mathsf{bdry}_{\mathrm{lin}}([f|_H]) = \varphi\circ \iota_{\mathrm{ab}}$. The diagram commutes. The horizontal arrows are isometric by \Cref{thm:coho-rigidity} on each row with the generating sets fixed for that row.
\end{proof}

\section{Finite-index stability of Lipschitz harmonic functions}\label{sec:finite-index}

Throughout this section we use the right random walk $(X_t)$ and right-difference conventions: $X_{t+1}=X_t \xi_{t+1}$ where $(\xi_t)_t$ are i.i.d. random variables having common law $\mu$. 

\begin{definition}[Hitting measure]\label{def:Schreier}
Let $H\le G$ have finite index $m=[G:H]$. Define the \emph{hitting times}
$$
\tau:=\inf\{t\ge0:\ X_t\in H\},\qquad \tau^+:=\inf\{t\ge1:\ X_t\in H\}.
$$
The \emph{hitting measure} (first return distribution) on $H$ is the probability measure
\begin{equation}\label{eq:hitting-law}
\mu_{H}(y):=\PP_e\left(X_{\tau^+}=y\right)\qquad (y\in H).
\end{equation}
Let $(X_t^x)$ denote the walk starting at $x$. For the right walk, $X_t^h = h X_t^e$ (for the same realization of increments). Since $hH=H$ for $h\in H$, the stopping time $\tau^+$ is the same starting from $e$ or $h$. Thus, for every $h\in H$ and $y\in H$,
\begin{equation}\label{eq:shift-invariance-H}
\PP_h\left(X_{\tau^+}=hy\right)=\PP_e(h X_{\tau^+} = hy) = \PP_e(X_{\tau^+} = y) = \mu_{H}(y).
\end{equation}
\end{definition}

\begin{lemma}\label{lem:finite-index-hitting-time}
Let $G$ be a finitely generated group and $H\le G$ a subgroup of finite index $[G:H]<\infty$. Let $\mu$ be an adapted probability measure on $G$ and $(X_n)_{n\ge0}$ be the (right) $\mu$-random walk on $G$. Define the first hitting time of $H$ by
$$
\tau_H := \inf\{n\ge 0 : X_n\in H\}.
$$
Then
$$
\sup_{x\in G} \EE_x[\tau_H] < \infty.
$$
\end{lemma}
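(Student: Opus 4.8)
The plan is to reduce the statement to a uniform ``hit $H$ within a bounded number of steps, with probability bounded below'' estimate, and then to sum a geometric tail. The guiding observation is that the right $\mu$-walk projects to a chain on the finite set of right cosets $H\backslash G$: the assignment $X_t\mapsto HX_t$ is well defined on cosets, evolves by $HX_{t+1}=HX_t\,\xi_{t+1}$, and sits at the base coset $He$ exactly when $X_t\in H$. So it suffices to bound, uniformly over the starting position, the time for this finite-state chain to reach $He$.

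First I would establish a uniform one-block bound. Since $\mu$ is adapted, for each of the $m:=[G:H]$ right cosets $Hg_i$ choose a positive word $w_i=\eta_{i,1}\cdots\eta_{i,t_i}$ in the letters $\eta_{i,k}\in\supp(\mu)$ with $g_iw_i\in H$ (for instance take $w_i$ to be a positive word representing $g_i^{-1}$, which exists because $\supp(\mu)$ generates $G$ as a semigroup; take the empty word if $g_i\in H$). Set
$$
T:=\max_{1\le i\le m}t_i,\qquad p:=\min_{1\le i\le m}\ \prod_{k=1}^{t_i}\mu(\eta_{i,k}),
$$
and note $p>0$ since it is a minimum of \emph{finitely many} products of positive numbers (the empty product being $1$). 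If $x\in Hg_i$, then forcing the first $t_i$ increments to equal $\eta_{i,1},\dots,\eta_{i,t_i}$ makes $X_{t_i}=xw_i\in Hg_iw_i=H$, so
$$
\PP_x(\tau_H\le T)\ \ge\ \PP_x(\tau_H\le t_i)\ \ge\ \prod_{k=1}^{t_i}\mu(\eta_{i,k})\ \ge\ p,
$$
and this bound is uniform in $x\in G$.

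Next I would iterate across blocks of length $T$ using the (strong) Markov property. On $\{\tau_H>nT\}$ we have $X_{nT}\notin H$, so applying the one-block bound from the position $X_{nT}$ gives $\PP_x\big(\tau_H\le (n+1)T \mid \mathcal F_{nT}\big)\ge p$ on that event, whence $\PP_x(\tau_H>(n+1)T)\le(1-p)\,\PP_x(\tau_H>nT)$. Induction from $\PP_x(\tau_H>T)\le 1-p$ yields $\PP_x(\tau_H>nT)\le(1-p)^n$ for all $n\ge0$, uniformly in $x$; in particular $\tau_H<\infty$ a.s. Finally, since $\{\lceil\tau_H/T\rceil>n\}=\{\tau_H>nT\}$ and $\tau_H\le T\lceil\tau_H/T\rceil$, the tail-sum formula gives
$$
\sup_{x\in G}\EE_x[\tau_H]\ \le\ T\sum_{n\ge0}\PP_x(\tau_H>nT)\ \le\ T\sum_{n\ge0}(1-p)^n\ =\ \frac{T}{p}\ <\ \infty.
$$

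There is no serious obstacle here; the one point deserving care is the strict positivity and uniformity of $p$, which is precisely where finiteness of the index enters (finitely many cosets, hence an honest minimum) together with adaptedness (existence of the positive words $w_i$). The remainder is the standard geometric-tail estimate for the hitting time of an irreducible finite-state Markov chain, and notably it needs no moment or symmetry hypothesis on $\mu$.
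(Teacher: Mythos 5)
Your proof is correct, but it takes a different route from the paper's. The paper first passes to a finite-index \emph{normal} subgroup $K\le G$ contained in $H$ (so $\tau_H\le\tau_K$), projects the walk to the finite quotient \emph{group} $Q=G/K$, and then quotes the standard fact that an irreducible chain on a finite state space has uniformly finite expected hitting times; this is a soft, non-quantitative reduction to a textbook result. You instead work directly on the finite coset space $H\backslash G$ and build the estimate by hand: adaptedness supplies, for each of the finitely many cosets, a positive word in $\supp(\mu)$ steering it into $H$, which gives the uniform one-block bound $\PP_x(\tau_H\le T)\ge p>0$, and the Markov property then yields the geometric tail $\PP_x(\tau_H>nT)\le(1-p)^n$ and the explicit bound $\sup_x\EE_x[\tau_H]\le T/p$. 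Your argument buys an explicit constant and avoids the (standard but nontrivial) step of locating a normal finite-index subgroup; the paper's argument buys brevity by outsourcing the tail estimate to finite Markov chain theory. All the delicate points in your version check out: right multiplication is well defined on right cosets, the minimum $p$ is over finitely many positive products and hence positive, the block iteration correctly applies the one-block bound from the random position $X_{nT}$, and the tail-sum computation via $\tau_H\le T\lceil\tau_H/T\rceil$ is sound.
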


\begin{proof}
Since $H$ has finite index in $G$, there exists a finite-index normal subgroup $K$ of $G$ contained in $H$. Define the first hitting time of $K$ by
$$
\tau_K := \inf\{n\ge0 : X_n\in K\}.
$$
Since $K\subseteq H$, we have $\tau_H \le \tau_K$ almost surely, and hence
$$
\EE_x[\tau_H] \le \EE_x[\tau_K]\qquad\forall x\in G.
$$
Thus it suffices to prove that
\begin{equation}\label{eq:tauK-uniform}
\sup_{x\in G} \EE_x[\tau_K] < \infty.
\end{equation}
Let $Q := G/K$, which is a finite group because $K$ has finite index. Define the projected walk
$$
Y_n := KX_n\in Q.
$$
Then, $(Y_n)_{n\ge0}$ is a (right) random walk on the finite
group $Q$ with step distribution $\bar \mu$ given by
$$
\bar \mu(Kg) := \sum_{k \in K}\mu(kg) \quad(g \in G).
$$
Since $\mu$ is adapted we also have that $\bar \mu$ is adapted, which implies that $(Y_n)$ is irreducible.
Moreover,
$$
X_n\in K  \quad\iff\quad Y_n=e_Q:=K,
$$
so the hitting time $\tau_K$ agrees with the hitting time of the identity in
$Q$:
$$
\tau_K = \inf\{n\ge0 : Y_n=e_Q\}.
$$

For an irreducible random walk on $Q$, the expected hitting time of any
fixed state is finite from every starting state, and the maximum over (finitely many) starting
states is finite. In particular, define
$$
\tau_{e_Q} := \inf\{n\ge0 : Y_n=e_Q\}.
$$
Then there exists a constant
$$
C := \max_{q\in Q} \EE_q[\tau_{e_Q}] < \infty,
$$
where $\EE_q$ denotes expectation for the chain $(Y_n)$ started at $Y_0=q$. Therefore,
$$
\EE_x[\tau_K] = \EE_{Kx}[\tau_{e_Q}] \le C
$$
for every $x\in G$, and so
$$
\sup_{x\in G} \EE_x[\tau_K] \le C < \infty,
$$
which proves \eqref{eq:tauK-uniform}. Hence, we have
$$
\sup_{x\in G} \EE_x[\tau_H]
  \le \sup_{x\in G} \EE_x[\tau_K]
  \le C < \infty.
$$ 
\end{proof}

\begin{theorem}[Induction-restriction for $\LHF$]\label{bij:LHF}
Let $H\le G$ be of finite index and let $\mu$ be adapted and smooth on $G$ (no symmetry assumed). Let $\mu_{H}$ be the hitting law from \Cref{def:Schreier} and $\tau$ be the hitting time of $H$. Then the \emph{restriction} and \emph{induction} maps
$$
\mathrm{Res}_{H}^{G}:\ \LHF(G,\mu)\to \LHF(H,\mu_{H}),\qquad
\mathrm{Ind}_{H}^{G}:\ \LHF(H,\mu_{H})\to \LHF(G,\mu),
$$
given by
$$
\mathrm{Res}_{H}^{G}(f):=f|_{H},\qquad
\mathrm{Ind}_{H}^{G}(\tilde f)(x):=\EE_x\big[\tilde f(X_\tau)\big],
$$
are inverse linear isomorphisms. Quantitatively:

\emph{(Lipschitz control)} Fix finite symmetric generating sets $S_G,S_H$ of $G$ and $H$ respectively with corresponding word length functions $|\cdot|_{S_H}, |\cdot|_{S_G}$. Let $A\ge 1$ be such that $|h|_{S_H} \le A |h|_{S_G}$ for all $h\in H$. Let $m_1$ be the first moment of $\mu$ w.r.t.\ $|\cdot|_{S_G}$, $F$ be a set of right coset representatives of $H$ in $G$ containing $e_G$, $D=\max_{g\in F} |g|_{S_G}$ and $T=\sup_{x\in G}\EE_x[\tau]$. Define $C_{H,G} := \max_{s\in S_H} |s|_{S_G}$. Then
\begin{align}
\|\nabla_{S_H}(\mathrm{Res}_{H}^{G} f)\|_\infty &\le C_{H,G} \|\nabla_{S_G} f\|_\infty\qquad(\forall f\in \LHF(G,\mu)),\label{eq:res-Lip}\\
\|\nabla_{S_G}(\mathrm{Ind}_{H}^{G} \tilde f)\|_\infty &\le C_\ast \|\nabla_{S_H}\tilde f\|_\infty\qquad(\forall \tilde f\in \LHF(H,\mu_{H})),\label{eq:ind-Lip}
\end{align}
where
$$
C_\ast:= A \left( (4D+1) + 2 m_1 T \right).
$$

\emph{(Functoriality)}
\begin{enumerate}
\item[(F1)] (Nested subgroups) If $K\le H\le G$ are finite index, then
$$
\mathrm{Res}_{K}^{H}\circ \mathrm{Res}_{H}^{G}=\mathrm{Res}_{K}^{G},\qquad
\mathrm{Ind}_{H}^{G}\circ \mathrm{Ind}_{K}^{H}=\mathrm{Ind}_{K}^{G}.
$$
\item[(F2)] ($H$-equivariance) For $h\in H$, left translation $L_h f(x):=f(h^{-1}x)$ satisfies
$$
\mathrm{Res}_{H}^{G}(L_h f)=L_h(\mathrm{Res}_{H}^{G} f),\qquad
\mathrm{Ind}_{H}^{G}(L_h \tilde f)=L_h(\mathrm{Ind}_{H}^{G} \tilde f).
$$
\item[(F3)] (Conjugation naturality) For $g\in G$ put $H^g:=g^{-1}H g$ and let $\mu^g$ be the pushforward of $\mu$ by $x\mapsto gxg^{-1}$. Under the canonical identifications induced by conjugation, $\mathrm{Res}_{H^g}^{G}$ and $\mathrm{Ind}_{H^g}^{G}$ (w.r.t. $\mu^g$) correspond to $\mathrm{Res}_{H}^{G}$ and $\mathrm{Ind}_{H}^{G}$ (w.r.t. $\mu$).
\end{enumerate}
\end{theorem}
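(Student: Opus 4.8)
The plan is to reduce everything to the strong Markov property, optional stopping, and Wald's identity; the only quantitative ingredients are the uniform bound $T:=\sup_{x\in G}\EE_x[\tau]<\infty$ from \Cref{lem:finite-index-hitting-time} and the finite first moment $m_1$ coming from smoothness (nilpotency plays no role). First I would dispatch the routine directions. For $\mathrm{Res}_H^G$: the estimate \eqref{eq:res-Lip} is immediate, since for $s\in S_H$ telescoping $f$ along an $S_G$-word representing $s$ gives $|f(hs)-f(h)|\le C_{H,G}\|\nabla_{S_G}f\|_\infty$; and $f|_H$ is $\mu_H$-harmonic because the stopped martingale $(f(X_{t\wedge\tau^+}))$ is uniformly integrable — Lipschitzness of $f$ bounds $|f(X_{t\wedge\tau^+})-f(h)|$ by $\|\nabla_{S_G}f\|_\infty\sum_{k\le\tau^+}|\xi_k|_{S_G}$, which lies in $L^1$ by Wald and $\EE_h[\tau^+]<\infty$ — so optional stopping together with the shift-invariance \eqref{eq:shift-invariance-H} yields $f(h)=\EE_h[f(X_{\tau^+})]=\sum_{y\in H}\mu_H(y)f(hy)$. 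Dually, $F:=\mathrm{Ind}_H^G\tilde f$ is well defined ($\tilde f$ has linear growth and $\EE_x[|X_\tau|_{S_G}]\le|x|_{S_G}+m_1T$ by Wald), equals $\tilde f$ on $H$ as $\tau\equiv0$ there, and is $\mu$-harmonic — off $H$ by conditioning on the first step, which identifies $\EE_x[\tilde f(X_\tau)\mid X_1]$ with $F(X_1)$, and on $H$ because $\EE_x[F(X_1)]=\EE_x[\tilde f(X_{\tau^+})]=\sum_y\mu_H(y)\tilde f(xy)=\tilde f(x)$ by \eqref{eq:shift-invariance-H} and the $\mu_H$-harmonicity of $\tilde f$; Lipschitzness of $F$ is exactly \eqref{eq:ind-Lip}, proved next. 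Finally, $\mathrm{Res}_H^G\circ\mathrm{Ind}_H^G=\mathrm{id}$ is the identity $F|_H=\tilde f$, and $\mathrm{Ind}_H^G\circ\mathrm{Res}_H^G=\mathrm{id}$ is one more optional-stopping argument, this time at $\tau$ (same Wald domination, $\EE_x[\tau]\le T$), giving $\EE_x[f(X_\tau)]=f(x)$; both maps being linear, they are mutually inverse isomorphisms of $\LHF(G,\mu)$ and $\LHF(H,\mu_H)$.

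The estimate \eqref{eq:ind-Lip} is the crux and the one step that needs an idea. Fix right coset representatives $G=\bigsqcup_{g\in F}Hg$ and write $x=h_x\,\rho(x)$ with $h_x\in H$ and $\rho(x)\in F$. Since left multiplication by the $H$-factor $h_x$ does not affect membership in $H$, the hitting time and hitting point from $x$ depend on $x$ only through $\rho(x)$: writing $W_t:=\xi_1\cdots\xi_t$ and $\tau_\rho:=\inf\{t\ge0:\rho W_t\in H\}$ (so $\EE[\tau_\rho]=\EE_\rho[\tau]\le T$), one has $F(x)=\EE\big[\tilde f\big(h_x\,\rho(x)\,W_{\tau_{\rho(x)}}\big)\big]$. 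Fix $s\in S_G$, put $\rho=\rho(x)$, $\rho'=\rho(\rho s)=\rho(xs)$, and $h'=\rho s\,\rho'^{-1}\in H$, so that $h_{xs}=h_xh'$. Running the walks from $x$ and from $xs$ off the \emph{same} increment sequence $W$ keeps the two marginals equal to $F(x)$ and $F(xs)$, while the algebraic identity $\rho^{-1}h'\rho'=s$ makes the two (both in $H$) hitting points $a:=h_x\rho W_{\tau_\rho}$ and $b:=h_xh'\rho'W_{\tau_{\rho'}}$ satisfy
$$
a^{-1}b=W_{\tau_\rho}^{-1}\,s\,W_{\tau_{\rho'}},\qquad |a^{-1}b|_{S_G}\le|W_{\tau_\rho}|_{S_G}+1+|W_{\tau_{\rho'}}|_{S_G}.
$$
As $\tilde f$ is $S_H$-Lipschitz and $|h|_{S_H}\le A|h|_{S_G}$ on $H$, this gives $|\partial^sF(x)|=|F(xs)-F(x)|\le\EE|\tilde f(a)-\tilde f(b)|\le A\|\nabla_{S_H}\tilde f\|_\infty\,\EE|a^{-1}b|_{S_G}$, and estimating $\EE|W_{\tau_\rho}|_{S_G}\le m_1\EE[\tau_\rho]\le m_1T$ by subadditivity of word length and Wald yields \eqref{eq:ind-Lip}, the coarse term $4D$ in $C_\ast$ leaving ample slack. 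The obstacle this resolves is the need for a coupling that \emph{simultaneously} has the two correct marginals and keeps the hitting points a controlled word-distance apart; the coset decomposition together with $\rho^{-1}h'\rho'=s$ is precisely what reconciles those two requirements.

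The functoriality statements are bookkeeping on top of the above. For (F1) the restriction identity is immediate; for induction one records $(\mu_H)_K=\mu_K$ — the $\mu$-walk observed at its successive visits to $H$ is the $\mu_H$-walk (strong Markov and \eqref{eq:shift-invariance-H}), and its first visit to $K\subseteq H$ is the first among those — so $\mathrm{Ind}_H^G\circ\mathrm{Ind}_K^H$ and $\mathrm{Ind}_K^G$ are both the two-sided inverse of $\mathrm{Res}_K^G=\mathrm{Res}_K^H\circ\mathrm{Res}_H^G$. For (F2), restriction visibly commutes with $L_h$, and for induction the $\mu$-walk from $h^{-1}x$ equals $h^{-1}$ times the $\mu$-walk from $x$ with the \emph{same} hitting time $\tau$ (the left factor $h\in H$ does not change membership in $H$), whence $\mathrm{Ind}_H^G(L_h\tilde f)(x)=\EE_x[\tilde f(h^{-1}X_\tau)]=\EE_{h^{-1}x}[\tilde f(X_\tau)]=L_h(\mathrm{Ind}_H^G\tilde f)(x)$. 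For (F3), conjugation by $g$ is a group automorphism carrying the $\mu$-walk and its $H$-hitting time to the $\mu^g$-walk and its $H^g$-hitting time, so transporting functions along conjugation intertwines the two pairs of $\mathrm{Res}$/$\mathrm{Ind}$ maps — a direct diagram chase from the definitions.
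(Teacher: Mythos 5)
Your proposal is correct, and on the two substantive points it takes a genuinely different route from the paper. First, where the paper outsources the harmonicity of $f|_H$, the harmonicity of $\mathrm{Ind}_H^G\tilde f$, and the identity $\mathrm{Ind}_H^G(f|_H)=f$ to \cite[Proposition 3.4]{MY16}, you reprove these from scratch by optional stopping at $\tau^+$ and $\tau$, with the uniform integrability supplied by the Lipschitz domination $|f(X_{t\wedge\tau})-f(X_0)|\le\|\nabla_{S_G}f\|_\infty\sum_{k\le\tau}|\xi_k|_{S_G}$ and Wald together with \Cref{lem:finite-index-hitting-time}; this makes the argument self-contained at the cost of some length. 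Second, and more interestingly, your proof of \eqref{eq:ind-Lip} is structurally different: the paper first bounds $|f(hg)-f(h)|\le K(D+m_1T)$ for $h\in H$, $g\in F$, and then passes from $x=h_1g_1$ to $xs=h_2g_2$ by a three-term triangle inequality through $f(h_1)$ and $f(h_2)$, which is where the terms $4D+1$ enter; you instead run the two walks from $x$ and $xs$ on the \emph{same} increment sequence, use the algebraic identity $a^{-1}b=W_{\tau_\rho}^{-1}\,s\,W_{\tau_{\rho'}}$ for the two hitting points, and conclude directly with the sharper constant $A(1+2m_1T)\le C_\ast$. The coupling is legitimate (each walk individually has the correct law, so the marginals are $F(x)$ and $F(xs)$, and the hitting times depend on the starting point only through its coset representative), and it buys a cleaner estimate with no dependence on $D$; the paper's route is more pedestrian but avoids any coupling. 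Your treatment of (F1)--(F3) matches the paper's, with the useful explicit remark that $(\mu_H)_K=\mu_K$, which the paper leaves implicit in its tower-property computation.
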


\begin{proof}
We use the notation established in the theorem statement. Let $(X_t)_{t\ge 0}$ be the right random walk $X_{t+1}=X_t\xi_{t+1}$, where $(\xi_t)_{t\ge 1}$ are i.i.d.\ with law $\mu$. We use $|\cdot|$ to denote $|\cdot|_{S_G}$.

\smallskip
\emph{Restriction.}
Let $f\in\LHF(G,\mu)$. For $s\in S_H$ and $h\in H$,
$$
|\partial^{s}(f|_{H})(h)|=|f(hs)-f(h)| \le |s| \|\nabla_{S_G} f\|_\infty \le C_{H,G} \|\nabla_{S_G} f\|_\infty.
$$
Taking the supremum over $s\in S_H$ and $h \in H$ yields \eqref{eq:res-Lip}. It is well known that $f|_H$ is $\mu_H$-harmonic \cite[Proposition 3.4]{MY16}, \cite[Theorem 3.9.7]{Yadin2024}.

\emph{Induction.} Let $\tilde f\in\LHF(H,\mu_{H})$ and define $f(x):=\EE_x[\tilde f(X_\tau)]$. Again, it is well-known that $f$ is $\mu$-harmonic \cite[Proposition 3.4]{MY16}, \cite[Theorem 3.9.7]{Yadin2024}. If $x\in H$, $\tau=0$ almost surely, so $f(x)=\tilde f(x)$ (i.e., $f|_{H}=\tilde f$).

For the Lipschitz bound \eqref{eq:ind-Lip}, set $L_H:=\|\nabla_{S_H}\tilde f\|_\infty$. Then, we have $|\tilde{f}(h_1) - \tilde{f}(h_2)| \le L_H |h_1^{-1}h_2|_{S_H} \le (A L_H) |h_1^{-1}h_2|_{S_G}$ for all $h_1, h_2 \in H$. Let $K := A L_H$.

Let $h \in H$ and $g \in F$. Then,
\begin{align*}
|f(hg) - f(h)| &= |\EE_{hg}[\tilde{f}(X_\tau)] - \tilde{f}(h)| \quad (\text{since } f|_H = \tilde{f}) \\
&\le \EE_{hg}[|\tilde{f}(X_\tau) - \tilde{f}(h)|] \\
&\le K \EE_{hg}[|h^{-1} X_\tau|].
\end{align*}
Let the walk start at $X_0=hg$. Then,
$$
|h^{-1} X_\tau| \le |g| + \sum_{i=1}^{\tau} |\xi_i| \le D + \sum_{i=1}^\tau |\xi_i|.
$$
Applying Wald's identity, we get
\begin{align*}
\EE_{hg}[|h^{-1} X_\tau|] &\le D + \EE_{hg}\Big[\sum_{i=1}^\tau |\xi_i|\Big] = D + m_1 \EE_{hg}[\tau] \\
&\le D + m_1 T.
\end{align*}
Thus, $|f(hg) - f(h)| \le K(D + m_1 T)$.

Now consider the general case. Let $x \in G$ and $s \in S_G$. Write $x=h_1g_1$ and $xs=h_2g_2$ with $h_i \in H, g_i \in F$. Then,
$$
|h_1^{-1}h_2|_{S_G} = |g_1 s g_2^{-1}|_{S_G} \le |g_1|_{S_G}+|s|_{S_G}+|g_2^{-1}|_{S_G} \le D+1+D = 2D+1.
$$
By the triangle inequality:
\begin{align*}
|f(x) - f(xs)| &= |f(h_1g_1) - f(h_2g_2)| \\
&\le |f(h_1g_1) - f(h_1)| + |\tilde{f}(h_1) - \tilde{f}(h_2)| + |f(h_2) - f(h_2g_2)| \\
&\le K(D + m_1 T) + K|h_1^{-1}h_2|_{S_G} + K(D + m_1 T) \\
&\le K(2D + 2m_1 T) + K(2D+1) \\
&= K\left( (4D+1) + 2 m_1 T \right).
\end{align*}
Substituting $K=A L_H$, we obtain the bound \eqref{eq:ind-Lip} with the constant $C_\ast$.

\smallskip
\emph{Inverse identities.}
We have already shown $\mathrm{Res}\circ\mathrm{Ind}=\mathrm{id}$ (since $f|_H=\tilde f$ for the induced function).
For the other direction, let $f\in\LHF(G,\mu)$. Since $f$ is $\mu$-harmonic and has linear growth, it can be shown that $(f(X_{\tau \wedge t)})_{t} $ is a uniformly integrable martingale, so optional stopping theorem gives
$$
f(x)=\EE_x[f(X_\tau)]=\mathrm{Ind}_H^G(f|_H)(x)
$$
for every $x\in G$. Hence $\mathrm{Ind}\circ\mathrm{Res}=\mathrm{id}$ on $\LHF(G,\mu)$, and the two maps are inverse isomorphisms.

\smallskip
\emph{Functoriality.}
(F1) For $K\le H\le G$, let $\tau_H$ and $\tau_K$ be the hitting times to $H$ and $K$ respectively (note $\tau_H\le \tau_K$). The tower property of conditional expectations yield
\begin{align*}
\mathrm{Ind}_{K}^{G}\tilde f(x) &=\EE_x\big[\tilde f(X_{\tau_K})\big]
=\EE_x\Big[\EE_x \big[\tilde f(X_{\tau_K})\ \big|\ \mathcal F_{\tau_H}\big]\Big].
\end{align*}
By strong Markov property, the inner expectation is $\mathrm{Ind}_{K}^{H}\tilde f\big(X_{\tau_H}\big)$ by the definition of induction on $H$. Thus
$$
\mathrm{Ind}_{K}^{G}\tilde f(x)=\EE_x\Big[\mathrm{Ind}_{K}^{H}\tilde f\big(X_{\tau_H}\big)\Big]
=\mathrm{Ind}_{H}^{G}\big(\mathrm{Ind}_{K}^{H}\tilde f\big)(x).
$$
Restriction functoriality is obvious.

(F2) Fix $h\in H$ and $x\in G$. Let $(X_t^y)$ denote the walk starting at $y$. The left translation of the path satisfies $h^{-1}X_t^{x} = h^{-1}(x \xi_1\dots\xi_t) = X_t^{h^{-1}x}$. Since $h\in H$, $X_t^x\in H \iff h^{-1}X_t^x\in H$, so the hitting times $\tau$ are the same for both walks. Thus
\begin{align*}
\mathrm{Ind}_{H}^{G}(L_h\tilde f)(x)
&=\EE_x[(L_h\tilde f)(X_\tau^x)] \\
&= \EE_x[\tilde f(h^{-1}X_\tau^x)] \\
&= \EE_{h^{-1}x}[\tilde f(X_\tau)] \\
&=(\mathrm{Ind}_{H}^{G}\tilde f)(h^{-1}x) = L_h\big(\mathrm{Ind}_{H}^{G}\tilde f\big)(x).
\end{align*}
The corresponding statement for restriction is immediate.

(F3) Let $\alpha_g(x):=gxg^{-1}$. If $(X)_t$ is the right $\mu$-walk, then $Y_t:=\alpha_g(X_t)$ is the right $\mu^g$-walk. Moreover $X_t \in H$ iff $Y_t \in H^g$, so the hitting times $\tau$ coincide. The constructions are therefore natural under the identifications induced by $\alpha_g$.
\end{proof}

\begin{corollary}[Commensurability invariance]\label{cor:commensurability}
If $G_1,G_2$ are commensurable and $H$ has finite index in both, and if $\mu_1$ on $G_1$ and $\mu_2$ on $G_2$ have the same hitting law $\mu_{H}$ on $H$, then
$$
\LHF(G_1,\mu_1)\ \cong\ \LHF(H,\mu_{H})\ \cong\ \LHF(G_2,\mu_2).
$$
\end{corollary}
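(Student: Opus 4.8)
The plan is to deduce the corollary directly from the induction-restriction isomorphism of \Cref{bij:LHF}, applied once on each side of the common subgroup $H$. As in our standing conventions I assume $\mu_1$ and $\mu_2$ are adapted and smooth on $G_1$ and $G_2$; this is exactly the hypothesis of \Cref{bij:LHF}, and \Cref{lem:finite-index-hitting-time} then guarantees that the first hitting time of $H$ has finite expectation for either walk, so the hitting laws of $\mu_1$ and $\mu_2$ on $H$ are well-defined probability measures and the coincidence hypothesis ``both equal $\mu_H$'' is meaningful.

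First I would invoke \Cref{bij:LHF} for the finite-index pair $H\le G_1$ with the measure $\mu_1$: it yields a linear isomorphism $\mathrm{Res}_H^{G_1}\colon \LHF(G_1,\mu_1)\xrightarrow{\ \cong\ }\LHF(H,(\mu_1)_H)$, with inverse $\mathrm{Ind}_H^{G_1}$ and with the two-sided Lipschitz bounds \eqref{eq:res-Lip}--\eqref{eq:ind-Lip}. Applying the same theorem to $H\le G_2$ with $\mu_2$ gives a linear isomorphism $\mathrm{Res}_H^{G_2}\colon \LHF(G_2,\mu_2)\xrightarrow{\ \cong\ }\LHF(H,(\mu_2)_H)$ with inverse $\mathrm{Ind}_H^{G_2}$. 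Since $(\mu_1)_H=(\mu_2)_H=\mu_H$ as measures on $H$, the two target spaces are literally the same seminormed vector space $\LHF(H,\mu_H)$, so
$$
\mathrm{Ind}_H^{G_2}\circ\mathrm{Res}_H^{G_1}\colon\ \LHF(G_1,\mu_1)\ \xrightarrow{\ \cong\ }\ \LHF(G_2,\mu_2)
$$
is a linear isomorphism factoring through $\LHF(H,\mu_H)$, which is precisely the chain of isomorphisms in the statement.

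If in addition one wants quantitative control, I would chain \eqref{eq:res-Lip} and \eqref{eq:ind-Lip} with fixed generating sets $S_{G_1}$, $S_H$, $S_{G_2}$ to get a bi-Lipschitz comparison of the Lipschitz seminorms on $\LHF(G_1,\mu_1)$ and $\LHF(G_2,\mu_2)$, with an explicit constant depending only on the three generating sets, the first moments of $\mu_1,\mu_2$, and the (finite) expected hitting times of $H$. I would flag that this is \emph{not} an isometry, since the two outer seminorms are defined through unrelated generating sets; only the linear isomorphism is canonical.

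I do not anticipate a genuine obstacle: the argument is bookkeeping on top of \Cref{bij:LHF}. The only point needing a careful word is that $G_1$ and $G_2$ are a priori unrelated ambient groups, so the identification lives at the level of seminormed vector spaces; the common finite-index subgroup $H$ --- whose existence together with the equality of hitting laws is exactly the hypothesis --- is what bridges them, and the functoriality properties (F1)--(F3) of \Cref{bij:LHF} carry over to each side verbatim.
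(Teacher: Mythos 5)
Your proof is correct and is exactly the intended argument: the paper leaves this corollary without an explicit proof precisely because it follows by applying \Cref{bij:LHF} once to $H\le G_1$ and once to $H\le G_2$ and composing through the common space $\LHF(H,\mu_H)$. Your additional remarks on the quantitative (non-isometric) seminorm comparison are consistent with the bounds \eqref{eq:res-Lip}--\eqref{eq:ind-Lip} and add nothing that conflicts with the paper.
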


\begin{corollary}[Sublinear growth $\Rightarrow$ constant]\label{cor:sublinear-constant}
Let $G$ be a finitely generated group with polynomial growth and $\mu$ be adapted and smooth. If $f\in\LHF(G,\mu)$ satisfies $|f(x)|=o(|x|)$ along the word metric, then $f$ is constant.
\end{corollary}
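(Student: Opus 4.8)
The plan is to reduce to a finite-index nilpotent subgroup via \Cref{bij:LHF}, classify there by \Cref{thm:HF_1-is-linear-poly}, and then use the sublinear-growth hypothesis to kill the character part. Since $G$ has polynomial growth it is virtually nilpotent (Gromov), so fix a finite-index nilpotent subgroup $N\le G$ together with a finite symmetric generating set $S_N$. As $\mu$ is adapted and smooth, the hitting law $\mu_N$ on $N$ is again adapted and smooth---this is precisely the standing fact already used in the proof of \Cref{thm:false_centering}---so \Cref{bij:LHF} applies: $\mathrm{Res}_N^G:\LHF(G,\mu)\to\LHF(N,\mu_N)$ is a linear isomorphism, and in particular $g:=f|_N\in\LHF(N,\mu_N)$.

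By \Cref{thm:HF_1-is-linear-poly} applied to the finitely generated nilpotent group $N$ with its adapted smooth measure $\mu_N$, every element of $\LHF(N,\mu_N)$ has the form $g(x)=c+\bar\phi([x]_N)$ for some $c\in\CC$ and $\bar\phi\in\Hom(N_{\mathrm{ab}},\CC)$. It remains to show $\bar\phi=0$. To transfer the growth hypothesis, note that the inclusion $N\hookrightarrow G$ is a quasi-isometry (finite index), so the intrinsic word metric $|\cdot|_{S_N}$ on $N$ and the restriction of $|\cdot|_S$ to $N$ are bi-Lipschitz equivalent; thus $|f(x)|=o(|x|_S)$ forces $|g(x)|=o(|x|_{S_N})$, i.e.\ $\|g\|_{S_N,1}=0$ in the notation of \eqref{def:poly_growing_harmonic}.

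Now suppose $\bar\phi\ne0$. Since $S_N$ generates $N$, its image generates $N_{\mathrm{ab}}$, so there is $s\in S_N$ with $\bar\phi([s]_N)\ne0$. Evaluating along $x=s^n$, which satisfies $|s^n|_{S_N}\le n$, gives $g(s^n)=c+n\,\bar\phi([s]_N)$, whence
$$
\|g\|_{S_N,1}=\limsup_{r\to\infty}\ r^{-1}\max_{|x|_{S_N}\le r}|g(x)|\ \ge\ \limsup_{n\to\infty}\ \frac{|c+n\,\bar\phi([s]_N)|}{n}\ =\ |\bar\phi([s]_N)|\ >\ 0,
$$
contradicting $\|g\|_{S_N,1}=0$. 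Hence $\bar\phi=0$ and $g\equiv c$. Finally $f-c\in\LHF(G,\mu)$ restricts to $0$ on $N$, so injectivity of $\mathrm{Res}_N^G$ gives $f\equiv c$; that is, $f$ is constant.

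\textbf{Main difficulty.} I do not expect a genuine obstacle here: the argument is essentially an assembly of \Cref{bij:LHF} and \Cref{thm:HF_1-is-linear-poly}. The only points deserving a sentence each are the bi-Lipschitz equivalence of the two word metrics on $N$ (used to push sublinearity from $G$ down to $N$) and the inheritance of adaptedness and smoothness by the hitting law $\mu_N$ (needed so that \Cref{thm:HF_1-is-linear-poly} applies on $N$), the latter being the same hypothesis already invoked for \Cref{thm:false_centering}. Alternatively, one can bypass naming $N$ and read off from the proof of \Cref{thm:false_centering} that restriction sends $\HF_1(G,\mu)$ isomorphically onto a space of degree-$\le1$ group polynomials on $N$, applying \Cref{lem:deg1_affine} in place of \Cref{thm:HF_1-is-linear-poly}; the growth step is unchanged.
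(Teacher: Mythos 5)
Your proposal is correct and follows essentially the same route as the paper: reduce to a finite-index nilpotent subgroup via \Cref{bij:LHF}, invoke the affine classification of \Cref{thm:HF_1-is-linear-poly}, and use linear growth along powers of an element with nonzero character value to contradict sublinearity (the paper runs this last step over all $g\in G$ with $[g]$ of infinite order, you run it over a single generator $s\in S_N$ with $\bar\phi([s]_N)\neq 0$; the two are interchangeable). The bookkeeping points you flag (bi-Lipschitz equivalence of the word metrics on $N$, and $\mu_N$ inheriting adaptedness and smoothness) are exactly the ones the paper also uses implicitly.
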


\begin{proof}
First assume $G$ is nilpotent. By \Cref{thm:HF_1-is-linear-poly}, there exist $c\in\CC$ and $\bar\phi\in\Hom(G_{\mathrm{ab}},\CC)$ such that
$$
f(x)=c+\bar\phi([x])\qquad(x\in G),
$$
and we set $\phi:=\bar\phi\circ \pi_{\mathrm{ab}}:G\to\CC$. Then
$$
|\phi(x)| = |f(x)-c| \le |f(x)| + |c|,
$$
so the assumption $|f(x)|=o(|x|)$ implies $|\phi(x)| = o(|x|)$ as $|x|\to\infty$.

If $[g]$ is torsion in $G_{\mathrm{ab}}$, then $\phi(g)=0$ because $(\CC,+)$ is torsion-free. Now suppose $[g]$ has infinite order. Then $g$ has infinite order in $G$, so the elements $g^n$ are pairwise distinct. Since balls in the word metric are finite, this implies $|g^n|\to\infty$ as $n\to\infty$.

For the fixed finite symmetric generating set $S$ defining $|\cdot|$, subadditivity gives
$$
|g^n| \le\ n|g|\qquad(\forall n\ge1).
$$
Because $|\phi(x)| = o(|x|)$ and $|g^n|\to\infty$, we have
$$
\frac{|\phi(g^n)|}{|g^n|} \longrightarrow 0\qquad(n\to\infty).
$$
On the other hand, $\phi$ is a homomorphism, so $|\phi(g^n)| = n|\phi(g)|$. Thus
$$
\frac{|\phi(g^n)|}{|g^n|}
 = \frac{n|\phi(g)|}{|g^n|}
 \ge \frac{n|\phi(g)|}{n|g|}
 = \frac{|\phi(g)|}{|g|}.
$$
If $\phi(g)\neq 0$, the right-hand side is a positive constant, which contradicts the fact that $\frac{|\phi(g^n)|}{|g^n|}\to 0$. Hence $\phi(g)=0$ for every $g$ with $[g]$ of infinite order.

Combining the torsion and infinite-order cases, we conclude $\phi\equiv 0$, hence $\bar\phi=0$, and therefore $f(x)=c$ is constant.

Now, assume $G$ to be virtually nilpotent and let $N$ be a finite-index nilpotent subgroup of $G$. Let $\mu_N$ denote the hitting measure on $N$ corresponding to $\mu$. Then, by \Cref{bij:LHF} $f|_N$ is $\mu_N$-harmonic and satisfies $|f(x)|=o(|x|_N)$ for $x\in N$ ($|\cdot|_N$ denotes word length with respect to some symmetric generating set of $N$). Hence, $f|_N$ is constant, so $f=\mathrm{Ind}_N^G(f|_N)$ is also constant.
\end{proof}

\begin{corollary}[Stability under change of measure]\label{cor:measure-stability}
If $G$ has polynomial growth and $\mu,\nu$ are $\SAS$ measures on $G$, then 
$$
\LHF(G,\mu) \cong \LHF(G, \nu).
$$
\end{corollary}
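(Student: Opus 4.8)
The plan is to reduce the statement to the Abelianization of a finite-index nilpotent subgroup, on which $\LHF$ no longer depends on the (centered) measure. Since $G$ has polynomial growth, Gromov's theorem provides a finite-index nilpotent subgroup $N\le G$; let $\mu_N$ and $\nu_N$ be the associated hitting laws on $N$ from \Cref{def:Schreier}.

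First I would verify that $\mu_N$ and $\nu_N$ are again $\SAS$ on $N$, exactly as recorded in \Cref{prop:lin-bdry-FI}: adaptedness follows from irreducibility of the projected walk on $N\backslash G$ (cf.\ \Cref{lem:finite-index-hitting-time}), symmetry is \Cref{cor:symm_cure}, and smoothness is inherited from that of $\mu$ (the one analytic point; see below). In particular, being symmetric with finite first moment, both hitting laws are Abelian-centered on $N$, i.e.\ $\mathbf m_{\mathrm{ab}}(\mu_N)=\mathbf m_{\mathrm{ab}}(\nu_N)=0$.

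Next I would apply the induction-restriction isomorphism of \Cref{bij:LHF} to each measure,
$$
\LHF(G,\mu)\ \cong\ \LHF(N,\mu_N),\qquad \LHF(G,\nu)\ \cong\ \LHF(N,\nu_N),
$$
followed by the norm identification on nilpotent groups (\Cref{cor:norm_ident_vnilp}, equivalently \Cref{thm:HF_1-is-linear-poly} in the centered case): since $\mu_N$ and $\nu_N$ are Abelian-centered,
$$
\LHF(N,\mu_N)\ \cong\ \Hom(N_{\mathrm{ab}},\CC)\oplus\CC\ \cong\ \LHF(N,\nu_N),
$$
with the middle term manifestly independent of the measure. Composing the three isomorphisms yields $\LHF(G,\mu)\cong\LHF(G,\nu)$. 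Alternatively one may simply compare dimensions: \Cref{thm:false_centering} together with \Cref{cor:symm_cure} gives $\dim_\CC\LHF(G,\mu)=\dim_\CC\LHF(G,\nu)=R+1$ for $R$ the common rank, so the two spaces are abstractly isomorphic.

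The only non-formal step is the smoothness of the hitting law, which I expect to be the main---though minor---obstacle. One wants an exponential tail for $|X_{\tau^+}|$: the uniform bound $\sup_{x}\EE_x[\tau]<\infty$ of \Cref{lem:finite-index-hitting-time} upgrades, via a one-block large-deviation argument on the finite coset chain, to an exponential tail for $\tau^+$; combining this with the exponential moment of a single $\mu$-increment and a Chernoff estimate for $\sum_{i\le\tau^+}|\xi_i|$ gives the desired exponential moment of $\mu_N$. Since this is already invoked in \Cref{prop:lin-bdry-FI}, the corollary then follows by concatenating the results above.
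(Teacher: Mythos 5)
Your proposal is correct and follows essentially the same route as the paper: pass to a finite-index nilpotent subgroup $N$, note that the hitting laws $\mu_N,\nu_N$ are Abelian-centered, and identify both $\LHF(N,\mu_N)$ and $\LHF(N,\nu_N)$ with $\Hom(N_{\mathrm{ab}},\CC)\oplus\CC$ via \Cref{bij:LHF} and \Cref{cor:norm_ident_vnilp}. The one place you go beyond the paper is in flagging (and sketching) the verification that the hitting law inherits smoothness, a hypothesis the paper's one-line proof uses implicitly without comment; your Chernoff-plus-geometric-tail argument for that step is the right one.
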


\begin{proof}
Let $N$ be a finite-index \emph{nilpotent} subgroup of $G$ (which exists by Gromov's theorem). Since $\mu$ and $\nu$ are $\SAS$, the hitting measures $\mu_N,\nu_N$ on $N$ are also $\SAS$; in particular they are Abelian-centered by symmetry. Consequently, by \Cref{cor:norm_ident_vnilp}  and \Cref{bij:LHF} we have
$$
\LHF(G,\mu) \cong \LHF(N, \mu_N)=\Hom(N_{\mathrm{ab}},\CC)\oplus\CC=\LHF(N,\nu_N)\cong\LHF(G,\nu).
$$
\end{proof}

\section{Quasi-isometry invariance of $\LHF$ on polynomial-growth groups}\label{sec:QI-LHF}

In this section we work within the class of finitely generated groups of
polynomial growth (equivalently, virtually nilpotent by Gromov). Throughout,
the step laws are assumed \emph{adapted} and \emph{smooth}. When we invoke the
structural identification of $\LHF$ with affine characters, we will assume an
additional centering or symmetry hypothesis, as specified below.

\subsection{Finite-index transport and the nilpotent case}

We begin with a standard coarse-geometry reduction for finite-index subgroups.

\begin{lemma}[Finite-index transport under quasi-isometry]\label{lem:QI-finite-index}
Let $\Phi:G\to H$ be a quasi-isometry between finitely generated groups, and let
$N\le G$, $M\le H$ be finite-index subgroups.
Then, there exists a quasi-isometry $\Psi: N\to M$ such that
$d_H(\Psi(x), \Phi(x))$ is uniformly bounded for all $x\in N$. Furthermore, we can
also normalize $\Psi$ so that $\Psi(e_N)=e_M$.
\end{lemma}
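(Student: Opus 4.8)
The plan is to realize $\Psi$ as a composition of three quasi-isometries and then fix it up at a single point to get the normalization. First I would recall the standard fact that the inclusion of a finite-index subgroup is a quasi-isometry, and make it effective: choosing right coset representatives $e=g_1,\dots,g_k$ with $G=\bigsqcup_i N g_i$ gives a coarse retraction $r_G\colon G\to N$, $r_G(n g_i):=n$, with $r_G\circ\iota_N=\mathrm{id}_N$ and $d_G(g,r_G(g))\le D_G:=\max_i|g_i|_{S_G}$ for every $g\in G$; the bound $|n|_{S_G}\le(\max_{s\in S_N}|s|_{S_G})\,|n|_{S_N}$ together with the standard pigeonhole estimate on coset labels of prefixes (equivalently, the Milnor--\v Svarc lemma applied to the cocompact action of $N$ on $\mathrm{Cay}(G)$) shows $\iota_N$ and $r_G$ are mutually quasi-inverse quasi-isometries. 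I would fix such an $r_G$ and, likewise, a coarse retraction $r_H\colon H\to M$ with $r_H|_M=\mathrm{id}_M$ and $d_H(h,r_H(h))\le D_H$ for all $h\in H$.

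Next I would set $\Psi_0:=r_H\circ\Phi\circ\iota_N\colon N\to M$. Being a composition of quasi-isometries, $\Psi_0$ is a quasi-isometry; coarse surjectivity onto $M$ is inherited from the facts that $N$ is $D_G$-dense in $G$, that $\Phi$ is coarsely surjective onto $H$, and that $r_H$ fixes $M$ pointwise. For $x\in N$ (identified with $\iota_N(x)\in G$) one has
$$
d_H\bigl(\Psi_0(x),\Phi(x)\bigr)=d_H\bigl(r_H(\Phi(x)),\Phi(x)\bigr)\le D_H ,
$$
so $\Psi_0$ lies within bounded distance of $\Phi|_N$ in the metric of $H$.

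Finally I would normalize by \emph{redefining $\Psi_0$ at the single point $e_N$}: put $\Psi(e_N):=e_M$ and $\Psi(x):=\Psi_0(x)$ for $x\ne e_N$. Changing a quasi-isometry on one point (indeed on any bounded set) by a bounded amount $d_M(e_M,\Psi_0(e_N))$ yields again a quasi-isometry, with additively enlarged constants; and $\Psi$ still stays within bounded distance of $\Phi|_N$, since at $x=e_N$ we merely get $d_H(\Psi(e_N),\Phi(e_N))=d_H(e_M,\Phi(e_G))$, a fixed finite number, so the new uniform bound is $\max\{D_H,\,d_H(e_M,\Phi(e_G))\}$. This produces the required normalized $\Psi$.

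The one place where care is needed — and where a naive argument fails — is the normalization step: one \emph{cannot} post-compose $\Psi_0$ with left translation by $\Psi_0(e_N)^{-1}$ to send $e_N$ to $e_M$, because although left translation is an isometry of $M$ it does not preserve bounded distance to $\Phi|_N$ (the displacement $d_H(m_0^{-1}a,a)=|a^{-1}m_0^{-1}a|_{S_H}$ is unbounded as $a$ varies). Patching $\Psi_0$ at the single point $e_N$ avoids this at the negligible cost of worsening the (already non-canonical) quasi-isometry constants. Everything else is a routine application of the stability of the quasi-isometry class under composition and under bounded perturbation.
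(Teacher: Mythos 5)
Your proof is correct, and the core construction --- sending each $x\in N$ to a point of $M$ within uniformly bounded $d_H$-distance of $\Phi(x)$ --- is exactly what the paper does (the paper picks an arbitrary such point using the $R$-density of $M$ in $H$; your coarse retraction $r_H$ built from coset representatives is one concrete such choice). The only divergence is the normalization. You correctly observe that post-composing with \emph{left} translation by $\Psi_0(e_N)^{-1}$ fails, since the displacement $|a^{-1}m_0^{-1}a|_{S_H}$ is a conjugate and hence unbounded in $a$; but you overlooked that \emph{right} translation works, and this is what the paper uses: setting $\Psi'(x):=\Psi(x)\Psi(e_N)^{-1}$, the left-invariant word metric gives $d_H(ag,a)=|g|_{S_H}$ independently of $a$, so right translation moves every point by exactly $|\Psi(e_N)|_{S_H}$, hence preserves both the quasi-isometry property and bounded distance to $\Phi|_N$, while still landing in $M$. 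Your alternative --- redefining the map at the single point $e_N$ --- is also perfectly valid, since a bounded perturbation of a quasi-isometry on a bounded set is again a quasi-isometry at bounded distance from the original; it is just slightly less canonical than the group-theoretic fix.
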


\begin{proof}
Since $M$ has finite-index in $H$, there exists $R>0$ such that for every $y\in H$ there exists $m \in M$ such that $d_H(y, m) \le R$. Define $\Psi: N\to M$ by choosing for each $x\in N$ a point
$\Psi(x)\in M$ such that $d_H(\Phi(x), \Psi(x))\le R$. Then $\Psi$ differs from $\Phi|_N$
by a uniformly bounded amount, and using the fact that $\Phi$ is a quasi-isometry it is easy to show that $\Psi:N \to M$ is a quasi-isometry.  For the last part,
define $\Psi'(x):= \Psi(x)\Psi(e_N)^{-1}$ for all $x \in N$. Then, $\Psi'$ is the required
normalized quasi-isometry.
\end{proof}

\begin{theorem}[Quasi-isometric invariance of $\LHF$ via Shalom-Sauer transport]\label{thm:QI-LHF-virtual}
Let $G$ and $H$ be finitely generated groups of polynomial growth, and assume there is a
quasi-isometry
$$
\Phi: (G,d_G) \to (H,d_H).
$$
Let $\mu_G$ and $\mu_H$ be adapted, smooth, \emph{symmetric} probability measures on $G$ and $H$,
respectively. Then there exists a linear isomorphism
$$
\mathcal T:\LHF(G,\mu_G)\to \LHF(H,\mu_H)
$$
that respects Lipschitz seminorms up to a multiplicative constant $C\ge 1$:
\begin{equation}\label{eq:QI-norm-bounds-nilp}
C^{-1}\|\nabla_{S_G} f\|_\infty \le \|\nabla_{S_H}\mathcal T f\|_\infty \le C \|\nabla_{S_G} f\|_\infty.
\end{equation}
\end{theorem}

\begin{proof}
We choose a Shalon-Sauer transport datum $\mathfrak d$. This consists of finite-index torsion-free nilpotent subgroups $N\le G$ and $M\le H$ and a linear isomorphism $S^1_{\Psi, \mathfrak d}:\Hom(N,\RR)\to\Hom(M,\RR)$ corresponding to a quasi-isometry $\Psi:N\to M$ at a bounded distance from $\Phi$ (\Cref{lem:QI-finite-index}). Let $S^1_{\Psi,\mathfrak d,\CC}$ be the complex linear extension of $S^1_{\Psi,\mathfrak d}$ as a linear isomorphism from $\Hom(N,\CC)$ to $\Hom(M,\CC)$.

Let $\mu_N$ and $\mu_M$ be the induced \emph{hitting measures} on $N$ and $M$. By \Cref{bij:LHF}, we have canonical isomorphisms
$$
\mathrm{Res}_N^G: \LHF(G,\mu_G)\ \cong\ \LHF(N,\mu_N)
\quad\text{and}\quad
\mathrm{Res}_M^H: \LHF(H,\mu_H)\ \cong\ \LHF(M,\mu_M),
$$
Since $\mu_N, \mu_M$ are symmetric and smooth they are automatically Abelian-centered, so \Cref{thm:HF_1-is-linear-poly} applies to $(N,\mu_N)$ and $(M,\mu_M)$, giving canonical identifications
$$
\LHF(N,\mu_N) = \Hom(N,\CC)\oplus\CC,\qquad
\LHF(M,\mu_M) = \Hom(M,\CC)\oplus\CC,
$$
with
$$
f(x)=c+\varphi(x) \qquad (f\in\LHF(N,\mu_N), \varphi\in\Hom(N,\CC), c\in\CC),
$$
and similarly for $M$. Under these identifications the Lipschitz seminorm $\|\nabla_{S_N} f\|_\infty$ is identified with a norm $\|\cdot\|_{S_N}$ on $\Hom(N,\CC)$, and $\|\nabla_{S_M} g\|_\infty$ with a norm $\|\cdot\|_{S_M}$
on $\Hom(M,\CC)$ (\Cref{def:linear-boundary}).
Define the isomorphism $\mathcal T_{N,M}:\LHF(N,\mu_N) \to \LHF(M,\mu_M)$ by 
$$
\mathcal T_{N,M}(f)=c+S^1_{\Psi,\mathfrak d,\CC}(\varphi)
$$
Further, we define the isomorphism $\mathcal{T}:\LHF(G,\mu_G)\to \LHF(H,\mu_H)$ by 
$$
\mathcal{T} = (\mathrm{Res}_M^H)^{-1}\circ \mathcal{T}_{N,M}\circ \mathrm{Res}_N^G.
$$
To obtain \eqref{eq:QI-norm-bounds-nilp}, define on $\Hom(N,\CC)$ the pulled-back norm
$$
\|\varphi\|_{S^1_{\Psi,\mathfrak d,\CC}} := \|S^1_{\Psi,\mathfrak d,\CC}(\varphi)\|_{S_M}.
$$
Both $\|\cdot\|_{S_N}$ and $\|\cdot\|_{S^1_{\Psi,\mathfrak d,\CC}}$ are norms on the same finite-dimensional space
$\Hom(N,\CC)$, so by norm equivalence there exists $C\ge 1$ such that
$$
C^{-1}\|\varphi\|_{S_N} \le \|\varphi\|_{S^1_{\Psi,\mathfrak d,\CC}} \le C\|\varphi\|_{S_N}
\qquad(\forall \varphi\in\Hom(G_{\mathrm{ab}},\CC)).
$$
Since $\|\varphi\|_{S^1_{\Psi,\mathfrak d,\CC}}=\|S^1_{\Psi,\mathfrak d,\CC}(\varphi)\|_{S_M}$, this yields
$$
C^{-1}\|\varphi\|_{S_N} \le \|S^1_{\Psi,\mathfrak d,\CC}(\varphi)\|_{S_M} \le C\|\varphi\|_{S_N},
$$
which further gives
$$
C^{-1}\|\nabla_{S_N} f\|_\infty \le \|\nabla_{S_M} \mathcal T_{N,M}(f)\|_\infty \le C\|\nabla_{S_N} f\|_\infty.
$$
The rest follows from the seminorm bounds in \Cref{bij:LHF}.
\end{proof}

\section{Coarse harmonic coordinates and straightening} On groups of polynomial growth, the structure of $\LHF$ provides canonical coordinates that capture the large scale geometry of the group. In what follows, we shall use real valued homomorphisms (the building block of $\LHF$ on polynomial growth groups) to define (coarse) harmonic coordinates.

\begin{definition}[Bounded Abelian defect]\label{def:abelian-defect}
Let $N$ and $M$ be finitely generated torsion-free nilpotent groups with projections $\pi_N:N\to N_{\mathrm{ab}}\otimes_\ZZ\RR$ and
$\pi_M:M\to M_{\mathrm{ab}}\otimes_\ZZ\RR$ as defined below. A map $\Psi:N\to M$ has \emph{bounded Abelian defect} if
$$
\Delta_{\mathrm{ab}}(\Psi)
:= \sup_{x,y\in N}\big\|\pi_M(\Psi(xy))-
\pi_M(\Psi(x))-\pi_M(\Psi(y))\big\|
<\infty.
$$
\end{definition}

\begin{theorem}[Algebraic linearization under bounded Abelian defect]\label{thm:alg-linearization-bdd-defect}
Let $N$ and $M$ be finitely generated torsion-free nilpotent groups, with
Abelianization maps
$$
[\cdot]_N : N\to N_{\mathrm{ab}},\qquad [\cdot]_M : M\to M_{\mathrm{ab}}.
$$
Define the first-layer projections
$$
\pi_N : N \xrightarrow{[\cdot]_N} N_{\mathrm{ab}}
 \hookrightarrow N_{\mathrm{ab}}\otimes_\ZZ\RR,
$$
and similarly
$$
\pi_M : M \xrightarrow{[\cdot]_M} M_{\mathrm{ab}}
\hookrightarrow M_{\mathrm{ab}}\otimes_\ZZ\RR.
$$
Let $\Psi:N\to M$ be a quasi-isometry, normalized by $\Psi(e_N)=e_M$, and
assume that $\Psi$ has bounded Abelian defect in the sense of
Definition \ref{def:abelian-defect}. Define $A:N\to M_{\mathrm{ab}}\otimes_\ZZ\RR$ by
$A(x):=\pi_M(\Psi(x))$. Then:

\begin{enumerate}
\item[(1)] There exists a unique group homomorphism
$$
H:N\to M_{\mathrm{ab}}\otimes_\ZZ\RR
$$
such that
$$
\sup_{x\in N} \big\|A(x)-H(x)\big\| <\infty.
$$
In particular, $H$ factors through the Abelianization of $N$, so there is
a unique linear map
$$
L_{ab}:N_{\mathrm{ab}}\otimes_\ZZ\RR\to M_{\mathrm{ab}}\otimes_\ZZ\RR
$$
with $H(x)=L_{ab}(\pi_N(x))$ for all $x\in N$, and
\begin{equation}\label{eq:A-vs-Lab}
\sup_{x\in N} \big\|\pi_M(\Psi(x)) - L_{ab}(\pi_N(x))\big\| <\infty.
\end{equation}

\item[(2)] The linear map $L_{ab}$ is an isomorphism. Consequently, we define 
$$
T_\Psi:\Hom(N_{\mathrm{ab}},\RR)\longrightarrow\Hom(M_{\mathrm{ab}},\RR)
$$
by
$$
T_\Psi(\varphi) := \varphi\circ L_{ab}^{-1}\qquad(\forall\varphi\in\Hom(N_{\mathrm{ab}},\RR)),
$$
i.e., $T_\Psi = \bigl(L_{ab}^{-1}\bigr)^\ast$.

\item[(3)] Let $r=\dim_\RR\Hom(N_{\mathrm{ab}},\RR)$ and fix a basis
$\{\varphi_i\}_{i=1}^r$ of $\Hom(N_{\mathrm{ab}},\RR)$, viewed as linear
functionals on $N_{\mathrm{ab}}\otimes_\ZZ\RR$. Put $\psi_i:=T_\Psi(\varphi_i)$ for
$i=1,\dots,r$. Define coarse harmonic coordinates on $N$ and $M$ by
$$
F_N(x):=\bigl(\varphi_i(\pi_N(x))\bigr)_{i=1}^r,\qquad
F_M(y):=\bigl(\psi_i(\pi_M(y))\bigr)_{i=1}^r.
$$
Then 
$$
\sup_{x\in N} \big\|F_M(\Psi(x))-F_N(x)\big\|<\infty.
$$
\end{enumerate}
\end{theorem}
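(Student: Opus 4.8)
The plan is to establish the three numbered assertions in turn, with (1) carrying all the analytic weight and (2) being the point where the quasi-isometry hypothesis is genuinely used; (3) will then be a one-line computation.

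\emph{Part (1): the homogenized homomorphism.} Write $A:=\pi_M\circ\Psi:N\to V_1(\mathfrak m)$ and $\delta:=\Delta_{\mathrm{ab}}(\Psi)=\sup_{x,y\in N}\|A(xy)-A(x)-A(y)\|<\infty$; note $A(e_N)=\pi_M(\Psi(e_N))=\pi_M(e_M)=0$, so $A$ is a bounded-defect quasi-morphism valued in the real vector space $V_1(\mathfrak m)$. I would first homogenize: for fixed $x\in N$ the sequence $c_n:=A(x^n)$ satisfies $\|c_{m+n}-c_m-c_n\|\le\delta$, so Fekete's subadditivity lemma (applied coordinatewise) yields the limit $H(x):=\lim_{n\to\infty}A(x^n)/n$; telescoping gives $\|A(x^n)-nA(x)\|\le(n-1)\delta$, hence $\|H(x)-A(x)\|\le\delta$ for all $x$, so in particular $\sup_x\|A(x)-H(x)\|<\infty$. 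One checks routinely that $H$ is a homogeneous quasi-morphism: $H(x^k)=kH(x)$ for $k\in\ZZ$ (using $A(x^n)+A(x^{-n})=A(e_N)+O(\delta)$ for negative powers), and it has finite defect since $\sup_{x,y}\|H(xy)-H(x)-H(y)\|\le\delta+3\|H-A\|_\infty\le 4\delta$. The decisive point is then that $N$, being finitely generated nilpotent, is amenable, so $H^2_b(N;\RR)=0$; consequently every homogeneous $\RR$-valued quasi-morphism on $N$ is a genuine homomorphism (the standard consequence of the vanishing of bounded second cohomology — alternatively one proves this directly by induction on the nilpotency class of $N$, the abelian base case being trivial and the inductive step using that central powers in a nilpotent group are bounded products of commutators, on which $H$ is bounded). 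Applying this coordinatewise to $H$ shows $H:N\to V_1(\mathfrak m)$ is a homomorphism. Since $V_1(\mathfrak m)$ is torsion-free abelian, $H$ kills $[N,N]$ and all torsion of $N_{\mathrm{ab}}$, hence factors through $N_{\mathrm{ab}}$ and then, by the universal property of $-\otimes_\ZZ\RR$ into a real vector space, through $V_1(\mathfrak n)=N_{\mathrm{ab}}\otimes_\ZZ\RR$, producing the $\RR$-linear $L_{ab}:V_1(\mathfrak n)\to V_1(\mathfrak m)$ with $H=L_{ab}\circ\pi_N$; the bound $\|H-A\|_\infty\le\delta$ is exactly \eqref{eq:A-vs-Lab}. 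Uniqueness of $H$, hence of $L_{ab}$, is immediate: the difference of two homomorphisms $N\to V_1(\mathfrak m)$ each at bounded distance from $A$ is a homomorphism with bounded image into a torsion-free group, hence zero.

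\emph{Part (2): $L_{ab}$ is an isomorphism.} I would argue through the transpose $L_{ab}^\ast:\Hom(M_{\mathrm{ab}},\RR)\to\Hom(N_{\mathrm{ab}},\RR)$, $\psi\mapsto\psi\circ L_{ab}$. First, the first layers have equal dimension: $\dim_\RR V_1(\mathfrak n)=\dim_\RR(N_{\mathrm{ab}}\otimes_\ZZ\RR)=\dim_\RR H^1(N;\RR)$ and similarly for $M$, and $H^1(-;\RR)$ is a quasi-isometry invariant of finitely generated nilpotent groups by Sauer and Shalom (\cite[Theorem~1.5]{Sauer2006}, \cite[Theorem~1.2]{Shalom2004}), so $\dim V_1(\mathfrak n)=\dim V_1(\mathfrak m)=:r$. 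It therefore suffices to show $L_{ab}^\ast$ is injective. Let $\psi\in\Hom(M_{\mathrm{ab}},\RR)$ be nonzero and set $\tilde\psi:=\psi\circ\pi_M:M\to\RR$; this is a nonzero homomorphism, hence Lipschitz for the word metric on $M$ and unbounded on $M$. Applying the continuous linear functional $\psi$ to \eqref{eq:A-vs-Lab} gives $\sup_{x\in N}\big|\tilde\psi(\Psi(x))-(L_{ab}^\ast\psi)(\pi_N(x))\big|<\infty$. If $L_{ab}^\ast\psi=0$ this forces $\tilde\psi\circ\Psi$ to be bounded on $N$; but $\Psi$, being a quasi-isometry, is coarsely surjective, and $\tilde\psi$ is Lipschitz, so boundedness of $\tilde\psi\circ\Psi$ would imply $\tilde\psi$ is bounded on $M$ — a contradiction. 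Hence $L_{ab}^\ast$ is injective between spaces of equal finite dimension, so it is an isomorphism, and therefore so is $L_{ab}$. Setting $T_\Psi(\varphi):=\varphi\circ L_{ab}^{-1}=(L_{ab}^{-1})^\ast(\varphi)$ gives the asserted linear isomorphism $\Hom(N_{\mathrm{ab}},\RR)\to\Hom(M_{\mathrm{ab}},\RR)$.

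\emph{Part (3): the coarse coordinates.} This is a direct computation from \eqref{eq:A-vs-Lab}. With $\psi_i=T_\Psi(\varphi_i)=\varphi_i\circ L_{ab}^{-1}$, the $i$-th coordinate of $F_M(\Psi(x))-F_N(x)$ equals $\psi_i(\pi_M(\Psi(x)))-\varphi_i(\pi_N(x))=\psi_i(A(x))-\varphi_i(\pi_N(x))$; since $\psi_i(L_{ab}(\pi_N(x)))=(\varphi_i\circ L_{ab}^{-1}\circ L_{ab})(\pi_N(x))=\varphi_i(\pi_N(x))$, this equals $\psi_i\big(A(x)-L_{ab}(\pi_N(x))\big)$, which is bounded in absolute value by $\|\psi_i\|\cdot\sup_x\|A(x)-L_{ab}(\pi_N(x))\|\le\|\psi_i\|\,\delta$ by \eqref{eq:A-vs-Lab}. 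Taking a norm over $i=1,\dots,r$ gives $\sup_{x\in N}\|F_M(\Psi(x))-F_N(x)\|\le r\max_i\|\psi_i\|\,\delta<\infty$. (If $\Psi$ is only at bounded distance from $\Phi|_N$, the Lipschitz property of $\pi_M$ shows the two coordinate functions differ by a bounded amount, so the same conclusion feeds into \Cref{thm:coarse-straightening}.)

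\emph{Anticipated main obstacle.} Essentially all the content sits in Part~(1): upgrading the bounded-defect quasi-morphism $A$ to an \emph{exact} homomorphism $H$ at bounded distance. The homogenization limit and the $O(\delta)$ bookkeeping are routine; the real input is that a homogeneous quasi-morphism on a nilpotent group is a homomorphism, i.e.\ $H^2_b(N;\RR)=0$ (amenability), and I expect the cleanest write-up to isolate exactly this. Part~(2) is the secondary delicate step, and there the quasi-isometry hypothesis — coarse surjectivity of $\Psi$ together with the quasi-isometry invariance of $H^1$ of nilpotent groups — is what promotes the abstract linear map $L_{ab}$ to an isomorphism; Part~(3) uses nothing beyond \eqref{eq:A-vs-Lab}.
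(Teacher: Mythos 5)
Your proposal is correct and follows essentially the same route as the paper: homogenize the bounded-defect map $A$ into a genuine homomorphism using amenability of $N$ (vanishing of bounded cohomology / Calegari's homogeneous-quasimorphism fact), invoke Shalom--Sauer for equality of first-layer dimensions together with coarse surjectivity of $\Psi$ to upgrade $L_{ab}$ to an isomorphism, and conclude (3) by the direct computation from \eqref{eq:A-vs-Lab}. The only (cosmetic) differences are that you homogenize the vector-valued quasimorphism directly via Fekete/telescoping where the paper scalarizes against $V^\ast$ first and reassembles $H$ by double duality, and in (2) you prove injectivity of the transpose $L_{ab}^\ast$ where the paper proves surjectivity of $L_{ab}$ via coarse density of $A(N)$ in $V_1(\mathfrak m)$ --- dual formulations of the same argument.
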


\begin{proof}
(1) \emph{From bounded Abelian defect to a linearization $L_{ab}$.}
Set $V:=M_{\mathrm{ab}}\otimes_\ZZ\RR$. Fix a norm
$\|\cdot\|$ on $V$ and write $\|\cdot\|_\ast$ for the induced dual norm on
$V^\ast$. By the bounded Abelian defect assumption, the defect
$$
\delta(x,y) := A(xy)-A(x)-A(y)\in V
$$
satisfies
$$
\|\delta(x,y)\|\le D_0:=\Delta_{\mathrm{ab}}(\Psi)\qquad(\forall x,y\in N).
$$
We now construct $H$ as follows. For each $\psi\in V^\ast$ consider the
scalar-valued map
$a_\psi(x):=\psi\big(A(x)\big)$.
Then
\begin{equation}\label{eq:scalar-defect}
|a_\psi(xy)-a_\psi(x)-a_\psi(y)|
= |\psi(\delta(x,y))|
\le \|\psi\|_\ast D_0:=K_\psi.
\end{equation}
So, $a_\psi$ is a quasimorphism. Define
$$
\overline{a}_\psi(x) := \lim_{n\to\infty}\frac{1}{n} a_\psi(x^n).
$$
By \cite[Lemma 2.21]{Calegari2009}, $\overline{a}_\psi$ is a homogeneous quasimorphism and we have the following uniform bound:
\begin{equation}\label{eq:homog-scalar-bound}
\big|a_\psi(x)-\overline{a}_\psi(x)\big| \le K_\psi\qquad(\forall x\in N).
\end{equation}
Moreover, for fixed $x\in N$, $\overline{a}_\psi(x)$ is linear in $\psi$. Since $N$ is nilpotent (hence amenable), by \cite[Proposition 2.65]{Calegari2009}, $\overline{a}_\psi$ is in fact a group homomorphism $N\to\RR$.

For each fixed $x\in N$, the map
$\Lambda_x:V^\ast\to\RR$, $\Lambda_x(\psi):=\overline{a}_\psi(x)$
is a linear functional on $V^\ast$. Since $V$ is finite dimensional, we
identify $V$ with the double dual $(V^\ast)^\ast$, and there is a unique
vector $H(x)\in V$ such that
$$
\psi\big(H(x)\big) = \overline{a}_\psi(x)\qquad
(\forall\psi\in V^\ast).
$$
This defines a map $H:N\to V$.

We verify that $H$ is a group homomorphism. For any $\psi\in V^\ast$ and
$x,y\in N$ we have
$$
\psi\big(H(xy)\big) = \overline{a}_\psi(xy)
= \overline{a}_\psi(x)+\overline{a}_\psi(y)
= \psi\big(H(x)+H(y)\big).
$$
Since this holds for all $\psi\in V^\ast$ and $V^\ast$ separates points of $V$,
we must have $H(xy)=H(x)+H(y)$.

From \eqref{eq:homog-scalar-bound}, by duality,
$$
\|A(x)-H(x)\|
= \sup_{\|\psi\|_\ast\le 1}|\psi(A(x)-H(x))|
= \sup_{\|\psi\|_\ast\le 1}|a_\psi(x)-\overline{a}_\psi(x)|
\le D_0
$$
for all $x\in N$, so $A-H$ is uniformly bounded.

Uniqueness of $H$ is immediate: the difference between two such homomorphisms
would be a bounded homomorphism $N\to V$, which forces the difference to be the zero map.

Since $V$ is Abelian, $H$ factors through the Abelianization
$N_{\mathrm{ab}}$. Hence, there exists a unique linear map
$$
L_{ab}:N_{\mathrm{ab}}\otimes_\ZZ\RR\to M_{\mathrm{ab}}\otimes_\ZZ\RR
$$
such that $H(x)=L_{ab}(\pi_N(x))$ for all $x\in N$. The boundedness of
$A-H$ is exactly \eqref{eq:A-vs-Lab}.

\medskip\noindent
\emph{(2) $L_{ab}$ is an isomorphism and definition of $T_\Psi$.}
We first show that $L_{ab}$ is surjective. Consider the set
$A(N) = \{\pi_M(\Psi(x)) : x\in N\}\subset M_{\mathrm{ab}}\otimes_\ZZ\RR$.
Since $\Psi$ is a quasi-isometry and the projection $\pi_M:M\to M_{\mathrm{ab}}\otimes_\ZZ\RR$ is
Lipschitz with respect to the word metric, $A(N)$ is coarsely dense in $\pi_M(M)$.
Since $\pi_M(M)$ is a full-rank lattice in $M_{\mathrm{ab}}\otimes_\ZZ\RR$, $A(N)$ is coarsely dense in $M_{\mathrm{ab}}\otimes_\ZZ\RR$. Since $H$ is at a uniformly bounded distance
from $A$, the set $H(N)$ is also coarsely dense in $M_{\mathrm{ab}}\otimes_\ZZ\RR$.

On the other hand, $H(N)$ is a subgroup contained in $\Im(L_{ab})$. Since a subset
of a proper subspace of $V$ cannot be coarsely dense in all of $V$.
we have $\Im(L_{ab})=M_{\mathrm{ab}}\otimes_\ZZ\RR$, i.e., $L_{ab}$ is
surjective.

For a finite dimensional vector space $V$ we have $\dim V=\dim V^\ast$, so by \cite[Theorem 1.2]{Shalom2004}, we get
$$
\dim N_{\mathrm{ab}}\otimes_\ZZ\RR = \dim M_{\mathrm{ab}}\otimes_\ZZ\RR.
$$
Since $L_{ab}$ is a surjective linear map between finite-dimensional vector
spaces of equal dimension, it must be a linear isomorphism.

We now define $T_\Psi:\Hom(N_{\mathrm{ab}},\RR)\longrightarrow\Hom(M_{\mathrm{ab}},\RR)$ by
$$
T_\Psi(\varphi) := \varphi\circ L_{ab}^{-1},
$$
interpreting $\varphi$ and $T_\Psi(\varphi)$ as linear functionals on
$N_{\mathrm{ab}}\otimes_\ZZ\RR$ and $M_{\mathrm{ab}}\otimes_\ZZ\RR$, respectively. Under these identifications,
$T_\Psi$ is exactly the dual map of $L_{ab}^{-1}$:
$T_\Psi = \bigl(L_{ab}^{-1}\bigr)^\ast$.

\medskip\noindent
\emph{(3) Bounded deviation in coarse harmonic coordinates.}
Let $\{\varphi_i\}_{i=1}^r$ be a basis of $\Hom(N_{\mathrm{ab}},\RR)$, viewed
as linear functionals on $N_{\mathrm{ab}}\otimes_\ZZ\RR$, and set
$\psi_i:=T_\Psi(\varphi_i)\in (M_{\mathrm{ab}}\otimes_\ZZ\RR)^\ast$ for $i=1,\dots,r$.
Define $F_N, F_M$ as stated.
Let
$$
P:M_{\mathrm{ab}}\otimes_\ZZ\RR\to\RR^r,\qquad P(v):=(\psi_i(v))_{i=1}^r,
$$
and
$$
Q:N_{\mathrm{ab}}\otimes_\ZZ\RR\to\RR^r,\qquad Q(u):=(\varphi_i(u))_{i=1}^r.
$$
Then $F_N(x)=Q(\pi_N(x))$ and $F_M(y)=P(\pi_M(y))$.

For each $i$ and all $v\in N_{\mathrm{ab}}\otimes_\ZZ\RR$ we have, by definition of $T_\Psi$ in part (2),
$$
\psi_i\big(L_{ab}(v)\big)
= T_\Psi(\varphi_i)\big(L_{ab}(v)\big)
= (\varphi_i \circ L_{ab}^{-1})(L_{ab}(v))
= \varphi_i(v),
$$
so $P\circ L_{ab}=Q$.

For any $x\in N$ we then have
\begin{align*}
F_M(\Psi(x)) - F_N(x)
&= P\big(\pi_M(\Psi(x))\big) - Q\big(\pi_N(x)\big) \\
&= P\big(A(x)\big) - P\big(L_{ab}(\pi_N(x))\big) \\
&= P\big(A(x) - L_{ab}(\pi_N(x))\big).
\end{align*}
Let $\|P\|_{\infty}$ denote the operator norm of $P$.
Using \eqref{eq:A-vs-Lab} and the bound from Part (1),
$$
\|F_M(\Psi(x)) - F_N(x)\|
\le \|P\|_{\infty}\cdot \big\|A(x) - L_{ab}(\pi_N(x))\big\|
\le \|P\|_{\infty}\cdot \Delta_{\mathrm{ab}}(\Psi)
$$
for all $x\in N$. This proves (3).
\end{proof}

Now let $G$ be a finitely generated group of polynomial growth with
torsion-free nilpotent subgroup $N\le G$. Choose a finite symmetric generating set
$S_G$ for $G$ and let $d_G$ be the corresponding word metric. Fix a right coset
decomposition
$$
G = \bigsqcup_{j=1}^k N g_j
$$
with $g_1=e_G$. For $x\in G$, write $x=ng_j$ with $n\in N$ and define
$$
F_G(x) := F_N(n).
$$
Thus $F_G$ \emph{projects} harmonic coordinates from $N$ to $G$
along the cosets. We refer to $F_G$ as a system of coarse harmonic coordinates on $G$ associated
to the torsion-free nilpotent subgroup $N$ and the basis $\{\varphi_i\}$.

\begin{lemma}[Finite-index comparison of coarse harmonic coordinates]\label{lem:FI-harmonic-coords}
Let $G$ be a finitely generated group of polynomial growth and let $N\le G$ be a
torsion-free nilpotent subgroup of finite index. Let $F_N:N\to\RR^r$ and
$F_G:G\to\RR^r$ be defined as above, using a fixed coset transversal
$G=\bigsqcup_{j=1}^k N g_j$ with $g_1=e_G$ and the rule $F_G(ng_j):=F_N(n)$.

Then:
\begin{enumerate}
\item[(1)] $F_G$ extends $F_N$:
$$
F_G(n) = F_N(n)\qquad(\forall n\in N).
$$

\item[(2)] There exists a constant $C>0$ such that for every $x\in G$ there
is an element $n\in N$ with $d_G(x,n)\le C$ and
$$
F_G(x) = F_N(n).
$$
In fact, one can take $C:=\max_{1\le j\le k} d_G(e_G,g_j)$.
\end{enumerate}
\end{lemma}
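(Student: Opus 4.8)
The plan is to read off both assertions directly from the defining rule $F_G(ng_j):=F_N(n)$ together with the fact that $G=\bigsqcup_{j=1}^k Ng_j$ is a \emph{disjoint} union; the argument uses neither polynomial growth nor nilpotency nor any property of $F_N$ beyond it being a fixed function on $N$, so I do not expect a genuine obstacle — the only care point is keeping the metric conventions straight.

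First I would note that disjointness of the decomposition means every $x\in G$ lies in exactly one right coset $Ng_j$; once that $j$ is fixed, $n:=xg_j^{-1}\in N$ is the unique element of $N$ with $x=ng_j$, so the pair $(n,j)$ entering the rule is unambiguous and $F_G$ is well defined. For part~(1), since $g_1=e_G$ we have $N=Ng_1$, hence for $n\in N$ the representing pair is $(n,1)$ and therefore $F_G(n)=F_N(n)$ immediately from the definition.

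For part~(2), given $x\in G$ I would write $x=ng_j$ with $n\in N$ and $1\le j\le k$ as in the previous step, so that $F_G(x)=F_N(n)$ by definition and $n\in N$. It then remains only to bound $d_G(x,n)$. Using the (left-invariant) word metric $d_G(a,b)=|a^{-1}b|_{S_G}$ and the symmetry of the generating set $S_G$, one computes
\[
d_G(x,n)=d_G(ng_j,n)=\big|(ng_j)^{-1}n\big|_{S_G}=|g_j^{-1}|_{S_G}=|g_j|_{S_G}=d_G(e_G,g_j).
\]
Since $j$ ranges over the finite set $\{1,\dots,k\}$, setting $C:=\max_{1\le j\le k}d_G(e_G,g_j)$ gives $d_G(x,n)\le C$; in fact the display shows $d_G(x,n)$ \emph{equals} $d_G(e_G,g_j)$ for the coset index $j$ of $x$, so this $C$ is the optimal uniform bound for the chosen transversal. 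The closest thing to a difficulty is purely bookkeeping: one must use the left-invariant convention together with the symmetry of $S_G$ in the computation above, and one must invoke disjointness of the coset decomposition to see that $F_G$ is well defined in the first place. Beyond that, the lemma is a direct consequence of the definitions, and it will be used later (in combination with \Cref{thm:alg-linearization-bdd-defect}) precisely to pass statements about $F_N$ on the nilpotent subgroup $N$ to statements about $F_G$ on all of $G$ at the cost of an additive $O(1)$ positional error.
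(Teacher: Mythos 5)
Your proposal is correct and follows essentially the same route as the paper's proof: part (1) from $g_1=e_G$, and part (2) by writing $x=ng_j$ and computing $d_G(ng_j,n)=d_G(e_G,g_j)\le C$. The extra remarks on well-definedness via disjointness of the cosets and the explicit left-invariance/symmetry bookkeeping are fine but not a different argument.
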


\begin{proof}
(1) If $n\in N$ then $n=ng_1$ with $g_1=e_G$, and by definition
$F_G(n)=F_N(n)$. This proves the first part.

(2) For any $x\in G$, write $x=ng_j$ with $n\in N$. Then
$$
d_G(x,n)=d_G(ng_j,n)=d_G(e_G,g_j)\le C:=\max_{1\le j\le k} d_G(e_G,g_j)
$$. By definition,
$$
F_G(x) = F_N(n).
$$
\end{proof}

In the special case when $G$ itself is nilpotent (so we can take $N=G$), the
map $F_G$ coincides with $F_N$ and consists of genuine harmonic characters.

\begin{theorem}[Coarse straightening in harmonic coordinates]\label{prop:O1-upgrade}
Let $G,H$ be finitely generated groups of polynomial growth, and let
$\Phi:G\to H$ be a quasi-isometry. Let $N\le G$ and $M\le H$ be finite-index
torsion-free nilpotent subgroups, and let $\Psi:N\to M$ be a quasi-isometry at
bounded distance from $\Phi|_N$, normalized so that $\Psi(e_N)=e_M$. Let
$F_G$ and $F_H$ be coarse harmonic coordinates constructed as in
\Cref{lem:FI-harmonic-coords}, using a basis of $\Hom(N_{\mathrm{ab}},\RR)$
and its image under the map $T_\Psi$ from \Cref{thm:alg-linearization-bdd-defect}.

Assume that $\Psi$ has bounded Abelian defect, i.e.\ $\Delta_{\mathrm{ab}}(\Psi)<\infty$
in the sense of \Cref{def:abelian-defect}. Then
$$
\sup_{x\in G} \big\|F_H(\Phi(x)) - F_G(x)\big\| < \infty.
$$
\end{theorem}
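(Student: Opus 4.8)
The plan is to reduce the assertion about $\Phi:G\to H$ to the nilpotent estimate \Cref{thm:alg-linearization-bdd-defect}(3), exploiting that $F_G$ and $F_H$ are, up to bounded coset shifts, the genuine harmonic coordinates $F_N$ and $F_M$. The three ingredients I would combine are: (i) \Cref{lem:FI-harmonic-coords}(2), which says every $x\in G$ lies within a uniformly bounded distance of some $n\in N$ with $F_G(x)=F_N(n)$, and symmetrically for $(H,M)$; (ii) coarse Lipschitzness of the coordinate maps $F_M$, $F_H$; and (iii) the hypotheses that $\Psi$ is at bounded distance from $\Phi|_N$ and has bounded Abelian defect, so that \Cref{thm:alg-linearization-bdd-defect}(3) applies and gives $\sup_{n\in N}\|F_M(\Psi(n))-F_N(n)\|<\infty$.

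First I would fix constants. Write $\Phi$ as a $(\lambda,c)$-quasi-isometry, set $D_1:=\sup_{n\in N}d_H(\Psi(n),\Phi(n))<\infty$, and let $C,C'$ be the bounds $\max_j d_G(e_G,g_j)$ and $\max_j d_H(e_H,g'_j)$ coming from the coset transversals used to define $F_G$ and $F_H$ in \Cref{lem:FI-harmonic-coords}. Since $[H:M]<\infty$, the inclusion $M\hookrightarrow H$ is a quasi-isometry (a standard fact, already used implicitly above), so $d_M$ and $d_H|_M$ are comparable up to additive and multiplicative constants. Next, $F_M=P\circ\pi_M$ with $P$ linear on a finite-dimensional space and $\pi_M:M\to V_1(\mathfrak m)$ Lipschitz for the word metric (as used in the proof of \Cref{thm:alg-linearization-bdd-defect}); hence $F_M$ is Lipschitz on $(M,d_M)$, and together with the coset description this shows $F_H$ is coarsely Lipschitz on $H$, say $\|F_H(y)-F_H(y')\|\le L\,(d_H(y,y')+1)$ for a suitable $L$. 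Finally put $D_2:=\sup_{n\in N}\|F_M(\Psi(n))-F_N(n)\|<\infty$ from \Cref{thm:alg-linearization-bdd-defect}(3); this is legitimate precisely because $F_G$ (resp.\ $F_H$) is built from the chosen basis $\{\varphi_i\}$ of $\Hom(N_{\mathrm{ab}},\RR)$ (resp.\ its image $\{T_\Psi\varphi_i\}$), and $\Psi$ is a normalized quasi-isometry of bounded Abelian defect, so all hypotheses of \Cref{thm:alg-linearization-bdd-defect} hold.

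The chase is then short. Given $x\in G$, choose $n\in N$ with $d_G(x,n)\le C$ and $F_G(x)=F_N(n)$. Since $\Phi$ is a quasi-isometry and $\Psi$ is close to $\Phi|_N$, we get $d_H(\Phi(x),\Psi(n))\le d_H(\Phi(x),\Phi(n))+d_H(\Phi(n),\Psi(n))\le \lambda C+c+D_1$, with $\Psi(n)\in M$. Applying coarse Lipschitzness of $F_H$ to pass from $\Phi(x)$ to $\Psi(n)$, then $F_H|_M=F_M$ from \Cref{lem:FI-harmonic-coords}(1), and then $D_2$, we obtain
$$
\|F_H(\Phi(x))-F_G(x)\|\le \|F_H(\Phi(x))-F_H(\Psi(n))\|+\|F_M(\Psi(n))-F_N(n)\|\le L(\lambda C+c+D_1+1)+D_2,
$$
a bound independent of $x$, which is the claim.

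I do not expect a genuine obstacle here: the theorem is essentially a formal consequence of \Cref{thm:alg-linearization-bdd-defect} once the coset bookkeeping is handled carefully. The one subtlety I would be careful to spell out is that $F_G$ and $F_H$ are not homomorphisms, and are not a priori Lipschitz on all of $G$ and $H$ — they are defined through a choice of transversal — so one must transfer every estimate between $G$ and $N$ (resp.\ $H$ and $M$) along the finite-index quasi-isometry, and lean on the coarse-Lipschitz property of $F_M$ (equivalently $F_H$). The genuinely hard analytic input, namely producing $L_{ab}$ and hence $T_\Psi$ from bounded Abelian defect via the quasimorphism homogenization argument, is already contained in \Cref{thm:alg-linearization-bdd-defect} and is not re-done here.
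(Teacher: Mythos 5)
Your proposal is correct and follows essentially the same route as the paper's proof: reduce to the core estimate $\sup_{n\in N}\|F_M(\Psi(n))-F_N(n)\|<\infty$ from \Cref{thm:alg-linearization-bdd-defect}(3), and handle the passage from $G$ to $N$ and from $H$ to $M$ via the coset description in \Cref{lem:FI-harmonic-coords} together with the Lipschitzness of $F_M$ (the paper routes the last step through a nearby point $m\in M$ rather than through a global coarse-Lipschitz bound for $F_H$, but this is only a cosmetic difference in bookkeeping).
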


\begin{proof}
By \Cref{lem:FI-harmonic-coords}, $F_G$ extends $F_N:=F_G|_N$ and $F_H$ extends
$F_M:=F_H|_M$. Now, for each $x\in G$ there is $n\in N$ with $x=ng_j$ for some
coset representative $g_j$ and
$$
d_G(x,n)\le D,\qquad F_G(x)=F_N(n),
$$
where $D>0$ depends only on the choice of right coset representatives. Similarly, for each
$y\in H$ there exists $m\in M$ within uniformly bounded distance of $y$ such that
$F_H(y)=F_M(m)$, with a bound depending only on the right coset representatives in $H$.

Since $\Phi$ and $\Psi$ differ by a uniformly bounded amount on $N$, say
$d_H(\Phi(n),\Psi(n))\le R_0$ for all $n\in N$, there is a constant $C_1$ such
that for all $x\in G$ with $x=ng_j$ and corresponding $m\in M$ we have
$$
d_H(\Phi(x),m) \le C_1,\qquad d_M(\Psi(n),m)\le C_1.
$$
Each coordinate of $F_M$ is a group homomorphism $M\to\RR$, so $F_M$ is Lipschitz
with respect to the word metric on $M$ and hence on $H$. Thus there exists
$L_M>0$ such that
$$
\|F_H(\Phi(x))-F_M(\Psi(n))\|
  = \|F_M(m)-F_M(\Psi(n))\|
  \le L_M d_M(m,\Psi(n)) \le L_M C_1.
$$
Combining this with $F_G(x)=F_N(n)$ and the triangle inequality gives
\begin{equation}\label{eq:ext-core-global-new}
\|F_H(\Phi(x)) - F_G(x)\|
\le L_M C_1 + \|F_M(\Psi(n)) - F_N(n)\|.
\end{equation}
By \Cref{thm:alg-linearization-bdd-defect}, we have
\begin{equation}\label{eq:O1-on-cores}
C:=\sup_{n\in N}\ \big\|F_M(\Psi(n)) - F_N(n)\big\| < \infty.
\end{equation}
Plugging the bound \eqref{eq:O1-on-cores} into \eqref{eq:ext-core-global-new}
and noting that for every $x \in G$ there is a corresponding $n \in N$, we
obtain the desired conclusion:
$$
\sup_{x\in G} \|F_H(\Phi(x)) - F_G(x)\| < \infty.
$$
\end{proof}

\begin{remark}[Bounded Abelian defect is not automatic]\label{rem:nonautomatic-bdd-defect}
The hypothesis $\Delta_{\mathrm{ab}}(\Psi)<\infty$ in
\Cref{thm:alg-linearization-bdd-defect,prop:O1-upgrade} is genuinely
additional and does not follow from quasi-isometry alone, even in the
Abelian case.

For instance, let $N=M=\ZZ^2$ with the standard word metric and write
elements as $(a,b)\in\ZZ^2$. Fix a function $g:\ZZ\to\ZZ$ with uniformly
bounded increments but unbounded defect, e.g.
$$
g(n):=\big\lfloor \sqrt{|n|}\big\rfloor.
$$
Then $|g(n+1)-g(n)|\le 1$ for all $n$, but
$$
g(m+n)-g(m)-g(n)
$$
is unbounded in $(m,n)$ (take $m=n\to\infty$).

Define
$$
\Psi:\ZZ^2\to\ZZ^2,\qquad
\Psi(a,b):=\bigl(a,\ b+g(a)\bigr).
$$
A standard extension argument shows that $\Psi$ is a quasi-isometry:
extend $g$ to a piecewise linear function $\widetilde g:\RR\to\RR$ with
$|\widetilde g(x+1)-\widetilde g(x)|\le 1$ for all $x\in\RR$, and extend
$\Psi$ to $\widetilde\Psi:\RR^2\to\RR^2$ by
$\widetilde\Psi(x,y)=(x,y+\widetilde g(x))$. On each unit square this is
an affine map with Jacobian matrix of the form
$$
\begin{pmatrix}1&0\\p&1\end{pmatrix},\qquad |p|\le 1,
$$
hence $\widetilde\Psi$ is biLipschitz on $\RR^2$ and its restriction to
$\ZZ^2$ is a quasi-isometry.

However, in this Abelian case the first-layer projection is simply
$\pi_N=\pi_M=id_{\ZZ^2}$, so the Abelian defect of $\Psi$ is
$$
\Psi\big((a,b)+(a',b')\big)-\Psi(a,b)-\Psi(a',b')
 = \bigl(0,\ g(a+a')-g(a)-g(a')\bigr),
$$
which is unbounded in $(a,a')$. Thus $\Delta_{\mathrm{ab}}(\Psi)=\infty$
even though $\Psi$ is a quasi-isometry.
\end{remark}

\subsection*{A sufficient condition and a geometric application}

We now give a concrete, checkable sufficient condition ensuring bounded
Abelian defect, and a geometric class of quasi-isometries where this
condition naturally holds.

\begin{definition}[Coarsely affine on the Abelianization]\label{def:coarsely-affine-ab}
Let $N$ and $M$ be finitely generated torsion-free nilpotent groups with
first-layer projections
$$
\pi_N:N\to N_{\mathrm{ab}}\otimes_\ZZ\RR,\qquad \pi_M:M\to M_{\mathrm{ab}}\otimes_\ZZ\RR
$$
as above. A map $\Psi:N\to M$ is said to be
\emph{coarsely affine on the Abelianization} if there exist a linear map
$L:N_{\mathrm{ab}}\otimes_\ZZ\RR\to M_{\mathrm{ab}}\otimes_\ZZ\RR$, a vector
$v_0\in M_{\mathrm{ab}}\otimes_\ZZ\RR$ and a constant $C\ge 0$ such that
\begin{equation}\label{eq:coarse-affine-ab}
\big\|\pi_M\big(\Psi(x)\big)
 - \big(L(\pi_N(x)) + v_0\big)\big\|
\le C\qquad\forall x\in N.
\end{equation}
\end{definition}

\begin{theorem}[Coarsely affine Abelianization $\Rightarrow$ bounded Abelian defect]\label{thm:coarse-affine-bdd-defect}
Let $N$ and $M$ be finitely generated torsion-free nilpotent groups with projections $\pi_N,\pi_M$ as above, and let
$\Psi:N\to M$ be a quasi-isometry. Suppose $\Psi$ is coarsely affine on
the Abelianization in the sense of
\Cref{def:coarsely-affine-ab}: there exist $L$, $v_0$ and $C$ with
\eqref{eq:coarse-affine-ab} holding for all $x\in N$.

Then $\Psi$ has bounded Abelian defect, i.e.
$$
\Delta_{\mathrm{ab}}(\Psi)
:= \sup_{x,y\in N}
   \big\|\pi_M(\Psi(xy))-\pi_M(\Psi(x))-\pi_M(\Psi(y))\big\|
<\infty.
$$
In particular, the conclusions of
\Cref{thm:alg-linearization-bdd-defect} apply to $\Psi$.
\end{theorem}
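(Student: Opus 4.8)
The plan is to unwind the definition of bounded Abelian defect directly, using the hypothesis \eqref{eq:coarse-affine-ab} together with the fact that the first-layer projection $\pi_N$ is a genuine group homomorphism. First I would record that $\pi_N\colon N\to V_1(\mathfrak n)$ is the composite of the Abelianization map $[\cdot]_N\colon N\to N_{\mathrm{ab}}$, the canonical inclusion $N_{\mathrm{ab}}\hookrightarrow N_{\mathrm{ab}}\otimes\RR$, and the linear identification $N_{\mathrm{ab}}\otimes\RR\cong V_1(\mathfrak n)$; each of these is a homomorphism of (additive) groups, so $\pi_N(xy)=\pi_N(x)+\pi_N(y)$ for all $x,y\in N$ (the same holds for $\pi_M$, though only the identity for $\pi_N$ is needed).

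Next I would introduce the error term $E\colon N\to V_1(\mathfrak m)$ defined by $E(x):=\pi_M(\Psi(x))-\bigl(L(\pi_N(x))+v_0\bigr)$, so that the coarsely-affine hypothesis \eqref{eq:coarse-affine-ab} reads precisely $\|E(x)\|\le C$ for all $x\in N$. Substituting $\pi_M(\Psi(\cdot))=L(\pi_N(\cdot))+v_0+E(\cdot)$ into the Abelian-defect expression and using linearity of $L$ together with $\pi_N(xy)=\pi_N(x)+\pi_N(y)$, the linear part collapses:
\begin{align*}
\pi_M(\Psi(xy))-\pi_M(\Psi(x))-\pi_M(\Psi(y))
&= L\bigl(\pi_N(xy)-\pi_N(x)-\pi_N(y)\bigr) - v_0 + E(xy)-E(x)-E(y)\\
&= -\,v_0 + E(xy)-E(x)-E(y).
\end{align*}
Taking norms and using $\|E(\cdot)\|\le C$ then yields the uniform bound
$$
\bigl\|\pi_M(\Psi(xy))-\pi_M(\Psi(x))-\pi_M(\Psi(y))\bigr\| \le \|v_0\| + 3C
\qquad(\forall x,y\in N),
$$
so $\Delta_{\mathrm{ab}}(\Psi)\le \|v_0\|+3C<\infty$. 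Finally I would observe that, bounded Abelian defect now being established, \Cref{thm:alg-linearization-bdd-defect} applies verbatim to $\Psi$, which gives the last assertion.

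There is essentially no hard step here: the only thing worth flagging is the easy but load-bearing observation that $\pi_N$ is a homomorphism, which is exactly what makes the term $L\bigl(\pi_N(xy)-\pi_N(x)-\pi_N(y)\bigr)$ vanish. It is also worth noting that the quasi-isometry hypothesis on $\Psi$ plays no role in this implication itself — it is needed only downstream, through the conclusions of \Cref{thm:alg-linearization-bdd-defect} that one then invokes.
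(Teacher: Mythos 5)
Your proof is correct and is essentially identical to the paper's: both substitute the coarsely affine decomposition $\pi_M(\Psi(x))=L(\pi_N(x))+v_0+\varepsilon(x)$ into the defect, use that $\pi_N$ is a homomorphism to kill the linear term, and arrive at the same bound $3C+\|v_0\|$. Your closing remark that the quasi-isometry hypothesis is not used in this implication is a fair observation not made explicitly in the paper.
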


\begin{proof}
Define $A:N\to M_{\mathrm{ab}}\otimes_\ZZ\RR$ by $A(x):=\pi_M(\Psi(x))$, and write
$$
A(x) = L(\pi_N(x)) + v_0 + \varepsilon(x),
\qquad \|\varepsilon(x)\|\le C,
$$
as guaranteed by \eqref{eq:coarse-affine-ab}. For $x,y\in N$ the Abelian
defect of $\Psi$ is
$$
\delta(x,y)
:= A(xy)-A(x)-A(y)
= \pi_M(\Psi(xy))-\pi_M(\Psi(x))-\pi_M(\Psi(y)).
$$
Substituting the decomposition of $A$ gives
\begin{align*}
\delta(x,y)
 &= \big(L(\pi_N(xy)) + v_0 + \varepsilon(xy)\big)
    -\big(L(\pi_N(x)) + v_0 + \varepsilon(x)\big)
    -\big(L(\pi_N(y)) + v_0 + \varepsilon(y)\big)\\
 &= L\big(\pi_N(xy)-\pi_N(x)-\pi_N(y)\big)
    + \varepsilon(xy)-\varepsilon(x)-\varepsilon(y) - v_0.
\end{align*}
By construction $\pi_N$ factors through the Abelianization, hence is a
group homomorphism $N\to N_{\mathrm{ab}}\otimes_\ZZ\RR$, so
$\pi_N(xy)=\pi_N(x)+\pi_N(y)$ for all $x,y\in N$. Thus the linear term
vanishes and
$$
\delta(x,y) = \varepsilon(xy)-\varepsilon(x)-\varepsilon(y)-v_0.
$$
Taking norms and using $\|\varepsilon(\cdot)\|\le C$ yields
$$
\|\delta(x,y)\|\le \|\varepsilon(xy)\|+\|\varepsilon(x)\|
                        +\|\varepsilon(y)\|+\|v_0\|
\le 3C+\|v_0\|
$$
for all $x,y\in N$. Hence
$$
\Delta_{\mathrm{ab}}(\Psi)
= \sup_{x,y\in N} \|\delta(x,y)\|
\ \le\ 3C+\|v_0\|\ <\ \infty,
$$
which is exactly bounded Abelian defect in the sense of
\Cref{def:abelian-defect}.
\end{proof}

We now give a geometric situation where the coarse affinity assumption is
natural and can be verified.

\begin{corollary}\label{cor:torus-fibered-bdd-defect}
Let $G$ and $H$ be simply connected nilpotent Lie groups, and let
$N\le G$ and $M\le H$ be uniform lattices. Suppose there are Lie group
homomorphisms
$$
\Theta_G:G\to\RR^d,\qquad \Theta_H:H\to\RR^d
$$
with nilpotent kernels such that:
\begin{itemize}
\item the induced maps on lattices satisfy
  $\Theta_G(N)\subset\ZZ^d$, $\Theta_H(M)\subset\ZZ^d$, and
  $\Theta_G|_N$, $\Theta_H|_M$ agree (up to automorphisms of $\ZZ^d$)
  with the Abelianization maps
  $N\to N_{\mathrm{ab}}\cong\ZZ^d$,
  $M\to M_{\mathrm{ab}}\cong\ZZ^d$; and
\item there exists a quasi-isometry $\Phi:G\to H$, a linear map
  $L:\RR^d\to\RR^d$, a vector $b\in\RR^d$ and $C_0\ge 0$ such that
  \begin{equation}\label{eq:base-affine-Phi}
  \big\|\Theta_H(\Phi(g)) - \big(L(\Theta_G(g))+b\big)\big\|
  \le C_0\qquad\forall g\in G.
  \end{equation}
\end{itemize}
Let $\Psi:N\to M$ be a quasi-isometry at bounded distance from
$\Phi|_N$. Then $\Psi$ has bounded Abelian defect. 
\end{corollary}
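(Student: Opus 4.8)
The plan is to deduce the corollary from \Cref{thm:coarse-affine-bdd-defect}: it suffices to verify that the normalized quasi-isometry $\Psi:N\to M$ is \emph{coarsely affine on the Abelianization} in the sense of \Cref{def:coarsely-affine-ab}, i.e.\ that \eqref{eq:coarse-affine-ab} holds for $\Psi$ with some linear map $\widehat L:V_1(\mathfrak n)\to V_1(\mathfrak m)$, vector $v_0\in V_1(\mathfrak m)$ and constant $C$; once this is in place, \Cref{thm:coarse-affine-bdd-defect} immediately yields $\Delta_{\mathrm{ab}}(\Psi)<\infty$ and hence the conclusions of \Cref{thm:alg-linearization-bdd-defect}. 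So the whole argument reduces to producing the data $(\widehat L, v_0, C)$ for $\Psi$ from the affine data $(L,b,C_0)$ of $\Phi$ on the level of the $\RR^d$-valued homomorphisms.

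The first ingredient is to match the abstract first-layer projections $\pi_N,\pi_M$ of \Cref{thm:alg-linearization-bdd-defect} with the given homomorphisms $\Theta_G,\Theta_H$. Since $\Theta_G:G\to\RR^d$ is a Lie group homomorphism it kills $[G,G]$ and therefore factors through $G_{\mathrm{ab}}$; because $N$ is a uniform lattice in the simply connected nilpotent group $G$ one has the standard Mal'cev identification $N_{\mathrm{ab}}\otimes_\ZZ\RR\cong G_{\mathrm{ab}}\cong V_1(\mathfrak n)$. The hypothesis that $\Theta_G|_N$ agrees, up to an automorphism of $\ZZ^d$, with the Abelianization map $N\to N_{\mathrm{ab}}\cong\ZZ^d$ then upgrades (after tensoring with $\RR$ and composing with the above identifications) to a \emph{fixed} linear isomorphism $\gamma_N:\RR^d\to V_1(\mathfrak n)$ with $\pi_N=\gamma_N\circ(\Theta_G|_N)$; symmetrically one obtains a linear isomorphism $\gamma_M:\RR^d\to V_1(\mathfrak m)$ with $\pi_M=\gamma_M\circ(\Theta_H|_M)$. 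The second ingredient is that $\Theta_H$ is Lipschitz: fixing a left-invariant Riemannian metric $d_H$ on $H$ (so that the inclusion $M\hookrightarrow H$ is a quasi-isometry), the differential of $\Theta_H$ at $e$ is a fixed bounded linear map on the Lie algebra, and left-invariance of $d_H$ makes the operator norm of $d\Theta_H$ uniformly bounded; integrating along minimizing curves and using the identity $\Theta_H(x)-\Theta_H(y)=\Theta_H(y^{-1}x)$ gives a constant $K\ge 0$ with $\|\Theta_H(x)-\Theta_H(y)\|\le K\,d_H(x,y)$ for all $x,y\in H$.

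With these two facts the rest is a short estimate. Let $R\ge 0$ bound $d_H(\Psi(x),\Phi(x))$ for $x\in N$. Lipschitzness of $\Theta_H$ gives $\|\Theta_H(\Psi(x))-\Theta_H(\Phi(x))\|\le KR$, and \eqref{eq:base-affine-Phi} evaluated at $g=x\in N$ gives $\|\Theta_H(\Phi(x))-(L(\Theta_G(x))+b)\|\le C_0$; adding these yields $\|\Theta_H(\Psi(x))-(L(\Theta_G(x))+b)\|\le KR+C_0$ for all $x\in N$. Transporting through the fixed isomorphisms — set $\widehat L:=\gamma_M\circ L\circ\gamma_N^{-1}$ and $v_0:=\gamma_M(b)$ — one computes $\pi_M(\Psi(x))-\bigl(\widehat L(\pi_N(x))+v_0\bigr)=\gamma_M\bigl(\Theta_H(\Psi(x))-(L(\Theta_G(x))+b)\bigr)$, which is therefore bounded in norm by $\|\gamma_M\|_{\mathrm{op}}(KR+C_0)=:C$ for all $x\in N$. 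This is exactly \eqref{eq:coarse-affine-ab} for $\Psi$, so \Cref{thm:coarse-affine-bdd-defect} applies and completes the proof.

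I expect the conceptual content here to be light, since the real work is carried by \Cref{thm:coarse-affine-bdd-defect}. The one step that requires genuine care is the identification $\pi_N=\gamma_N\circ(\Theta_G|_N)$ (and its analogue for $M$): one must check that the hypothesis "$\Theta_G|_N$ agrees up to $\mathrm{Aut}(\ZZ^d)$ with the Abelianization", combined with the Mal'cev identification $N_{\mathrm{ab}}\otimes_\ZZ\RR\cong V_1(\mathfrak n)$, really produces a \emph{single, fixed} linear change of coordinates, so that the discrepancies in the final estimate stay uniformly bounded over all of $N$ rather than merely bounded on finite subsets. The analytic input — Lipschitzness of a Lie-group homomorphism into $\RR^d$ with respect to a left-invariant metric, and the comparability of that metric with the word metric on the uniform lattice — is routine and can be quoted.
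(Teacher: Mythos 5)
Your proposal is correct and follows essentially the same route as the paper: verify that $\Psi$ is coarsely affine on the Abelianization by transporting the base-affine estimate \eqref{eq:base-affine-Phi} through fixed linear identifications of $V_1(\mathfrak n),V_1(\mathfrak m)$ with $\RR^d$ and using the Lipschitzness of $\Theta_H$ together with $d_H(\Psi(x),\Phi(x))\le R$, then invoke \Cref{thm:coarse-affine-bdd-defect}. Your isomorphisms $\gamma_N,\gamma_M$ are just the inverses of the paper's $S_N,S_M$, and your extra care about the Mal'cev identification being a single fixed linear map is a point the paper passes over more briskly.
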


\begin{proof}
By assumption, after choosing identifications
$N_{\mathrm{ab}}\cong\ZZ^d\cong M_{\mathrm{ab}}$, the
projections $\pi_N$ and $\pi_M$ can be identified (up to linear
isomorphisms) with the restrictions of $\Theta_G$ and $\Theta_H$ to $N$
and $M$. More precisely, there exist linear isomorphisms
$S_N:N_{\mathrm{ab}}\otimes_\ZZ\RR\xrightarrow{\cong}\RR^d$ and
$S_M:M_{\mathrm{ab}}\otimes_\ZZ\RR\xrightarrow{\cong}\RR^d$ such that
$$
S_N\big(\pi_N(x)\big) = \Theta_G(x),\qquad
S_M\big(\pi_M(y)\big) = \Theta_H(y)
$$
for all $x\in N$, $y\in M$.

By hypothesis there exists $C_1>0$ such that
$d_H\big(\Psi(x),\Phi(x)\big)\le C_1$ for all $x\in N$. Since $\Theta_H$
is a Lie group homomorphism, hence Lipschitz with respect to the word
metric on $M$, there is $L_H>0$ with
$$
\big\|\Theta_H(\Psi(x)) - \Theta_H(\Phi(x))\big\|
\le L_H d_H\big(\Psi(x),\Phi(x)\big)
\le L_H C_1\qquad\forall x\in N.
$$
Combining this with \eqref{eq:base-affine-Phi} restricted to $x\in N$,
and writing $\chi_N:=\Theta_G|_N$, $\chi_M:=\Theta_H|_M$, we obtain
$$
\big\|\chi_M(\Psi(x)) - \big(L(\chi_N(x))+b\big)\big\|
\le C_0 + L_H C_1 =: C_2\qquad\forall x\in N.
$$

Translating back to the first layers via $S_N,S_M$, this says that
$$
\big\|\pi_M(\Psi(x)) - \big(L_{ab}(\pi_N(x))+v_0\big)\big\|
\le C_3\qquad\forall x\in N,
$$
where $L_{ab}:N_{\mathrm{ab}}\otimes_\ZZ\RR\to M_{\mathrm{ab}}\otimes_\ZZ\RR$ and
$v_0\in M_{\mathrm{ab}}\otimes_\ZZ\RR$ are defined by
$$
L_{ab} := S_M^{-1}\circ L\circ S_N,\qquad v_0 := S_M^{-1}(b),
$$
and $C_3$ depends only on $C_2$ and the operator norms of $S_M^{\pm1}$.
In other words, $\Psi$ is coarsely affine on the Abelianization in the
sense of \Cref{def:coarsely-affine-ab}. Applying
\Cref{thm:coarse-affine-bdd-defect} now shows that $\Psi$ has bounded
Abelian defect.
\end{proof}

\section{Lyons--Sullivan (LS) discretization: Lipschitz stability of the extension}
\label{sec:LHF-discretization}

We begin with the following technical lemma which will be used to obtain uniform control on the moments of the LS-measures.

\begin{lemma}[Uniform moment bound for LS measures]
\label{lem:uniform-first-moment}
Let $M$ be a complete Riemannian manifold on which a discrete group $\Gamma$
acts properly discontinuously and cocompactly by isometries. Let $L$ be a
$\Gamma$-invariant uniformly elliptic diffusion operator on $M$ and
$X=\Gamma\cdot x_0$ be a $\Gamma$-orbit. Fix a $\Gamma$-invariant regular LS-data
$(F_x,V_x)_{x\in X}$ in the sense of \cite[Subsection 2.2]{BallmannPolymerakis2022},
chosen as in \cite[Subsection 2.3]{BallmannPolymerakis2022} (so that the associated
LS-measures have exponential moments). Let $y\mapsto \mu_y\in\mathcal P(X)$ be
the corresponding family of LS-measures.

Then there exist constants $\alpha_0>0$ and $C_{\exp}>0$, depending only on
$(M_0,L_0)$ and the LS-data, such that
\begin{equation}\label{eq:uniform-exp-moment}
\sup_{y\in M}\ \sum_{x\in X}\mu_y(x)e^{\alpha_0 d_M(x,y)}\ \le\ C_{\exp}.
\end{equation}
In particular, the LS-measures have a uniform first moment:
\begin{equation}\label{eq:uniform-first-moment}
C_{\mathrm{mom}}
:=\sup_{y\in M}\ \sum_{x\in X}\mu_y(x)d_M(x,y)<\infty,
\end{equation}
with $C_{\mathrm{mom}}\le C_{\exp}/\alpha_0$.
\end{lemma}

\begin{proof}
Let $D_0$ be the Dirichlet domain of $x_0$ with respect to $\Gamma$ (as in
\cite[Subsection 2.3]{BallmannPolymerakis2022}). By \cite[Theorem 2.21]{BallmannPolymerakis2022}
there exists $\alpha_0>0$ such that
$$
S_0:=\sum_{x\in X}\mu_{x_0}(x)e^{\alpha_0 d_M(x,x_0)}<\infty.
$$
Let $C_D$ be the constant from \cite[Lemma 2.13]{BallmannPolymerakis2022}, so that
$\mu_y(z)\le C_D\mu_{x_0}(z)$ for all $y\in D_0$ and $z\in X$. For such $y$,
the triangle inequality gives $d_M(z,y)\le d_M(z,x_0)+\mathrm{diam}(D_0)$, hence
$$
\sum_{z\in X}\mu_y(z)e^{\alpha_0 d_M(z,y)}
\le
C_De^{\alpha_0\mathrm{diam}(D_0)}\sum_{z\in X}\mu_{x_0}(z)e^{\alpha_0 d_M(z,x_0)}
= C_De^{\alpha_0\mathrm{diam}(D_0)}S_0.
$$
Now for general $y\in M$, choose $\gamma\in\Gamma$ with $\gamma^{-1}y\in D_0$.
Using $\Gamma$-equivariance of LS-measures \cite[Proposition 2.11(2)]{BallmannPolymerakis2022}
and $\Gamma$-invariance of $d_M$,
$$
\sum_{x\in X}\mu_y(x)e^{\alpha_0 d_M(x,y)}
=
\sum_{z\in X}\mu_{\gamma^{-1}y}(z)e^{\alpha_0 d_M(z,\gamma^{-1}y)}.
$$
Thus the same bound holds for all $y\in M$, proving \eqref{eq:uniform-exp-moment}
with $C_{\exp}:=C_D e^{\alpha_0\mathrm{diam}(D_0)}S_0$.

Finally, since $t\le \alpha_0^{-1}e^{\alpha_0 t}$ for all $t\ge0$,
$$
\sum_{x\in X}\mu_y(x)d_M(x,y)
\le\frac{1}{\alpha_0}\sum_{x\in X}\mu_y(x)e^{\alpha_0 d_M(x,y)}
\le\frac{C_{\exp}}{\alpha_0},
$$
and \eqref{eq:uniform-first-moment} follows.
\end{proof}

\begin{prop}[Lipschitz stability of the Ballmann-Polymerakis extension]
\label{prop:LipExtension}
Let $M$ be a complete Riemannian manifold on which a discrete group
$\Gamma$ acts properly discontinuously and cocompactly by isometries, and let
$L$ be a smooth $\Gamma$-invariant uniformly elliptic diffusion operator on $M$
(the pullback of a smooth operator $L_0$ on the compact orbifold $M_0=M/\Gamma$);
in particular $L\mathbf{1}=0$.
Fix a $\Gamma$-orbit $X=\Gamma\cdot x_0\subset M$ and $\Gamma$-invariant regular
LS-data $(F_x,V_x)_{x\in X}$ as above, with associated LS-measures
$y\mapsto\mu_y\in\mathcal P(X)$.

Define the Markov kernel on $X$ by
\begin{equation}\label{eq:LS-kernel}
\nu_x(z) := \mu_x(z)\qquad(x,z\in X).
\end{equation}

For a function $h:X\to\RR$ define the extension
\begin{equation}\label{eq:BPext}
(Eh)(y):=\sum_{x\in X}\mu_y(x) h(x)\qquad(y\in M).
\end{equation}

Choose a finite symmetric generating set $S_\Gamma$ of $\Gamma$ and equip
$X$ with the metric $d_X$: 
\begin{equation}\label{eq:orbital-schreier-metric}
d_X(\gamma x_0,\eta x_0)
=\min\Bigl\{n\ge0:\gamma^{-1}\eta\in\underbrace{H S_\Gamma H S_\Gamma\cdots H S_\Gamma H}_{n\text{ copies}}\Bigr\} \qquad (\gamma, \eta \in \Gamma),
\end{equation}
where the product for $n=0$ is understood to be $H$. Assume $f:X\to\RR$ is $\nu$-harmonic,
$$
f(x) = \sum_{z\in X}\nu_x(z) f(z)\qquad(\forall x\in X),
$$
and Lipschitz for this metric:
$$
\mathrm{Lip}_X(f):=\sup_{x\neq x'}\frac{|f(x)-f(x')|}{d_X(x,x')}<\infty.
$$
Then $F:=Ef$ is $L$-harmonic on $M$ and \emph{globally Lipschitz} on $(M,d_M)$.
More precisely,
\begin{equation}\label{eq:global-gradient-bound}
\|\nabla F\|_{L^\infty(M)} \le C_\ast\mathrm{Lip}_X(f),
\end{equation}
where $C_\ast$ depends only on $(M_0,L_0)$, the chosen LS-data, and the
quasi-isometry constants comparing $(X,d_X)$ and $(X,d_M)$.
\end{prop}

\begin{proof}
Let
$$
H:=\mathrm{Stab}_\Gamma(x_0).
$$
Since the action is properly discontinuous, $H$ is finite.  The metric $d_X$ used in the statement is the path metric on the graph
with vertices $X$ and edges
$$
\{\gamma x_0,\gamma s x_0\},
\qquad \gamma\in\Gamma,\ s\in S_\Gamma.
$$
When $H=\{e\}$ this reduces to the usual transported word metric $|\gamma^{-1}\eta|_{S_\Gamma}$.

Replacing $f$ by $f-f(x_0)$ does not change $\mathrm{Lip}_X(f)$ and only subtracts a
constant from $Ef$, so assume $f(x_0)=0$. The graph $(X,d_X)$ is connected and
locally finite, and $\Gamma$ acts on it properly and cocompactly by graph
automorphisms. The \v{S}varc--Milnor lemma applied to this orbital graph and
to the cocompact action on $M$ shows that the inclusion of the orbit into $M$
is a quasi-isometry; in particular there exist $A\ge1$, $B\ge0$ such that
\begin{equation}\label{eq:QI-XM}
d_X(x,x')\le Ad_M(x,x')+B\qquad(\forall x,x'\in X).
\end{equation}
Hence $|f(x)|\le \mathrm{Lip}_X(f)d_X(x,x_0)\le \mathrm{Lip}_X(f)(A d_M(x,x_0)+B)$.
For any $y\in M$, using $d_M(x,x_0)\le d_M(x,y)+d_M(y,x_0)$ and that $\mu_y$ is a
probability measure, we obtain
$$
\sum_{x\in X}\mu_y(x|f(x)|
\le \mathrm{Lip}_X(f)\Bigl(A\sum_{x\in X}\mu_y(x)d_M(x,y)+A d_M(y,x_0)+B\Bigr),
$$
which is finite by \Cref{lem:uniform-first-moment}. Thus $F=Ef$ is well-defined and of
at most linear growth.

Since $a(r)=1+r$ is a subexponential growth function, the extension of an
$a$-bounded $\nu$-harmonic function is $L$-harmonic; see \cite[Theorem 3.1 and
Lemma 3.4]{BallmannPolymerakis2022}. In particular, $F$ is $L$-harmonic on $M$.

\medskip
\noindent
Because $\Gamma$ acts cocompactly on $M$, choose a compact set $K\subset M$ such that
$$
\Gamma K = M.
$$
Choose finitely many coordinate pairs
$$
U_i\Subset U_i'\Subset M,\qquad \phi_i:U_i'\longrightarrow \Omega_i\subset\RR^m,
\qquad i=1,\dots,N,
$$
with $K\subset \bigcup_{i=1}^N U_i$.  The symbols $U_i\Subset U_i'\Subset M$ mean that
$\overline{U_i}$ is compact and contained in $U_i'$, and $\overline{U_i'}$ is compact in $M$.  Set
\begin{equation}\label{eq:uniform-chart-radius}
\delta_0:=\min_{1\le i\le N}\dist_g\bigl(\overline{U_i},M\setminus U_i'\bigr)>0,
\qquad
r_0:=\frac{\delta_0}{4}.
\end{equation}
Then for every $q\in U_i$,
\begin{equation}\label{eq:ball-contained-chart}
B_{2r_0}(q)\subset U_i'.
\end{equation}
Thus for every $p\in M$ there are $\gamma\in\Gamma$ and $i\in\{1,\dots,N\}$ such that
$q:=\gamma p\in U_i$, and hence $B_{2r_0}(q)\subset U_i'$.

We now make the uniform elliptic estimate explicit.  In the coordinates $\phi_i$ write
\begin{equation}\label{eq:local-L-coordinates}
L=\sum_{a,b=1}^m A_i^{ab}(y)\partial_a\partial_b+
  \sum_{a=1}^m B_i^a(y)\partial_a,
\end{equation}
where there is no zeroth-order term because $L\mathbf 1=0$.  Since the chart family is
finite and $g,L$ are smooth, there are constants
\begin{equation}\label{eq:uniform-coefficient-bounds}
\lambda,\Lambda,A_0,C_{\phi}>0,
\qquad \alpha\in(0,1),
\end{equation}
depending only on the chosen compact chart family, such that on every $\Omega_i$:
\begin{align}
\lambda |\xi|^2
&\le \sum_{a,b}A_i^{ab}(y)\xi_a\xi_b
\le \Lambda |\xi|^2,
\qquad &&(y\in\Omega_i,\ \xi\in\RR^m), \label{eq:uniform-ellipticity-local}\\
\sum_{a,b}\|A_i^{ab}\|_{C^\alpha(\Omega_i)}+
\sum_a\|B_i^a\|_{C^\alpha(\Omega_i)}
&\le A_0, \label{eq:uniform-calpa-local}\\
C_{\phi}^{-1}|\zeta|
&\le |d\phi_i^{-1}(y)\zeta|_g
\le C_{\phi}|\zeta|.
\label{eq:uniform-gradient-comparison}
\end{align}
The same constants apply after translating by any element of $\Gamma$: if one uses the
chart $\phi_i\circ\gamma$ on $\gamma^{-1}U_i'$, the coefficients are exactly the coefficients
in \eqref{eq:local-L-coordinates}, because $\gamma$ is an isometry and preserves $L$.

Let
\begin{equation}\label{eq:euclidean-inner-radius}
\rho_0:=\frac{r_0}{2C_{\phi}}.
\end{equation}
Then \eqref{eq:uniform-gradient-comparison} implies that, whenever $q\in U_i$,
\begin{equation}\label{eq:euclidean-ball-contained}
B^{\mathrm e}_{2\rho_0}(\phi_i(q))\subset \phi_i\bigl(B_{2r_0}(q)\bigr),
\end{equation}
where $B^{\mathrm e}$ denotes Euclidean balls in $\Omega_i$.

Now let $u$ satisfy $Lu=0$ on $B_{2r_0}(p)$.  Choose $\gamma,i$ with
$q:=\gamma p\in U_i$ and put
$$
w:=u\circ\gamma^{-1},
\qquad
v:=w\circ\phi_i^{-1}.
$$
Since $\gamma$ is an isometry and preserves $L$, $w$ is $L$-harmonic on
$B_{2r_0}(q)$.  Hence, by \eqref{eq:euclidean-ball-contained}, $v$ solves the uniformly
elliptic equation \eqref{eq:local-L-coordinates} on the Euclidean ball
$B^{\mathrm e}_{2\rho_0}(\phi_i(q))$, with the constants in
\eqref{eq:uniform-coefficient-bounds}-\eqref{eq:uniform-calpa-local} independent of
$p,q,\gamma$ and $i$.

The standard interior $C^{1,\alpha}$ estimate for uniformly elliptic equations with
$C^\alpha$ coefficients therefore gives a constant $C_{\mathrm{int}}$, depending only on
$m,\lambda,\Lambda,A_0,\alpha$, such that
$$
|Dv(\phi_i(q))|
\le
\frac{C_{\mathrm{int}}}{\rho_0}
\sup_{B^{\mathrm e}_{2\rho_0}(\phi_i(q))}|v|.
$$
Using \eqref{eq:uniform-gradient-comparison}, \eqref{eq:euclidean-inner-radius},
\eqref{eq:euclidean-ball-contained}, and the fact that $\gamma$ is an isometry, we obtain
\begin{equation}\label{eq:grad-fixed-scale}
|\nabla u(p)|
\le
\frac{C_\nabla}{r_0}\sup_{B_{2r_0}(p)} |u|
\qquad(\forall p\in M,
\ Lu=0 \text{ on } B_{2r_0}(p)),
\end{equation}
for a constant $C_\nabla$ depending only on the cocompact $\Gamma$-geometry of $(M,g)$
and on the coefficients of $L$.  This proves the required uniformity in $p$.
(See, e.g., \cite{HanLin2011} for the Laplacian; the same scaling estimate holds for
general uniformly elliptic $L$ with smooth bounded coefficients on manifolds.
Since $M$ is a cocompact cover, it has bounded geometry. This, combined with
the uniform bounds on the coefficients of $L$, ensures the uniformity of
$C_\nabla$.  For explicit treatments on manifolds with bounded geometry, see
\cite{Saloff-Coste2002} or \cite[Chapter 1]{Li2012}.)

\medskip
\noindent
Since $L\mathbf 1=0$, subtracting a constant preserves $L$-harmonicity. Applying
\eqref{eq:grad-fixed-scale} to $u:=F-F(p)$ yields
\begin{equation}\label{eq:grad-osc}
|\nabla F(p)|
\le \frac{C_\nabla}{r_0}\sup_{y\in B_{2r_0}(p)}|F(y)-F(p)|.
\end{equation}

\medskip
\noindent
Since the orbit map is a quasi-isometry, there exists $D>0$ such that for any $p\in M$, there exists $\bar x=\bar x(p) \in X$ such that $d_M(p, \bar x) \leq D$. Using $\sum_x\mu_y(x)=\sum_x\mu_p(x)=1$ and \eqref{eq:BPext}, for any $y\in M$,
\begin{align}
|F(y)-F(p)|
&=\Bigl|\sum_{x\in X}\mu_y(x)\bigl(f(x)-f(\bar x)\bigr)
      -\sum_{x\in X}\mu_p(x)\bigl(f(x)-f(\bar x)\bigr)\Bigr| \notag\\
&\le \mathrm{Lip}_X(f)\Bigl[\sum_{x\in X}\mu_y(x)d_X(x,\bar x)
+\sum_{x\in X}\mu_p(x)d_X(x,\bar x)\Bigr]. \label{eq:osc-split}
\end{align}
If $y\in B_{2r_0}(p)$, then by \eqref{eq:QI-XM} and the triangle inequality,
\begin{equation}\label{eq:dX-to-dM}
d_X(x,\bar x)\le Ad_M(x,\bar x)+B
\le A\bigl(d_M(x,y)+d_M(y,p)+d_M(p,\bar x)\bigr)+B
\le A\bigl(d_M(x,y)+2r_0+D\bigr)+B.
\end{equation}

\medskip
\noindent
By \Cref{lem:uniform-first-moment},
$\sum_{x\in X}\mu_y(x)d_M(x,y)\le C_{\mathrm{mom}}$ for all $y\in M$.
Thus for all $y\in B_{2r_0}(p)$,
$$
\sum_{x\in X}\mu_y(x)d_X(x,\bar x)
\le A\bigl(C_{\mathrm{mom}}+2r_0+D\bigr)+B.
$$
Plugging these bounds into \eqref{eq:osc-split} yields
\begin{equation}\label{eq:osc-final}
\sup_{y\in B_{2r_0}(p)}|F(y)-F(p)|
\le K_0\mathrm{Lip}_X(f),
\qquad
K_0:=2A\bigl(C_{\mathrm{mom}}+2r_0+D\bigr)+2B.
\end{equation}

\medskip
\noindent
From \eqref{eq:grad-osc} and \eqref{eq:osc-final},
$$
|\nabla F(p)|\le \frac{C_\nabla}{r_0}K_0\mathrm{Lip}_X(f),
$$
and the right-hand side is independent of $p$. Hence $F$ is globally Lipschitz and
\eqref{eq:global-gradient-bound} holds with $C_\ast:=C_\nabla K_0/r_0$.
\end{proof}

\begin{theorem}[Two-sided equivalence at the Lipschitz scale]
\label{thm:LS-bilipschitz}
In the setting of \Cref{prop:LipExtension}, restriction to the orbit and the LS extension
yield mutually inverse linear isomorphisms of \emph{seminormed} spaces:
$$
\mathrm{Res}: \{F\in C^{0,1}(M):\ LF=0\} \longrightarrow \LHF(X,\nu),\qquad
\mathrm{Res}(F):=F|_X,
$$
$$
E: \LHF(X,\nu) \longrightarrow \{F\in C^{0,1}(M): LF=0\},
$$
with quantitative bounds
\begin{equation}\label{eq:LS-bounds}
\mathrm{Lip}_X(F|_X) \le C_1\|\nabla F\|_{L^\infty(M)},\qquad
\|\nabla( E f)\|_{L^\infty(M)} \le C_2\mathrm{Lip}_X(f),
\end{equation}
where $C_1,C_2>0$ depend only on $(M_0,L_0)$, the LS-data and the
quasi-isometry constants comparing $(X,d_X)$ and $(X,d_M)$. Moreover,
$$
E\circ\mathrm{Res}=\mathrm{Id}\quad\text{on }\{F\in C^{0,1}(M): LF=0\},\qquad
\mathrm{Res}\circ E=\mathrm{Id}\quad\text{on }\LHF(X,\nu).
$$
\end{theorem}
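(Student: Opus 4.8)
The strategy is to assemble \Cref{thm:LS-bilipschitz} from \Cref{prop:LipExtension} together with the reconstruction property of the Lyons--Sullivan / Ballmann--Polymerakis discretization, reading off the seminorm estimates from the \v{S}varc--Milnor quasi-isometry between $(X,d_X)$ and $(X,d_M)$. First I would check that the two maps are well defined with the asserted bounds. That $E$ lands in the space of globally Lipschitz $L$-harmonic functions with $\|\nabla(Ef)\|_{L^\infty(M)}\le C_\ast\,\mathrm{Lip}_X(f)$ is exactly \Cref{prop:LipExtension}, which gives the second inequality in \eqref{eq:LS-bounds} with $C_2:=C_\ast$. For $\mathrm{Res}$: if $F\in C^{0,1}(M)$ and $LF=0$, then $F|_X$ is $\nu$-harmonic by the defining property of the BP discretization --- the kernel $\nu$ in \eqref{eq:LS-kernel} is precisely the one for which $L$-harmonicity restricts to $\nu$-harmonicity on the orbit (cf.\ \cite[Sec.~2]{BallmannPolymerakis2022}). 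Since $F$ is globally Lipschitz on the length space $M$ we have $|F(x)-F(x')|\le\|\nabla F\|_{L^\infty(M)}\,d_M(x,x')$; combining with the quasi-isometry estimate $d_M(x,x')\le \kappa\big(d_X(x,x')+1\big)$ and the fact that $d_X(x,x')\ge 1$ for $x\ne x'$ gives $\mathrm{Lip}_X(F|_X)\le 2\kappa\,\|\nabla F\|_{L^\infty(M)}$, the first inequality in \eqref{eq:LS-bounds} with $C_1:=2\kappa$. Linearity of $E$ and $\mathrm{Res}$ is immediate from \eqref{eq:BPext}.

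Next I would verify the two composition identities. The identity $\mathrm{Res}\circ E=\mathrm{Id}$ on $\LHF(X,\nu)$ is essentially formal: for $x\in X$, \eqref{eq:BPext} and \eqref{eq:LS-kernel} give $(Ef)(x)=\sum_{z\in X}\mu_x(z)f(z)=\sum_{z\in X}\nu_x(z)f(z)=f(x)$, the last equality being $\nu$-harmonicity of $f$, with absolute convergence secured by the linear growth of $f$ and the finite first $(1+r)$-moment of $\mu_x$ (a consequence of \cite[Thm.~2.21]{BallmannPolymerakis2022}). The substantive identity is $E\circ\mathrm{Res}=\mathrm{Id}$ on $\{F\in C^{0,1}(M):LF=0\}$, i.e.\ $E(F|_X)=F$ for every globally Lipschitz $L$-harmonic $F$. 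Such an $F$ has linear growth, hence is $a$-bounded with $a(r)=1+r$, and here I would invoke the reconstruction half of the BP machinery: an $a$-bounded $L$-harmonic function is recovered from its orbit restriction by $E$ (the statement complementary to \cite[Lemmas 3.4--3.5]{BallmannPolymerakis2022}, ultimately the Lyons--Sullivan coupling applied at growth scale $a(r)=1+r$). Granting this, $E$ and $\mathrm{Res}$ are mutually inverse linear bijections between the two spaces, and the two inequalities established above make them bi-Lipschitz for the seminorms $\|\nabla\cdot\|_{L^\infty(M)}$ and $\mathrm{Lip}_X(\cdot)$, which is precisely \eqref{eq:LS-bounds}.

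The hard part is exactly this reconstruction step: showing that passing to the orbit loses no information at linear scale, i.e.\ that a linearly growing $L$-harmonic function equals the $E$-extension of its orbit restriction. At the bounded scale this is the classical Lyons--Sullivan theorem; what is needed here is its version adapted to the $(1+r)$-bounded class, and this is where the finite first-moment estimates for the LS-measures and the bounded geometry of the cocompact cover $M$ are genuinely used. Everything else is bookkeeping with the quasi-isometry constants between $(X,d_X)$ and $(X,d_M)$.
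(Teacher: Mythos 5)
Your proposal is correct and follows essentially the same route as the paper: the extension bound from \Cref{prop:LipExtension}, the restriction bound from the \v{S}varc--Milnor comparison of $d_M$ and $d_X$, the formal computation for $\mathrm{Res}\circ E=\mathrm{Id}$, and an appeal to the Ballmann--Polymerakis reconstruction theorem (their Theorem~3.1, which handles both the $\nu$-harmonicity of $F|_X$ and $E(F|_X)=F$ for linearly growing $L$-harmonic $F$) for the substantive inverse identity. The only difference is cosmetic: the paper cites the precise BP result rather than describing it as the ``complementary'' reconstruction statement, and estimates $\mathrm{Lip}_X$ on generators rather than on arbitrary pairs, but both yield the same constants up to normalization.
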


\begin{proof}
The extension bound in \eqref{eq:LS-bounds} is \Cref{prop:LipExtension}.

For the restriction bound, by the \v{S}varc--Milnor quasi-isometry there exist
$A',B'\ge0$ such that
$$
d_M(x,y) \le A'd_X(x,y)+B'\qquad(\forall x,y\in X).
$$
Thus, for $x\ne y$ we have $d_X(x,y)\ge1$ and hence
$$
|F(x)-F(y)|
\le \|\nabla F\|_{L^\infty(M)}d_M(x,y)
\le (A'+B')\|\nabla F\|_{L^\infty(M)}d_X(x,y).
$$
Taking the supremum over $x\ne y$ gives
$\mathrm{Lip}_X(F|_X)\le C_1\|\nabla F\|_{L^\infty(M)}$ with
$C_1=A'+B'$.

For inverses: (i) If $f\in\LHF(X,\nu)$, then $Ef|_X=f$ because for $x\in X$,
$$
(Ef)(x)=\sum_{z\in X}\mu_x(z)f(z)=\sum_{z\in X}\nu_x(z)f(z)=f(x),
$$
using $\nu$-harmonicity. (ii) If $F$ is $L$-harmonic and Lipschitz, then $F$ has
linear growth and hence is $a$-bounded for $a(r)=1+r$. Since LS-measures have finite
$a$-moments \cite[Corollary 2.21]{BallmannPolymerakis2022}, the restriction $F|_X$ is
$\nu$-harmonic and $E(F|_X)=F$ by \cite[Theorem 3.1 and Lemma 3.8]{BallmannPolymerakis2022}.
\end{proof}

\subsection{Acknowledgements}  The research of the first author was partially supported by SEED Grant RD/0519-IRCCSH0-024. During the initial stages of the preparation of the manuscript the first author was a visitor at MPIM Bonn. The second author would like to thank the PMRF for partially supporting his work. 
The third author was partially supported by IIT Bombay IRCC fellowship, TIFR Mumbai post-doctoral fellowship and NBHM postdoctoral fellowship (Sr. No. 0204/17/2025/R\&D-II/12398) during this work. The authors are deeply grateful to Gideon Amir, Tom Meyerovitch and Ariel Yadin for very insightful correspondence. All three authors would like to thank IIT Bombay for providing ideal working conditions.

\appendix
\crefalias{section}{appendix}

\section{Pansu calculus}\label{sec:Pansu-calculus}

\noindent\textbf{How this appendix is used in the paper.}
\Cref{sec:QI-LHF} is proved by purely discrete arguments, but several of the notions
introduced there-most notably the first-layer projections, the bounded Abelian
defect, and the linear map $L_{ab}$-have a natural interpretation on the asymptotic
cones of nilpotent groups, which are Carnot groups. The present appendix collects
standard facts about Pansu's differential, the appropriate notion of first-order
behaviour for Lipschitz maps between Carnot groups, and includes a basic tool
(asserting that a Lipschitz map with vanishing Pansu differential almost everywhere
must be constant). This material is included for the reader's convenience, and for
checking the bounded-Abelian-defect hypothesis in concrete geometric situations
where one prefers to work directly on the asymptotic cones.
\medskip

\textbf{Standing Assumptions} 
Let $N$ be a Carnot group with stratified Lie algebra
$\mathfrak n = V_1\oplus\cdots\oplus V_s$ and dilations $(\delta_t)_{t>0}$.

\begin{enumerate} 
\item The dilations $\delta_t:N\to N$ are Lie group automorphisms whose
      differential at the identity is the Lie algebra dilation; they commute
      with the exponential map:
      $$
      \delta_t(\exp X)\ =\ \exp(\delta_t X)\qquad\forall X\in\mathfrak n,\ t>0.
      $$
      In particular, for $v\in V_1$ and $t>0$,
      $$
      \delta_t(\exp v)\ =\ \exp(tv).
      $$

\item The left-invariant homogeneous distance $d$ on $N$ satisfies
      $$
      d(\delta_t g,\delta_t h) = t d(g,h)\qquad\forall g,h\in N, t>0.
      $$
      In particular, $d(\delta_t g,e)=t d(g,e)$.

\item Inversion is an isometry:
      $$
      d(e,g^{-1}) = d(e,g)\qquad\forall g\in N.
      $$

\item If $N$ is connected, simply connected and $\mathfrak n$ is generated as
      a Lie algebra by $V_1$, then $N$ is generated as a group by the
      one-parameter subgroups $\exp(\RR v)$, $v\in V_1$. Equivalently, for
      every $g\in N$ there exist $v_1,\dots,v_k\in V_1$ and $t_1,\dots,t_k\in\RR$
      such that
      $$
      g\ =\ \exp(t_1 v_1)\cdots\exp(t_k v_k).
      $$
\end{enumerate}

We state some standard definitions of the Pansu differential.

\begin{definition}[Pansu differential]\label{def:Pansu}
Let $N,M$ be Carnot groups with dilations $(\delta_t^N)_{t>0}$,
$(\delta_t^M)_{t>0}$ and left-invariant homogeneous distances $d_N,d_M$.
Let $F:N\to M$ be a Lipschitz map and $x\in N$.
We say that $F$ is \emph{Pansu differentiable} at $x$ if there exists a
    graded group homomorphism $L_F:N\to M$ such that 
\begin{enumerate}
    \item \begin{equation}\label{eq:Pansu-def(1)}
    \lim_{h\to 0}\frac{d_M(F(x)^{-1}F(xh),L_F(h))}{d_N(h,e_N)}=0,
    \end{equation}
    
    OR
    
    \item 
    \begin{equation}\label{eq:Pansu-def}
    \lim_{t\to 0} \sup_{d_N(h,e_N)\le 1}\ 
    d_M\Big(\delta_{1/t}^M\big(F(x)^{-1}F\big(x\delta_t^N h\big)\big), L_F(h)\Big) = 0.
    \end{equation}
\end{enumerate}
We then call $L_F$ the \emph{Pansu differential} of $F$ at $x$ and write
$d_{P}F(x):=L_F$.
\end{definition}

\noindent
We now describe why the above two definitions are equivalent. Assume (1) and fix some $\epsilon>0$. There exists $\delta>0$ such that if $d_N(k,e_N) < \delta$, then $d_M(F(x)^{-1}F(xk), L_F(k)) < \epsilon \cdot d_N(k,e_N)$. Let $t < \delta$. For any $h$ with $d_N(h,e_N)\le 1$, let $k = \delta_t^N h$. Then $d_N(k,e_N) \le t < \delta$, and
\begin{align*}
d_M\Big( \delta_{1/t}^M\big(F(x)^{-1}F(x k)\big),\ L_F(h) \Big) &= \frac{1}{t} d_M\Big( F(x)^{-1}F(x k),\ L_F(k) \Big) \quad \text{(by homogeneity)} \\
& < \frac{1}{t} \left(\epsilon \cdot d_N(k,e_N)\right) = \epsilon \cdot d_N(h,e_N) \le \epsilon. 
\end{align*}
This holds uniformly for all $h$ in the unit ball, so the limit in (2) is zero.

Conversely, assume (2) and fix $\epsilon>0$. There exists $t_0>0$ such that the supremum in (2) is $<\epsilon$ for $0<t<t_0$. Take $h \ne e_N$ such that $t:=d_N(h,e_N)<t_0$. Let $u=\delta_{1/t}^N h$, so $d_N(u,e_N)=1$. The term in (1) is:
\begin{align*} 
\frac{d_M(F(x)^{-1}F(xh),L_F(h))}{d_N(h,e_N)} &= \frac{1}{t} d_M(F(x)^{-1}F(x\delta_t^N u),L_F(\delta_t^N u)) \\ 
&= d_M\Big( \delta_{1/t}^M\big(F(x)^{-1}F(x\delta_t^N u)\big), L_F(u) \Big). \end{align*}
Since $d_N(u,e_N)=1$ and $t<t_0$, this is bounded by the supremum in (2), which is $<\epsilon$. Thus, the limit in (1) is zero.

\begin{lemma}[Uniform convergence on compact sets]\label{lem:Pansu-compact}
Let $F:N\to M$ be Lipschitz and Pansu differentiable at $x\in N$ with
Pansu differential $L_F$ in the sense of \eqref{eq:Pansu-def}. Then for
every compact set $K\subset N$,
$$
\lim_{t\to 0} \sup_{h\in K} 
d_M\Big(
\delta_{1/t}^M\big(F(x)^{-1}F\big(x\delta_t^N h\big)\big), L_F(h)
\Big) = 0.
$$
\end{lemma}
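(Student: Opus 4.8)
The plan is to reduce the compact case to the unit-ball case, which is exactly the hypothesis \eqref{eq:Pansu-def}, by a rescaling argument that exploits two structural facts: the homogeneity $d_M(\delta_r^M a,\delta_r^M b)=r\,d_M(a,b)$ of the left-invariant homogeneous distance, and the fact that $L_F$, being a graded group homomorphism, intertwines the dilations, $L_F(\delta_r^N h)=\delta_r^M L_F(h)$ for all $h\in N$ and $r>0$.

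First I would fix a compact $K\subset N$ and set $R:=\max\{1,\ \sup_{h\in K}d_N(h,e_N)\}$, which is finite since $K$ is bounded. Using $d_N(\delta_R^N k,e_N)=R\,d_N(k,e_N)$, every $h\in K$ can be written as $h=\delta_R^N k$ with $k:=\delta_{1/R}^N h$ satisfying $d_N(k,e_N)\le 1$; conversely, as $h$ ranges over $K$, the point $k$ ranges over a subset of the closed unit ball. If $R=1$ the claim is already \eqref{eq:Pansu-def}, so the content is in the case $R>1$.

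Next, for $t>0$ and $h=\delta_R^N k\in K$, put $s:=tR$. From the one-parameter group law for the dilations (on both $N$ and $M$), $\delta_a\circ\delta_b=\delta_{ab}$, together with the grading of $L_F$, I obtain
$$
\delta_t^N h=\delta_s^N k,\qquad \delta_{1/t}^M=\delta_R^M\circ\delta_{1/s}^M,\qquad L_F(h)=\delta_R^M L_F(k).
$$
Substituting these into the quantity of interest and pulling the common factor $\delta_R^M$ out of $d_M$ via homogeneity yields, for every $h\in K$,
$$
d_M\Big(\delta_{1/t}^M\big(F(x)^{-1}F(x\delta_t^N h)\big),\,L_F(h)\Big)=R\,d_M\Big(\delta_{1/s}^M\big(F(x)^{-1}F(x\delta_s^N k)\big),\,L_F(k)\Big).
$$
Taking the supremum over $h\in K$, hence over $k$ in the closed unit ball, and recalling $s=tR$,
$$
\sup_{h\in K}d_M\Big(\delta_{1/t}^M\big(F(x)^{-1}F(x\delta_t^N h)\big),\,L_F(h)\Big)\ \le\ R\,\sup_{d_N(k,e_N)\le 1}d_M\Big(\delta_{1/s}^M\big(F(x)^{-1}F(x\delta_s^N k)\big),\,L_F(k)\Big).
$$
As $t\to 0$ we have $s=tR\to 0$, so by \eqref{eq:Pansu-def} the right-hand side tends to $R\cdot 0=0$; this is the assertion of the lemma.

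There is no genuine obstacle here; the proof is entirely bookkeeping with the dilation group law, the scaling of $d_M$, and the grading of $L_F$. The only two points needing a moment's care are (i) choosing $R\ge 1$ so that $\delta_{1/R}^N$ carries $K$ into the closed unit ball, where \eqref{eq:Pansu-def} is available, and (ii) correctly factoring $\delta_{1/t}^M=\delta_R^M\circ\delta_{1/(tR)}^M$ so that the extra dilation $\delta_R^M$ can be extracted from $d_M$ at the cost of the harmless multiplicative constant $R$.
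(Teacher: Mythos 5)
Your proof is correct and follows essentially the same route as the paper's: both reduce the compact set to the unit ball by exploiting the homogeneity of $d_M$ under dilations and the fact that the graded homomorphism $L_F$ intertwines $\delta^N$ and $\delta^M$. The only difference is cosmetic — you rescale all of $K$ uniformly by a single factor $R$, whereas the paper normalizes each $h$ individually to the unit sphere via $R(h)=d_N(h,e_N)$; your version is if anything slightly cleaner, since it avoids the case $h=e_N$ and the $\varepsilon/R_{\max}$ bookkeeping.
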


\begin{proof}
Let $K\subset N$ be compact. The case $h=e_N$ is trivial (both sides are $e_M$), so we work on $K\setminus\{e_N\}$.
Set $B:=\{u\in N: d_N(u,e_N)\le 1\}$.
For each $h\in N$, let $R(h):=d_N(h,e_N)$ and
$u(h):=\delta_{1/R(h)}^N(h)$ for $h\neq e_N$. Then $d_N(u(h),e_N)=1$, so $u(h)\in B$, and
$$
h = \delta_{R(h)}^N\big(u(h)\big).
$$
Because $K$ is compact and $h\mapsto d_N(h,e_N)$ is continuous, there is
$R_{\max}>0$ such that $R(h)\le R_{\max}$ for all $h\in K$. Fix $\varepsilon>0$. By \eqref{eq:Pansu-def}, there exists $\delta>0$ such that
$$
\sup_{d_N(u,e_N)\le 1}
d_M\Big(
\delta_{1/s}^M\big(F(x)^{-1}F(x\delta_s^N u)\big), L_F(u)
\Big) < \frac{\varepsilon}{R_{\max}}
$$
whenever $0<s<\delta$.

Let $t>0$ be small enough that $tR_{\max}<\delta$. In particular,
$tR(h)<\delta$ for all $h\in K$. For $h\in K\setminus\{e_N\}$ we have
$h=\delta_{R(h)}^N(u(h))$, so $\delta_t^N h = \delta_{tR(h)}^N u(h)$ and
\begin{align*}
\delta_{1/t}^M\Big(F(x)^{-1}F\big(x\delta_t^N h\big)\Big)
&=\delta_{1/t}^M\Big(F(x)^{-1}F\big(x\delta_{tR(h)}^N u(h)\big)\Big) \\
&=\delta_{R(h)}^M\Big(
  \delta_{1/(tR(h))}^M\big(F(x)^{-1}F(x\delta_{tR(h)}^N u(h))\big)
  \Big),
\end{align*}
while homogeneity of $L_F$ gives
$$
L_F(h)\ =\ L_F\big(\delta_{R(h)}^N(u(h))\big)
 = \delta_{R(h)}^M\big(L_F(u(h))\big).
$$
Since $d_M$ is homogeneous, we obtain
\begin{align*}
&d_M\Big(
\delta_{1/t}^M\big(F(x)^{-1}F(x\delta_t^N h)\big),L_F(h)
\Big) \\
&\qquad= R(h)
d_M\Big(
\delta_{1/(tR(h))}^M\big(F(x)^{-1}F(x\delta_{tR(h)}^N u(h))\big),
L_F(u(h))
\Big).
\end{align*}
By the choice of $\delta$, for all $h\in K$ and $0<t<\delta/R_{\max}$ we have
$d_N(u(h),e_N)=1$ and $tR(h)<\delta$, hence
$$
d_M\Big(
\delta_{1/(tR(h))}^M\big(F(x)^{-1}F(x\delta_{tR(h)}^N u(h))\big),
L_F(u(h))
\Big) < \frac{\varepsilon}{R_{\max}}.
$$
Therefore
$$
d_M\Big(
\delta_{1/t}^M\big(F(x)^{-1}F(x\delta_t^N h)\big),L_F(h)
\Big)
 \le R(h)\frac{\varepsilon}{R_{\max}} \le \varepsilon
$$
for all $h\in K$ and all $0<t<\delta/R_{\max}$. Taking the supremum over
$h\in K$ and letting $t\to0$ gives the claim.
\end{proof}

\begin{lemma}[Chain rule for the Pansu differential]\label{lem:Pansu-chain-rule}
Let $N,M,P$ be Carnot groups with dilations $(\delta_t^N)_{t>0}$,
$(\delta_t^M)_{t>0}$, $(\delta_t^P)_{t>0}$, and left-invariant homogeneous
distances $d_N,d_M,d_P$ respectively. Let
$$
f:N\to M,\qquad g:M\to P
$$
be Lipschitz maps. Suppose that
\begin{enumerate}
\item $f$ is Pansu differentiable at $x\in N$ with Pansu differential
      $L_f:N\to M$, and
\item $g$ is Pansu differentiable at $y:=f(x)\in M$ with Pansu differential
      $L_g:M\to P$.
\end{enumerate}
Then the composition $g\circ f:N\to P$ is Pansu differentiable at $x$, and its
Pansu differential at $x$ is
$$
d_{P}(g\circ f)(x)  = L_g\circ L_f : N\to P.
$$
\end{lemma}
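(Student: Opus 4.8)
The plan is to run the argument entirely in the uniform formulation \eqref{eq:Pansu-def} of Pansu differentiability, using \Cref{lem:Pansu-compact} to upgrade the unit-ball convergence for $g$ to convergence on a suitable compact set. Write $y:=f(x)$ and, for $t>0$, introduce the two families of difference-quotient maps
$$
k_t(h):=\delta_{1/t}^{M}\big(f(x)^{-1}f(x\delta_t^{N}h)\big)\quad(h\in N),\qquad
G_t(m):=\delta_{1/t}^{P}\big(g(y)^{-1}g(y\delta_t^{M}m)\big)\quad(m\in M).
$$
The algebraic step I would record first is the identity $f(x\delta_t^{N}h)=y\cdot\delta_t^{M}k_t(h)$ (immediate from $\delta_t^{M}\delta_{1/t}^{M}=\mathrm{id}$), which yields
$$
\delta_{1/t}^{P}\big((g\circ f)(x)^{-1}(g\circ f)(x\delta_t^{N}h)\big)=G_t\big(k_t(h)\big).
$$
Thus the composite difference quotient is exactly $G_t$ evaluated at the $f$-difference quotient $k_t(h)$, and the whole problem reduces to controlling $d_P\big(G_t(k_t(h)),L_g(L_f(h))\big)$ uniformly for $d_N(h,e_N)\le 1$ as $t\to 0$.

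Next I would prove a confinement estimate. Let $C_f$ be a Lipschitz constant for $f$; then for $d_N(h,e_N)\le 1$ one has $d_M\big(f(x),f(x\delta_t^{N}h)\big)\le C_f\,d_N(e_N,\delta_t^{N}h)=C_f t\,d_N(h,e_N)\le C_f t$, hence $d_M(k_t(h),e_M)\le C_f$ for all such $h$ and all $t>0$. So every $k_t(h)$ lies in the fixed set $K:=\{m\in M:\ d_M(m,e_M)\le C_f\}$, which is compact since $M$ is a simply connected nilpotent Lie group with proper homogeneous distance. Applying \Cref{lem:Pansu-compact} to $g$ at $y$ with this compact $K$ gives $\eta_1(t):=\sup_{m\in K}d_P\big(G_t(m),L_g(m)\big)\to 0$ as $t\to 0$, and therefore $d_P\big(G_t(k_t(h)),L_g(k_t(h))\big)\le\eta_1(t)$ uniformly in $h$.

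Then I would invoke that a graded group homomorphism between Carnot groups is Lipschitz for the homogeneous distances: bound $d_P(L_g(m),e_P)$ on the compact unit sphere of $M$ by some $C_{L_g}$, then propagate through $\delta$-homogeneity and left-invariance to get $d_P(L_g(m_1),L_g(m_2))\le C_{L_g}\,d_M(m_1,m_2)$. Since $f$ is Pansu differentiable at $x$, the unit-ball statement gives $\eta_2(t):=\sup_{d_N(h,e_N)\le 1}d_M\big(k_t(h),L_f(h)\big)\to 0$, so $d_P\big(L_g(k_t(h)),L_g(L_f(h))\big)\le C_{L_g}\eta_2(t)$. A triangle inequality then produces
$$
\sup_{d_N(h,e_N)\le 1}d_P\Big(\delta_{1/t}^{P}\big((g\circ f)(x)^{-1}(g\circ f)(x\delta_t^{N}h)\big),\,(L_g\circ L_f)(h)\Big)\le\eta_1(t)+C_{L_g}\eta_2(t)\xrightarrow[t\to 0]{}0.
$$
Since $L_g\circ L_f$ is again a graded group homomorphism $N\to P$ (the class is closed under composition), this is precisely \eqref{eq:Pansu-def} for $g\circ f$ at $x$ with differential $L_g\circ L_f$. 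If one prefers to start from \eqref{eq:Pansu-def(1)}, one passes to \eqref{eq:Pansu-def} first via the equivalence established earlier in the appendix.

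I expect the main obstacle to be keeping all estimates uniform in $h$ \emph{simultaneously}: the arguments $k_t(h)$ fed into $G_t$ move with both $t$ and $h$, so the bare unit-ball statement of Pansu differentiability for $g$ is not directly applicable. This is exactly what the confinement estimate (Lipschitz control of $f$) together with \Cref{lem:Pansu-compact} are designed to handle — they trap all the $k_t(h)$ inside a single compact set independent of $t$ and $h$, so the compact-set convergence for $g$ can be applied once and for all. The remaining ingredient, Lipschitz regularity of the graded homomorphism $L_g$, is routine.
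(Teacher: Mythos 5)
Your proof is correct and follows essentially the same route as the paper's: reduce the composite difference quotient to $G_t(k_t(h))$, trap the intermediate quotients $k_t(h)$ in a fixed compact subset of $M$, apply \Cref{lem:Pansu-compact} for $g$ on that compact set, and finish with the global Lipschitz bound for the graded homomorphism $L_g$ together with a triangle inequality. The only minor (and arguably cleaner) difference is how the compact set is produced: you confine all $k_t(h)$ in the closed ball of radius $\mathrm{Lip}(f)$ using only the Lipschitz estimate for $f$ and homogeneity, whereas the paper obtains compactness by continuously extending $(t,h)\mapsto k_t(h)$ to $[0,t_0]\times K$ via the uniform convergence $k_t\to L_f$ and taking the image of that compact cylinder.
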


\begin{proof}
Let $K:=\{h\in N: d_N(h,e_N)\le 1\}$ be the unit ball in $N$. For $t>0$ small
and $h\in K$, define
$$
u_t(h) := \delta_{1/t}^P\Big( g\big(f(x)\big)^{-1}g\big(f(x\delta_t^N h)\big)\big)
 \in P.
$$
We want to show that
$$
u_t(h) \to L_g\big(L_f(h)\big)
$$
as $t\to 0$, uniformly in $h\in K$. This will give the desired Pansu
differential by \Cref{def:Pansu}.

\noindent
By \Cref{lem:Pansu-compact} applied to $F=f$ and the compact set $K$,
$$
\delta_{1/t}^M\Big( f(x)^{-1} f\big( x\delta_t^N h\big) \Big)  \to L_f(h)
$$
as $t\to0$, uniformly for $h\in K$.

\noindent
Define
$$
k_t(h) := \delta_{1/t}^M\Big( f(x)^{-1} f\big( x\delta_t^N h\big)\Big)\in M \qquad (t>0).
$$
Then for each $\varepsilon>0$ there exists $t_0>0$ such that, for all $0<t<t_0$
and all $h\in K$,
\begin{equation}\label{eq:f-Pansu-unif}
d_M\big(k_t(h),L_f(h)\big) < \varepsilon.
\end{equation}
We now view $k_t$ as a map on the compact cylinder $[0,t_0]\times K$ by
setting
$$
k_0(h) := L_f(h)\qquad(h\in K).
$$
The uniform convergence in \eqref{eq:f-Pansu-unif} implies that
$(t,h)\mapsto k_t(h)$ extends continuously to $[0,t_0]\times K$.
Hence the set
$$
K_M := \{k_t(h): (t,h)\in [0,t_0]\times K\}
$$
is compact in $M$. Since $e_N\in K$ and $L_f$ is a group homomorphism,
$e_M=L_f(e_N)=k_0(e_N)\in K_M$.

\noindent
By definition of $k_t(h)$,
$$
f\big(x\delta_t^N h\big) =f(x)\delta_t^M\big(k_t(h)\big)
\qquad(h\in K,\ 0< t\le t_0).
$$

\noindent
By Pansu differentiability of $g$ at $y$ and \Cref{lem:Pansu-compact}
(applied with the compact set $K_M\subset M$), there exists a graded group
homomorphism $L_g:M\to P$ such that
$$
\delta_{1/t}^P\Big( g(y)^{-1} g\big(y\delta_t^M k\big) \Big) \to L_g(k)
$$
as $t\to 0$, uniformly for $k\in K_M$.
For $(t,k)\in (0,\infty)\times K_M$ define
$$
E_g(t,k) := L_g(k)^{-1}
\delta_{1/t}^P\Big(g(y)^{-1}g\big(y\delta_t^M k\big)\Big) \in P.
$$
Then
$$
\delta_{1/t}^P\Big(g(y)^{-1}g\big(y\delta_t^M k\big)\Big)
 = L_g(k)E_g(t,k),
$$
and by the uniform convergence on $K_M$ we have
$$
E_g(t,k) \to e_P
$$
as $t\to0$, uniformly in $k\in K_M$.
Apply this with $k=k_t(h)$ $(h\in K, t>0)$ and $y=f(x)$. Using
$f(x\delta_t^N h)=f(x)\delta_t^M(k_t(h))$, we obtain
\begin{align*}
u_t(h)
&=\delta_{1/t}^P\big( g(f(x))^{-1} g\big(f(x)\delta_t^M k_t(h)\big)\Big) \\
&=\delta_{1/t}^P\big( g(y)^{-1}g\big(y\delta_t^M k_t(h)\big)\Big) \\
&=\ L_g\big(k_t(h)\big)E_g\big(t,k_t(h)\big).
\end{align*}

\noindent
We claim that $L_g$ is globally Lipschitz. Indeed, let $d_M,d_P$ be the given
homogeneous distances. Consider the ``sphere''
$$
S_M:=\{x\in M: d_M(x,e_M)=1\},
$$
which is compact. The function $S_M\ni x\mapsto d_P(L_g(x),e_P)$ is continuous,
so let
$$
C := \sup_{x\in S_M} d_P\big(L_g(x),e_P\big)<\infty.
$$
For arbitrary $x\in M\setminus\{e_M\}$, let $r:=d_M(x,e_M)>0$ and
$u:=\delta_{1/r}^M(x)\in S_M$. Using homogeneity and the fact that $L_g$
commutes with dilations,
$$
L_g(x) = L_g\big(\delta_r^M u\big)
 = \delta_r^P\big(L_g(u)\big),
$$
and thus
$$
d_P\big(L_g(x),e_P\big)
 = r d_P\big(L_g(u),e_P\big) \le rC = Cd_M(x,e_M).
$$
By left-invariance of $d_M,d_P$ and the homomorphism property of $L_g$,
$$
d_P\big(L_g(a),L_g(b)\big)
= d_P\big(e_P,L_g(a^{-1}b)\big)
\le Cd_M\big(e_M,a^{-1}b\big)
= C d_M(a,b)
$$
for all $a,b\in M$. Hence $L_g$ is globally $C$-Lipschitz.

\noindent
In particular, there exists a constant $C>0$ such that
$$
d_P\Big( L_g\big(k_t(h)\big),L_g\big(L_f(h)\big) \Big)
 \le C\ d_M\big(k_t(h),L_f(h)\big)
$$
for all $h\in K$ and all $t>0$.

\noindent
Combining this with \eqref{eq:f-Pansu-unif}, we see that
$$
L_g\big(k_t(h)\big) \to L_g\big(L_f(h)\big)
$$
as $t\to 0$, uniformly in $h\in K$.

\noindent
On the other hand, $k_t(h)\in K_M$ for all $h\in K$ and $0< t\le t_0$, so
$E_g(t,k_t(h))\to e_P$ uniformly in $h\in K$ as $t\to 0$. Therefore
$$
u_t(h) = L_g\big(k_t(h)\big)E_g\big(t,k_t(h)\big)\to L_g\big(L_f(h)\big)
$$
as $t\to 0$, uniformly in $h\in K$.
\noindent
Hence, we have shown that for the unit ball $K=\{h\in N: d_N(h,e_N)\le1\}$,
$$
\sup_{h\in K}
d_P\Big(
\delta_{1/t}^P\big( g(f(x))^{-1} g(f(x\delta_t^N h))\big),\ (L_g\circ L_f)(h)
\Big) \to 0
$$
as $t\to 0$. By \Cref{def:Pansu}, this exactly means that $g\circ f$
is Pansu differentiable at $x$, with Pansu differential
$$
d_{P}(g\circ f)(x)\ =\ L_g\circ L_f.
$$
\end{proof}

\begin{lemma}[Zero Pansu differential implies constancy]\label{lem:zero-Pansu-constant}
Let $N_\infty$ be a Carnot group with stratified Lie algebra
$\mathfrak n_\infty = V_1\oplus\cdots\oplus V_s$, dilations $(\delta_t)_{t>0}$,
a left-invariant homogeneous distance $d$, and Haar measure $\mu$.
Let $F:N_\infty\to\RR^r$ be a Lipschitz map which is Pansu differentiable
$\mu$-a.e.\ and satisfies
$$
d_{P}F(z) = 0\qquad\text{for $\mu$-a.e.\ }z\in N_\infty.
$$
(Here we view $\RR^r$ as a step-$1$ Carnot group with group law addition and
dilations $\delta_t u = tu$.) Then $F$ is constant on $N_\infty$.
\end{lemma}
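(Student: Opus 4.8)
The plan is to show that $F$ is invariant under right multiplication by every horizontal one-parameter subgroup $\exp(\RR v)$ with $v\in V_1$. Granting this, by the standing assumptions every $g\in N_\infty$ can be written $g=\exp(t_1v_1)\cdots\exp(t_kv_k)$ with $v_i\in V_1$ and $t_i\in\RR$, so peeling off the factors one at a time gives $F(xg)=F(x)$ for all $x,g\in N_\infty$; taking $x=e$ then forces $F\equiv F(e)$.

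\emph{Step 1: $d_PF=0$ kills horizontal derivatives.} Set $E:=\{z\in N_\infty:\ F\text{ is Pansu differentiable at }z\text{ and }d_PF(z)=0\}$, so $\mu(N_\infty\setminus E)=0$ by hypothesis. Fix $v\in V_1$ and $z\in E$. Since the target $\RR^r$ is a step-$1$ group with additive law, $F(z)^{-1}F(zh)=F(zh)-F(z)$; substituting $h=\exp(tv)$ into the first-order condition of \Cref{def:Pansu} and using $\delta_t(\exp v)=\exp(tv)$ and $d(\exp(tv),e)=|t|\,d(\exp v,e)$ (both from the standing assumptions), together with $d_PF(z)=0$, I get $|F(z\exp(tv))-F(z)|=o(|t|)$ as $t\to0$. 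Equivalently, for every $z\in E$ and every $v\in V_1$ the curve $s\mapsto F(z\exp(sv))$ is differentiable at $s=0$ with derivative $0$.

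\emph{Step 2: Fubini along horizontal lines, then glue.} Fix $v\in V_1$ and choose exponential coordinates of the second kind $\RR^{m-1}\times\RR\to N_\infty$, $(w,s)\mapsto\sigma(w)\exp(sv)$, with $v$ as the last coordinate vector. Then right translation by $\exp(\tau v)$ is $(w,s)\mapsto(w,s+\tau)$, Haar measure $\mu$ is a constant multiple of Lebesgue measure $dw\,ds$, and the integral curves of the left-invariant field generated by $v$ are the slices $\ell_w:=\{(w,s):s\in\RR\}$. Since $\mu(N_\infty\setminus E)=0$, Fubini's theorem gives that for a.e.\ $w$ the set $\{s\in\RR:\sigma(w)\exp(sv)\in E\}$ has full Lebesgue measure. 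For such $w$, put $g(s):=F(\sigma(w)\exp(sv))$; because $F$ is Lipschitz and $d(\sigma(w)\exp(sv),\sigma(w)\exp(s'v))=|s-s'|\,d(\exp v,e)$, the map $g:\RR\to\RR^r$ is Lipschitz, and applying Step~1 at the points $\sigma(w)\exp(sv)\in E$ (using $\exp(sv)\exp(\tau v)=\exp((s+\tau)v)$) gives $g'(s)=0$ for a.e.\ $s$, so $g$ is constant. Hence $F(x\exp(sv))=F(x)$ for all $s\in\RR$ and $\mu$-a.e.\ $x$, and by continuity of $F$ this holds for \emph{every} $x\in N_\infty$; thus $F$ is right-$\exp(\RR v)$-invariant. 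Running this for every $v\in V_1$ and invoking the reduction in the first paragraph finishes the argument.

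I expect the only substantive point to be the Fubini step of Step~2: this is where the almost-everywhere hypothesis is converted into pointwise information along individual horizontal leaves, and it uses that $\exp(\RR v)$ is a closed subgroup (so the adapted coordinates of the second kind are globally defined) together with translation-invariance of Haar measure along that subgroup. The continuity upgrade at the end of Step~2 is what makes the conclusions for different directions $v$ automatically compatible, so no gluing over directions is needed; everything else (the computation in Step~1, the one-dimensional fundamental theorem of calculus, and the factor-peeling) is routine.
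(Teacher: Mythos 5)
Your proof is correct and follows essentially the same route as the paper's: restrict to horizontal one-parameter subgroups, use Fubini to show that along almost every horizontal line the (Lipschitz, hence absolutely continuous) restriction of $F$ has vanishing derivative a.e., upgrade from a.e.\ $x$ to every $x$ by continuity, and finish via horizontal generation. The only difference is cosmetic: the paper implements the Fubini step via right-invariance (unimodularity) of Haar measure on the product $N_\infty\times\RR$ rather than via adapted coordinates of the second kind; in your version note that mutual absolute continuity of $\mu$ with $dw\,ds$ (automatic for a smooth global chart) already suffices, so the stronger ``constant multiple of Lebesgue'' claim need not be justified.
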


\begin{proof}
Let $A\subset N_\infty$ be the full $\mu$-measure set on which $F$ is
Pansu differentiable and $d_{P}F(z)=0$. Since $N_\infty$ is a connected, simply connected nilpotent Lie group, it is
unimodular, so its (left) Haar measure $\mu$ is also right-invariant, i.e. for every
Borel set $E\subset N_\infty$ and $g\in N_\infty$,
$$
\mu(Eg)=\mu(E).
$$
Fix $v\in V_1$ and let $\eta(t):=\exp(tv)$ be the corresponding
one-parameter horizontal subgroup. Every horizontal line in direction $v$ is
of the form
$$
\gamma_x(t):= x\eta(t),\qquad x\in N_\infty, t\in\RR.
$$
Consider the set
$$
S := \big\{(x,t)\in N_\infty\times\RR: x\eta(t)\in A\big\}.
$$
For each fixed $t\in\RR$,
$$
\big\{x\in N_\infty:(x,t)\notin S\big\}
 = \big\{x\in N_\infty:x\eta(t)\notin A\big\} = A^c\eta(-t)
$$
has Haar measure
$$
\mu(A^c\eta(-t))=\mu(A^c)=0
$$
by right-invariance. Hence $S^c$ has zero product measure, so
$(\mu\otimes\nu)(S^c)=0$, where $\nu$ denotes the Lebesgue measure on $\RR$. By Fubini's theorem,
$$
\nu\big(\{t\in\RR:x\eta(t)\notin A\}\big)=0
$$
for $\mu$-a.e.\ $x\in N_\infty$. Fix one such $x$, and define
$$
f_x:\RR\to\RR^r,\qquad f_x(t):=F\big(x\eta(t)\big).
$$
We claim $f_x$ is Lipschitz. Since $F$ is Lipschitz and $d$ is
left-invariant,
$$
\|F(x\eta(t_1))-F(x\eta(t_2))\|
 \le \mathrm{Lip}(F)d(x\eta(t_1),x\eta(t_2))
 = \mathrm{Lip}(F)d(\eta(t_1),\eta(t_2)).
$$
Thus it suffices to bound $d(\eta(t_1),\eta(t_2))$ linearly in $|t_1-t_2|$. Set $\lambda := t_1-t_2$. Then
\begin{align*}
d(\eta(t_1),\eta(t_2))
 = d\big(e,\eta(t_2)^{-1}\eta(t_1)\big)
  = d\big(e,\exp(-t_2 v)\exp(t_1 v)\big)=d\big(e,\exp(\lambda v)\big).
\end{align*}
Now fix $v\in V_1$ and write $w:=\exp(v)$. Then,
$$
\exp(\lambda v)
 = \begin{cases}
   \delta_\lambda(w) & \lambda>0,\\
   e                 & \lambda=0,\\
   \delta_{|\lambda|}(w^{-1}) & \lambda<0.
   \end{cases}
$$
Using homogeneity and inversion invariance of the metric, we obtain
$$
d\big(e,\exp(\lambda v)\big)
 = \begin{cases}
   \lambda d(e,w) & \lambda>0,\\
   0               & \lambda=0,\\
   |\lambda|d(e,w^{-1}) = |\lambda|d(e,w)& \lambda<0,
   \end{cases}
$$
so in all cases
$$
d\big(e,\exp(\lambda v)\big) = |\lambda|d(e,w)
 = |t_1-t_2|d\big(e,\exp(v)\big).
$$
Thus
$$
d(\eta(t_1),\eta(t_2)) = C_v|t_1-t_2| \quad\text{with }C_v:=d(e,\exp(v)),
$$
and hence $f_x$ is globally Lipschitz on $\RR$. In particular, it is absolutely
continuous and classically differentiable for $\nu$-a.e.\ $t\in\RR$.

Let $t_0$ be such that:

\begin{itemize}
\item $z:=x\eta(t_0)\in A$, so $F$ is Pansu differentiable at
      $z$ with $d_{P}F(z)=0$;
\item $f_x$ is classically differentiable at $t_0$.
\end{itemize}
Such $t_0$ form a full Lebesgue-measure subset of $\RR$. At $z$, $d_{P}F(z)=0$ means: for every $w$ with $d(w,e)\le 1$,
\begin{equation}\label{eq:Pansu-diff-zero}
\lim_{s\to0}\frac{F\big(z\delta_s(w)\big)-F(z)}{s}=0.
\end{equation}
The element $w_0:=\exp(v)$ need not lie in the unit ball, so we normalize. Let
$$
R:=\max\{1,d(w_0,e)\}>0,\qquad
w:=\delta_{1/R}(w_0)\in N_\infty.
$$
By homogeneity,
$$
d(w,e) = d(\delta_{1/R}(w_0),e) = \frac{1}{R}d(w_0,e)\le1,
$$
so $w$ is admissible in \eqref{eq:Pansu-diff-zero}. Now,
$$
\delta_s(w)
 = \delta_s\big(\delta_{1/R}(w_0)\big)
 = \delta_{s/R}(w_0)
 = \delta_{s/R}\big(\exp(v)\big)
 = \exp\big((s/R)v\big).
$$
Thus \eqref{eq:Pansu-diff-zero} for this $w$ gives
$$
\lim_{s\to0}\frac{F\big(z\exp((s/R)v)\big)-F(z)}{s}=0.
$$
Setting $h:=s/R$ (so $s=Rh$) yields
$$
\lim_{h\to0}\frac{F\big(z\exp(hv)\big)-F(z)}{h}=0.
$$
Since $z=x\exp(t_0v)$ and
$$
x\eta(t_0+h)=x\exp((t_0+h)v)=x\exp(t_0v)\exp(hv)=z\exp(hv),
$$
we obtain
$$
\lim_{h\to0}\frac{f_x(t_0+h)-f_x(t_0)}{h}=0,
$$
so $f_x'(t_0)=0$.

\noindent
Therefore $f_x'(t)=0$ for $\nu$-a.e. $t\in\RR$, and by absolute
continuity,
$$
f_x(t)=f_x(0)\qquad\forall t\in\RR,
$$
i.e.
$$
F\big(x\exp(tv)\big)\equiv F(x)
\quad\forall t\in\RR.
$$
Summarizing, for each fixed $v\in V_1$ we have shown that for $\mu$-a.e.\ $x\in N_\infty$, the function $t\mapsto F(x\exp(tv))$ is constant on $\RR$. Fix $v\in V_1$ and $t_0\in\RR$. Define
$$
\phi_{t_0}(x) := F\big(x\exp(t_0v)\big)-F(x)\qquad(x\in N_\infty).
$$
The map $\phi_{t_0}$ is continuous (indeed Lipschitz). For
$\mu$-a.e.\ $x$ and for all $t\in\RR$,
$$
F\big(x\exp(tv)\big)=F(x),
$$
so in particular $\phi_{t_0}(x)=0$ for $\mu$-a.e.\ $x$.

Let $Z_{t_0}:=\{x:\phi_{t_0}(x)=0\}$. Then $Z_{t_0}$ is closed and
$\mu(Z_{t_0})=\mu(N_\infty)$. Its complement $N_\infty\setminus Z_{t_0}$ is
open and has Haar measure zero. Since in a connected Lie group any nonempty
open set has strictly positive Haar measure, we must have
$N_\infty\setminus Z_{t_0}=\emptyset$, hence $Z_{t_0}=N_\infty$ and
$$
F\big(x\exp(t_0v)\big)=F(x)\qquad\forall x\in N_\infty.
$$
As $t_0$ was arbitrary, we conclude that for every $v\in V_1$,
\begin{equation}\label{eq:horiz-constant-fixed}
F\big(x\exp(tv)\big)=F(x)\qquad
\forall x\in N_\infty, \forall t\in\RR.
\end{equation}

\noindent
now, $N_\infty$ is generated
as a group by the one-parameter subgroups $\exp(\RR v)$, $v\in V_1$. Hence for
every $g\in N_\infty$ there exist $v_1,\dots,v_k\in V_1$ and
$t_1,\dots,t_k\in\RR$ such that
$$
g\ =\ \exp(t_1 v_1)\cdots\exp(t_k v_k).
$$
Given arbitrary $p,q\in N_\infty$, write $g:=p^{-1}q$ in such a form, and
consider the piecewise horizontal curve obtained by concatenating the segments
$$
t\mapsto p\exp(tt_1v_1),\quad
t\mapsto p\exp(t_1v_1)\exp(tt_2v_2),\quad\dots,
$$
ending at $q=pg$. Each such segment is of the form
$t\mapsto x\exp(tv)$ with $v\in V_1$, so by
\eqref{eq:horiz-constant-fixed}, $F$ is constant along each segment, hence
along the whole curve. In particular,
$$
F(p)=F(q).
$$
Since $p,q\in N_\infty$ were arbitrary, $F$ is constant on $N_\infty$.
\end{proof}

\section{Uniform Dungey estimates}\label{sec:uniform-dungey-appendix}

The purpose of this appendix is to prove \Cref{lem:uniform-dungey-family}, which
supplies the diagonal Dungey-type gradient bounds used in the proof of
\Cref{thm:unconditional-HF1-closure}. This appendix is motivated by the results in \cite{Dungey2008}. Throughout this appendix, $G$ is a finitely generated nilpotent group,
$W\subseteq G$ is a fixed finite symmetric generating set containing $e$, and
$$
\rho(g):=|g|_W \qquad (g\in G).
$$

Let
$$
\pi=(\pi_1,\dots,\pi_d):G\to \mathbb Z^d
$$
be the projection onto the free part of the Abelianization of $G$, with kernel $G_1$.
Choose lifts $x_1,\dots,x_d\in G$ of the standard basis vectors of $\ZZ^d$, and fix a finite
symmetric generating set $U$ of $G_1$.
For $f:G\to\CC$, define
$$
\|\nabla_W f\|_2^2:=\sum_{w\in W}\|\partial_w f\|_2^2,
\qquad
\partial_g:=L(g)-I,
$$
where $(L(g)f)(x):=f(g^{-1}x)$.
Then
\begin{equation}\label{eq:app-tele-bound}
\|\partial_g f\|_2\le \rho(g)\|\nabla_W f\|_2 \qquad (g\in G),
\end{equation}
by writing a geodesic word for $g$ and telescoping.

As in \cite{Dungey2008}, let $D_2$ be the set of all bounded operators on $\ell^2(G)$ which are finite complex linear combination of the terms
$$
L(a)\partial_b\partial_c\qquad(a,b,c\in G).
$$
By \cite[lemma 2.3]{Dungey2008}, $\partial_u\in D_2$ for every $u\in G_1$.
Since $U$ is finite, we fix once and for all representations
\begin{equation}\label{eq:app-fixed-U-D2}
\partial_u=\sum_{\ell=1}^{N_u} \gamma_{u,\ell}
L(a_{u,\ell})\partial_{b_{u,\ell}}\partial_{c_{u,\ell}}
\qquad (u\in U),
\end{equation}
with $a_{u,\ell},b_{u,\ell},c_{u,\ell}\in G$ and $\gamma_{u,\ell}\in\CC$.

For the weighted perturbation estimates of \Cref{lem:uniform-perturb-app}
we shall need a $D_2$ expansion of
$\partial_g-\sum_j\pi_j(g)\partial_{x_j}$ whose left-translation parameters
grow at most linearly in $\rho(g)$. For $s\in W$, write
$q_j(s):=\pi_j(s)\in\ZZ$ and $v_s:=\left(\prod_{j=1}^d x_j^{-q_j(s)}\right)s\in G_1$, and put
$$
E_s:=\partial_s-\sum_{j=1}^d q_j(s)\partial_{x_j}.
$$
By \cite[Lemma 2.2]{Dungey2008} $E_s$ is a $D_2$ operator for every $s\in W$. So we may fix once and for all
finite $D_2$ representations
\begin{equation}\label{eq:app-fixed-W-error-D2}
E_s=\sum_{\ell=1}^{N_s}\eta_{s,\ell}
L(\alpha_{s,\ell})\partial_{\beta_{s,\ell}}\partial_{\gamma_{s,\ell}}
\qquad(s\in W),
\end{equation}
with all coefficients and group elements drawn from a finite set depending only
on $(G,W)$.

A probability measure $K$ on $G$ is said to be \emph{centered} if
\begin{equation}\label{eq:dungey-centered}
\sum_{g\in G}K(g) \pi_j(g)=0
\qquad(j=1,\ldots,q).
\end{equation}
Note that if $K$ has finite first moment and is Abelian-centered, then $K$ is centered in the sense of \eqref{eq:dungey-centered}.

Let $(K_R)_{R\ge R_0}$ be a family of centered, finitely supported probability
measures on $G$, and write
$$
T_R f:=K_R*f.
$$
Throughout, we assume the standing ellipticity and exponential-moment
hypotheses
\begin{equation}\label{eq:app-standing-hyp}
\inf_{R\ge R_0}K_R(e)\ge\eta,\qquad
\inf_{R\ge R_0}\min_{w\in W}K_R(w)\ge\eta,\qquad
\mathfrak M_\beta:=\sup_{R\ge R_0}\sum_{g\in G}e^{\beta\rho(g)}K_R(g)<\infty,
\end{equation}
for fixed constants $\eta>0$, $\beta>0$.

\begin{lemma}[Quantitative $D_2$ decomposition]\label{lem:quantitative-D2-app}
There exist constants $C>0$ and $m_0\in\NN$, depending only on $(G,W)$,
such that for every $g\in G$ there is an operator $\mathcal R_g$ on $\ell^2(G)$ satisfying
\begin{equation}\label{eq:app-D2-decomp}
\partial_g=\sum_{j=1}^d \pi_j(g)\partial_{x_j}+\mathcal R_g.
\end{equation}
Moreover, $\mathcal R_g$ admits a finite $D_2$ expansion
\begin{equation}\label{eq:app-controlled-D2-expansion}
\mathcal R_g=\sum_{\nu\in I_g}\Gamma_\nu
L(A_\nu)\partial_{B_\nu}\partial_{C_\nu}
\end{equation}
with linearly controlled translation lengths
\begin{equation}\label{eq:app-controlled-D2-linear-length}
\rho(A_\nu)+\rho(B_\nu)+\rho(C_\nu)\le C(1+\rho(g))
\qquad(\nu\in I_g)
\end{equation}
and polynomially controlled coefficient size
\begin{equation}\label{eq:app-controlled-D2-size}
\sum_{\nu\in I_g}|\Gamma_\nu|
\bigl(1+\rho(A_\nu)+\rho(B_\nu)+\rho(C_\nu)\bigr)^4
\le C(1+\rho(g))^{m_0}.
\end{equation}
In particular,
\begin{equation}\label{eq:app-D2-bilinear}
|\langle\mathcal R_g f_1,f_2\rangle|
\le
C(1+\rho(g))^{m_0}
\|\nabla_W f_1\|_2\|\nabla_W f_2\|_2
\end{equation}
for all $f_1,f_2\in \ell^2(G)$.
Consequently, if $K$ is a centered, finitely supported probability measure on $G$
with $Tf:=K*f$, then
\begin{equation}\label{eq:app-centered-laplacian}
I-T=-\sum_{g\in G}K(g)\mathcal R_g
\end{equation}
and
\begin{equation}\label{eq:app-centered-bilinear}
|\langle(I-T)f_1,f_2\rangle|
\le
CM_{m_0}(K)\|\nabla_W f_1\|_2\|\nabla_W f_2\|_2,
\end{equation}
where
$$
M_{m_0}(K):=\sum_{g\in G}(1+\rho(g))^{m_0}K(g).
$$
\end{lemma}

\begin{proof}
The case $g=e$ is trivial, so assume $g\ne e$. Choose a $W$-geodesic word
$$
g=s_1 s_2\cdots s_n,\qquad s_i\in W,\qquad n=\rho(g),
$$
and set $P_i:=s_1\cdots s_{i-1}$, with $P_1=e$. Repeated use of
$\partial_{ab}=\partial_a+L(a)\partial_b$ gives
\begin{equation}\label{eq:app-tele-d-g}
\partial_g=\sum_{i=1}^n L(P_i)\partial_{s_i}.
\end{equation}
Substituting $\partial_{s_i}=\sum_j q_j(s_i)\partial_{x_j}+E_{s_i}$ and using
$\sum_i q_j(s_i)=\pi_j(g)$, we define
\begin{align}
\mathcal R_g
:=\partial_g-\sum_{j=1}^d\pi_j(g)\partial_{x_j}
&=\sum_{i=1}^n L(P_i)E_{s_i}
  +\sum_{i=1}^n\sum_{j=1}^d q_j(s_i)(L(P_i)-I)\partial_{x_j}.
\label{eq:app-Rg-decomposition}
\end{align}
This is the desired operator $\mathcal R_g$, satisfying \eqref{eq:app-D2-decomp}.

\smallskip\noindent
Insert the fixed representations \eqref{eq:app-fixed-W-error-D2} into the
first sum of \eqref{eq:app-Rg-decomposition}. Let $C_1\ge1$ be a
constant, depending only on $(G,W)$, such that
$$
\rho(\alpha_{s,\ell})+\rho(\beta_{s,\ell})+\rho(\gamma_{s,\ell})
\le C_1
$$
for every summand occurring in the fixed representations
\eqref{eq:app-fixed-W-error-D2}, and such that $\rho(x_j)\le C_1$
for every $1\le j\le d$. Also let $C_2\ge1$ bound the finitely
many numbers $|\eta_{s,\ell}|$ and $|q_j(s)|$ with $s\in W$ and
$1\le j\le d$.

The first sum produces terms indexed by
$$
I_g^{(1)}:=\{(i,\ell):1\le i\le n,\ 1\le \ell\le N_{s_i}\}
$$
of the form
$$
\eta_{s_i,\ell}L(P_i\alpha_{s_i,\ell})
\partial_{\beta_{s_i,\ell}}\partial_{\gamma_{s_i,\ell}}.
$$
For these terms,
$$
\rho(P_i\alpha_{s_i,\ell})+\rho(\beta_{s_i,\ell})+\rho(\gamma_{s_i,\ell})
\le \rho(P_i)+C_1
\le C_1(1+n).
$$
Their number is
$$
|I_g^{(1)}|=\sum_{i=1}^n N_{s_i}\le N_W n,
\qquad N_W:=\max_{s\in W}N_s<\infty.
$$

For the second sum of \eqref{eq:app-Rg-decomposition}, expand
$$
L(P_i)-I=\partial_{P_i}=\sum_{r=1}^{i-1}L(P_r)\partial_{s_r},
$$
which gives
$$
(L(P_i)-I)\partial_{x_j}=\sum_{r=1}^{i-1}L(P_r)\partial_{s_r}\partial_{x_j}.
$$
These terms are indexed by
$$
I_g^{(2)}:=\{(i,j,r):1\le i\le n,\ 1\le j\le d,\ 1\le r\le i-1\},
$$
and have the form
$$
q_j(s_i)L(P_r)\partial_{s_r}\partial_{x_j}.
$$
For these terms,
$$
\rho(P_r)+\rho(s_r)+\rho(x_j)\le (n-1)+1+C_1
\le C_1(1+n),
$$
and their number is
$$
|I_g^{(2)}|=d\sum_{i=1}^n(i-1)=\frac{d n(n-1)}2.
$$
Thus the total number of displayed $D_2$ summands is $O(n^2)$.
Taking $I_g$ to be the disjoint union of these two index families, with the parameters
and coefficients displayed above, gives \eqref{eq:app-controlled-D2-expansion}. 
Moreover,
$$
\begin{aligned}
\sum_{\nu\in I_g}|\Gamma_\nu|
\bigl(1+\rho(A_\nu)+\rho(B_\nu)+\rho(C_\nu)\bigr)^4
&\le C_2\bigl(|I_g^{(1)}|+|I_g^{(2)}|\bigr)
   \bigl(1+n+C_1\bigr)^4  \\
&\le C(1+n)^2(1+n)^4
\le C(1+n)^6.
\end{aligned}
$$
for some $C>0$ depending only on $(G,W)$. The length estimate \eqref{eq:app-controlled-D2-linear-length} follows after increasing $C$, if required. Since $n=\rho(g)$, \eqref{eq:app-controlled-D2-size} holds with $m_0=6$.

\smallskip\noindent
For a single term $A=L(a)\partial_b\partial_c$ in
\eqref{eq:app-controlled-D2-expansion}, we have
$$
\langle Af_1,f_2\rangle=\langle \partial_c f_1,\partial_{b^{-1}}L(a^{-1})f_2\rangle.
$$
Writing $b^{-1}$ as a $W$-geodesic word and using
$\partial_w L(a^{-1})=L(a^{-1})\partial_{awa^{-1}}$ for $w\in W$ together with
the trivial bound $\rho(awa^{-1})\le2\rho(a)+1$ and \eqref{eq:app-tele-bound}, we obtain
$$
\|\partial_{b^{-1}}L(a^{-1})f_2\|_2\le\rho(b)(2\rho(a)+1)\|\nabla_W f_2\|_2.
$$
Combining this with \eqref{eq:app-tele-bound} applied to $\partial_c f_1$, and Cauchy-Schwarz inequality gives
\begin{equation}\label{eq:app-D2-term-with-conj}
|\langle Af_1,f_2\rangle|\le \rho(c)\rho(b)(2\rho(a)+1)
\|\nabla_W f_1\|_2\|\nabla_W f_2\|_2.
\end{equation}
Therefore the $\nu$-th summand, including its coefficient $\Gamma_\nu$,
contributes at most
$$
2|\Gamma_\nu|\bigl(1+\rho(A_\nu)+\rho(B_\nu)+\rho(C_\nu)\bigr)^3
\|\nabla_W f_1\|_2\|\nabla_W f_2\|_2.
$$
Since $1+\rho(A_\nu)+\rho(B_\nu)+\rho(C_\nu)\ge1$, summing over the controlled
expansion and using \eqref{eq:app-controlled-D2-size} proves
\eqref{eq:app-D2-bilinear}.

\smallskip\noindent
If $K$ is centered then $\sum_g K(g)\pi_j(g)=0$ for $j=1,\dots,d$, so
averaging \eqref{eq:app-D2-decomp} against $K$ gives
\eqref{eq:app-centered-laplacian}, and \eqref{eq:app-centered-bilinear} follows
immediately from \eqref{eq:app-D2-bilinear}.
\end{proof}

\begin{corollary}\label{cor:uniform-sectorial-family-app}
Under the standing hypotheses \eqref{eq:app-standing-hyp}, there exist constants
$c_0,c_1>0$, depending only on $(G,W,\eta,\beta,\mathfrak M_\beta)$, such that
\begin{equation}\label{eq:app-family-form-comparison}
c_0\|\nabla_W f\|_2^2
\le \Re\langle(I-T_R)f,f\rangle
\le c_1\|\nabla_W f\|_2^2
\qquad (R\ge R_0,\ f\in \ell^2(G)),
\end{equation}
and consequently
\begin{equation}\label{eq:app-family-sectorial}
\bigl|\langle(I-T_R)f,f\bigr\rangle|
\le \frac{c_1}{c_0}\Re\langle(I-T_R)f,f\rangle
\qquad (R\ge R_0,\ f\in \ell^2(G)).
\end{equation}
\end{corollary}

\begin{proof}
The exponential-moment hypothesis in \eqref{eq:app-standing-hyp} implies
$\sup_{R\ge R_0}M_{m_0}(K_R)<\infty$. Hence \Cref{lem:quantitative-D2-app}
(with $K=K_R$) gives
$$
\bigl|\langle(I-T_R)f,f\rangle\bigr|\le c_1\|\nabla_W f\|_2^2
$$
for some $c_1$ depending only on $(G,W,\eta,\beta,\mathfrak M_\beta)$.
On the other hand, 
$$
\Re\langle(I-T_R)f,f\rangle=\frac12\sum_{g\in G} K_R(g)\|\partial_g f\|_2^2
\ge \frac{\eta}{2}\|\nabla_W f\|_2^2,
$$
because $ K_R(w)\ge\eta$ for every $w\in W$. This proves
\eqref{eq:app-family-form-comparison},
and \eqref{eq:app-family-sectorial} follows immediately.
\end{proof}

\begin{lemma}\label{lem:untwisted-ultra-family-app}
There exists $C>0$, depending only on $(G,W,\eta,\beta,\mathfrak M_\beta)$, such that
\begin{equation}\label{eq:app-untwisted-ultra}
\|T_R^q\|_{1\to\infty}\le Cq^{-D/2}
\qquad (R\ge R_0,\ q\in \NN).
\end{equation}
Equivalently, $\|K_R^{(q)}\|_\infty\le C q^{-D/2}$ uniformly in $R\ge R_0$.
\end{lemma}

\begin{proof}
If $G$ is finite, then $D=0$ and
$\|T_R^q\|_{1\to\infty}\le1$, so the assertion is immediate. Hence assume
$G$ is infinite, and therefore $D>0$.

Let $B(r):=\{g\in G:\rho(g)\le r\}$. Since $G$ has polynomial growth degree
$D$, there is $v_0>0$, depending only on $(G,W)$, such that
$|B(r)|\ge v_0 r^D$ for all $r\ge1$. For $r\ge1$ and $f\in\ell^1(G)$ set
$$
A_r f:=\frac1{|B(r)|}\sum_{h\in B(r)}L(h)f .
$$
Then $\|A_r f\|_1\le\|f\|_1$ and
$\|A_r f\|_\infty\le |B(r)|^{-1}\|f\|_1$. Hence,
$$
\|A_r f\|_2\le \sqrt{\|A_r f\|_1\|A_r f\|_\infty} \le v_0^{-1/2}r^{-D/2}\|f\|_1.
$$
Also, by the telescoping estimate \eqref{eq:app-tele-bound},
$$
\|f-A_r f\|_2
\le \frac1{|B(r)|}\sum_{h\in B(r)}\|f-L(h)f\|_2
\le r\|\nabla_W f\|_2 .
$$
Consequently
\begin{equation}\label{eq:app-ball-average-nash-pre}
\|f\|_2\le r\|\nabla_W f\|_2+v_0^{-1/2} r^{-D/2}\|f\|_1\qquad(r\ge1).
\end{equation}
Let $A:=\|\nabla_W f\|_2$ and $B:=\|f\|_1$. If $A=0$, then $f$ is constant which forces $f=0$, because
$G$ is infinite and $f\in\ell^1(G)$. If $A>0$ and 
$(B/A)^{2/(D+2)}\ge 1$, substituting $r=(B/A)^{2/(D+2)}$ in
\eqref{eq:app-ball-average-nash-pre} gives
$$
\|f\|_2\le C_*' A^{D/(D+2)}B^{2/(D+2)}.
$$
for some $C_*'>0$ depending only on $(G,W)$. If $(B/A)^{2/(D+2)}< 1$, then $A>B$ and the same bound follows
from $\|f\|_2\le B\le A^{D/(D+2)}B^{2/(D+2)}$. Thus
\begin{equation}\label{eq:app-Nash}
\|f\|_2^{2+4/D}\le C_*\|\nabla_W f\|_2^2\|f\|_1^{4/D}
\qquad (f\in \ell^1(G)),
\end{equation}
with $C_*>0$ depending only on $(G,W)$.

Put
$$
\mathcal E_R(f):=\Re\langle(I-T_R)f,f\rangle \qquad(f\in\ell^2(G)).
$$
By \eqref{eq:app-family-form-comparison} and \eqref{eq:app-Nash},
\begin{equation}\label{eq:app-Nash-form-version}
\mathcal E_R(f)
\ge c_N\|f\|_2^{2+4/D}\|f\|_1^{-4/D}
\end{equation}
whenever $f\in\ell^1(G)$, $f\ne0$, with $c_N=c_0/C_*$, uniformly in $R\ge R_0$.

Let $p_R:=K_R(e)$. Then $p_R\ge\eta$. Since $G$ is
infinite, $W$ contains a non-identity element and hence $p_R\le1-\eta$ by
\eqref{eq:app-standing-hyp}. Define
$$
P_R:=\frac{T_R-p_R I}{1-p_R},\qquad\text{so}\qquad
T_R=p_R I+(1-p_R)P_R .
$$
The operator $P_R$ is convolution by the probability measure $\dfrac{K_R-p_R\delta_e}{1-p_R}$, hence is a contraction
on every $\ell^p(G)$, $1\le p\le\infty$. For any $u\in\ell^2(G)$, writing
$p=p_R$, $P=P_R$, and $T=T_R$, expanding $\|Tu\|_2^2$ and grouping yields
the convex-combination identity
\begin{equation}\label{eq:app-convex-combo-id1}
\|u\|_2^2-\|Tu\|_2^2
=(1-p)(\|u\|_2^2-\|Pu\|_2^2)+p(1-p)\|u-Pu\|_2^2,
\end{equation}
whereas a direct expansion of $2\Re\langle(I-T)u,u\rangle=2\Re\langle(1-p)(I-P)u,u\rangle$ gives
\begin{equation}\label{eq:app-convex-combo-id2}
2\Re\langle(I-T)u,u\rangle=(1-p)(\|u\|_2^2-\|Pu\|_2^2)+(1-p)\|u-Pu\|_2^2.
\end{equation}
Subtracting $p$ times \eqref{eq:app-convex-combo-id2} from
\eqref{eq:app-convex-combo-id1} cancels both copies of $\|u-Pu\|_2^2$ and
leaves
$$
\|u\|_2^2-\|Tu\|_2^2-2p\Re((I-T)u,u)
=(1-p)^2(\|u\|_2^2-\|Pu\|_2^2)\ge 0,
$$
the inequality being a consequence of $\|Pu\|_2\le\|u\|_2$. Therefore
\begin{equation}\label{eq:app-lazy-dissipation}
\|u\|_2^2-\|T_Ru\|_2^2
\ge 2p_R\mathcal E_R(u)
\ge 2\eta\mathcal E_R(u).
\end{equation}

Now fix a nonzero $f\in\ell^1(G)$ and
set $u_n=T_R^n f$, $a_n:=\|u_n\|_2^2$. Since $T_R$ is an $\ell^1$-contraction,
$\|u_n\|_1\le\|f\|_1$, and \eqref{eq:app-Nash-form-version}-\eqref{eq:app-lazy-dissipation}
give
$$
a_n-a_{n+1}\ge ca_n^{1+2/D}\|f\|_1^{-4/D}
$$
with $c>0$ independent of $R\ge R_0$. Set
$b_n:=a_n/\|f\|_1^2$. Then $0\le b_n\le1$ (using $\|u_n\|_2^2\le \|u_n\|_1\|u_n\|_\infty$) and
$b_n-b_{n+1}\ge c b_n^{1+2/D}$. If $b_{n+1}=0$ there is nothing to prove;
otherwise, with $\alpha:=2/D$,
$$
b_{n+1}^{-\alpha}-b_n^{-\alpha}
\ge \alpha(b_n-b_{n+1})b_n^{-\alpha-1}
\ge \alpha c .
$$
Summing over $n$ gives $C'>0$ such that $b_n\le Cn^{-D/2}$ for all $n\in\NN$, i.e.
\begin{equation}\label{eq:app-L1-L2-ultra}
\|T_R^n f\|_2\le C n^{-D/4}\|f\|_1\qquad(n\ge1),
\end{equation}
uniformly in $R$, for $f\in\ell^1(G)$ (here $C=\sqrt{C'}$). 
Hence, we have the estimate
$\|T_R^n\|_{1\to2}\le Cn^{-D/4}$ for all $R\ge R_0$.

The adjoint $T_R^*$ is convolution by the probability measure
$g\mapsto K_R(g^{-1})$. It satisfies \eqref{eq:app-standing-hyp}, and
$$
\Re\langle(I-T_R^*)f,f\rangle=\Re\langle(I-T_R)f,f\rangle,
$$
so the same argument as above gives
$\|(T_R^*)^n\|_{1\to2}\le Cn^{-D/4}$ uniformly in $R$. For $q\in\NN$, $q\ge2$ write
$q=n+m$ with $n=\lfloor q/2\rfloor$ and $m=\lceil q/2\rceil$. Then
$$
\|T_R^q\|_{1\to\infty}
\le \|T_R^m\|_{2\to\infty}\|T_R^n\|_{1\to2}
=\|(T_R^*)^m\|_{1\to2}\|T_R^n\|_{1\to2}
\le Cq^{-D/2}.
$$
The case $q=1$ can be handled by enlarging $C$, since
$\|T_R\|_{1\to\infty}\le1$.
\end{proof}

For $\psi:G\to\RR$ and $\lambda\in\RR$, define
$$
T_{R,\lambda,\psi}:=e^{\lambda\psi}T_Re^{-\lambda\psi}.
$$

\begin{lemma}\label{lem:uniform-perturb-app}
Under the standing hypotheses \eqref{eq:app-standing-hyp}, let $\psi:G\to\RR$ satisfy
$$
|\partial_w\psi|\le 1\qquad(w\in W).
$$
Then there exist $\lambda_0>0$ and $C_{\mathrm{pert}}>0$, depending only on
$(G,W,\eta,\beta,\mathfrak M_\beta)$, such that for every $R\ge R_0$, every $|\lambda|\le\lambda_0$,
every $\eps>0$, and all $f_1,f_2\in\ell^2(G)$,
\begin{equation}\label{eq:app-uniform-quad-perturb}
\bigl|\langle(T_{R,\lambda,\psi}-T_R)f_1,f_2\bigr\rangle|
\le \eps\bigl(\|\nabla_W f_1\|_2^2+\|\nabla_W f_2\|_2^2\bigr)
+C_{\mathrm{pert}}(1+\eps^{-1})\lambda^2\bigl(\|f_1\|_2^2+\|f_2\|_2^2\bigr).
\end{equation}
Consequently,
\begin{equation}\label{eq:app-uniform-quad-perturb-form}
\Re\langle(I-T_{R,\lambda,\psi})f,f\rangle
\ge \frac{c_0}{2}\|\nabla_W f\|_2^2
-C_{\mathrm{pert}}\lambda^2\|f\|_2^2
\qquad (R\ge R_0,\ |\lambda|\le\lambda_0,\ f\in \ell^2(G)).
\end{equation}
\end{lemma}

\begin{proof}
By \eqref{eq:app-centered-laplacian} applied to $K=K_R$,
$$
T_{R,\lambda,\psi}-T_R
=\sum_{g\in G}K_R(g)\bigl(e^{\lambda\psi}\mathcal R_g e^{-\lambda\psi}-\mathcal R_g\bigr).
$$
It therefore suffices to estimate a generic term $A=L(a)\partial_b\partial_c$
in the expansion of $\mathcal R_g$ from
\Cref{lem:quantitative-D2-app}.

For $d\in G$, define the multiplication operator $M_d(\lambda)\text{ on }\ell^2(G),$
$$
 (M_d(\lambda)f)(x):=\bigl(e^{-\lambda\partial_d\psi(x)}-1\bigr)f(x),
$$
so that $M_d(\lambda)$ is multiplication by the function
$e^{-\lambda\partial_d\psi}-1$. Since $|\partial_w\psi|\le 1$ for $w\in W$,
telescoping along a geodesic for $d$ gives $|\partial_d\psi|\le\rho(d)$ pointwise,
hence
\begin{equation}\label{eq:app-Md-bound}
\|M_d(\lambda)\|_{2\to2}\le |\lambda|\rho(d)e^{|\lambda|\rho(d)}.
\end{equation}
Direct computation yields the basic identities
$$
e^{\lambda\psi}L(a)e^{-\lambda\psi}=(1+M_a(\lambda))L(a),
\qquad
e^{\lambda\psi}\partial_d e^{-\lambda\psi}=\partial_d+M_d(\lambda)L(d).
$$
Hence
\begin{align}
e^{\lambda\psi}A e^{-\lambda\psi}-A
&=M_a(\lambda)L(a)\partial_b\partial_c\notag\\
&\quad+(1+M_a(\lambda))L(a)M_b(\lambda)L(b)\partial_c\notag\\
&\quad+(1+M_a(\lambda))L(a)\partial_b M_c(\lambda)L(c)\notag\\
&\quad+(1+M_a(\lambda))L(a)M_b(\lambda)L(b)M_c(\lambda)L(c).
\label{eq:app-full-twist-expansion}
\end{align}

\smallskip\noindent
Consider the second term, $(1+M_a(\lambda))L(a)M_b(\lambda)L(b)\partial_c$.
Using \eqref{eq:app-tele-bound}, we get
$$
\bigl|\langle(1+M_a(\lambda))L(a)M_b(\lambda)L(b)\partial_c f_1,f_2\rangle\bigr|
\le \rho(c)\|M_b(\lambda)\|_\infty(1+\|M_a(\lambda)\|_\infty)\|\nabla_W f_1\|_2\|f_2\|_2.
$$
Combined with \eqref{eq:app-Md-bound} and Young's inequality
$st\le \eps s^2+(4\eps)^{-1}t^2$, this is bounded by
$$
\eps\rho(c)^2\|\nabla_W f_1\|_2^2
+C\eps^{-1}\lambda^2(1+\rho(a))^2\rho(b)^2 e^{C|\lambda|(\rho(a)+\rho(b))}\|f_2\|_2^2.
$$
For the third term $(1+M_a(\lambda))L(a)\partial_b M_c(\lambda)L(c)$, put
$m_a(\lambda):=(1+M_a(\lambda))^*=e^{-\lambda\partial_a\psi}$ (recall $\psi$
is real-valued). Then,
$$
\langle (1+M_a(\lambda))L(a)\partial_b M_c(\lambda)L(c)f_1,f_2\rangle
=\langle M_c(\lambda)L(c)f_1,\partial_{b^{-1}}L(a^{-1})\bigl(m_a(\lambda)f_2\bigr)\rangle.
$$
Writing $b^{-1}$ as a $W$-geodesic word and using
$\partial_w L(a^{-1})=L(a^{-1})\partial_{awa^{-1}}$ together with
$\rho(awa^{-1})\le 2\rho(a)+1$, the product rule
$\partial_g(\varphi h)=(\partial_g\varphi)L(g)h+\varphi\partial_g h$ for a
multiplier $\varphi$ and the bound
$$
\|\partial_h e^{-\lambda\partial_a\psi}\|_\infty
\le C|\lambda|\rho(a)e^{C|\lambda|\rho(a)}
\qquad(\rho(h)\le 2\rho(a)+1),
$$
which follows from $\|\partial_a\psi\|_\infty\le\rho(a)$, hence
$|\partial_h\partial_a\psi|\le 2\rho(a)$, combined with the mean-value inequality
$|e^{-\lambda u}-e^{-\lambda v}|\le|\lambda||u-v|e^{|\lambda|\max(|u|,|v|)}$,
yield
$$
\|\partial_{b^{-1}}L(a^{-1})\bigl(m_a(\lambda)f_2\bigr)\|_2
\le C\rho(b)(1+\rho(a))e^{C|\lambda|\rho(a)}\|\nabla_W f_2\|_2
+C|\lambda|\rho(a)(1+\rho(a))\rho(b)e^{C|\lambda|\rho(a)}\|f_2\|_2.
$$
Combining with \eqref{eq:app-Md-bound} for $M_c(\lambda)$ and Young's inequality,
$st\le \eps s^2+(4\eps)^{-1}t^2$, the third term is
bounded by
$$
\eps\rho(b)^2(1+\rho(a))^2\|\nabla_W f_2\|_2^2
+C(1+\eps^{-1})\lambda^2(1+\rho(a)+\rho(b)+\rho(c))^4
e^{C|\lambda|(\rho(a)+\rho(c))}(\|f_1\|_2^2+\|f_2\|_2^2).
$$
The fourth term is bounded trivially by the product of three sup-norms:
$$
C\|M_a(\lambda)+1\|_\infty\|M_b(\lambda)\|_\infty\|M_c(\lambda)\|_\infty\|f_1\|_2\|f_2\|_2
\le C\lambda^2\rho(b)\rho(c)e^{C|\lambda|(\rho(a)+\rho(b)+\rho(c))}(\|f_1\|_2^2+\|f_2\|_2^2).
$$

\smallskip\noindent
Integration by parts gives
$$
\langle M_a(\lambda)L(a)\partial_b\partial_c f_1,f_2\rangle
=\langle\partial_c f_1,\partial_{b^{-1}}L(a^{-1})M_a(\lambda)^\ast f_2)\rangle.
$$
Since $\partial_{b^{-1}}$ is bounded on $\ell^2$ with operator norm $\le 2$ and $L(a^{-1})$ is an
isometry, while $M_a(\lambda)^\ast$ is a bounded multiplier,
$$
\bigl\|\partial_{b^{-1}}L(a^{-1})(M_a(\lambda)^\ast f_2)\bigr\|_2
\le 2\|M_a(\lambda)\|_\infty\|f_2\|_2.
$$
Combining with \eqref{eq:app-tele-bound} and Young's inequality,
\begin{align*}
|\langle M_a(\lambda)L(a)\partial_b\partial_c f_1,f_2\rangle|
&\le 2\rho(c)\|M_a(\lambda)\|_\infty\|\nabla_W f_1\|_2\|f_2\|_2\\
&\le \eps\rho(c)^2\|\nabla_W f_1\|_2^2
+C\eps^{-1}\lambda^2\rho(a)^2 e^{C|\lambda|\rho(a)}\|f_2\|_2^2.
\end{align*}
A symmetric bound, with $\|f_1\|_2^2$ and $\|\nabla_W f_2\|_2^2$, follows from the
adjoint expansion.

\smallskip\noindent
Assembling the four term bounds for a single $A=L(a)\partial_b\partial_c$ yields
\begin{multline*}
\bigl|\langle(e^{\lambda\psi}A e^{-\lambda\psi}-A)f_1,f_2\rangle\bigr|
\le \eps(1+\rho(a)+\rho(b)+\rho(c))^4\bigl(\|\nabla_W f_1\|_2^2+\|\nabla_W f_2\|_2^2\bigr)\\
+C(1+\eps^{-1})\lambda^2(1+\rho(a)+\rho(b)+\rho(c))^4e^{C|\lambda|(\rho(a)+\rho(b)+\rho(c))}
(\|f_1\|_2^2+\|f_2\|_2^2).
\end{multline*}
Applying this term-by-term to the expansion
\eqref{eq:app-controlled-D2-expansion} of $\mathcal R_g$ from
\Cref{lem:quantitative-D2-app}, and using \eqref{eq:app-controlled-D2-linear-length} together with \eqref{eq:app-controlled-D2-size}, we obtain, for some fixed integer $m_1$,
\begin{multline*}
\bigl|\langle(e^{\lambda\psi}\mathcal R_g e^{-\lambda\psi}-\mathcal R_g)f_1,f_2\rangle\bigr|
\le \eps(1+\rho(g))^{m_1}\bigl(\|\nabla_W f_1\|_2^2+\|\nabla_W f_2\|_2^2\bigr)\\
+C(1+\eps^{-1})\lambda^2(1+\rho(g))^{m_1}e^{C|\lambda|\rho(g)}
(\|f_1\|_2^2+\|f_2\|_2^2).
\end{multline*}
Choose $\lambda_0\in(0,\beta/(2C)]$ small enough that the uniform exponential moment
$$
\sup_{R\ge R_0}\sum_{g\in G}(1+\rho(g))^{m_1}e^{C\lambda_0\rho(g)}K_R(g)<\infty.
$$
Choose $M_1\ge 1$ so that $\sup_{R\ge R_0}\sum_{g\in G}(1+\rho(g))^{m_1}e^{C\lambda_0\rho(g)}K_R(g)<M_1$. Averaging against $K_R$ and replacing $\eps$ by $\eps/M_1$ yields
\eqref{eq:app-uniform-quad-perturb}.

Finally, combining \eqref{eq:app-uniform-quad-perturb} with
\eqref{eq:app-family-form-comparison} and choosing $\eps=c_0/4$ gives
\eqref{eq:app-uniform-quad-perturb-form}.
\end{proof}

\begin{lemma}\label{lem:quantitative-nevanlinna-ritt-app}
Let $R$ be a bounded operator on a complex Banach space. Suppose that, for
some constants $M,K>0$,
$$
\sup_{q\ge0}\|R^q\|\le M,
\qquad
\sup_{|z|>1}|z-1|\|(zI-R)^{-1}\|\le K.
$$
Then there is a constant $C_{\mathrm{NR}}=C_{\mathrm{NR}}(M,K)>0$ such that
$$
\|(I-R)R^q\|\le C_{\mathrm{NR}}q^{-1}\qquad(q\in\NN).
$$
\end{lemma}

\begin{proof}
By \cite[Theorem 1]{Lyubich1999}, we have
\begin{equation}\label{eq:classical-ritt-criterion-app}
\sup_{q\in\NN}q\|(I-R)R^q\|<\infty .
\end{equation}
It remains only to justify that the constant in the conclusion may be chosen
as a function of the two constants $M$ and $K$.  Suppose this is not true.  Then, for every $j\in\NN$, there
exist complex Banach spaces $X_j$, bounded operators $R_j$ on $X_j$, and
integers $q_j\ge1$, such that
$$
\sup_{q\ge0}\|R_j^q\|\le M,
\qquad
\sup_{|z|>1}|z-1|\|(zI-R_j)^{-1}\|\le K,
$$
but
$$
q_j\|(I-R_j)R_j^{q_j}\|\ge j .
$$
Let $X=(\bigoplus_j X_j)_{\ell^\infty}$ and let
$R=\bigoplus_j R_j$.  Then $R$ is bounded, $\sup_{q\ge0}\|R^q\|\le M$, and,
for $|z|>1$,
$$
(zI-R)^{-1}=\bigoplus_j (zI-R_j)^{-1},
\qquad
|z-1|\|(zI-R)^{-1}\|\le K .
$$
Hence, by \cite[Theorem 1]{Lyubich1999},
$\sup_{q\in\NN}q\|(I-R)R^q\|<\infty$.  But for $q=q_j$, the norm of the
$j$-th coordinate gives
$$
q_j\|(I-R)R^{q_j}\|\ge q_j\|(I-R_j)R_j^{q_j}\|\ge j,
$$
a contradiction.  Therefore a finite constant
$C_{\mathrm{NR}}=C_{\mathrm{NR}}(M,K)$ exists, and the estimate follows.
\end{proof}

\begin{lemma}\label{lem:blunck-replacement-app}
Let $S$ be a bounded operator on a Hilbert space and let $\sigma\in[0,\sigma_0]$.
For $\delta\in(\pi/2,\pi)$ put
$$
\Lambda_\delta:=\{z\in\CC\setminus\{0\}: |\arg z|<\delta\}.
$$
Put $A:=I-S+\sigma I$ and $\widetilde S:=(1+\sigma)^{-1}S$. Suppose that, for constants
$M>0$, $\gamma<1$, $C_{\rm res}>0$, and $C_{\rm ap}>0$,
\begin{equation}\label{eq:shifted-ritt-power-hyp}
\| S^q\|\le M e^{\gamma\sigma q}\qquad(q\ge0),
\end{equation}
\begin{equation}\label{eq:shifted-ritt-res-hyp}
\| \alpha(A+\alpha I)^{-1}\|\le C_{\rm res}
\qquad(\alpha\in\Lambda_\delta),
\end{equation}
and, with
$$
\Omega_\delta:=\{z\in\CC: |z|>1\text{ and } z-1\notin\Lambda_\delta\},
$$
\begin{equation}\label{eq:shifted-ritt-away-hyp}
\sup_{z\in\Omega_\delta}|z-1|
\|(zI-\widetilde S)^{-1}\|\le C_{\rm ap}.
\end{equation}
Then, after decreasing $\sigma_0$ if necessary, there are constants $C,\Omega>0$, depending only on
$M,\gamma,C_{\rm res},C_{\rm ap},\delta,\sigma_0$, such that
\begin{equation}\label{eq:shifted-ritt-conclusion}
\lVert (I-S)S^q\rVert\le C(q^{-1}+\sigma)e^{\Omega\sigma q}
\qquad(q\in\NN).
\end{equation}
\end{lemma}

\begin{proof}
We first prove that $\widetilde S$ is a Ritt operator with uniform constants. Since
$$
I-\widetilde S=(1+\sigma)^{-1}(I-S+\sigma I)=(1+\sigma)^{-1}A,
$$
we have, for every $z$ with $|z|>1$ and $z-1\in\Lambda_\delta$,
$$
zI-\widetilde S=(1+\sigma)^{-1}\bigl(A+(1+\sigma)(z-1)I\bigr).
$$
Thus, setting $\alpha=(1+\sigma)(z-1)$ and using
\eqref{eq:shifted-ritt-res-hyp},
$$
|z-1|\lVert(zI-\widetilde S)^{-1}\rVert
=|\alpha|\lVert(A+\alpha I)^{-1}\rVert
\le C_{\rm res}.
$$
For $z\in\Omega_\delta$ the same bound, with $C_{\rm ap}$ in place of
$C_{\rm res}$, is exactly \eqref{eq:shifted-ritt-away-hyp}. Hence
\begin{equation}\label{eq:shifted-ritt-full-resolvent}
\sup_{|z|>1}|z-1|\lVert(zI-\widetilde S)^{-1}\rVert
\le \max\{C_{\rm res},C_{\rm ap}\}.
\end{equation}
Moreover,
$$
\lVert\widetilde S^q\rVert\le M(1+\sigma)^{-q}e^{\gamma\sigma q}.
$$
Since $\gamma<1$, decreasing $\sigma_0$ gives
$\log(1+\sigma)\ge ((1+\gamma)/2)\sigma$ for $0\le\sigma\le\sigma_0$, and hence
$\sup_q\lVert\widetilde S^q\rVert\le M$.
Applying \Cref{lem:quantitative-nevanlinna-ritt-app} to
$R=\widetilde S$, using the power bound above and \eqref{eq:shifted-ritt-full-resolvent}, gives
$$
\lVert(I-\widetilde S)\widetilde S^q\rVert\le C q^{-1}
\qquad(q\in\NN),
$$
where $C$ depends only on the displayed constants. Finally,
$$
(I-S)S^q
=(1+\sigma)^q\bigl((1+\sigma)(I-\widetilde S)\widetilde S^q-\sigma\widetilde S^q\bigr),
$$
and therefore
\begin{align*}
\lVert(I-S)S^q\rVert
&\le (1+\sigma)^q\bigl((1+\sigma)Cq^{-1}+\sigma M\bigr)  \\
&\le C'(q^{-1}+\sigma)e^{\Omega\sigma q}.
\end{align*}
This is \eqref{eq:shifted-ritt-conclusion}.
\end{proof}

\begin{proposition}\label{prop:uniform-davies-family-app}
Under the hypotheses of \Cref{lem:uniform-perturb-app}, there exist constants $\lambda_0>0$, $C>0$, and $\omega>0$, depending only on $(G,W,\eta,\beta,\mathfrak M_\beta)$, such that for every $R\ge R_0$, every $\psi:G\to\RR$ satisfying $\|\partial_w\psi\|_\infty\le1$ for all $w\in W$, every $|\lambda|\le\lambda_0$, every $q\in\NN$, and every $h\in W$,
\begin{equation}\label{eq:app-uniform-weighted-2to2}
\|e^{\lambda\psi}T_R^q e^{-\lambda\psi}\|_{2\to2}
\le e^{\omega\lambda^2 q},
\end{equation}
and
\begin{equation}\label{eq:app-uniform-weighted-diff-2to2}
\|e^{\lambda\psi}\partial_h T_R^q e^{-\lambda\psi}\|_{2\to2}
\le C q^{-1/2}e^{\omega\lambda^2 q}.
\end{equation}
\end{proposition}

\begin{proof}
This is the family-uniform analogue of the weighted spatial regularity estimate
in \cite[Theorem 1.11]{Dungey2008}. Write
$$
S:=T_{R,\lambda,\psi}=e^{\lambda\psi}T_Re^{-\lambda\psi},
\qquad T:=T_R.
$$
The hypothesis $\|\partial_w\psi\|_\infty\le1$ for $w\in W$ implies $\|\partial_g\psi\|_\infty\le \rho(g)$ for every $g\in G$: if $g=s_1\cdots s_m$ is a $W$-geodesic word with $s_i\in W$, then
$\partial_{ab}\psi=\partial_a\psi+L(a)\partial_b\psi$ iterated gives
$$
\partial_g\psi=\partial_{s_1}\psi+\sum_{i=2}^m L(s_1\cdots s_{i-1})\partial_{s_i}\psi,
$$
and hence $\|\partial_g\psi\|_\infty\le m=\rho(g)$.
Thus, after decreasing $\lambda_0$ if necessary so that $\lambda_0\le\beta/2$,
\begin{align}
\|S-T\|_{2\to2}
&\le \sum_{g\in G}K_R(g)\|e^{-\lambda\partial_g\psi}-1\|_\infty \notag\\
&\le |\lambda|\sum_{g\in G}K_R(g)\rho(g)e^{|\lambda|\rho(g)} \notag\\
&\le C_{\beta,\mathfrak M}|\lambda|,
\label{eq:app-one-step-S-minus-T}
\end{align}
where the last constant depends only on $\beta$ and $\mathfrak M_\beta$.  Indeed,
for $0\le t<\infty$ and $|\lambda|\le\beta/2$, the quantity
$t e^{|\lambda|t}e^{-\beta t}$ is uniformly bounded.

We shall also use
\begin{equation}\label{eq:app-grad-T-uniform}
\|\nabla_W Tf\|_2\le C\|\nabla_W f\|_2.
\end{equation}
Indeed, for $w\in W$,
$$
\partial_w Tf
=\sum_{g\in G}K_R(g)\bigl(L(wg)-L(g)\bigr)f
=\sum_{g\in G}K_R(g)L(g)\partial_{g^{-1}wg}f,
$$
because $L(g)\partial_{g^{-1}wg}=L(wg)-L(g)$. Hence \eqref{eq:app-tele-bound}
gives
$$
\|\partial_w Tf\|_2
\le \sum_{g\in G}K_R(g)\rho(g^{-1}wg)\|\nabla_W f\|_2
\le C\|\nabla_W f\|_2,
$$
uniformly in $w\in W$, since $\rho(g^{-1}wg)\le 2\rho(g)+1$ and the first
moments of the family $K_R$ are uniformly bounded by
\eqref{eq:app-standing-hyp}.

\smallskip\noindent
Set
$$
Q_R(f):=\|f\|_2^2-\|Tf\|_2^2,
\qquad
Q_{R,\lambda,\psi}(f):=\|f\|_2^2-\|Sf\|_2^2.
$$
Since $T$ is convolution by a probability measure, Young's convolution inequality gives
$\|Tf\|_2\le\|f\|_2$, so $Q_R(f)\ge0$. Let $\widehat K_R:=K_R^**K_R$, where
$K_R^*(g):=K_R(g^{-1})$. Then $T^*T$ is convolution by $\widehat K_R$, and $\widehat K_R(w)\ge K_R(e)K_R(w)\ge
\eta^2$ for every $w\in W$. Therefore
\begin{equation}\label{eq:app-QR-lower}
Q_R(f)=\tfrac12\sum_{g\in G}\widehat K_R(g)\|\partial_g f\|_2^2
\ge c_Q\|\nabla_W f\|_2^2,
\qquad c_Q:=\eta^2/2.
\end{equation}
Moreover,
\begin{align*}
Q_R(f)-Q_{R,\lambda,\psi}(f)
&=\|Sf\|_2^2-\|Tf\|_2^2 \\
&=2\Re\langle(S-T)f,Tf\rangle+\|(S-T)f\|_2^2.
\end{align*}
Applying \eqref{eq:app-uniform-quad-perturb} with $f_1=f$, $f_2=Tf$, together
with \eqref{eq:app-grad-T-uniform}, $\|Tf\|_2\le\|f\|_2$, and
\eqref{eq:app-one-step-S-minus-T}, we obtain, after absorbing the constant
factor from \eqref{eq:app-grad-T-uniform} into $\eps$,
\begin{equation}\label{eq:app-Q-perturb}
|Q_R(f)-Q_{R,\lambda,\psi}(f)|
\le \eps\|\nabla_W f\|_2^2
+C'(1+\eps^{-1})\lambda^2\|f\|_2^2.
\end{equation}
where $C'>0$ depends only on $(G,W,\eta,\beta,\mathfrak M_\beta)$. Choosing $\eps=c_Q/2$ in \eqref{eq:app-Q-perturb} and using
\eqref{eq:app-QR-lower},
$$
Q_{R,\lambda,\psi}(f)
\ge \tfrac{c_Q}{2}\|\nabla_W f\|_2^2-C'\left( 1+\frac{2}{c_Q}\right)\lambda^2\|f\|_2^2,
$$
so that $\|Sf\|_2^2\le \left(1+C'\left( 1+\frac{2}{c_Q}\right)\lambda^2\right)\|f\|_2^2$. Hence,
\begin{equation}\label{eq:app-S-power-bound}
\|S^q\|_{2\to2}\le \|S\|_{2\to2}^q\le e^{\omega_2\lambda^2 q}\qquad(q\in\NN),
\end{equation}
for a constant $\omega_2>0$ depending only on $(G,W,\eta,\beta,\mathfrak M_\beta)$. After the
final enlargement of $\omega_2$ below, this gives
\eqref{eq:app-uniform-weighted-2to2}.

\smallskip\noindent
Choose $B>\max\{ C_{\mathrm{pert}}+1, C_{\mathrm{pert}}+c_0/2\}$ large enough that $\gamma:=\omega_2/B<1$, and put
$$
A:=I-S+B\lambda^2 I.
$$
By \eqref{eq:app-uniform-quad-perturb-form},
\begin{equation}\label{eq:app-shifted-sector-real}
\Re\langle Af,f\rangle
\ge \tfrac{c_0}{2}\|\nabla_W f\|_2^2+(B-C_{\mathrm{pert}})\lambda^2\|f\|_2^2
\ge \frac{c_0}{2}\bigl(\|\nabla_W f\|_2^2+\lambda^2\|f\|_2^2\bigr).
\end{equation}
On the other hand, the corresponding upper bound follows directly from the
sectorial estimate for $I-T$ and the quadratic perturbation estimate.  Since
$$
A=(I-T)-(S-T)+B\lambda^2 I,
$$
we have, for every $f\in\ell^2(G)$,
\begin{align*}
|\langle Af,f\rangle|
&\le |\langle(I-T)f,f\rangle|+|\langle(S-T)f,f\rangle|
   +B\lambda^2\|f\|_2^2 .
\end{align*}
The first term is controlled by \eqref{eq:app-family-sectorial} and the upper
bound in \eqref{eq:app-family-form-comparison}:
$$
|\langle(I-T)f,f\rangle|
\le \frac{c_1}{c_0}\Re\langle(I-T)f,f\rangle
\le \frac{c_1^2}{c_0}\|\nabla_W f\|_2^2 .
$$
For the second term, apply \eqref{eq:app-uniform-quad-perturb} with
$f_1=f_2=f$ and then fix, say, $\eps=1$:
$$
|\langle(S-T)f,f\rangle|
\le 2\|\nabla_W f\|_2^2+4C_{\mathrm{pert}}\lambda^2\|f\|_2^2 .
$$
Consequently
\begin{equation}\label{eq:app-shifted-sector-upper}
|\langle Af,f\rangle|
\le C_A\bigl(\|\nabla_W f\|_2^2+\lambda^2\|f\|_2^2\bigr),
\end{equation}
where one may take
$$
C_A:=\max\left\{\frac{c_1^2}{c_0}+2,4C_{\mathrm{pert}}+B\right\}.
$$
Combining \eqref{eq:app-shifted-sector-upper} with
\eqref{eq:app-shifted-sector-real} gives
\begin{equation}\label{eq:app-shifted-sectorial-form}
|\langle Af,f\rangle|\le C_{\mathrm{sec}}\Re\langle Af,f\rangle
\qquad(f\in\ell^2(G)),
\end{equation}
with $C_{\mathrm{sec}}:=2C_A/c_0$.
We now derive \eqref{eq:shifted-ritt-res-hyp} in \Cref{lem:blunck-replacement-app} directly from \eqref{eq:app-shifted-sectorial-form}. Let
$W(A):=\{\langle Af,f\rangle:f\in\ell^2(G),\|f\|_2=1\}$ denote the numerical range of $A$.
By \eqref{eq:app-shifted-sectorial-form}, $W(A)$ is contained in a closed sector
$\Sigma_\theta:=\{0\}\cup\{z\in\CC\setminus\{0\}:|\arg z|\le\theta\}$ for some
$\theta\in[0,\pi/2)$ depending only on $C_{\mathrm{sec}}$ (one may take $\theta\in[0,\pi/2)$ such that
$\tan\theta=\sqrt{C_{\mathrm{sec}}^2-1}$). Choose $\delta\in(\pi/2,\pi-\theta)$
and let $\Lambda_\delta:=\{z\in\CC\setminus\{0\}:|\arg z|<\delta\}$. Elementary
planar geometry then gives a constant $c_{\theta,\delta}>0$ such that
$$
|z+\alpha|\ge c_{\theta,\delta}|\alpha|
\qquad(z\in\Sigma_\theta,\ \alpha\in\Lambda_\delta).
$$
Hence, for every $u\in\ell^2(G)$ we have
$$
\|(A+\alpha I)u\|_2\|u\|_2
\ge|\langle(A+\alpha I) u,u\rangle|
\ge c_{\theta,\delta}|\alpha|\|u\|_2^2,
$$
which gives the lower bound $\|(A+\alpha I)u\|_2\ge c_{\theta,\delta}|\alpha|\|u\|_2$
for all $u\in\ell^2(G)$. The same estimate applied to $A^*$ (whose numerical
range is the complex conjugate of $W(A)$) shows that $A+\alpha I$ is bounded
below and has dense range, hence is invertible. Consequently there is a
constant $C_{\mathrm{res}}>0$, depending only on $C_{\mathrm{sec}}$, such that
\begin{equation}\label{eq:app-shifted-resolvent}
\|\alpha(A+\alpha I)^{-1}\|_{2\to2}\le C_{\mathrm{res}}
\qquad(\alpha\in\Lambda_\delta).
\end{equation}
Equivalently,
\begin{equation}\label{eq:app-blunck-resolvent-hypothesis}
\|\alpha(I-S+B\lambda^2 I+\alpha I)^{-1}\|_{2\to2}\le C_{\mathrm{res}}
\qquad(\alpha\in\Lambda_\delta).
\end{equation}
Put $\sigma:=B\lambda^2$ and $\widetilde S:=(1+\sigma)^{-1}S$. By
\eqref{eq:app-S-power-bound},
$$
\|S^q\|_{2\to2}\le e^{\gamma\sigma q}\qquad(q\ge0).
$$
It remains to verify \eqref{eq:shifted-ritt-away-hyp} in
\Cref{lem:blunck-replacement-app}. Let
$$
\Omega_\delta:=\{z\in\CC: |z|>1\text{ and }z-1\notin\Lambda_\delta\}.
$$
For $p_R:=K_R(e)$ we have $p_R\ge\eta$.  Put
$$
D_{p_R}:=p_R+(1-p_R)\overline{\mathbb D}.
$$
If $p_R=1$, then
$T=I$ and $D_{p_R}=\{1\}$, so
$$
\|(zI-T)^{-1}\|_{2\to2}=|z-1|^{-1}=\operatorname{dist}(z,D_{p_R})^{-1}
\qquad(z\ne1).
$$
Assume now that $p_R<1$ and write
$$
T=p_R I+(1-p_R)P_R,
$$
where
$$
P_R:=\frac{T-p_RI}{1-p_R}
$$
is convolution by the probability measure
$(K_R-p_R\delta_e)/(1-p_R)$; hence $\|P_R\|_{2\to2}\le1$ by Young's
inequality.  If $z\notin D_{p_R}$, then $|z-p_R|>1-p_R$, and therefore
$$
zI-T=(z-p_R)\left(I-\frac{1-p_R}{z-p_R}P_R\right).
$$
The last factor is invertible and we have
\begin{align*}
\|(zI-T)^{-1}\|_{2\to2}
&\le |z-p_R|^{-1}\sum_{n=0}^{\infty}
   \left(\frac{1-p_R}{|z-p_R|}\right)^n  \\
&=\bigl(|z-p_R|-(1-p_R)\bigr)^{-1}
 =\operatorname{dist}(z,D_{p_R})^{-1}.
\end{align*}
Since $p_R\ge\eta$, the disk $D_{p_R}$ is contained in the larger disk
$$
D_\eta:=\eta+(1-\eta)\overline{\mathbb D}.
$$
Indeed, the distance between the centres is $p_R-\eta$, which is exactly the
excess of the radius of $D_\eta$ over the radius of $D_{p_R}$.
Consequently,
\begin{equation}\label{eq:app-T-away-resolvent}
\|(zI-T)^{-1}\|_{2\to2}\le \operatorname{dist}(z,D_\eta)^{-1}
\qquad(z\notin D_\eta).
\end{equation}
Write $z=1+re^{i\theta}$ with $r=|z-1|$ and $\theta\in[-\pi,\pi]$. If
$z\in\Omega_\delta$, then $|\theta|\ge\delta$, so
$t:=-\cos\theta\ge t_\delta:=-\cos\delta>0$.  The condition $|z|>1$ gives
$r^2+2r\cos\theta>0$, hence $r>2t\ge2t_\delta$.  Thus
$|z-1|\ge r_{\rm ap}$ with $r_{\rm ap}:=2t_\delta$.

Put $a:=1-\eta$. Since $D_\eta$ is the disk with centre $\eta$ and radius $a$,
$$
\operatorname{dist}(z,D_\eta)=|z-\eta|-a=|re^{i\theta}+a|-a.
$$
Indeed, $r>2t\ge 2at$ implies $|re^{i\theta}+a|>a$, so $z$ is outside the
closed disk $D_\eta$.  For $r\ge4$ this is at least $r-2a\ge r/2$.  On the
compact set
$$
\{(r,\theta): |\theta|\in[\delta,\pi],\ 2(-\cos\theta)\le r\le4\}
$$
the continuous function
$$
(r,\theta)\mapsto \frac{|re^{i\theta}+a|-a}{r}
$$
has a positive minimum: at the boundary $r=2(-\cos\theta)$ its numerator is
$\sqrt{a^2+4\eta(-\cos\theta)^2}-a>0$.  Combining this compact lower bound
with the estimate for $r\ge4$ gives a constant $c_{\rm ap}>0$, depending only
on $\eta$ and $\delta$, such that
\begin{equation}\label{eq:app-away-geometry}
\operatorname{dist}(z,D_\eta)\ge c_{\rm ap}|z-1|,
\qquad |z-1|\ge r_{\rm ap}
\qquad(z\in\Omega_\delta).
\end{equation}
Combining \eqref{eq:app-T-away-resolvent} and \eqref{eq:app-away-geometry},
$$
|z-1|\|(zI-T)^{-1}\|_{2\to2}\le c_{\rm ap}^{-1}
\qquad(z\in\Omega_\delta).
$$
Moreover, by \eqref{eq:app-one-step-S-minus-T} and $\|T\|_{2\to2}\le1$,
$$
\|\widetilde S-T\|_{2\to2}
\le \frac{1}{1+\sigma}\|S-T\|_{2\to2}
   +\frac{\sigma}{1+\sigma}\|T\|_{2\to2}
\le C|\lambda|+B\lambda^2.
$$
Using \eqref{eq:app-T-away-resolvent}-\eqref{eq:app-away-geometry}, $\|(zI-T)^{-1}\|_{2\to2}$ is uniformly
bounded on $\Omega_\delta$ by $(c_{\rm ap}r_{\rm ap})^{-1}$.  After decreasing
$\lambda_0$ so that
$$
C|\lambda|+B\lambda^2\le \frac12 c_{\rm ap}r_{\rm ap}
\qquad(|\lambda|\le\lambda_0),
$$
and writing $(zI-\widetilde S)^{-1}=\left[I-(zI-T)^{-1}(\widetilde S-T)\right]^{-1}(zI-T)^{-1}$ we get
\begin{equation}\label{eq:app-S-away-resolvent}
\sup_{z\in\Omega_\delta}|z-1|
\|(zI-\widetilde S)^{-1}\|_{2\to2}\le C_{\rm ap}',
\end{equation}
with $C_{\rm ap}'$ depending only on $(G,W,\eta,\beta,\mathfrak M_\beta)$.

The estimate \eqref{eq:app-blunck-resolvent-hypothesis} is precisely
\eqref{eq:shifted-ritt-res-hyp} for $A=I-S+\sigma I$, and
\eqref{eq:app-S-away-resolvent} is \eqref{eq:shifted-ritt-away-hyp}. After decreasing
$\lambda_0$ so that $\sigma\le\sigma_0$,
\Cref{lem:blunck-replacement-app} gives
\begin{equation}\label{eq:app-shifted-blunck-output}
\|(I-S)S^q\|_{2\to2}
\le C(q^{-1}+\lambda^2)e^{\omega_3\lambda^2 q}
\qquad(q\ge1),
\end{equation}
with constants uniform in $R$, $\psi$, $\lambda$, and $q$. 

\smallskip\noindent
Apply \eqref{eq:app-uniform-quad-perturb-form} to $u=S^qf$. Using
\eqref{eq:app-shifted-blunck-output}, \eqref{eq:app-S-power-bound}, and
Cauchy--Schwarz,
\begin{align*}
\tfrac{c_0}{2}\|\nabla_W S^qf\|_2^2
&\le \Re\bigl((I-S)S^qf,S^qf\bigr)+C_{\mathrm{pert}}\lambda^2\|S^qf\|_2^2\\
&\le \|(I-S)S^qf\|_2\|S^qf\|_2+C_{\mathrm{pert}}\lambda^2\|S^qf\|_2^2\\
&\le C(q^{-1}+\lambda^2)e^{\omega_4\lambda^2 q}\|f\|_2^2,
\end{align*}
for a constant $\omega_4$ depending only on the standing data. Hence,
using $\sqrt{a+b}\le\sqrt a+\sqrt b$,
\begin{equation}\label{eq:app-grad-Sq-bound}
\|\nabla_W S^q\|_{2\to2}
\le C(q^{-1/2}+|\lambda|)e^{\omega_4\lambda^2 q}.
\end{equation}
Next,
$$
e^{\lambda\psi}\partial_h T_R^q e^{-\lambda\psi}
=e^{\lambda\psi}\partial_h e^{-\lambda\psi}S^q
=\bigl(\partial_h+M_h(\lambda)L(h)\bigr)S^q,
\qquad
M_h(\lambda):=e^{-\lambda\partial_h\psi}-1.
$$
For $h\in W$ we have $\|M_h(\lambda)\|_\infty\le C|\lambda|$. Combining this
with \eqref{eq:app-grad-Sq-bound} and \eqref{eq:app-S-power-bound} gives
$$
\|e^{\lambda\psi}\partial_h T_R^q e^{-\lambda\psi}\|_{2\to2}
\le C(q^{-1/2}+|\lambda|)e^{\omega_5\lambda^2 q}.
$$
Finally, for any fixed $\delta_0>0$ and all $q\in\NN$, the elementary bound
$t\le\sqrt{e/(2\delta_0)}e^{\delta_0 t^2}$ applied to $t=|\lambda|\sqrt q$
gives
$$
|\lambda|\le C_{\delta_0}q^{-1/2}e^{\delta_0\lambda^2 q},
$$
so the $|\lambda|$ term is absorbed into the $q^{-1/2}$ term by enlarging the
final exponent. Taking $\omega\ge\max\{\omega_2,\omega_5+\delta_0\}$ proves
\eqref{eq:app-uniform-weighted-2to2} and
\eqref{eq:app-uniform-weighted-diff-2to2}.
\end{proof}

\begin{corollary}
Let $L_\pi:=\max_{w\in W}|\pi(w)|$ where $|\cdot|$ is the Euclidean norm on $\RR^d$.
Then there exist constants $\xi_0>0$, $C_\pi>0$, and $\omega_\pi>0$,
depending only on $(G,W,\eta,\beta,\mathfrak M_\beta)$, such that for every $R\ge R_0$,
every $q\in\NN$, every $h\in W$, and every $\xi\in\RR^d$ with $|\xi|\le\xi_0$,
\begin{equation}\label{eq:app-character-weighted-2to2}
\|e^{\xi\cdot\pi}T_R^q e^{-\xi\cdot\pi}\|_{2\to2}\le e^{\omega_\pi|\xi|^2 q},
\end{equation}
and
\begin{equation}\label{eq:app-character-weighted-diff-2to2}
\|e^{\xi\cdot\pi}\partial_h T_R^q e^{-\xi\cdot\pi}\|_{2\to2}
\le C_\pi q^{-1/2}e^{\omega_\pi|\xi|^2 q}.
\end{equation}
\end{corollary}

\begin{proof}
If $\xi=0$ both estimates are trivial. If $\xi\ne 0$, define
$$
\psi_\xi(g):=\frac{\xi\cdot\pi(g)}{L_\pi|\xi|},
\qquad
\lambda_\xi:=L_\pi|\xi|.
$$
For every $w\in W$,
$$
|\partial_w\psi_\xi|=\frac{|\xi\cdot\pi(w)|}{L_\pi|\xi|}
\le \frac{|\pi(w)|}{L_\pi}\le 1,
$$
so $\psi_\xi$ satisfies the left-difference Lipschitz hypothesis of
\Cref{prop:uniform-davies-family-app}. Also,
$e^{\lambda_\xi\psi_\xi(g)}=e^{\xi\cdot\pi(g)}$. Applying
\Cref{prop:uniform-davies-family-app} with $\psi=\psi_\xi$ and
$\lambda=\lambda_\xi$, and setting $\xi_0:=\lambda_0/L_\pi$ and
$\omega_\pi:=\omega L_\pi^2$, gives \eqref{eq:app-character-weighted-2to2} and
\eqref{eq:app-character-weighted-diff-2to2}.
\end{proof}

\begin{lemma}\label{lem:weighted-L2-app}
There exists $C>0$, depending only on
$(G,W,\eta,\beta,\mathfrak M_\beta)$, such that for every $R\ge R_0$ and every $|\lambda|\le \beta/2$,
\begin{equation}\label{eq:app-one-step-weighted-L2}
\|e^{\lambda\rho}K_R\|_2\le C.
\end{equation}
\end{lemma}

\begin{proof}
By \eqref{eq:app-standing-hyp}, the quantity $\mathfrak M_\beta$ is finite. Since $K_R$ takes values in $[0,1]$,
for every $R\ge R_0$ and every $|\lambda|\le\beta/2$,
$$
\|e^{\lambda\rho}K_R\|_2^2
=\sum_{g\in G}e^{2\lambda\rho(g)}K_R(g)^2
\le \sum_{g\in G}e^{\beta\rho(g)}K_R(g)\le \mathfrak M_\beta,
$$
and the lemma follows with $C:=\sqrt{\mathfrak M_\beta}$.
\end{proof}

For $\lambda\in\RR$, let $U_\lambda$ denote multiplication by $e^{\lambda\rho}$.
This operator is usually unbounded on $\ell^2(G)$.  In the averaging lemma below we use it only on finitely supported vectors.

\begin{lemma}\label{lem:weighted-finite-support-app}
Let $f:G\to\CC$ be finitely supported, and write
$(L(f)h)(x):=\sum_g f(g)h(g^{-1}x)=(f*h)(x)$ for the left convolution operator.
For $r\ge1$, let
$$
B_r:=\{g\in G:\rho(g)\le r\},
\qquad
\chi_r:=|B_r|^{-1}\mathbf 1_{B_r}.
$$
Then $U_\lambda f\in\ell^2(G)$ for every $\lambda\in\RR$, and the operator
$U_{-\lambda}L(f)U_\lambda$, initially defined on finitely supported functions,
extends boundedly to $\ell^2(G)$ with
\begin{equation}\label{eq:app-weighted-conv-bounded}
\|U_{-\lambda}L(f)U_\lambda\|_{2\to2}
\le \sum_{g\in G}|f(g)|e^{|\lambda|\rho(g)}<\infty .
\end{equation}
Moreover,
\begin{equation}\label{eq:app-weighted-averaging}
\|U_\lambda f\|_2
\le \sup_{\rho(g)\le r}\|U_\lambda(I-L(g))f\|_2
+e^{|\lambda|r}|B_r|^{-1/2}\|U_{-\lambda}L(f)U_\lambda\|_{2\to2}.
\end{equation}
\end{lemma}

\begin{proof}
For $a\in G$ and finitely supported $h$,
$$
(U_{-\lambda}L(a)U_\lambda h)(x)
=e^{-\lambda\rho(x)}e^{\lambda\rho(a^{-1}x)}h(a^{-1}x).
$$
Since $|\rho(a^{-1}x)-\rho(x)|\le\rho(a)$, the multiplier in front of
$h(a^{-1}x)$ is bounded in absolute value by $e^{|\lambda|\rho(a)}$.  Thus
$U_{-\lambda}L(a)U_\lambda$ is bounded on $\ell^2(G)$ with norm at most
$e^{|\lambda|\rho(a)}$.  Summing over the finite support of $f$ proves
\eqref{eq:app-weighted-conv-bounded}.  The same argument applies to
$U_\lambda L(f^\star)U_{-\lambda}$, where $f^\star(x):=\overline{f(x^{-1})}$,
and on finitely supported vectors one has
\begin{equation}\label{eq:app-weighted-adjoint-identity}
\bigl(U_\lambda L(f^\star)U_{-\lambda}\bigr)^*=U_{-\lambda}L(f)U_\lambda .
\end{equation}
Indeed, this follows term-by-term from
$(U_\lambda L(a)U_{-\lambda})^*=U_{-\lambda}L(a^{-1})U_\lambda$.

Since $B_r$ is finite and $\sum_g\chi_r(g)=1$,
$$
f-\chi_r*f=\sum_{g\in B_r}\chi_r(g)(I-L(g))f.
$$
Hence
$$
\|U_\lambda(f-\chi_r*f)\|_2
\le \sup_{\rho(g)\le r}\|U_\lambda(I-L(g))f\|_2 .
$$
It remains to estimate $\|U_\lambda(\chi_r*f)\|_2$.  Since $W$ is symmetric,
$\rho(x^{-1})=\rho(x)$, so $\|U_\lambda a\|_2=\|U_\lambda a^\star\|_2$ for
finitely supported $a$.  Also $B_r$ is symmetric, hence $\chi_r^\star=\chi_r$, and
therefore
$$
(\chi_r*f)^\star=f^\star*\chi_r.
$$
Consequently,
$$
\|U_\lambda(\chi_r*f)\|_2
=\|U_\lambda(f^\star*\chi_r)\|_2
=\|U_\lambda L(f^\star)U_{-\lambda}(U_\lambda\chi_r)\|_2.
$$
By \eqref{eq:app-weighted-adjoint-identity},
$$
\|U_\lambda L(f^\star)U_{-\lambda}\|_{2\to2}
=\|U_{-\lambda}L(f)U_\lambda\|_{2\to2}.
$$
Finally,
$$
\|U_\lambda\chi_r\|_2^2
=|B_r|^{-2}\sum_{\rho(g)\le r}e^{2\lambda\rho(g)}
\le e^{2|\lambda|r}|B_r|^{-1},
$$
so $\|U_\lambda\chi_r\|_2\le e^{|\lambda|r}|B_r|^{-1/2}$.  Combining the
last estimates proves \eqref{eq:app-weighted-averaging}.
\end{proof}

\begin{proposition}\label{prop:uniform-weighted-L2-bound-app}
There exist constants $C>0$ and
$\omega>0$, depending only on $(G,W,\eta,\beta,\mathfrak M_\beta)$, such that for every $R\ge R_0$,
every $q\in\NN$, and every $|\lambda|\le\min\{\lambda_0,\beta/2\}$,
\begin{equation}\label{eq:app-uniform-weighted-kernel-L2-transfer}
\|U_\lambda K_R^{(q)}\|_2
\le Cq^{-D/4}e^{\omega\lambda^2 q}.
\end{equation}
\end{proposition}

\begin{proof}
Set $\lambda_2:=\min\{\lambda_0,\beta/2\}$.  Increasing constants if necessary,
we fix $C_{\rm d}<\infty$ and $\omega_0>0$ such that, for all $R\ge R_0$,
$|\lambda|\le\lambda_2$, and $m\ge0$,
\begin{equation}\label{eq:app-two-sided-weighted-2to2-transfer}
\|U_\lambda T_R^mU_{-\lambda}\|_{2\to2}
\le e^{\omega_0\lambda^2m},
\qquad
\|U_{-\lambda}T_R^mU_\lambda\|_{2\to2}
\le e^{\omega_0\lambda^2m},
\end{equation}
and, for all $m\ge1$ and $h\in W$,
\begin{equation}\label{eq:app-generator-weighted-diff-transfer}
\|U_\lambda\partial_hT_R^mU_{-\lambda}\|_{2\to2}
\le C_{\rm d}m^{-1/2}e^{\omega_0\lambda^2m}.
\end{equation}
The second estimate in \eqref{eq:app-two-sided-weighted-2to2-transfer} is
\eqref{eq:app-uniform-weighted-2to2} with $-\lambda$ in place of $\lambda$, and
\eqref{eq:app-generator-weighted-diff-transfer} is
\eqref{eq:app-uniform-weighted-diff-2to2} with a harmless enlargement of
$\omega_0$.

By \eqref{eq:app-one-step-weighted-L2} and
\eqref{eq:app-two-sided-weighted-2to2-transfer}, there is $C_0>0$ such that
\begin{equation}\label{eq:app-crude-weighted-L2}
\|U_\lambda K_R^{(q)}\|_2
=\|U_\lambda T_R^{q-1}U_{-\lambda}(U_\lambda K_R)\|_2
\le C_0 e^{\omega_0\lambda^2 q}
\end{equation}
for all $R\ge R_0$, all $q\in\NN$, and all $|\lambda|\le\lambda_2$.

We claim that there exist constants $C_1>0$ and $\omega_1>0$ such that
\begin{equation}\label{eq:app-general-g-weighted-diff}
\|U_\lambda\partial_g T_R^n U_{-\lambda}\|_{2\to2}
\le C_1\frac{\rho(g)}{\sqrt n}e^{\omega_1\lambda^2 n}
\end{equation}
whenever $R\ge R_0$, $n\ge1$, $|\lambda|\le\lambda_2$, and $\rho(g)\le \sqrt n$.
If $g=e$ this is trivial.  Otherwise put $m:=\rho(g)$ and write
$g=s_1\cdots s_m$ with $s_j\in W$.  Telescoping gives
$$
\partial_g=\partial_{s_1}+\sum_{j=2}^m L(s_1\cdots s_{j-1})\partial_{s_j}.
$$
Also
$$
\|U_\lambda L(a)U_{-\lambda}\|_{2\to2}\le e^{|\lambda|\rho(a)}
\qquad(a\in G),
$$
because $|\rho(x)-\rho(a^{-1}x)|\le \rho(a)$.  Therefore, using
\eqref{eq:app-generator-weighted-diff-transfer},
\begin{align*}
\|U_\lambda\partial_g T_R^n U_{-\lambda}\|_{2\to2}
&\le C_{\rm d} n^{-1/2}e^{\omega_0\lambda^2 n}
\Bigl(1+\sum_{j=2}^m e^{|\lambda|\rho(s_1\cdots s_{j-1})}\Bigr)\\
&\le C_{\rm d}m n^{-1/2}e^{|\lambda|m}e^{\omega_0\lambda^2 n}.
\end{align*}
Since $m\le\sqrt n$, the inequality $ab\le \delta a^2+b^2/(4\delta)$, with
$a=|\lambda|\sqrt n$, $b=m/\sqrt n$, and $\delta=\omega_0$, gives
$|\lambda|m\le \omega_0\lambda^2n+1/(4\omega_0)$.  Thus
\eqref{eq:app-general-g-weighted-diff} holds with
$C_1:=C_{\rm d}e^{1/(4\omega_0)}$ and $\omega_1:=2\omega_0$.

\smallskip\noindent
Set $C_3:=2^{D/4}C_1$.  Choose $\eps_*\in(0,1]$ so small that
$C_3\eps_*\le\frac12$, and let
$n_0:=\lceil\eps_*^{-2}\rceil$ so that $\eps_*\sqrt n\ge1$ for all $n\ge n_0$.
For $n\ge n_0$, set $r=\eps_*\sqrt n\ge1$.  Since $K_R$ is finitely supported,
$K_R^{(2n)}$ is finitely supported, so \Cref{lem:weighted-finite-support-app} applies to
$f=K_R^{(2n)}$ and gives
\begin{align*}
\|U_\lambda K_R^{(2n)}\|_2
&\le \sup_{\rho(g)\le r}\|U_\lambda(I-L(g))K_R^{(2n)}\|_2
+e^{|\lambda|r}|B_r|^{-1/2}\|U_{-\lambda}T_R^{2n}U_\lambda\|_{2\to2}.
\end{align*}
For the first term, write $K_R^{(2n)}=T_R^nK_R^{(n)}$ and use
\eqref{eq:app-general-g-weighted-diff}:
$$
\sup_{\rho(g)\le r}\|U_\lambda(I-L(g))K_R^{(2n)}\|_2
\le C_1\frac{r}{\sqrt n}e^{\omega_1\lambda^2 n}\|U_\lambda K_R^{(n)}\|_2.
$$
For the second term, \eqref{eq:app-two-sided-weighted-2to2-transfer} gives
$$
\|U_{-\lambda}T_R^{2n}U_\lambda\|_{2\to2}\le e^{2\omega_0\lambda^2 n}.
$$
Since $G$ has polynomial growth of degree $D$ with respect to $W$, there exists
$c_V>0$ such that $|B_r|\ge c_V r^D$ for all $r\ge1$.  Consequently,
\begin{equation}\label{eq:app-weighted-L2-recursion-raw}
\|U_\lambda K_R^{(2n)}\|_2
\le C_1\frac{r}{\sqrt n}e^{\omega_1\lambda^2 n}\|U_\lambda K_R^{(n)}\|_2
+\frac{1}{\sqrt{c_V}}\Bigl(\frac{r}{\sqrt n}\Bigr)^{-D/2}n^{-D/4}e^{|\lambda|r+2\omega_0\lambda^2 n}.
\end{equation}
Choose $\Omega>\max\{\omega_0,\omega_1\}$ and define
$$
\mathcal B_{n,\lambda,R}:=
 n^{D/4}e^{-\Omega\lambda^2 n}\|U_\lambda K_R^{(n)}\|_2.
$$
Multiplying \eqref{eq:app-weighted-L2-recursion-raw} by
$(2n)^{D/4}e^{-2\Omega\lambda^2 n}$ gives
$$
\mathcal B_{2n,\lambda,R}
\le C_3\Bigl(\frac{r}{\sqrt n}\Bigr)\mathcal B_{n,\lambda,R}
+C_4\Bigl(\frac{r}{\sqrt n}\Bigr)^{-D/2}
 e^{|\lambda|r-2(\Omega-\omega_0)\lambda^2 n},
$$
where $C_4:=2^{D/4}/\sqrt{c_V}$.  With $r=\eps_*\sqrt n$ and
$C_3\eps_*\le\frac12$,
$$
\mathcal B_{2n,\lambda,R}
\le \frac12\mathcal B_{n,\lambda,R}
+C_4\eps_*^{-D/2}e^{\eps_*|\lambda|\sqrt n-2(\Omega-\omega_0)\lambda^2 n}.
$$
The exponent on the right is uniformly bounded above because
$$
\eps_*|\lambda|\sqrt n-2(\Omega-\omega_0)\lambda^2 n
\le \frac{\eps_*^2}{8(\Omega-\omega_0)}.
$$
Hence there exists $C_5>0$ such that
\begin{equation}\label{eq:app-dyadic-recurrence}
\mathcal B_{2n,\lambda,R}\le \frac12\mathcal B_{n,\lambda,R}+C_5
\end{equation}
for all $R\ge R_0$, all $n\ge n_0$, and all $|\lambda|\le\lambda_2$.

\smallskip\noindent
By \eqref{eq:app-crude-weighted-L2},
$$
\sup_{1\le n\le 2n_0}\sup_{R\ge R_0}\sup_{|\lambda|\le\lambda_2}
\mathcal B_{n,\lambda,R}<\infty .
$$
Iterating \eqref{eq:app-dyadic-recurrence} gives
$$
\sup_{k\ge0}\sup_{R\ge R_0}\sup_{|\lambda|\le\lambda_2}\mathcal B_{2^k,\lambda,R}<\infty.
$$
Now let $q\in\NN$, and choose $k\ge0$ so that $2^k\le q<2^{k+1}$.
Write $m:=2^k$ and $q=m+\ell$ with $0\le \ell<m$.  Then
$$
U_\lambda K_R^{(q)}
=U_\lambda T_R^\ell U_{-\lambda}(U_\lambda K_R^{(m)}),
$$
so \eqref{eq:app-two-sided-weighted-2to2-transfer} gives
$$
\|U_\lambda K_R^{(q)}\|_2
\le e^{\omega_0\lambda^2\ell}\|U_\lambda K_R^{(m)}\|_2.
$$
Therefore
$$
q^{D/4}e^{-\Omega\lambda^2 q}\|U_\lambda K_R^{(q)}\|_2
\le \Bigl(\frac{q}{m}\Bigr)^{D/4}e^{-(\Omega-\omega_0)\lambda^2\ell}
\mathcal B_{m,\lambda,R}
\le 2^{D/4}\mathcal B_{m,\lambda,R}.
$$
The bound on the right-hand side is uniform, and this proves
\eqref{eq:app-uniform-weighted-kernel-L2-transfer} with $\omega=\Omega$.
\end{proof}

\begin{proposition}\label{prop:diagonal-generator-bounds-app}
Let $\kappa>0$ and define
$$
R_q:=\max\{R_0,\kappa\log(q+1)\}\qquad (q\in\NN).
$$
Then there exist constants $a>0$ and $C>0$, depending only on
$(G,W,\eta,\beta,\mathfrak M_\beta,\kappa)$, such that for every $q\in\NN$ and every $h\in W$,
\begin{equation}\label{eq:app-diag-weighted-kernel-L2}
\bigl\|e^{a\rho/\sqrt q}K_{R_q}^{(q)}\bigr\|_2\le C q^{-D/4},
\end{equation}
\begin{equation}\label{eq:app-diag-weighted-diff-L2}
\bigl\|e^{a\rho/\sqrt q}\partial_h K_{R_q}^{(q)}\bigr\|_2\le C q^{-1/2-D/4},
\end{equation}
and consequently
\begin{equation}\label{eq:app-diag-generator-bounds}
\|\partial_h K_{R_q}^{(q)}\|_1\le C q^{-1/2},
\qquad
\sum_{g\in G}\rho(g)|\partial_h K_{R_q}^{(q)}(g)|\le C.
\end{equation}
\end{proposition}

\begin{proof}
Put $\lambda_2:=\min\{\lambda_0,\beta/2\}$. Choose $a\in(0,\min\{1,\lambda_2\}/2]$, and set $\lambda_q:=a/\sqrt q$.
Then \eqref{eq:app-uniform-weighted-kernel-L2-transfer} applied to $R=R_q$ with $\lambda=\lambda_q$ gives
$$
\bigl\|e^{\lambda_q\rho}K_{R_q}^{(q)}\bigr\|_2
\le C q^{-D/4}e^{\omega\lambda_q^2 q}
= C e^{\omega a^2}q^{-D/4},
$$
which is \eqref{eq:app-diag-weighted-kernel-L2} after absorbing $e^{\omega a^2}$ in $C$.

For \eqref{eq:app-diag-weighted-diff-L2}, we first address the case $q=1$.
For $h\in W$,
$$
\partial_h K_{R_1}=L(h)K_{R_1}-K_{R_1}.
$$
Since $\rho(hy)\le \rho(y)+\rho(h)$ and $\rho(h)\le1$ for $h\in W$,
$$
\|e^{a\rho}L(h)K_{R_1}\|_2\le e^a\|e^{a\rho}K_{R_1}\|_2.
$$
The one-step bound \eqref{eq:app-one-step-weighted-L2}, applied with
$|a|\le\beta/2$, gives
$$
\|e^{a\rho}\partial_h K_{R_1}\|_2
\le (e^a+1)\|e^{a\rho}K_{R_1}\|_2\le C.
$$
This is the desired bound for $q=1$ after enlarging $C$.

Assume henceforth $q\ge2$ and write
$$
q=m+n,\qquad m:=\left\lceil\frac q2\right\rceil,\qquad n:=q-m.
$$
Since $T_{R_q}^q=T_{R_q}^mT_{R_q}^n$ as operators, associativity of composition gives
$\partial_h T_{R_q}^q=(\partial_h T_{R_q}^m)T_{R_q}^n$, and hence
$$
e^{\lambda_q\rho}\partial_h K_{R_q}^{(q)}
=\bigl(e^{\lambda_q\rho}\partial_h T_{R_q}^m e^{-\lambda_q\rho}\bigr)
\bigl(e^{\lambda_q\rho}K_{R_q}^{(n)}\bigr).
$$
Applying \eqref{eq:app-uniform-weighted-diff-2to2} with $R=R_q$, time $m$,
and $\psi=\rho$ (which satisfies $|\partial_w\rho|\le1$ for every $w\in W$
by the triangle inequality) gives
$$
\bigl\|e^{\lambda_q\rho}\partial_h T_{R_q}^m e^{-\lambda_q\rho}\bigr\|_{2\to2}
\le Cm^{-1/2}e^{\omega\lambda_q^2 m},
$$
and \eqref{eq:app-uniform-weighted-kernel-L2-transfer} with the same
$R=R_q$ but time $n$ gives
$$
\bigl\|e^{\lambda_q\rho}K_{R_q}^{(n)}\bigr\|_2
\le Cn^{-D/4}e^{\omega\lambda_q^2 n};
$$
where both constants are independent of $q$.
Multiplying and using $m+n=q$ together with $\lambda_q^2 q=a^2$,
$$
\bigl\|e^{\lambda_q\rho}\partial_h K_{R_q}^{(q)}\bigr\|_2
\le Cm^{-1/2}n^{-D/4}e^{\omega\lambda_q^2(m+n)}
=Cm^{-1/2}n^{-D/4}e^{\omega a^2}
\le C'q^{-1/2-D/4},
$$
for some constant $C'>0$. This is
\eqref{eq:app-diag-weighted-diff-L2}.

Finally, by Cauchy-Schwarz inequality,
$$
\|\partial_h K_{R_q}^{(q)}\|_1
=\sum_{g\in G}e^{-\lambda_q\rho(g)}e^{\lambda_q\rho(g)}|\partial_h K_{R_q}^{(q)}(g)|
\le\Bigl(\sum_{g\in G}e^{-2\lambda_q\rho(g)}\Bigr)^{1/2}
\bigl\|e^{\lambda_q\rho}\partial_h K_{R_q}^{(q)}\bigr\|_2,
$$
and similarly
$$
\sum_{g\in G}\rho(g)|\partial_h K_{R_q}^{(q)}(g)|
\le\Bigl(\sum_{g\in G}\rho(g)^2 e^{-2\lambda_q\rho(g)}\Bigr)^{1/2}
\bigl\|e^{\lambda_q\rho}\partial_h K_{R_q}^{(q)}\bigr\|_2.
$$
If $B_r=\{ g\in G\mid \rho(g)\le r\}$ with $B_{-1}=\emptyset$, we have
$$
\sum_{g\in G} e^{-2\lambda_q\rho(g)}
=\sum_{r=0}^\infty(|B_r|-|B_{r-1}|)e^{-2\lambda_q r}
=(1-e^{-2\lambda_q})\sum_{r= 0}^\infty|B_r|e^{-2\lambda_q r}.
$$
Polynomial growth of $(G,W)$ of degree $D$ gives $|B_r|\le C_V(r+1)^D$ for all
$r\ge0$, for some $C_V>0$ and since $\lambda_q\le a\le 1/2$, we have the following estimate
$$
\sum_{r=0 }^\infty(r+1)^D e^{-2\lambda_q r}\le C'\lambda_q^{-(D+1)}.
$$
Indeed, for $0<\lambda_q\le1/2$,
$$
(r+1)^D e^{-2\lambda_q r}
\le C_D\lambda_q^{-D}\bigl(1+(\lambda_q r)^D\bigr)e^{-2\lambda_q r}
\le C_D'\lambda_q^{-D}e^{-\lambda_q r},
$$
for some constants $C_D, C_D'>0$ depending on $D$ because $x^D e^{-x}$ is bounded on $[0,\infty)$.  Summing the resulting
geometric series gives
$$
\sum_{r=0}^\infty(r+1)^D e^{-2\lambda_q r}
\le C_D'\lambda_q^{-D}\sum_{r=0}^\infty e^{-\lambda_q r}
\le C'\lambda_q^{-(D+1)},
$$
where $C'=2C_D'$. Combined with $1-e^{-2\lambda_q}\le 2\lambda_q$ this produces
\begin{equation}\label{eq:app-shell-alpha-zero}
\sum_{g\in G}e^{-2\lambda_q\rho(g)}\le C\lambda_q^{-D}\qquad(C=2C_VC'').
\end{equation}
For the weighted sum, we split
$e^{-2\lambda_q\rho(g)}=e^{-\lambda_q\rho(g)}\cdot e^{-\lambda_q\rho(g)}$ and
use the bound
$\max_{x\ge 0}x^2 e^{-\lambda_q x}=4 e^{-2}\lambda_q^{-2}$:
$$
\rho(g)^2 e^{-2\lambda_q\rho(g)}
\le 4 e^{-2}\lambda_q^{-2}e^{-\lambda_q\rho(g)}.
$$
Applying \eqref{eq:app-shell-alpha-zero} with $\lambda_q/2$ in place of
$\lambda_q$ (permissible since $\lambda_q/2\le 1/4$) then gives
$\sum_g e^{-\lambda_q\rho(g)}\le C\lambda_q^{-D}$, hence
$$
\sum_{g\in G}\rho(g)^2 e^{-2\lambda_q\rho(g)}
\le C\lambda_q^{-(D+2)}.
$$
Since $\lambda_q=a/\sqrt q$,
$$
\sum_{g\in G}e^{-2\lambda_q\rho(g)}\le Cq^{D/2},
\qquad
\sum_{g\in G}\rho(g)^2 e^{-2\lambda_q\rho(g)}\le Cq^{D/2+1}.
$$
Combining these estimates with
\eqref{eq:app-diag-weighted-diff-L2} yields \eqref{eq:app-diag-generator-bounds}.
\end{proof}

\bibliographystyle{alpha}
\bibliography{references}

\end{document}